\let\breve\widecheck
 \title[Differential index theorem]{An Index Theorem in Differential
$K$-Theory} 
 \author[D. S. Freed]{Daniel S.~Freed} \thanks{The first author was partially
 supported by NSF grant DMS-0603964.  The second author was
partially supported by NSF grant DMS-0903076}
 \address{Department of Mathematics \\ University of Texas \\ 1 University
Station C1200\\ Austin, TX 78712-0257}
 \email{dafr@math.utexas.edu}
 \author[J.~Lott ]{John~Lott}
 \address{Department of Mathematics \\ University of California, Berkeley \\
970 Evans Hall \#3840\\ Berkeley, CA 94720-3840}
 \email{lott@math.berkeley.edu}
 \dedicatory{To our teacher Isadore Singer on the occasion of his 85th
birthday} 
 \date{July 17, 2009}
\theoremstyle{definition}
\newtheorem*{example*}{Example}
\newtheorem*{problem*}{Problem}
\newtheorem*{exercise*}{Exercise}
\newtheorem*{question*}{Question}
\theoremstyle{remark}
\newtheorem{remark}[equation]{Remark}
\newtheorem*{note*}{Note}
\newtheorem*{notation*}{Notation}
\newtheorem*{remark*}{Remark}
\theoremstyle{plain}
\newtheorem{definition}[equation]{Definition}
\newtheorem{theorem}[equation]{Theorem}
\newtheorem{corollary}[equation]{Corollary}
\newtheorem{lemma}[equation]{Lemma}
\newtheorem{proposition}[equation]{Proposition}
\newtheorem*{definition*}{Definition}
\newtheorem*{theorem*}{Theorem}
\newtheorem*{corollary*}{Corollary}
\newtheorem*{lemma*}{Lemma}
\newtheorem*{proposition*}{Proposition}
\newtheorem*{conjecture*}{Conjecture}
\newtheorem*{claim*}{Claim}
\numberwithin{equation}{section}
\renewcommand{\:}{\colon}
\newcommand{\Ahat}{{\hat A}}
\DeclareMathOperator{\Aut}{Aut}
\newcommand{\CC}{{\mathbb C}}
\DeclareMathOperator{\End}{End}
\DeclareMathOperator{\Hom}{Hom}
\DeclareMathOperator{\Index}{Index}
\DeclareMathOperator{\pt}{pt}
\newcommand{\QQ}{{\mathbb Q}}
\newcommand{\RR}{{\mathbb R}}
\DeclareMathOperator{\Spin}{Spin}
\DeclareMathOperator{\STr}{STr}
\DeclareMathOperator{\str}{str}
\DeclareMathOperator{\Tr}{Tr}
\DeclareMathOperator{\tr}{tr}
\DeclareMathOperator{\trs}{Str}
\newcommand{\ZZ}{{\mathbb Z}}
\DeclareMathOperator{\ch}{ch}
\newcommand{\chiup}{\raise.5ex\hbox{$\chi$}}
\newcommand{\cir}{S^1}
\DeclareMathOperator{\ind}{ind}
\newcommand{\inv}{^{-1}}
\newcommand{\mstrut}{^{\vphantom{1*\prime y}}}
\newcommand{\temsquare}{\raise3.5pt\hbox{\boxed{ }}}
\newcommand{\zmod}[1]{\ZZ/#1\ZZ}
\newcommand{\zt}{\zmod2}
\newcommand{\RL}{\R_\Lambda }
\newcommand{\RQ}{\R_\QQ }
\newcommand{\pprod}{\pi ^{prod}}
\numberwithin{subsection}{section}
\renewcommand{\cir}{\ensuremath{S^1}}
\newif\ifpdf 
\newcommand{\cK}{\breve{K}}
\DeclareMathOperator{\Image}{Image}
\DeclareMathOperator{\Ker}{Ker}
\DeclareMathOperator{\Td}{Todd}
\DeclareMathOperator{\Det}{Det}
\DeclareMathOperator{\Quillen}{an}
\newcommand{\RZ}{\RR/\ZZ}
\renewcommand{\R}{\mathcal{R}}
\newcommand{\Spinc}{\textnormal{Spin}^c}
\newcommand{\Sp}{\mathcal{S}}
\newcommand{\WFO}{{}\mstrut _{WF} \Omega }
\newcommand{\WFcK}{{}\mstrut _{WF} \breve{K}}
\newcommand{\cb}{\check{b}}
\newcommand{\dO}{{}\mstrut _\delta \Omega }
\newcommand{\dcK}{{}\mstrut _\delta \breve{K}}
\newcommand{\sB}{\mathcal{B}}
\newcommand{\sF}{\mathcal{F}}
\newcommand{\sH}{\mathcal{H}}
\newcommand{\snablas}{\nabla \kern -.75 em \nabla }
\newcommand{\snabla}{\nabla \kern -1 em \nabla }
\newcommand{\spinc}{\textnormal{spin}^c}
\renewcommand{\AA}{\mathbb{A}}
\begin{document}

\begin{abstract} 
Let $\pi : X \rightarrow B$ be a proper submersion with a Riemannian
structure.  Given a differential $K$-theory class on $X$, we define its
analytic and topological indices as differential $K$-theory classes on
$B$.  We prove that the two indices are the same.
\end{abstract}
\maketitle







\section{Introduction}

Let $\pi \:X\to B$ be a proper submersion of relative dimension~$n$.  The
Atiyah-Singer families index theorem \cite{Atiyah-Singer} equates the
analytic and topological index maps, defined on the topological $K$-theory of
the relative tangent bundle.  Suppose that the relative tangent bundle has a
$\spinc$-structure.  This orients the map~$\pi $ in $K$-theory, and the index
maps may be expressed as pushforwards $K^0(X; \ZZ) \to K^{-n}(B; \ZZ)$.  The
topological index map $\pi_*^{top}$, which preceded the index theorem, is due
to Atiyah and Hirzebruch \cite{Atiyah-Hirzebruch}.  The analytic index
map~$\pi^{an}_*$ is defined in terms of Dirac-type operators as follows.  Let
$E\to X$ be a complex vector bundle representing a class $[E] \in K^0(X;
\ZZ)$.  Choose a Riemannian structure on the relative tangent bundle, a
$\spinc$-lift of the resulting Levi-Civita connection, and a connection on
$E$.  This geometric data determines a family of fiberwise Dirac-type
operators, parametrized by~$B$.  The analytic index $\pi^{an}_*[E] \in
K^{-n}(B; \ZZ)$ is the homotopy class of that family of Fredholm operators;
it is independent of the geometric data.  The families index theorem asserts
$\pi^{an}_* = \pi^{top}_*$.  In the special case when $B$ is a point, one
recovers the original integer-valued Atiyah-Singer index theorem
\cite{Atiyah-Singer0}.

The work of Atiyah and Singer has led to many other index theorems. In this
paper we prove a geometric extension of the Atiyah-Singer theorem, in which
$K$-theory is replaced by \emph{differential} $K$-theory.  Roughly speaking,
differential $K$-theory combines topological $K$-theory with differential
forms.  We define analytic and topological pushforwards in differential
$K$-theory.  The analytic pushforward~$\ind^{an}$ is constructed using the
Bismut superconnection and local index theory techniques.  The topological
pushforward~$\ind^{top}$ is constructed as a refinement of the
Atiyah-Hirzebruch pushforward in topological $K$-theory.  Our main result is
the following theorem.

 	\begin{theorem*} \label{maintheorem0}
  Let $\pi : X \rightarrow B$ be a proper submersion of relative dimension
$n$ equipped with a Riemannian structure and a differential
$\spinc$-structure. Then $\ind^{an} = \ind^{top}$ as homomorphisms
$\cK^0(X)\to\cK^{-n}(B)$ on differential $K$-theory.
   \end{theorem*}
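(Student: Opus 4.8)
The plan is to study the difference homomorphism $D=\ind^{an}-\ind^{top}\colon\cK^0(X)\to\cK^{-n}(B)$ and show it vanishes; the first step confines $D$ to a very small subgroup of the target. Composing $\ind^{an}$ with the forgetful map $\cK^{-n}(B)\to K^{-n}(B;\ZZ)$ yields the Atiyah--Singer families index of the underlying class, which by the families index theorem equals the corresponding composite of $\ind^{top}$; hence $D$ has vanishing underlying topological class. Composing $\ind^{an}$ with the curvature is computed by Bismut's local index theorem to be $\int_{X/B}\Td(\nabla^{X/B})\wedge(\,\cdot\,)$ applied to the curvature of the input, which is exactly the formula that defines the curvature of $\ind^{top}$; hence $D$ has vanishing curvature as well. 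A class with vanishing underlying class and vanishing curvature lies in the image of the inclusion $a$ of closed forms, so $D$ lands in a subgroup isomorphic to $H^{-n-1}(B;\RR)\big/\ch\bigl(K^{-n-1}(B;\ZZ)\bigr)$. Moreover both indices intertwine $a$ on $X$ with $a\circ\int_{X/B}\Td(\nabla^{X/B})\wedge(\,\cdot\,)$ on $B$, so $D$ annihilates the kernel of the (surjective) map $\cK^0(X)\to K^0(X;\ZZ)$ and therefore descends to $\bar D\colon K^0(X;\ZZ)\to H^{-n-1}(B;\RR)\big/\ch\bigl(K^{-n-1}(B;\ZZ)\bigr)$. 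What remains is to prove $\bar D=0$, and this is not formal: the torsion-free part of $K^0(X;\ZZ)$ could a priori map nontrivially to this divisible group.

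To kill $\bar D$ I would use naturality together with a geometric factorization of $\pi$. The ingredients are: (i) naturality of $\ind^{an}$ and $\ind^{top}$ under base change $B'\to B$; (ii) $\cK^0(B)$--linearity of both indices and their compatibility with external products; and --- the \emph{main analytic input} --- (iii) a composition law $\ind^{an}_{\rho\circ\iota}=\ind^{an}_\rho\circ\ind^{an}_\iota$ for the composite of a fibrewise embedding $\iota$ with a submersion $\rho$, together with the analogous, essentially formal, identity for $\ind^{top}$. Granting these, Whitney-embed the fibres of $X$ over $B$ into $\RR^{2N}$ and fibrewise one-point compactify, obtaining a factorization $\pi=p\circ\iota$ with $\iota\colon X\hookrightarrow S^{2N}\times B$ a fibrewise embedding and $p\colon S^{2N}\times B\to B$ the projection. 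It then suffices to prove the theorem for $p$ and for $\iota$ separately.

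For the projection $p$: since $K^*(S^{2N};\ZZ)$ is torsion free, Künneth shows that every class in $\cK^0(S^{2N}\times B)$ differs, by an element of the kernel of $\cK^0(S^{2N}\times B)\to K^0(S^{2N}\times B;\ZZ)$ --- on which both indices already agree --- from a sum of external products $\check\alpha\times\check v$ with $\check\alpha$ a differential lift of a generator of $K^0(S^{2N};\ZZ)$ and $\check v\in\cK^0(B)$; by $\cK^0(B)$--linearity and base change this reduces the claim for $p$ to the absolute statement $\cK^0(S^{2N})\to\cK^{-2N}(\pt)=\ZZ$, which is the classical Atiyah--Singer index theorem for $S^{2N}$ (no flat contribution appears, as $H^{\mathrm{odd}}(S^{2N};\RR)=0$). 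For the embedding $\iota$: both $\ind^{an}_\iota$ and $\ind^{top}_\iota$ are the Gysin pushforward along $\iota$ determined by the differential $K$-theory Thom class of the normal bundle $\nu$ of $X$ in $S^{2N}\times B$ with its induced Riemannian and $\spinc$ data; one checks they agree by matching the Mathai--Quillen-type Thom form underlying the analytic side with the topological Thom class, the difference being an explicit exact Chern--Simons term absorbed into the normalizations.

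I expect the main obstacle to be ingredient (iii), the composition law for the analytic index: proving $\ind^{an}_{\rho\circ\iota}=\ind^{an}_\rho\circ\ind^{an}_\iota$ requires an adiabatic-limit analysis gluing the Bismut--Cheeger eta forms of $\iota$ and of $\rho$ into that of $\rho\circ\iota$, in the spirit of the Bismut--Zhang embedding formula and Dai's adiabatic-limit theorem; this is where the flat, or $\RR/\ZZ$--index-theoretic, content of the theorem really sits. Two further hurdles are technical but unavoidable: first, $\ind^{an}$ must be shown to be well defined at all --- the fibrewise kernels of the Dirac family need not form a vector bundle, so one perturbs the Bismut superconnection by a finite-rank bundle endomorphism pulled back from $B$ and proves that the resulting class is independent of the perturbation and of all other geometric choices, and that the curvature and covariant-derivative identities used in the first step hold exactly, not merely modulo exact forms; second, one must verify that the analytic pushforward attached to the fibrewise embedding $\iota$, a priori defined by a limiting process on a tubular neighbourhood, coincides with the differential Thom-class pushforward used above.
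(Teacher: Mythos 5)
Your opening paragraph is correct and even slightly sharper than what the paper does: the paper only uses the vanishing of $\omega(D)$ and the exact sequence~\eqref{eq:4} to conclude that $D(\mathcal E)=j(\mathcal T)$ for a unique $\mathcal T\in K^{-n-1}(B;\RR/\ZZ)$, whereas you further invoke the Atiyah--Singer families index theorem to push $\mathcal T$ into the flat subgroup $H^{-n-1}(B;\RR)/\ch\bigl(K^{-n-1}(B;\ZZ)\bigr)$. The step that is missing from your outline, and which is the real engine of the paper's proof, is what one does with the flat class once one has it. The paper pairs $\mathcal T$ with arbitrary classes in $K_{-n-1}(B;\ZZ)$; by divisibility of $\RR/\ZZ$ and the universal coefficient theorem, $\mathcal T$ vanishes iff all such pairings do. Every $K$-homology class is of the form $f_*[M]$ for a closed odd-dimensional $\spinc$-manifold $M$ mapping into $B$, so by naturality one reduces to the case where $B$ itself is such an $M$ and the pairing $\langle[B],\mathcal T\rangle$ is a scalar in $\RR/\ZZ$. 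At that point the identity one must verify concerns \emph{reduced eta-invariants}, not eta-forms: Dai's adiabatic-limit theorem relating $\overline\eta(D^{X,E})$ to $\overline\eta$ on the base plus $\int_B\Td\wedge\tilde\eta$, and Bismut--Zhang's Theorem~2.2 relating $\overline\eta$ on $X$ to $\overline\eta$ on $S^N\times B$ via $\int\Td\wedge\gamma$, are applied directly as numerical identities on closed manifolds.

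Your ingredient (iii) --- a composition law $\ind^{an}_{\rho\circ\iota}=\ind^{an}_{\rho}\circ\ind^{an}_{\iota}$ for differential $K$-theory classes --- is not a reduction of the theorem but a restatement of it, since by construction $\ind^{top}_\pi=\check\pi^{prod}_*\circ\check\iota^{mod}_*$ and the embedding pushforward is unique (it is precisely the Bismut--Zhang current construction, so there is no separate ``analytic'' versus ``topological'' Thom form to reconcile). Proving that composition law directly by ``gluing the Bismut--Cheeger eta forms of $\iota$ and $\rho$'' would require family-level versions of the Dai and Bismut--Zhang theorems, i.e.\ identities among eta \emph{forms} in $\Omega(B;\R)^{-n-1}/\Image(d)$, which are strictly stronger than the scalar eta-invariant identities one actually needs. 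The $K$-homology pairing trick is exactly what lets one avoid this. Your treatment of the projection $p$ via K\"unneth and the absolute index on $S^{2N}$, and your two closing caveats (Mischenko--Fomenko perturbation when $\Ker D^V$ is not a bundle, and the need to work with currents and wave-front sets for the embedding pushforward) all correspond accurately to lemmas in the paper, but without the pairing reduction the core step remains unestablished.
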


This theorem provides a topological formula (with differential forms)
for geometric invariants of Dirac-type operators. We illustrate
this for the determinant line bundle (Section \ref{sec:7}) and the
reduced eta-invariant (Section \ref{sec:8}).

In the remainder of the introduction we describe the theorem and its proof in
more detail. We also give some historical background.

\subsection{Differential $K$-theory}

Let $u$ be a formal variable of degree $2$ and put $\R = \RR[u, u^{-1}]$.
For any manifold~$X$, its differential $K$-theory~$\cK^{\bullet }(X)$ fits
into the commutative square
  \begin{equation}\label{eq:112}
     \xymatrix{\cK^{\bullet }(X) \ar[r]\ar[d] & \Omega ^{\bullet
     }(X;\R)_K\ar[d]\\ K^{\bullet }(X;\ZZ) \ar[r] &H ^{\bullet }(X;\R)}. 
  \end{equation}
The bottom map is the Chern character $\ch : K^{\bullet}(X; \ZZ) \rightarrow
H(X;\R)^\bullet$; the formal variable~$u$ encodes Bott periodicity.  Also,
$\Omega(X;\R)_K^\bullet$ denotes the closed differential forms whose
cohomology class lies in the image of the Chern character.  The right
vertical map is defined by the de Rham theorem.  One can define differential
$K$-theory by positing that \eqref{eq:112}~be a \emph{homotopy} pullback
square~\cite{Hopkins-Singer}, which is the precise sense in which
differential $K$-theory combines topological $K$-theory with differential
forms.

We use a geometric model for differential $K$-theory, defined by generators
and relations.  A generator~ ${\mathcal E}$ of~$\cK^0(X)$ is a quadruple $(E,
h^E, \nabla^E, \phi)$, where $E\to X$ is a complex vector bundle, $h^E$ is a
Hermitian metric, $\nabla^E$ is a compatible connection, and $\phi \in
\Omega(X; \R)^{-1}/\Image(d)$.  The relations come from short exact sequences
of Hermitian vector bundles.  There is a similar description of
$\cK^{-1}(X)$, in which $(E,h^E,\nabla^E, \phi)$ is additionally equipped
with a unitary automorphism $U^E \in \Aut(E)$. One can then use periodicity
to define $\cK^r(X)$ for any integer $r$.

\subsection{Pushforwards for geometric submersions}

Let $\pi \:X\to B$ be a proper submersion.  A Riemannian structure on $\pi$
consists of an inner product on the vertical tangent bundle and a horizontal
distribution on $X$.  A differential $\spinc$-structure on~$\pi $ is a
topological $\spinc$-structure together with a unitary connection on the
characteristic line bundle associated to the $\spinc$-structure.  This
geometric data determines a local index form $\Td(X/B)\in \Omega (X;\R)^0$,
the $\spinc$-version of the $\Ahat$-form.

Suppose first that the fibers of~$\pi $ have even dimension~$n$.  Then there
is a diagram
  \begin{equation} \label{commintro}
     \xymatrix{ 0 \ar[r]&
           K^{-1}(X;\RZ)\ar[d]_{\ind^{an}}\ar@<1ex>[d]^{\ind^{top}}\ar[r]^{\quad
     j}&\cK^0(X)\ar[d]_{?}\ar@<1ex>[d]^{?}\ar[r]^{\omega\quad} &
           \Omega(X;\R)_K^0\ar[d]^{\int_{X/B}\Td(X/B)\wedge -}\ar[r] &0 \\
     0 \ar[r]& 
           K^{-n-1}(B;\RZ)\ar[r]^{\quad j}&\cK^{-n}(B)\ar[r]^{\omega\quad} &
           \Omega(B;\R)_K^{-n}\ar[r]& 0 } 
  \end{equation}
in which the rows are exact sequences closely related to~\eqref{eq:112},
easily derived from the definition of differential $K$-theory.  The left
vertical arrows are the topological index $\ind^{top}$, defined by a
construction in generalized cohomology theory, and the analytic index
$\ind^{an}$, defined in~ \cite{Lott}.  The main theorem of \cite{Lott} is the
equality of these arrows.  Our analytic and topological indices are defined
to fill in the~ ``?'''s in the middle vertical arrows subject to the
condition that the resulting two diagrams (with analytic and topological
indices) commute. Note that the right vertical arrow depends on the geometric
structures; hence the same is true for the middle vertical arrows.

The analytic index is based on Quillen's notion of a superconnection
\cite{Quillen2} as generalized to the infinite-dimensional setting by Bismut
\cite{Bismut}.  To define it in our finite-dimensional model of differential
$K$-theory we use the Bismut-Cheeger eta form \cite{Bismut-Cheeger}, which
mediates between the Chern character of the Bismut superconnection and the
Chern character of the finite-dimensional index bundle.  The resulting
Definition~\ref{analindex} is then a simple extension of the
$\RR/\ZZ$~analytic index in~\cite{Lott}.

As in topological $K$-theory, to define the topological index we factor~$\pi
$ as the composition of a fiberwise embedding $X \rightarrow S^N \times B$
and the projection $S^N \times B \rightarrow B$, broadly basing our
construction on~\cite{Hopkins-Singer,Klonoff,Ortiz}.  However, our
differential $K$-theory pushforward for an embedding, given in
Definition~\ref{pushforwarddef}, is new and of independent interest.  For our
proof of the main theorem we want the image to be defined in terms of
currents instead of differential forms, so the embedding pushforward lands in
the ``currential'' $K$-theory of $S^N \times B$.  The definition uses the
Bismut-Zhang current~\cite{Bismut-Zhang}, which essentially mediates between
the Chern character of a certain superconnection and a cohomologous current
supported on the image of the embedding.  A K\"unneth decomposition of the
currential $K$-theory of $S^N \times B$ is used to give an explicit formula
for the projection pushforward.  The topological index is the composition of
the embedding and projection pushforwards, with a modification to account for
a discrepancy in the horizontal distributions.  The topological index does
not involve any spectral analysis, but does involve differential forms, so
may be more appropriately termed the ``differential topological index''.

For proper submersions with odd fiber dimension, we introduce suspension and
desuspension maps between even and odd differential $K$-theory groups.  We
use them to define topological and analytic pushforward maps for the odd case
in terms of those for the even case.

The preceding constructions apply when $B$~is compact. 
To define the index maps for
noncompact~$B$, we take a limit over
an exhaustion of~$B$ by compact submanifolds.  
This depends on a result of independent interest which we
prove in the appendix: $\cK^{\bullet }(B)$ is isomorphic to the inverse limit
of the differential $K$-theory groups of compact submanifolds.

\subsection{Method of proof}

The commutativity of the right-hand square of (\ref{commintro}) for both the
analytic and topological pushforwards, combined with the exactness of the
bottom row, implies that for any~ ${\mathcal E} \in \cK^0(X)$ we have
$\ind^{an}({\mathcal E}) - \ind^{top}({\mathcal E})=j({\mathcal T})$ for a
unique $\mathcal{T}\in K^{-n-1}(B; \RR/\ZZ)$. We now use the basic method of
proof in~ \cite{Lott} to show that ${\mathcal T}$ vanishes.  Namely, it
suffices to demonstrate the vanishing of the pairing of ${\mathcal T}$ with
any element of the $K$-homology group $K_{-n-1}(B; \ZZ)$. Such pairings are
given by reduced eta-invariants, assuming the family of Dirac-type operators
has vector bundle kernel.  After some rewriting we are reduced to proving an
identity involving a reduced eta-invariant of $S^N \times B$, a reduced
eta-invariant of $X$, and the eta form on $B$.  The relation between the
reduced eta-invariant of $X$ and the eta form on $B$ is an adiabatic limit
result of Dai \cite{Dai}. The new input is a theorem of Bismut-Zhang which
relates the reduced eta-invariant of $X$ to the reduced eta-invariant of $S^N
\times B$ \cite{Bismut-Zhang}.  To handle the case when the rank of the
kernel is not locally constant we follow a perturbation argument from
\cite{Lott}, which uses a lemma of
Mischenko-Fomenko~\cite{Mischenko-Fomenko}.

\subsection{Historical discussion}

Karoubi's description of $K$-theory with coefficients \cite{Karoubi} combines
vector bundles, connections, and differential forms into a topological
framework.  His model of $K^{-1}(X; \CC/\ZZ)$ is essentially the same as the
kernel of the map~$\omega $ in~\eqref{commintro}.  (Hermitian metrics can be
added to his model to get $\RR/\ZZ$-coefficients.)  Inspired by this work,
Gillet and Soul\'e~ \cite{Gillet-Soule} defined a group $\hat{K}^0(X)$ in the
holomorphic setting which is a counterpart of differential $K$-theory in the
smooth setting. Faltings~ \cite{Faltings} and Gillet-R\"ossler-Soul\'e~
\cite{Gillet-Rossler-Soule} proved an arithmetic Riemann-Roch theorem about
these groups.  Using Karoubi's description of $K$-theory with coefficients,
the second author proved an index theorem in $\RR/\ZZ$-valued $K$-theory~
\cite{Lott}.  Based on the Gillet-Soul\'e work, he also considered what could
now be called differential flat $K$-theory $\widehat{K}^0_R(X)$ and
differential L-theory $\widehat{L}^0_\epsilon(X)$ \cite{Lott3}.

Differential $K$-theory has an antecedent in the differential character
groups of Cheeger-Simons~\cite{Cheeger-Simons}, which are isomorphic to
integral differential cohomology groups.  Independent of the developments in
the last paragraph, and with physical motivation, the first author sketched a
notion of differential $K$-theory $\cK^0(X)$~\cite{Freed,Freed-Hopkins}.  In
retrospect, differential K-theory can also be seen as a case of Karoubi's
multiplicative K-theory~\cite{Karoubinew} for a particular choice of
subcomplexes.  Generalized differential cohomology was developed by Hopkins
and Singer~\cite{Hopkins-Singer}.  In particular, they defined differential
orientations and pushforwards in their setting, and constructed a pushforward
in differential $K$-theory for such a map \cite[Example 4.85]{Hopkins-Singer}
(but with a more elaborate notion of ``differential $\spinc$-structure'').
Klonoff \cite{Klonoff} constructed an isomorphism between the differential
$K$-theory group $\cK^0(X)$ defined by Hopkins-Singer and the one given by
the finite-dimensional model used in this paper.  (We remark that the
argument in \cite{Klonoff} relies on a universal connection which is not
proved to exist; Ortiz~\cite[\S3.3 ]{Ortiz} gives a modification of the
argument which bypasses this difficulty.) A topological index map for proper
submersions is also developed in~\cite{Klonoff,Ortiz} and it fits into a
commutative diagram~\eqref{commintro}.  It has many of the same ingredients
as the topological index defined here and most likely agrees with it, but we
have not checked the details.  One notable difference is our use of currents,
a key element in our proof of the main theorem.  In \cite{Klonoff}, Klonoff
proves a version of Proposition~\ref{determinant} and
Corollary~\ref{etapush}.

There are many other recent works on differential $K$-theory, among which we
only mention two.  Bunke and Schick~\cite{Bunke-Schick} use a different
definition of $\cK^{\bullet}(X)$ in which the generators are fiber bundles
over $X$ with a Riemannian structure and a differential $\spinc$-structure.
They prove a rational Riemann-Roch-type theorem with value in rational
differential cohomology.  This theorem is the result of applying the
differential Chern character to our index theorem; see Subsection
\ref{rational}. In a different direction, Simons and
Sullivan~\cite{Simons-Sullivan} prove that the differential form~ $\phi$ in
the definition of $\cK^0(X)$ can be removed, provided that one modifies the
relations accordingly.  In this way, differential $K$-theory really becomes a
$K$-theory of vector bundles with connection.  However, in our approach to
the analytic and topological indices it is natural to include the form~
$\phi$.

The differential index theorem, or rather its consequence for determinant
line bundles, is used in Type~I string theory to prove the anomaly
cancellation known as the ``Green-Schwarz mechanism''~\cite{Freed}, at least
on the level of isomorphism classes.  Indeed, this application was one
motivation to consider a differential K-theory index theorem, for the first
author.  The differential $K$-theory formula for the determinant line bundle
reduces in special low dimensional cases to a formula in a simpler
differential cohomology theory.  There is a two-dimensional example relevant
to worldsheet string theory~\cite[\S5]{Freed3} and an example in
four-dimensional gauge theory~\cite[\S2]{Freed2}.

\subsection{Outline}

We begin in Section \ref{background} with some basic material about
characteristic forms, differential $K$-theory and reduced eta-invariants.  We
also establish our notation.  In Section \ref{sec:3} we define the analytic
index for differential $K$-theory, in the case of vector bundle kernel.  In
Section~ \ref{sec:2} we construct the pushforward for differential $K$-theory
under an embedding which is provided with a Riemannian structure and a
differential $\spinc$-structure on its normal bundle. It lands in the
currential $K$-theory of the image manifold.  In Section~ \ref{sec:4} we
construct the topological index.  In Section \ref{sec:5} we prove our main
theorem in the case of vector bundle kernel. The general case is covered in
Section \ref{sec:6}.  The relationship of our index theorem to determinant
line bundles, differential Riemann-Roch theorems, and indices in Deligne
cohomology is the subject of Section \ref{sec:7}. In Section~\ref{sec:8} we
describe how to extend the results of the preceding sections, which were for
even differential $K$-theory and even relative dimension, to the odd case by
means of suspensions and desuspensions.  In the appendix we prove that the
differential $K$-theory of a noncompact manifold may be computed as a limit
of the differential $K$-theory of compact submanifolds.

More detailed explanations appear at the beginnings of the individual
sections.
 
\subsection*{} \ \ The first author thanks Michael Hopkins and Isadore Singer
for early explorations, as well as Kiyonori Gomi for more recent
discussions.  We also thank the referee for constructive suggestions which
improved the paper.

\section{Background material} \label{background}

In this section we review some standard material and clarify
notation. 

In Subsection~\ref{subsec:1.0} we describe the Chern character and the
relative Chern-Simons form.  One slightly nonstandard point is that we
include a formal variable $u$ of degree~$2$ so that the Chern character
preserves integer cohomological degree as opposed to only a mod~2 degree.

In Subsection~\ref{subsec:1.1} we define differential $K$-theory 
in even degrees using a model which involves vector bundles, connections and
differential forms.  We will also need a slight extension of differential
$K$-theory in which differential forms are replaced by de Rham currents. This
``currential'' K-theory is introduced in Subsection~\ref{subsec:1.2}.

In Subsection~\ref{subsec:1.25} we show that on a compact odd-dimensional
$\spinc$-manifold, the Atiyah-Patodi-Singer reduced 
$\eta$-invariant gives an invariant of
currential $K$-theory.
Finally, in Subsection~\ref{subsec:1.3} we
recall Quillen's definition of superconnections and their
associated Chern character forms.

\subsection{Characteristic classes} \label{subsec:1.0}

Define the $\ZZ$-graded real algebra 
  \begin{equation}\label{eq:2}
     \R^{\bullet }=\RR[u,u\inv ],\qquad \deg u=2. 
  \end{equation}
It is isomorphic to~$K^{\bullet }(\pt;\RR)$.  

Let $X$ be a smooth manifold.  Let $\Omega (X;\R)^{\bullet}$~denote the
$\ZZ$-graded algebra of differential forms with coefficients in~$\R$; we use
the total grading.  Let $H (X;\R)^{\bullet}$ denote the corresponding
cohomology groups.  Let $R_u : \Omega (X;\R)^{\bullet} \otimes \CC
\rightarrow \Omega (X;\R)^{\bullet} \otimes \CC$ be the map which multiplies
$u$ by $2\pi i$. 

We take $K^{0}(X; \ZZ)$ to be the homotopy-invariant $K$-theory of $X$,
i.e. $K^{0}(X; \ZZ) = [X, \ZZ \times BGL(\infty, \CC)]$.
Note that one can carry out all of the usual $K$-theory constructions without
any further assumption on the manifold $X$, such as compactness or
finite topological type. For example,
given a complex vector bundle over $X$, there is always another
complex vector bundle on $X$ so that the direct sum is a trivial bundle
\cite[Problem 5-E]{Milnor}.

We can describe $K^{0}(X; \ZZ)$ as an abelian group generated by 
complex vector bundles $E$ over $X$ equipped with Hermitian metrics $h^E$.
The relations are that $E_2 = E_1 + E_3$ whenever there is
a short exact sequence of Hermitian vector bundles, meaning that
there is a short exact sequence
 \begin{equation} \label{shortexact}
0 \longrightarrow E_1 \stackrel{i}{\longrightarrow} E_2 
\stackrel{j}{\longrightarrow} E_3 \longrightarrow 0
 \end{equation}
so that $i$ and $j^*$ are isometries.
Note that in such a case, we get an orthogonal
splitting $E_2 = E_1 \oplus E_3$.

Let $\nabla^E$ be a compatible connection on $E$.  The corresponding Chern
character form is
 \begin{equation}
\omega(\nabla^E) = R_u 
\tr \left( e^{- u^{-1} (\nabla^E)^2}
\right) \in \Omega (X;\R)^0.
 \end{equation}
It is a closed form whose de Rham cohomology class $\ch(E) \in H (X;\R)^0$ is
independent of $\nabla^E$.  The map $\ch : K^{0}(X; \ZZ) \longrightarrow H
(X;\R)^0$ becomes an isomorphism after tensoring the left-hand side with~
$\RR$.  We also put
 \begin{equation}
c_1(\nabla^E) \, = \, - \frac{1}{2\pi i} \, u^{-1} \, 
\tr \left( (\nabla^E)^2
\right) \in \Omega (X;\R)^0.
 \end{equation}

We can represent $K^0(X; \ZZ)$ using $\ZZ/2\ZZ$-graded vector bundles.  A
generator of $K^0(X; \ZZ)$ is then a $\ZZ/2\ZZ$-graded complex vector bundle
$E = E_+ \oplus E_-$ on $X$, equipped with a Hermitian metric $h^E = h^{E_+}
\oplus h^{E_-}$.  Choosing unitary connections $\nabla^{E_\pm}$ and letting
$\str$ denote the supertrace, we put
 \begin{equation} \label{Chern2}
\omega(\nabla^E) = R_u \str \left( e^{- u^{-1} (\nabla^E)^2}
\right) \in \Omega (X;\R)^0.
 \end{equation}

More generally, if $r$ is even then by Bott periodicity, we can
represent a generator of $K^r(X; \ZZ)$ by a 
complex vector bundle
$E$ on $X$, equipped with a Hermitian metric $h^E$.
Again, we choose a compatible connection $\nabla^E$.
In order to define the Chern character form, we put
 \begin{equation} \label{Chern3}
\omega(\nabla^E) = u^{{r}/{2}} R_u \tr \left( e^{- u^{-1} (\nabla^E)^2}
\right) \in \Omega (X;\R)^r,
 \end{equation}
and similarly for $\ZZ/2\ZZ$-graded generators of $K^r(X; \ZZ)$.

\begin{remark} \label{newremark1}
Note the factor of $u^{{r}/{2}}$.
It would perhaps be natural to insert the formal variable~$u^{r/2}$ in
front of~$E$ but we will refrain from doing so.
In any given case, it should be clear from the context what the
degree is.
\end{remark}

If $\nabla^E_1$ and $\nabla^E_2$ are two connections on a vector
bundle $E$ then
there is an explicit relative Chern-Simons form
$CS(\nabla^E_1,\nabla^E_2) \in 
\Omega (X;\R)^{-1}/\Image(d)$. It satisfies
 \begin{equation}
dCS(\nabla^E_1,\nabla^E_2) =
\omega(\nabla^E_1) - \omega(\nabla^E_2).
 \end{equation}
More generally, if 
 \begin{equation}
0 \longrightarrow E_1 \longrightarrow E_2 \longrightarrow E_3 
\longrightarrow 0
 \end{equation}
is a short exact sequence of vector bundles with connections
$\{\nabla^{E_i}\}_{i=1}^3$ then there is an explicit 
relative Chern-Simons form
$CS(\nabla^{E_1},\nabla^{E_2},\nabla^{E_3}) \in 
\Omega (X;\R)^{-1}/\Image(d)$.
It satisfies
 \begin{equation}
dCS(\nabla^{E_1},\nabla^{E_2},\nabla^{E_3}) =
\omega(\nabla^{E_2}) - \omega(\nabla^{E_1}) - \omega(\nabla^{E_3}).
 \end{equation}
To construct $CS(\nabla^{E_1},\nabla^{E_2},\nabla^{E_3})$,
put $W = [0,1] \times X$ and let $p : W \rightarrow X$ be the
projection map.  Put $F = p^* E_2$. Let $\nabla^F$ be a
unitary connection on $F$
which equals $p^* \nabla^{E_2}$ near
$\{1\} \times X$ and which equals $p^*(\nabla^{E_1} \oplus \nabla^{E_3})$
near $\{0\} \times X$. Then
 \begin{equation}
CS(\nabla^{E_1},\nabla^{E_2},\nabla^{E_3}) = \int_0^1
\omega(\nabla^F) \in \Omega (X;\R)^{-1}/\Image(d).
 \end{equation}
 
If $W$ is a real vector bundle on $X$ with connection $\nabla^W$ then
we put
  \begin{equation}\label{eq:10}
     \Ahat(\nabla^{W})= R_u \sqrt{\det\left(\frac{u\inv \Omega^{W}/2}
     {\sinh\,u\inv \Omega^{W}/2}\right)} \quad \in \Omega (X;\R)^0,
   \end{equation}
where $\Omega^{W}$ is the curvature of~$\nabla^{W}$.

Suppose that $W$ is an oriented $\RR^n$-vector bundle on $X$ with a Euclidean
metric $h^W$ and a compatible connection $\nabla^W$.
Let $\sB \rightarrow X$ 
denote the principal $SO(n)$-bundle on $X$ to
which $W$ is associated. We say that 
$W$ has a
$\spinc$-structure if the principal $SO(n)$-bundle $\sB \rightarrow X$
lifts to a principal $\Spinc(n)$-bundle $\sF \rightarrow X$. 
Let $\Sp^W \rightarrow X$ 
be the complex spinor bundle on $X$ that is associated to
$\sF$.
It is $\zt$-graded if $n$~is even and ungraded if $n$~is odd.
Let $L^W \rightarrow X$ 
denote the characteristic line bundle on $X$ that is
associated to $\sF\to X$ by the homomorphism
$\Spinc(n)\to U(1)$.  (Recall that 
$\Spinc(n)=\Spin(n)\times _{\ZZ/2\ZZ}U(1)$;
the indicated homomorphism is trivial on the~$\Spin(n)$ factor and is the
square on
the $U(1)$~factor.)
Choose a unitary connection $\nabla^{L^W}$ on $L^W$.
Then $\nabla^{W}$ and $\nabla^{L^W}$ combine to give a connection on
$\sF$ and hence an associated 
connection $\widehat{\nabla}^W$ on $\Sp^W$.  We write
 \begin{equation}
 \Td( \widehat{\nabla}^W) = \Ahat(\nabla^{W}) \wedge
e^{\frac{c_1 \left( \nabla^{L^W} \right)}{2}} \;\in \Omega (X;\R)^0
 \end{equation}
The motivation for our notation comes from the case when $W$ is the
underlying real vector bundle of a complex vector bundle $W^\prime$. If
$W^\prime$ has a unitary structure then $W$ inherits a $\spinc$-structure. If
$\nabla^{W^\prime}$ is a unitary connection on $W^\prime$ then $\Sp^W \cong
\Lambda^{0,*}(W)$ inherits a connection $\widehat{\nabla}^W$ and $\Td (
\widehat{\nabla}^W )$ equals the Todd form of $\nabla^{W^\prime}$
\cite[Chapter 1.7]{Hirzebruch}.

\subsection{Differential K-theory}\label{subsec:1.1}

 \begin{definition}\label{def:1.14}
The differential K-theory group $\cK^0(X)$ is the abelian group 
coming from the following generators and relations.
The generators are quadruples ${\mathcal E} = 
(E, h^E, \nabla^E, \phi)$ where
 \begin{itemize}
 \item $E$ is a complex vector bundle on $X$.
 \item $h^E$ is a Hermitian metric on $E$.
 \item $\nabla^E$ is an $h^E$-compatible connection on $E$.
 \item $\phi \in \Omega (X;\R)^{-1}/\Image(d)$.
 \end{itemize}

The relations are 
${\mathcal E}_2 = {\mathcal E}_1 + {\mathcal E}_3$
whenever there is a short exact sequence (\ref{shortexact}) of
Hermitian vector bundles and
$\phi_2 = \phi_1 + \phi_3 - CS(\nabla^{E_1},\nabla^{E_2},\nabla^{E_3})$.
 \end{definition}

\noindent
 Hereafter, when we speak of a generator of $\cK^0(X)$, we will mean a
quadruple ${\mathcal E} = (E, h^E, \nabla^E, \phi)$ as above.

There is a homomorphism 
$\omega : \cK^0(X) \longrightarrow \Omega (X; \R)^0$, given on
generators by
$\omega({\mathcal E}) = \omega(\nabla^E) + d\phi$.

There is an evident extension of the definition of $\cK^0$ to 
manifolds-with-boundary. 

We can also represent $\cK^0(X)$ using $\ZZ/2\ZZ$-graded vector bundles.
A generator of $\cK^0(X)$ is then a quadruple consisting of a
$\ZZ/2\ZZ$-graded complex vector bundle $E$ on $X$, a Hermitian metric 
$h^E$ on $E$, a compatible connection $\nabla^E$ on $E$ and an element
$\phi \in \Omega(X, \R)^{-1}/\Image(d)$.
 
One can define $\cK^{\bullet}(X)$ by a general
construction~\cite{Hopkins-Singer};  it is a $2$-periodic generalized
differential cohomology theory.   

        \begin{remark}[]\label{thm:3}
The abelian group defined in Definition~\ref{def:1.14} is isomorphic to that
defined in~\cite{Hopkins-Singer}; see~\cite{Klonoff,Ortiz} for a
proof.
        \end{remark}

We use the following model in arbitrary even degrees.  For any even $r$, a
generator of $\cK^{r}(X)$ is a quadruple ${\mathcal E} = (E, h^E, \nabla^E,
\phi)$ where $\phi \in \Omega(X, \R)^{r-1}/\Image(d)$ has total degree~$r-1$.
For such a quadruple, we put
  \begin{equation}\label{eq:101}
     \omega({\mathcal E}) = u^{r/2} R_u \tr \left( e^{- u^{-1} (\nabla^E)^2}
     \right) + d\phi \in \Omega (X;\R)^r, 
  \end{equation}
and similarly for $\ZZ/2\ZZ$-graded generators of $\cK^{r}(X)$. 

\begin{remark} \label{newremark2} 
As in Remark \ref{newremark1}, it would
perhaps be natural to insert the formal variable~$u^{r/2}$ in front
of~$(E,h^E,\nabla^E)$ but we will refrain from doing so.
\end{remark}

Let $\Omega(X;\R)_K^\bullet$ denote the union of 
affine subspaces of closed forms
whose de Rham cohomology class lies in the image of
$\ch\:K^\bullet(X;\ZZ)\longrightarrow H(X;\R)^\bullet$.
There are
exact sequences
  \begin{equation}\label{eq:4}
     0 \longrightarrow
     K^{\bullet -1}(X;\RZ)\xrightarrow{\;\;i\;\;}\cK^\bullet
     (X)\xrightarrow{\;\;\omega \;\;} 
     \Omega(X;\R)_K^\bullet \longrightarrow 0 
   \end{equation}
and 
  \begin{equation}\label{eq:5}
     0\longrightarrow \frac{\Omega (X;\R)^{\bullet -1}}{\Omega
     (X;\R)_K^{\bullet -1}}\xrightarrow{\;\;j\;\;}\cK^\bullet
     (X)\xrightarrow{\;\;c\;\;} K^\bullet (X;\ZZ)\longrightarrow 0.
   \end{equation}
 
Also,
$\cK^{\bullet }(X)$~is an algebra, with the product on
$\cK^{0}(X)$ given by
 \begin{align} \label{product}
& \left( E_1, h^{E_1}, \nabla^{E_1}, \phi_1 \right) \cdot
\left( E_2, h^{E_2}, \nabla^{E_2}, \phi_2 \right) = \\
&\left( E_1 \otimes E_2, h^{E_1} \otimes h^{E_2},
\nabla^{E_1} \otimes I + I \otimes \nabla^{E_2},
\phi_1 \wedge \omega(\nabla^{E_2}) + \omega(\nabla^{E_1}) \wedge \phi_2 +
\phi_1 \wedge d\phi_2 \right). \notag
 \end{align}
Then with respect to the exact sequences~\eqref{eq:4} and~\eqref{eq:5},
  \begin{equation}\label{eq:19}
     \begin{aligned} i(x)\cb &= i\bigl(xc(\cb) \bigr), \\ j(\alpha )\cb &=
      j\bigl(\alpha \wedge \omega(\cb) \bigr). \end{aligned} 
   \end{equation}

We now describe how a differential K-theory class changes
under a deformation of
its Hermitian metric, its unitary connection and its differential form.

\begin{lemma} \label{restriction}
For $i \in \{0,1\}$, let 
$A_i : X \rightarrow [0,1] \times X$ be the embedding $A_i(x) = (i,x)$.
Given ${\mathcal E}^\prime = (E^\prime, h^{E^\prime}, \nabla^{E^\prime}, 
\phi^\prime) \in \cK^0([0,1] \times X)$,
put ${\mathcal E}_i = A_i^*  {\mathcal E}^\prime \in \cK^0(X)$.
Then ${\mathcal E}_1 = {\mathcal E}_0 + 
j(\int_0^1 \omega({\mathcal E}^\prime))$.
 \end{lemma}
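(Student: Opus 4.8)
The plan is to build an explicit element of $\cK^0([0,1]\times X)$ that interpolates between $\mathcal{E}_0$ and $\mathcal{E}_1$ in a controlled way, and then read off the difference using the relations in Definition~\ref{def:1.14}. First I would work on $W=[0,1]\times X$ with projection $p\colon W\to X$. Since $A_0$ and $A_1$ are homotopic, the bundles $A_0^*E^\prime$ and $A_1^*E^\prime$ are isomorphic as complex vector bundles, so after fixing such an isomorphism I may assume $E^\prime \cong p^*E$ for a single bundle $E\to X$, with $\mathcal{E}_i=(E,h^{E_i},\nabla^{E_i},\phi_i)$ where $h^{E_i}$, $\nabla^{E_i}$, $\phi_i$ are the restrictions of $h^{E^\prime}$, $\nabla^{E^\prime}$, $\phi^\prime$ to $\{i\}\times X$. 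The key computation is then that the generator $\mathcal{E}^\prime$, restricted appropriately, exhibits $\mathcal{E}_1-\mathcal{E}_0$ as $j$ of a differential form, and that form is $\int_0^1\omega(\mathcal{E}^\prime)=\int_{[0,1]\times X/X}\omega(\mathcal{E}^\prime)$.

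The cleanest route is via the split short exact sequence trick already used in the excerpt to define the relative Chern--Simons form. Consider on $W=[0,1]\times X$ the trivial short exact sequence $0\to E^\prime\xrightarrow{\id} E^\prime\to 0\to 0$ is not what I want; instead I would use the pullback square structure directly. Concretely: choose a path of Hermitian metrics and compatible connections on the fixed bundle $E\to X$ realizing $h^{E_0},\nabla^{E_0}$ at one end and $h^{E_1},\nabla^{E_1}$ at the other — this is exactly the data $(h^{E^\prime},\nabla^{E^\prime})$ on $p^*E$. Now apply Stokes' theorem fiberwise: for the Chern character form one has
\begin{equation*}
\omega(\nabla^{E_1})-\omega(\nabla^{E_0}) = \int_0^1 d_W\,\omega(\nabla^{E^\prime})\big/\!\!\cdots = d_X\!\left(\int_0^1 \omega(\nabla^{E^\prime})\right) + (\text{fiber-boundary terms}),
\end{equation*}
and since $\omega(\nabla^{E^\prime})$ is closed on $W$, the fiberwise integration formula $d_X\int_{W/X} = \int_{W/X} d_W \pm \int_{\partial(W/X)}$ gives precisely $\omega(\nabla^{E_1})-\omega(\nabla^{E_0}) = d_X\big(\int_0^1\omega(\nabla^{E^\prime})\big) + \big(\text{restriction to }\{1\}\text{ minus }\{0\}\big)$ of the $\phi$-part — wait, more carefully: applying $\int_0^1$ to the $W$-form $\omega(\mathcal{E}^\prime)=\omega(\nabla^{E^\prime})+d_W\phi^\prime$ and using $d_W = d_X + dt\,\partial_t$ yields $\int_0^1\omega(\mathcal{E}^\prime) = \int_0^1\omega(\nabla^{E^\prime}) + (\phi_1-\phi_0) - d_X\!\int_0^1\phi^\prime$ modulo $\Image(d_X)$, which is the natural candidate for the form defining $j(\int_0^1\omega(\mathcal{E}^\prime))$.

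The actual verification that $\mathcal{E}_1 - \mathcal{E}_0 = j(\int_0^1\omega(\mathcal{E}^\prime))$ then comes down to identifying $CS(\nabla^{E_0},\nabla^{E_1})$ (computed on $X$ via the cylinder) with $\int_0^1\omega(\nabla^{E^\prime})$ — which is immediate from the very definition of the relative Chern--Simons form given in the excerpt, since that definition is literally $\int_0^1$ of the Chern character of an interpolating connection on a cylinder — and then tracking the $\phi$-bookkeeping through the relation $\phi_2=\phi_1+\phi_3-CS$ in Definition~\ref{def:1.14}. Applying that relation to the interpolation (with $E_1,E_3$ playing the role of the two endpoints and $E_2$ a cylinder construction) collapses everything to $j$ of $(\phi_1-\phi_0) - CS(\nabla^{E_0},\nabla^{E_1}) = (\phi_1-\phi_0) - \int_0^1\omega(\nabla^{E^\prime})$, and comparing with the displayed expression for $\int_0^1\omega(\mathcal{E}^\prime)$ above (the $d_X\!\int_0^1\phi^\prime$ term is killed in $\Omega(X;\R)^{-1}/\Image(d)$) finishes it. The main obstacle is purely organizational rather than conceptual: keeping the fiberwise Stokes signs, the $dt$-contractions, and the $\Image(d)$ ambiguities consistent, and making sure the homotopy identification of $A_0^*E^\prime$ with $A_1^*E^\prime$ is compatible with the metrics and connections being compared (it is, because one transports the whole generator $\mathcal{E}^\prime$ along the trivialization, which changes nothing in $\cK^0$).
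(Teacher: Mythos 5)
Your proposal is correct and takes essentially the same route as the paper's: reduce to the case $E^\prime = p^*E$ with $h^{E^\prime} = p^*h^E$, split $d\phi^\prime$ via $d = d_X + dt\,\partial_t$ to get $\int_0^1 \omega(\mathcal{E}^\prime) = \int_0^1\omega(\nabla^{E^\prime}) + (\phi_1 - \phi_0) - d_X\int_0^1\phi^\prime$, and identify $\int_0^1\omega(\nabla^{E^\prime})$ with the relative Chern--Simons form. The only blemish is the phrase ``$E_2$ a cylinder construction'' in your final bookkeeping step, which muddles the clean statement that two compatible connections on the same Hermitian bundle differ in $\cK^0(X)$ by $j$ of their Chern--Simons form; the computation you perform is nonetheless the one the paper uses.
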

 \begin{proof}
We can write $E^\prime$ as the pullback of a vector bundle on $X$, under
the projection map $[0,1] \times X \rightarrow X$. Thereby,
$E_0$ and $E_1$ get identified with a single vector bundle $E$.
After performing an automorphism of $E^\prime$, we can
also assume that $h^{E^\prime}$ is the pullback of a Hermitian
metric $h^E$ on $E$.
Since 
 \begin{equation}
\omega({\mathcal E}^\prime) = \omega(\nabla^{E^\prime}) + d\phi^\prime =
\omega(\nabla^{E^\prime}) + dt \wedge \partial_t \phi^\prime + d_X \phi^\prime,
 \end{equation}
we have
 \begin{equation}
\int_0^1 \omega({\mathcal E}^\prime) = CS(\nabla^{E}_1, \nabla^{E}_0) + 
\phi_1 - \phi_0 -
d_X  \int_0^1 \phi^\prime,
 \end{equation}
from which the lemma follows.
 \end{proof}

\begin{remark} \label{restriction2}
There is an evident extension of Lemma \ref{restriction} to the case when
${\mathcal E}^\prime = (E^\prime, h^{E^\prime}, \nabla^{E^\prime}, 
\phi^\prime) \in \cK^r([0,1] \times X)$ for $r$ even.
\end{remark}

\subsection{Currential K-theory}\label{subsec:1.2}

Let $\dO^p(X)$ denote the $p$-currents on $X$, meaning
$\dO^p(X) = \left( \Omega_c^{\dim(X)-p}(X; o) \right)^*$, where
$o$ is the flat orientation $\RR$-bundle on $X$. We
think of an element of $\dO^p(X)$ as a 
$p$-form on $X$ whose components, in a local
coordinate system, are distributional.
Consider the cocomplex
$\dO(X;\R)^{\bullet } = \dO(X) \otimes \R$ equipped with
the differential $d$ of degree $1$.
In the definition of $\cK^0(X)$, suppose that we
take $\phi \in \dO(X)^{-1}/\Image(d)$.
Let $\dcK^{\bullet }(X)$ denote the ensuing ``currential'' K-theory groups.
With an obvious meaning for $\dO(X;\R)_K^n$,
there are exact sequences 
  \begin{equation}\label{eq:4.5}
     0 \longrightarrow K^{\bullet -1}(X;\RZ)\xrightarrow{\;\;i\;\;}\cK^\bullet
     (X)\xrightarrow{\;\;\omega \;\;} \dO(X;\R)_K^\bullet \longrightarrow 0
   \end{equation}
and 
  \begin{equation}\label{eq:5.5}
     0\longrightarrow \frac{\dO (X;\R)^{\bullet -1}}{\dO
     (X;\R)_K^{\bullet -1}}\xrightarrow{\;\;j\;\;}\cK^\bullet
     (X)\xrightarrow{\;\;c\;\;} K^\bullet (X;\ZZ)\longrightarrow 0. 
   \end{equation}
However, $\dcK^{\bullet }(X)$ is not an algebra, since we can't multiply
currents.

\subsection{Reduced eta-invariants}\label{subsec:1.25}

Suppose that $X$ is a closed odd-dimensional $\spinc$ manifold. Let $L^X$
denote the characteristic line bundle of the $\spinc$ structure. We assume
that $X$ is equipped with a Riemannian metric $g^{TX}$ and a unitary
connection $\nabla^{L^X}$ on $L^X$. Let $\Sp^X$ denote the spinor bundle on
$X$.  Given a generator ${\mathcal E} = \left(E, h^E, \nabla^E, \phi \right)$
for $\dcK^0(X)$, let $D^{X,E}$ be the Dirac-type operator acting on smooth
sections of $\Sp^X \otimes E$.  Let $\overline{\eta}(D^{X,E})$ denote its
reduced eta-invariant, i.e.
 \begin{equation}
\overline{\eta}(D^{X,E}) = \frac{\eta(D^{X,E}) + \dim(\Ker(D^{X,E}))}{2} 
\, \, \, \, (mod \, \, \ZZ).
 \end{equation}

 \begin{definition} \label{bigreducedeta}

Given a generator ${\mathcal E}$ for $\dcK^0(X)$, 
define $\overline{\eta}(X, {\mathcal E}) \in 
u^{-\frac{\dim(X)+1}{2}} \cdot (\RR/\ZZ)$ by

\begin{equation}
\overline{\eta}(X, {\mathcal E}) = 
u^{-\frac{\dim(X)+1}{2}} \overline{\eta}(D^{X,E}) +
\int_X \Td \left( \widehat{\nabla}^{TX} \right)
\wedge \phi \, \, \, \, (mod \, \, u^{-\frac{\dim(X)+1}{2}} \cdot \ZZ).
 \end{equation} 
 \end{definition}

Note that $\int_X \Td \left( \widehat{\nabla}^{TX} \right) \wedge \phi$ is a
real multiple of $u^{- \frac{\dim(X)+1}{2}}$ for dimensional reasons.  
Note also that
$\overline{\eta}(X, {\mathcal E})$ generally depends on the geometric
structure of $X$.

We now prove some basic properties of 
$\overline{\eta}(X, {\mathcal E})$. 

 \begin{proposition} \label{etapairing}
 \begin{enumerate}
 \item Let $W$ be an even-dimensional
compact $\spinc$-manifold-with-boundary. 
Suppose that $W$ is equipped with a Riemannian metric $g^{TW}$ and a unitary
connection $\nabla^{L^W}$, which are products near $\partial W$.
Let ${\mathcal F}$ be a generator for
$\cK^0(W)$ which is a product near $\partial W$
and let ${\mathcal E}$ be its pullback to $\partial W$.
Then
 \begin{equation}
\overline{\eta}(\partial W, {\mathcal E}) = \int_W 
\Td \left( \widehat{\nabla}^{TW} \right)
\wedge \omega({\mathcal F})
\, \, \, \, (mod \, \, u^{-\frac{\dim(\partial W)+1}{2}} \cdot \ZZ).
 \end{equation}
 \item The assignment ${\mathcal E} \longrightarrow \overline{\eta}(X,
{\mathcal E})$ factors through a homomorphism $\overline{\eta} : \dcK^0(X)
\rightarrow u^{-\frac{\dim(X)+1}{2}} \cdot (\RR/\ZZ)$.
\item If $a\in K\inv (X;\RZ)$, then 
 \begin{equation} \label{pairing}
\overline{\eta}\left(X, i(a)\right) = u^{-\frac{\dim(X)+1}{2}} 
\langle [X],a\rangle,
 \end{equation} 
where $[X] \in K_{-1}(X; \ZZ)$ is the (periodicity-shifted) fundamental class
in K-homology and $\langle [X],a\rangle \in \RR/\ZZ$ is the result of the
pairing between $K_{-1}(X; \ZZ)$ and $K^{-1}(X; \RR/\ZZ)$.
 \end{enumerate}
 \end{proposition}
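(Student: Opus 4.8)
The plan is to prove the three parts in order, deriving (2) and (3) from (1).

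\emph{Part (1).} The idea is to recognize this as the Atiyah--Patodi--Singer index theorem dressed with the differential-form term. Write $\mathcal F=(F,h^F,\nabla^F,\phi^F)$, so $\mathcal E=(E,h^E,\nabla^E,\phi^E)$ is its restriction to $\partial W$. Let $D^{W,F}_+$ be the $\spinc$ Dirac operator on the even-dimensional $W$ twisted by $F$, with APS boundary conditions; its boundary operator is $D^{\partial W,E}$. Since all the geometric data are products near $\partial W$, the APS index theorem gives $\Index(D^{W,F}_+)=\int_W\Td(\widehat{\nabla}^{TW})\wedge\omega(\nabla^F)-\overline{\eta}(D^{\partial W,E})$, hence $u^{-k}\overline{\eta}(D^{\partial W,E})\equiv\int_W\Td(\widehat{\nabla}^{TW})\wedge\omega(\nabla^F)\pmod{u^{-k}\ZZ}$, where $k=\frac{\dim(\partial W)+1}{2}$ (bookkeeping with the formal variable $u$ is routine). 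Adding $\int_{\partial W}\Td(\widehat{\nabla}^{T\partial W})\wedge\phi^E$ to both sides, the left side becomes $\overline{\eta}(\partial W,\mathcal E)$ by Definition~\ref{bigreducedeta}; on the right, the product structure identifies $\Td(\widehat{\nabla}^{T\partial W})$ with $\Td(\widehat{\nabla}^{TW})|_{\partial W}$ and $\phi^E$ with $\phi^F|_{\partial W}$, and Stokes' theorem together with $d\Td(\widehat{\nabla}^{TW})=0$ turns $\int_{\partial W}\Td(\widehat{\nabla}^{TW})\wedge\phi^F$ into $\int_W\Td(\widehat{\nabla}^{TW})\wedge d\phi^F$. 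Combining the two terms yields $\int_W\Td(\widehat{\nabla}^{TW})\wedge(\omega(\nabla^F)+d\phi^F)=\int_W\Td(\widehat{\nabla}^{TW})\wedge\omega(\mathcal F)$, as claimed.

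\emph{Part (2).} A homomorphism out of $\dcK^0(X)$ is the same as a function on generators additive under orthogonal direct sums and vanishing on the defining relations of Definition~\ref{def:1.14}; additivity under direct sums is immediate, since the Dirac operator of a direct sum splits and the term $\int_X\Td\wedge\phi$ is linear in $\phi$. For a relation coming from a short exact sequence $0\to E_1\to E_2\to E_3\to 0$ with $\phi_2=\phi_1+\phi_3-CS(\nabla^{E_1},\nabla^{E_2},\nabla^{E_3})$, I would apply part (1) with $W=[0,1]\times X$ carrying the product metric and the pulled-back $\spinc$-connection, and with $\mathcal F$ the generator of $\cK^0(W)$ assembled exactly as in the construction of $CS(\nabla^{E_1},\nabla^{E_2},\nabla^{E_3})$ in Subsection~\ref{subsec:1.0}: $F=p^*E_2$, metric $p^*h^{E_2}$, a connection $\nabla^F$ equal to $p^*\nabla^{E_2}$ near $\{1\}\times X$ and to $p^*(\nabla^{E_1}\oplus\nabla^{E_3})$ near $\{0\}\times X$, and a form $\Phi$ equal to $p^*\phi_2$ near $\{1\}\times X$ and to $p^*(\phi_1+\phi_3)$ near $\{0\}\times X$. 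Accounting for boundary orientations, the left side of part (1) is $\overline{\eta}(X,\mathcal E_2)-\overline{\eta}(X,\mathcal E_1\oplus\mathcal E_3)$, while the right side, after fiber integration over $[0,1]$ and Stokes, is $\int_X\Td(\widehat{\nabla}^{TX})\wedge(\int_0^1\omega(\nabla^F)+\phi_2-\phi_1-\phi_3)$; since $\int_0^1\omega(\nabla^F)=CS(\nabla^{E_1},\nabla^{E_2},\nabla^{E_3})$ by Subsection~\ref{subsec:1.0}, the relation makes this vanish, so $\overline{\eta}(X,\mathcal E_2)=\overline{\eta}(X,\mathcal E_1\oplus\mathcal E_3)=\overline{\eta}(X,\mathcal E_1)+\overline{\eta}(X,\mathcal E_3)$.

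\emph{Part (3).} I would first unwind the map $i$: a class $a\in K^{-1}(X;\RZ)$ is represented in $\cK^0(X)$ by a generator $(E,h^E,\nabla^E,\phi)$ with $\omega(\nabla^E)+d\phi=0$. Both $a\mapsto\overline{\eta}(X,i(a))$ and $a\mapsto u^{-k}\langle[X],a\rangle$ are homomorphisms $K^{-1}(X;\RZ)\to u^{-k}\cdot(\RZ)$ --- the first by part (2) --- so it remains to identify them on such a representative. This identification is the Atiyah--Patodi--Singer description of the $\RR/\ZZ$-valued index pairing: the $\spinc$-fundamental class $[X]$ is carried by the $\spinc$ Dirac operator $D^X$, and $\langle[X],a\rangle$ is the $\RR/\ZZ$-index of $D^X$ twisted by $a$, which for a representative as above equals $\overline{\eta}(D^{X,E})+\int_X\Td(\widehat{\nabla}^{TX})\wedge\phi\pmod{\ZZ}$; multiplying by $u^{-k}$ recovers $\overline{\eta}(X,i(a))$. (Equivalently, this is the $B=\pt$ case of the $\RR/\ZZ$-index theorem of~\cite{Lott}.) I expect part (3) to be the real obstacle: parts (1) and (2) are APS plus Stokes plus a clean cylinder argument, whereas part (3) hinges on correctly matching the conventions of the present geometric model with the standard analytic formula for the $K$-homology pairing, i.e.\ on invoking the APS $\RR/\ZZ$-index theorem with the right normalizations.
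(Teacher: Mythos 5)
Your proposal follows the same route as the paper: part (1) is Atiyah–Patodi–Singer on $W$ plus Stokes to convert $\int_W \Td\wedge d\Phi$ into the boundary term; part (2) applies part (1) to the cylinder $W=[0,1]\times X$ carrying exactly the interpolating data used to define the relative Chern–Simons form, so the relation in $\cK^0$ makes the integral vanish; and part (3) is, as you say, reduced to the $\RR/\ZZ$-index pairing formula of Lott (the paper simply cites \cite[Proposition~3]{Lott} here). The only cosmetic difference is your splitting of part (2) into a separate direct-sum-additivity check, which is in fact subsumed by the short-exact-sequence relations.
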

 \begin{proof}
For part (1), write ${\mathcal F} = (F, h^F, \nabla^R, \Phi)$.
By the Atiyah-Patodi-Singer
index theorem \cite{Atiyah-Patodi-Singer},
 \begin{equation}
u^{\frac{\dim(W)}{2}} \int_W \Td \left( \widehat{\nabla}^{TW} \right)
\wedge \omega(\nabla^F) - \overline{\eta}(D^{X,E}) \in \ZZ.
 \end{equation}
(Note that $\int_W \Td \left( \widehat{\nabla}^{TW} \right)
\wedge \omega(\nabla^E)$ is a real multiple of
$u^{- \frac{\dim(W)}{2}}$ for dimensional reasons.)
As 
 \begin{align}
\int_W \Td \left( \widehat{\nabla}^{TW} \right)
\wedge \omega({\mathcal F}) = &  
\int_W \Td \left( \widehat{\nabla}^{TW} \right)
\wedge ( \omega(\nabla^F) + d \Phi) \\
= &
\int_W \Td \left( \widehat{\nabla}^{TW} \right)
\wedge  \omega(\nabla^F) + \notag \\
& \int_{\partial W} \Td \left( \widehat{\nabla}^{T\partial W} \right)
\wedge \Phi, \notag
 \end{align}
part (1) follows.

To prove part (2), suppose first that we have a relation
${\mathcal E}_2 = {\mathcal E}_1 + {\mathcal E}_3$ for
$\cK^0(X)$.
Put $W = [0,1] \times X$, with a product metric.
If $p : W \rightarrow X$ is the projection map,
put $F = p^* E_2$  and $h^F = p^* h^{E_2}$.
Let $\nabla^F$ be a unitary connection on $F$
which equals $p^* \nabla^{E_2}$ near
$\{1\} \times X$ and which equals $p^*(\nabla^{E_1} \oplus \nabla^{E_3})$
near $\{0\} \times X$.  Choose $\Phi \in \Omega(W; \R)^{-1}/Im(d)$
which equals
$p^* \phi_2$ near $\{1\} \times X$ and which equals 
$p^*(\phi_1 + \phi_3)$ near $\{0\} \times X$.
Using part (1),
 \begin{align}
\overline{\eta}(X, {\mathcal E}_2) - \overline{\eta}(X, {\mathcal E}_1) -
\overline{\eta}(X, {\mathcal E}_3) = & 
\int_W 
\Td \left( \widehat{\nabla}^{TW} \right)
\wedge (\omega(\nabla^F) + d\Phi) \\
= &
\int_X \int_0^1 
\Td \left( \widehat{\nabla}^{TX} \right)
\wedge (\omega(\nabla^F) + d\Phi) \notag \\
= &
\int_X 
\Td \left( \widehat{\nabla}^{TX} \right)
\wedge 
\left(
CS \left( \nabla^{E_1}, \nabla^{E_2}, \nabla^{E_3} \right)
+ \phi_2 - \phi_1 - \phi_3 \right) \notag \\
= & 0 \notag
 \end{align}
in $u^{-\frac{\dim(X)+1}{2}} \cdot (\RR/\ZZ)$. 
This shows that $\overline{\eta}$ extends to a map
$\cK^0(X) \rightarrow u^{-\frac{\dim(X)+1}{2}} \cdot (\RR/\ZZ)$. 
The argument easily extends if we use
currents instead of forms, thereby proving part (2) of the proposition.

Part (3) follows from \cite[Proposition 3]{Lott}.
 \end{proof}

 \begin{remark}
To prove part (2) of Proposition \ref{etapairing}, we could have
used the variational formula for $\overline{\eta}$
\cite{Atiyah-Patodi-SingerIII}, which is more elementary than the
Atiyah-Patodi-Singer index theorem.
 \end{remark}

More generally, if $(E, h^E, \nabla^E, \phi)$ is a generator
for $\dcK^r(X)$ then we define $\overline{\eta}(X, {\mathcal E}) \in
u^{\frac{r-\dim(X)-1}{2}} \cdot (\RR/\ZZ)$ by
\begin{equation}
\overline{\eta}(X, {\mathcal E}) = 
u^{\frac{r-\dim(X)-1}{2}} \overline{\eta}(D^{X,E}) +
\int_X \Td \left( \widehat{\nabla}^{TX} \right)
\wedge \phi \, \, \, \, (mod \, \, u^{\frac{r-\dim(X)-1}{2}} \cdot \ZZ).
\end{equation} 

\subsection{Superconnections} \label{subsec:1.3}

Define the auxiliary ring
  \begin{equation}\label{eq:7}
     \R'=\RR[u^{1/2},u^{-1/2}] ,
   \end{equation}
where $u^{1/2}$ is a formal variable of degree~1 and $u^{-1/2}$ its inverse.
Then $\R\subset \R'$.  If $E$ is a $\frac{\ZZ}{2\ZZ}$-graded 
vector bundle on $X$ then the 
$\Omega (X; \R')$-module 
$\Omega (X;E\otimes \R')$ of differential forms with values in~$E\otimes \R'$
is $(\ZZ\times \ZZ\times \frac{\ZZ}{2\ZZ})$-graded: 
by form degree, degree in~$\R'$, and
degree in~$E$.  We use a quotient~$(\ZZ\times \frac{\ZZ}{2\ZZ})$-grading: 
the integer
degree is the sum of the form degree and the degree in~$\R'$, 
while the mod~2
degree is the degree in~$E$ plus the mod two form degree.

        \begin{definition}[]\label{thm:1}
 A \emph{superconnection}~$A$ on~$E$ is a 
graded $\Omega (X; \R')$-derivation of  $\Omega
(X;E\otimes \R')$  of degree~$(1,1)$. 
         \end{definition}

 Note that we can uniquely write
  \begin{equation}\label{eq:8}
     A = u^{1/2}\omega_0 + \nabla + u^{-1/2}\omega_2 + u^{-1}\omega_3+\cdots,
   \end{equation}
where $\nabla$ is an ordinary connection on~$E$ (which preserves degree) and
$\omega _j$ is an $\End(E)$-valued  $j$-form 
on~$X$ which is an even endomorphism if
$j$~is odd and an odd endomorphism if $j$~is even.  The powers of~$u$ are
related to the standard scaling of a superconnection.  The Chern character of
$A$ is defined by
  \begin{equation}\label{eq:9}
     \omega(A) = R_u \trs e^{u\inv \,A^2}\quad \in \Omega (M;\R)^0. 
   \end{equation}
Notice that the curvature~$A\,^2$ has degree~$(2,0)$ so $u\inv
A\,^2$ of degree~$(0,0)$ can be exponentiated.  Also, there are no
fractional powers of~$u$ in the result since the supertrace of an odd
endomorphism of~$E$ vanishes.

\section{Analytic index}\label{sec:3}

In this section we define the analytic pushforward of a
differential K-theory class under a proper submersion. This is
an extension of the analytic pushforward in $\RR/\ZZ$-valued
K-theory that was defined in 
\cite[Section 4]{Lott}. The geometric assumptions are that we have
a proper submersion $\pi : X \rightarrow B$ of
relative dimension $n$, with $n$ even,
which is
equipped with a Riemannian structure on the fibers and a
differential $\spinc$-structure (in a sense that will be made precise
below). 

Given a differential K-theory class ${\mathcal E} =
(E, h^E, \nabla^E, \phi)$ on $X$, there is an ensuing family $D^V$ of
vertical Dirac-type operators. In this section we assume that $\Ker(D^V)$ forms
a vector bundle on $B$.  (This assumption will be lifted in
Section \ref{sec:6}). In Definition \ref{analindex} 
we define the analytic pushforward
$\check{\pi}_*({\mathcal E}) \in \cK^{-n}(B)$ of ${\mathcal E}$, using
the Bismut-Cheeger eta form.  

For later purposes, we will want to extend the definition of the
analytic pushforward to certain currential K-theory classes. To
do so, we have to make a compatibility assumption between the singularities of
the current $\phi$ and the fibration $\pi$. This is phrased in terms
of the wave front set of the current $\phi$, which is a subset of $T^*X$
that microlocally measures the
singularity locus of $\phi$.  For a fiber bundle $\pi \:X\to B$ we define an
analog  
$\WFcK^0(X)$ of $\cK^0(X)$ using currents $\phi$ whose
wave front set has zero
intersection with the conormal bundle of the fibers.  Roughly speaking,
this means that the singularity locus of $\phi$ meets the fibers of
$\pi$ transversely, so we can integrate $\phi$ fiberwise to get
a smooth form on $B$. We then define the analytic pushforward on
$\WFcK^0(X)$. 


\subsection{Construction of the analytic index}\label{subsec:3.0}

Let $\pi \:X\to B$ be a  proper submersion of relative
dimension $n$, with $n$ even.  Recall that this is the same as
saying that $\pi \: X \to B$ is a
smooth fiber bundle with compact fibers of even
dimension $n$.
Let $T^VX = \Ker(d\pi)$ denote the relative tangent bundle on $X$. 

We define a \emph{Riemannian structure on~$\pi $} to be a pair consisting of
a vertical metric $g^{T^VX}$ and a horizontal distribution $T^HX$ on~$X$.  This
terminology is justified by the existence of a certain
connection on~$T^VX$ which
restricts to the Levi-Civita connection on each fiber of~$\pi $~
\cite[Definition 1.6]{Bismut}.  We recall the definition.  Let $g^{TB}$ be a
Riemannian metric on $B$. Using $g^{TB}$ and the Riemannian structure on
$\pi$, we obtain a Riemannian metric $g^{TX}$ on $X$. Let $\nabla^{TX}$ be
its Levi-Civita connection. Let $P : TX \rightarrow T^VX$ be orthogonal
projection.

 \begin{definition} \label{Bismutconnection}
The connection $\nabla^{T^VX}$ on 
$T^VX$ is $\nabla^{T^VX} = P \circ \nabla^{TX}
\circ P$. It is independent of the choice of $g^{TB}$.
 \end{definition}
 
Suppose the map $\pi$ is $\spinc$-oriented in the sense that $T^VX$ has a
$\spinc$-structure, with characteristic hermitian line bundle $L^VX\to X$.  A
\emph{differential $\spinc$-structure} on~$\pi $ is in addition a unitary
connection on~$L^VX$.  Let $\Sp^VX$ denote the associated spinor bundle
on~$X$.  The connections on~$T^VX$ and~$L^VX$ induce a connection
$\widehat{\nabla}^{T^VX}$ on $\Sp^VX$.

Define
$\pi_* : \Omega (X;\R)^{\bullet} \rightarrow
\Omega (B;\R)^{\bullet - n}$ by
 \begin{equation} \label{fiberint}
 \pi_*(\phi) = \int_{X/B} \Td \left( \widehat{\nabla}^{T^VX} \right) \wedge
\phi.
 \end{equation}
Note that our $\pi_*$ differs from the de Rham pushforward
by the factor of $\Td \left( \widehat{\nabla}^{T^VX} \right)$.
It will simplify later formulas if we use our slightly
unconventional definition.

We recall that there is a notion of the wave front set of a
current on $X$; it is the union of the wave front sets of its local
distributional coefficients \cite[Chapters 8.1 and 8.2]{Hormander}. The wave
front set is a subset of $T^*X$.  Let $N^*_V X = \pi^* T^*B \subset T^*X$ be
the conormal bundle of the fibers.  Let $\WFO(X;\R)^{\bullet}$ denote the
subspace of $\dO(X;\R)^{\bullet}$ consisting of elements whose wave front set
intersects $N^*_V X$ only at the zero section of $N^*_V X$.  By \cite[Theorem
8.2.12]{Hormander}, equation (\ref{fiberint}) defines a map \begin{equation}
\label{canintegrate} \pi_* : \WFO (X;\R)^{\bullet} \rightarrow \Omega
(B;\R)^{\bullet - n}.
\end{equation}

Let $\WFcK^0(X)$ be the abelian group whose generators are quadruples
${\mathcal E} = \left(E, h^E, \nabla^E, \phi \right)$ with
$\phi \in \WFO(X;\R)^{-1}/\Image(d)$, and with relations as before.
Then there are exact sequences
   \begin{equation}\label{eq:4.52}
     0 \longrightarrow K^{\bullet
     -1}(X;\RZ)\xrightarrow{\;\;i\;\;}\WFcK^\bullet (X)\xrightarrow{\;\;\omega
     \;\;} \WFO(X;\R)_K^\bullet \longrightarrow 0
   \end{equation}
and 
   \begin{equation}\label{eq:5.52}
     0\longrightarrow \frac{\WFO (X;\R)^{\bullet -1}}{\WFO (X;\R)_K^{\bullet
     -1}}\xrightarrow{\;\;j\;\;}\WFcK^\bullet (X)\xrightarrow{\;\;c\;\;}
     K^\bullet (X;\ZZ)\longrightarrow 0.
   \end{equation}
Here we use the fact that if $\alpha \in \WFO(X;\R)^{\bullet}$ and
$\alpha \in \Image
(d \, : \, \dO(X;\R)^{\bullet-1} \rightarrow \dO(X;\R)^{\bullet})$ 
then $\alpha \in \Image
(d \, : \, \WFO(X;\R)^{\bullet -1} \rightarrow \WFO(X;\R)^{\bullet})$.

Given a Riemannian structure on $\pi$ and a generator ${\mathcal E}$ for
$\cK^0(X)$, we want to define a pushforward of ${\mathcal E}$ that lives
in $\cK^{-n}(B)$. Write
${\mathcal E} = \left( E, h^E, \nabla^E, \phi \right)$.  Let $\sH$ denote the
(possibly infinite dimensional) vector bundle on $B$ whose fiber $\sH_b$
at~$b\in B$ is the space of smooth sections of $(\Sp^VX \otimes E)
\big|_{X_b}$. The bundle $\sH$ is $\ZZ/2\ZZ$-graded.  For $s > 0$, the
\emph{Bismut superconnection} $A_s$ is
   \begin{equation}\label{eq:23}
     A_s= s u^{1/2} D^V + 
\nabla^{\sH}  - s^{-1} u^{-1/2}\frac{c(T)}{4}. 
   \end{equation}
Here $D^V$ is the Dirac-type operator acting on $\sH_b$, $\nabla^{\sH}$ is a
certain unitary connection on $\sH$ constructed from
$\widehat{\nabla}^{T^VX}$, $\nabla^E$ and the mean curvature of the fibers,
and $c(T)$ is the Clifford multiplication by the curvature $2$-form $T$ of
the fiber bundle. For more information, see \cite[Proposition
10.15]{Berline-Getzler-Vergne}. We use powers of $s$ in
(\ref{eq:23}) in order to simplify calculations, as compared to the
powers of $s^{1/2}$ used by some other authors, but there is no
essential difference.

Now assume that $\Ker(D^V)$ forms a smooth vector bundle on $B$, necessarily
$\ZZ/2\ZZ$-graded. There are an induced $L^2$-metric $h^{\Ker(D^V)}$ and a
compatible projected connection $\nabla^{\Ker(D^V)}$.  
Note that
$[\Ker(D^V)]$ lies in $K^{-n}(B)$.
Then
 \begin{equation} \label{short}
 \lim_{s \rightarrow 0} u^{-n/2}R_u \STr \left( e^{- u^{-1} A_s^2} \right) =
\pi_*(\omega(\nabla^E)),
 \end{equation}
while
 \begin{equation} \lim_{s \rightarrow \infty} u^{-n/2} R_u \STr \left( e^{-
u^{-1} A_s^2} \right) = \omega(\nabla^{\Ker(D^V)});
 \end{equation}
see \cite[Chapter 10]{Berline-Getzler-Vergne}.  
Note that
the preceding two equations lie in forms of
total degree~$-n$.

The Bismut-Cheeger eta-form \cite{Bismut-Cheeger} is
 \begin{equation} \label{etaform}
 \tilde{\eta} = u^{-n/2} R_u \int_0^\infty \STr \left( u^{-1}
\frac{dA_s}{ds} e^{- u^{-1} A_s^2} \right) ds \in \Omega
(B;\R)^{-n-1}/\Image(d).
 \end{equation}
It satisfies
 \begin{equation}\label{eq:2.10}
d\tilde{\eta} = \pi_*(\omega(\nabla^E)) - \omega(\nabla^{\Ker(D^V)}).
 \end{equation}

 \begin{definition}\label{analindex}
 Given a generator ${\mathcal E} = \left( E, h^E, \nabla^E, \phi \right)$ for
$\cK^0(X)$, and assuming $\Ker(D^V)$ is a vector bundle, we define the
\emph{analytic index} $\ind^{an}({\mathcal E}) \in \cK^{-n}(B)$ by
 \begin{equation} \label{submerformula}
 \ind^{an}({\mathcal E}) = \left( \Ker(D^V), h^{\Ker(D^V)},
\nabla^{\Ker(D^V)}, \pi_*(\phi) + \widetilde{\eta} \right).
 \end{equation}
 \end{definition}

It follows from Theorem \ref{maintheorem} below that the assignment
${\mathcal E} \rightarrow \ind^{an}({\mathcal E})$ factors through a map
from $\cK^{0}(X)$ to $\cK^{-n}(B)$.

Given a generator ${\mathcal E}$ of $\WFcK^0(X)$, we define
$\ind^{an}({\mathcal E}) \in \cK^{-n}(B)$ by the same formula
(\ref{submerformula}).

 \begin{lemma} \label{omegapush}
If ${\mathcal E}$ is a generator for $\WFcK^0(X)$ then
$\omega(\ind^{an}({\mathcal E})) = \pi_*(\omega({\mathcal E}))$
in $\Omega(B; \R)^{-n}$.
 \end{lemma}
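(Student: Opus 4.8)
The plan is to unwind both sides of the claimed identity $\omega(\ind^{an}({\mathcal E})) = \pi_*(\omega({\mathcal E}))$ directly from the definitions and show they agree as elements of $\Omega(B;\R)^{-n}$. On the left, by Definition~\ref{analindex} we have $\ind^{an}({\mathcal E}) = (\Ker(D^V), h^{\Ker(D^V)}, \nabla^{\Ker(D^V)}, \pi_*(\phi) + \widetilde{\eta})$, so applying the homomorphism $\omega$ for $\cK^{-n}(B)$ (as in~\eqref{eq:101}) gives
\begin{equation*}
\omega(\ind^{an}({\mathcal E})) = \omega(\nabla^{\Ker(D^V)}) + d\bigl(\pi_*(\phi) + \widetilde{\eta}\bigr).
\end{equation*}
On the right, $\omega({\mathcal E}) = \omega(\nabla^E) + d\phi$, so $\pi_*(\omega({\mathcal E})) = \pi_*(\omega(\nabla^E)) + \pi_*(d\phi)$.

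The first step is to handle the $d\phi$ term: I would invoke commutativity of fiber integration with the exterior derivative, in the form $\pi_*(d\phi) = d(\pi_*(\phi))$. Here one must be slightly careful because $\pi_*$ in~\eqref{fiberint} carries the extra factor $\Td(\widehat{\nabla}^{T^VX})$, but that factor is a closed form on $X$ (it is a characteristic form built from the connection $\widehat{\nabla}^{T^VX}$), so $d\bigl(\Td(\widehat{\nabla}^{T^VX}) \wedge \phi\bigr) = \Td(\widehat{\nabla}^{T^VX}) \wedge d\phi$, and ordinary Stokes along the fibers (the fibers are closed, so there is no boundary term) gives $d \int_{X/B} \Td(\widehat{\nabla}^{T^VX}) \wedge \phi = \int_{X/B} \Td(\widehat{\nabla}^{T^VX}) \wedge d\phi$. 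Since ${\mathcal E}$ is a generator of $\WFcK^0(X)$, $\phi$ lies in $\WFO(X;\R)^{-1}$ and the fiberwise integrals are well-defined smooth forms on $B$ by~\eqref{canintegrate}; one checks $d\phi$ is again in $\WFO(X;\R)^0$ so the manipulation stays inside the relevant space. This reduces the claim to showing $\omega(\nabla^{\Ker(D^V)}) + d\widetilde{\eta} = \pi_*(\omega(\nabla^E))$.

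The second step is then immediate: this is precisely the transgression formula~\eqref{eq:2.10} for the Bismut-Cheeger eta form, namely $d\widetilde{\eta} = \pi_*(\omega(\nabla^E)) - \omega(\nabla^{\Ker(D^V)})$, which is quoted from~\cite[Chapter 10]{Berline-Getzler-Vergne}. Rearranging gives exactly what is needed, completing the proof. I do not anticipate a serious obstacle here; the only point requiring a moment's attention is the bookkeeping around the nonstandard $\pi_*$ (the Todd factor) and verifying that the currential nature of $\phi$ does not obstruct the Stokes/commutation step — but both are controlled by the wave front set hypothesis built into $\WFcK^0(X)$ and the fact that $\Td(\widehat{\nabla}^{T^VX})$ is smooth and closed. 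So the "hard part," such as it is, is simply recognizing that the lemma is a formal consequence of Definition~\ref{analindex}, the commutation of $d$ with fiber integration, and the already-established eta-form transgression identity.
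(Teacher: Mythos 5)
Your proof is correct and follows the same route as the paper's: unwind Definition~\ref{analindex}, apply $\omega$, commute $d$ past the fiber integral (which is justified because $\Td(\widehat{\nabla}^{T^VX})$ is closed and the fibers are closed), and rearrange the eta-form transgression identity~\eqref{eq:2.10}. The paper's proof is a terse one-line chain of equalities; you have merely spelled out the same bookkeeping in more detail, including the correct observation that the wave-front hypothesis is what makes $\pi_*(\phi)$ and $\pi_*(d\phi)$ well-defined smooth forms.
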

 \begin{proof}
From \eqref{eq:2.10},
 \begin{equation}
\omega(\ind^{an}({\mathcal E})) =
\omega(\nabla^{\Ker(D^V)}) + d(\pi_*(\phi) + \widetilde{\eta}) =
\pi_*(\omega(\nabla^E) + d\phi) = 
\pi_*(\omega({\mathcal E})),
 \end{equation}
which proves the lemma.
 \end{proof}

  \section{Pushforward under an embedding}\label{sec:2}

In this section we define a pushforward on differential $K$-theory under a
proper embedding $\iota : X \rightarrow Y$ of manifolds.  The definition uses
the data of a generator ${\mathcal E} = (E, h^E, \nabla^E, \phi)$ of
$\cK^0(X)$ and a Riemannian structure on the normal bundle $\nu$ of the
embedding.

To motivate our definition, let us recall how to push forward ordinary
$K$-theory under $\iota$
\cite{Atiyah-Hirzebruch0}.  Suppose that the normal bundle
$p : \nu \rightarrow X$ has even dimension $r$ and
is endowed with a $\spinc$-structure. Let $\Sp^\nu \rightarrow X$ 
denote the corresponding $\ZZ/2\ZZ$-graded spinor
bundle on $X$. 
Clifford multiplication by an element in $\nu$
gives an isomorphism,
on the complement of the zero-section in $\nu$,
between $p^* \Sp^\nu_+$ and
$p^* \Sp^\nu_-$. 
The $K$-theory Thom class $U_K$
is the corresponding relative class in
$K^r(D(\nu), S(\nu); \ZZ)$, where $D(\nu)$ denotes the 
closed disk bundle of $\nu$ and
$S(\nu) = \partial D(\nu)$ is the sphere bundle.

Given a vector bundle $E$ on $X$, the Thom homomorphism $K^0(X; \ZZ)
\rightarrow K^r(D(\nu), S(\nu); \ZZ)$ sends $[E]$ to $p^*[E] \cdot U_K$.
Transplanting this to a closed tubular neighborhood $T$ of $X$ in $Y$, we
obtain a relative $K$-theory class in $K(T, \partial T; \ZZ)$. Then excision
in $K$-theory defines an element $\iota_*[E] \in K^r(Y; \ZZ)$, which is the
$K$-theory pushforward of $[E]$. Applying the Chern character, one finds that
$\ch(\iota_*[E])$ is the extension to $Y$ of the cohomology class $\frac{p^*
\ch([E]) \cup U_H}{\Td(\nu)} \in H^{\bullet}(D(\nu), S(\nu); \QQ)$, where
$U_H \in H^r(D(\nu), S(\nu); \ZZ)$ is the Thom class in cohomology.

In order to push forward classes in differential $K$-theory, we will need to
carry along differential form information in the $K$-theory pushforward.
There are differential form descriptions of the Thom class in cohomology, but
they are not very convenient for our purposes.  Instead we pass to currents
and simply write the Thom homomorphism in real cohomology as the map which
sends a differential form $\omega$ on $X$ to the current $\omega \wedge
\delta_X$ on $Y$.  Following this line of reasoning, the pushforward under
$\iota$ of a differential $K$-theory class on $X$ is a currential $K$-theory
class on $Y$.  An important ingredient in its definition is a certain current
$\gamma$ defined by Bismut-Zhang \cite{Bismut-Zhang}.

 \subsection{Construcgtion of the embedding pushforward}\label{subsec:2.2}

Let $\iota\:X\hookrightarrow Y$ be a proper embedding of manifolds.  Let $r$
be the codimension of $X$ in $Y$. We assume that $r$ is even.  Let $\delta
\mstrut _X \in \dO(Y)^{r}$ denote the current of integration on $X$.

Let $\nu = \iota^*TY/TX$ be the normal bundle to $X$.  We define a
\emph{Riemannian structure on~$\iota$} to be a metric $g^\nu$ on $\nu$ and a
compatible connection $\nabla^\nu$ on $\nu$.  Suppose the map $\iota$ carries
a \emph{differential $\spinc$-structure}, in the sense that $\nu$ has a
$\spinc$-structure with characteristic hermitian line bundle $L^{\nu} \to X$
and that the 
line bundle is endowed with a unitary connection~ $\nabla^{L^\nu}$.
Let $\Sp^\nu\to X$ be the spinor bundle of $\nu$.  Then $\Sp^\nu$ inherits a
connection $\widehat{\nabla}^{\nu}$.  Let $c(\xi)$ denote Clifford
multiplication by $\xi \in \nu$ on $\Sp^\nu$.  Let $p : \nu \rightarrow X$ be
the vector bundle projection.  Then there is a self-adjoint odd endomorphism
$c \in \End(p^* \Sp^{\nu})$ which acts on $(p^* \Sp^{\nu})_\xi \cong
\Sp^{\nu}$ as Clifford multiplication by $\xi \in \nu$.

There is a pushforward map $\iota_* : \Omega(X;\R)^{\bullet} \rightarrow
\dO(Y;\R)^{\bullet + r}$ given by
 \begin{equation}
 \iota_*(\phi) = \frac{\phi}{ \Td \left( \widehat{\nabla}^{\nu} \right) }
\wedge \delta \mstrut _X.
 \end{equation}
Note that our $\iota_*$ differs from the de Rham pushforward
by the factor of $\Td \left( \widehat{\nabla}^{\nu} \right)$.
It will simplify later formulas if we use our slightly
unconventional definition.

Given a Riemannian structure on $\iota$, we want to define a map
$\check{\iota}_*\:\cK^0(X) \longrightarrow \dcK^{r}(Y)$.  To do so, we use a
construction of Bismut and Zhang \cite{Bismut-Zhang}.  Let $F$ be a
$\ZZ/2\ZZ$-graded vector bundle on $Y$ equipped with a Hermitian metric
$h^F$. We assume that we are given an odd self-adjoint endomorphism $V$ of
$F$ which is invertible on $Y- X$, and that $\Ker(V)$ has locally constant
rank along $X$.  Then $\Ker(V)$ restricts to a $\zt$-graded vector bundle on
$X$. It inherits a Hermitian metric $h^{\Ker(V)}$ from $F$.  Let
$P^{\Ker(V)}$ denote orthogonal projection from $F \big|_X$ to $\Ker(V)$. If
$F$ has an $h^F$-compatible connection $\nabla^F$ then $\Ker(V)$ inherits an
$h^{\Ker(V)}$-compatible connection given by $\nabla^{\Ker(V)} = P^{\Ker(V)}
\nabla^F P^{\Ker(V)}$.  Given a connection $\nabla^F$ on $F$, a point $x \in
X$ and a vector $\xi \in \nu_x$, lift $\xi$ to an element $\hat{\xi} \in
T_xY$ and put
 \begin{equation} \partial_\xi V = P^{\Ker(V)} \left( \nabla^F_{\hat{\xi}} V
\right) P^{\Ker(V)}.
 \end{equation}
Then $\partial_\xi V$ is an odd self-adjoint endomorphism of
$\Ker(V)$ which is independent of the choices of $\nabla^F$ and $\hat{\xi}$.
There is a well-defined odd self-adjoint endomorphism 
$\partial V \in \End(p^* \Ker(V))$ so that $\partial V$ acts on
$(p^* \Ker(V))_\xi \cong \Ker(V)$ by $\partial_\xi V$.

 \begin{lemma} \label{AH} \cite[Remark 1.1]{Bismut-Zhang}
Given a $\ZZ/2\ZZ$-graded vector bundle $E$ on $X$,
equipped with a Hermitian metric $h^E$ and a compatible
connection $\nabla^E$, there are
$F, h^F, \nabla^F, V$ on $Y$ so that
 \begin{equation}\label{eq:3.5}
    \left( \Sp^{\nu} \otimes E, h^{\Sp^{\nu}} \otimes h^E,
\widehat{\nabla}^{\nu} \otimes Id + Id \otimes \nabla^F, {c} \otimes Id
\right) \cong \left( \Ker(V), h^{\Ker(V)}, \nabla^{\Ker(V)}, \partial V
\right).
 \end{equation}
 \end{lemma}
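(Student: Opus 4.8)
The plan is to realize the left-hand side of \eqref{eq:3.5} as a summand inside a quadruple of the form $(\Ker(V), h^{\Ker(V)}, \nabla^{\Ker(V)}, \partial V)$ by the standard trick of adding an auxiliary bundle that makes everything trivial, and then choosing $V$ so that $\Ker(V)$ picks out exactly the piece we want. First I would choose, using \cite[Problem 5-E]{Milnor} applied on the compact manifold $X$ (or the total space $\nu$, which is homotopy equivalent to $X$), an auxiliary $\ZZ/2\ZZ$-graded Hermitian bundle $E^\perp$ on $X$ with connection so that $E\oplus E^\perp$ is isomorphic, as a Hermitian bundle with connection, to a trivial bundle $\underline{\CC^N}$ with the product metric and connection — here one may have to stabilize $E$ by a bundle of the opposite parity to respect the $\ZZ/2\ZZ$-grading, and absorb a Chern--Simons correction into nothing because we only need an isomorphism of geometric data, not of differential $K$-theory classes. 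Pulling this back by $p$ and tensoring with $\Sp^\nu$, the bundle $p^*(\Sp^\nu\otimes(E\oplus E^\perp))$ is isomorphic to $p^*\Sp^\nu\otimes\underline{\CC^N}$, hence extends canonically over all of $Y$ once we have a tubular neighborhood and note that $\Sp^\nu$ itself, being a bundle on $X\simeq\nu$, can be absorbed — more precisely I would push the pair $(p^*(\Sp^\nu\otimes E^\perp), c\otimes \mathrm{Id})$ and its complement around to build $F$ on $Y$.

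The concrete construction: set $T$ to be a closed tubular neighborhood of $X$ in $Y$, identified with $D(\nu)$, and work first on $\nu$. On $\nu$ consider the $\ZZ/2\ZZ$-graded bundle $p^*(\Sp^\nu\otimes(E\oplus E^\perp)) \cong p^*\Sp^\nu\otimes\underline{\CC^N}$, with the endomorphism $V_0 = c\otimes\mathrm{Id}_E \oplus c\otimes\mathrm{Id}_{E^\perp} = c\otimes\mathrm{Id}_{\CC^N}$, which is Clifford multiplication by $\xi$; this is invertible away from the zero section and its kernel along $X$ is exactly $\Sp^\nu\otimes\underline{\CC^N}\cong\Sp^\nu\otimes(E\oplus E^\perp)$. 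Under the identification $p^*\Sp^\nu\otimes\underline{\CC^N}$ the bundle, metric and connection are pulled back from $X$ and tensored with the trivial data, so they visibly extend over all of $Y$ (outside $T$ take the trivial bundle $\underline{\CC^N}\otimes(\text{rank of }\Sp^\nu)$, or more honestly one extends $\Sp^\nu$ only as needed — since away from $X$ the endomorphism $V_0$ is invertible, $\Ker(V_0)$ is empty there and the extension of the bundle is irrelevant up to the equivalence we need). Glue to get $(F,h^F,\nabla^F)$ on $Y$ and take $V$ to be $V_0$ on $T$, extended to be any fixed invertible odd self-adjoint endomorphism outside a slightly smaller neighborhood, interpolating on the collar; then $\Ker(V)=\Ker(V_0)|_X = \Sp^\nu\otimes(E\oplus E^\perp)$, $h^{\Ker(V)}$ and $\nabla^{\Ker(V)}=P\nabla^F P$ are the tensor-product data, and a direct computation of $\partial_\xi V$ for the Clifford endomorphism gives $\partial V = c\otimes\mathrm{Id}$ on the $\Sp^\nu$-factor (this is the content of \cite[Remark 1.1]{Bismut-Zhang}). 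Finally I would split off the $E^\perp$-summand: because $\Ker(V)$ splits $\zt$-equivariantly and compatibly with all the geometric data as $(\Sp^\nu\otimes E)\oplus(\Sp^\nu\otimes E^\perp)$, after possibly enlarging $F$ further to make the $E^\perp$-part itself a kernel of an invertible-off-$X$ endomorphism (again via Milnor stabilization so that $E^\perp$ is a summand of a trivial bundle) one arranges the stated isomorphism on the nose, at the cost of replacing $F$ by $F\oplus(\text{trivial})$.

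The main obstacle is the bookkeeping around the $\ZZ/2\ZZ$-grading and the requirement that \eqref{eq:3.5} be an isomorphism of the \emph{full} quadruple including the connection and the endomorphism, not just an isomorphism of underlying bundles: one must ensure the stabilizing bundle $E^\perp$ can be chosen with a connection making $E\oplus E^\perp$ flatly trivial (possible up to adding a further trivial summand, since any connection on a trivial bundle differs from the product one by a globally defined $\mathfrak{u}(N)$-valued $1$-form, which can be killed by a further stabilization and rotation — or one simply allows $\nabla^F$ to be the resulting nontrivial connection on the trivial bundle, which is still a legitimate choice of $\nabla^F$), and that the interpolation of $V$ across the collar of $T$ does not create spurious kernel, which holds because $V_0$ is uniformly invertible on $S(\nu)=\partial D(\nu)$. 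Everything else — the computation of $\partial_\xi V$ for Clifford multiplication, the identification of the projected connection with the tensor-product connection, and the $\zt$-equivariance of the splitting — is routine once the bundles are in place, and is exactly what is recorded in \cite[Remark 1.1]{Bismut-Zhang}.
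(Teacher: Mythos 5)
Your proposal has the right flavor — stabilize via \cite[Problem 5-E]{Milnor}, work on a tubular neighborhood $T\cong D(\nu)$, and use the Clifford endomorphism to get $V$ — but it applies the stabilization to the wrong object and then leaves the genuinely hard steps unjustified.

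The central gap is the claim that after trivializing $E\oplus E^\perp\cong\underline{\CC^N}$, the bundle $p^*(\Sp^\nu\otimes(E\oplus E^\perp))\cong p^*\Sp^\nu\otimes\underline{\CC^N}$ ``visibly extends over all of $Y$.'' It does not: $p^*\Sp^\nu_\pm$ are still nontrivial bundles on $T$, and there is no given identification of them with anything on $\partial T$, so ``take the trivial bundle outside $T$'' is not a coherent instruction. One needs a clutching isomorphism on $\partial T$, and this is exactly what is missing. The paper's proof stabilizes the relevant bundle directly: choose $R$ on $T$ with $W_-\oplus R\cong\RR^N\times T$ (Milnor applied to $W_-=\sigma^*p^*(\Sp^\nu_-\otimes E)$, not to $E$), extend $W_-\oplus R$ to the trivial bundle $F_-$ on $Y$, and then define $F_+$ by clutching $W_+\oplus R$ on $T$ with $\RR^N\times\overline{Y-T}$ along $\partial T$ using $(\sigma^*c\oplus\mathrm{Id}_R)$ composed with the trivialization. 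The Clifford map is not merely the endomorphism $V$ — it is the clutching function that makes the extension of $F_+$ exist. Your parenthetical ``since away from $X$ the endomorphism $V_0$ is invertible ... the extension of the bundle is irrelevant'' conflates the uniqueness statement (proved separately in the paper) with the existence of any extension at all; invertibility of $V_0$ on $\partial T$ gives an isomorphism $W_+|_{\partial T}\cong W_-|_{\partial T}$, not a trivialization of either one, and one still has to invoke excision $K^0(Y,\overline{Y-T})\cong K^0(T,\partial T)$ or the explicit clutching to produce $F$.

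Second, the final step — ``split off the $E^\perp$-summand ... at the cost of replacing $F$ by $F\oplus(\text{trivial})$'' — is incoherent. Your construction produces $\Ker(V)=\Sp^\nu\otimes(E\oplus E^\perp)$, which is strictly larger than the target $\Sp^\nu\otimes E$. Enlarging $F$ can only enlarge or preserve $\Ker(V)$, never shrink it; making the $E^\perp$-part ``itself a kernel of an invertible-off-$X$ endomorphism'' would still leave it inside $\Ker(V)$ along $X$. There is no subtraction available at the level of actual bundles, and the detour through $E^\perp$ is both unnecessary and not repairable by further stabilization. Dropping $E^\perp$ entirely and stabilizing $W_-$ on $T$ as in the paper gives the desired $\Ker(V)\cong\Sp^\nu\otimes E$ directly.
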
 
 \begin{proof}
Let $D(\nu)$ denote the closed unit disk bundle of $\nu$. Put
$S(\nu) = \partial D(\nu)$. Then there is a diffeomorphism
$\sigma : T \rightarrow D(\nu)$ between $D(\nu)$ and a
closed tubular neighborhood $T$ of $X$ in $Y$.
The $\ZZ/2\ZZ$-graded vector bundle $W = \sigma^* p^* (\Sp^{\nu} \otimes E)$
on $T$ is equipped with an isomorphism $J : W_+ \big|_{\partial T} 
\rightarrow W_- \big|_{\partial T}$ on
$\partial T$ given by $\sigma^* {c}$.

By the excision isomorphism in K-theory,
$K^0(Y, \overline{Y-T}) \cong K^0(T, \partial T)$. This means that
after stabilization, $W$ can be extended to a $\ZZ/2\ZZ$-graded vector
bundle $F$ on $Y$ which is equipped with an isomorphism between
$F_+ \big|_{\overline{Y-T}}$ and 
$F_- \big|_{\overline{Y-T}}$. More explicitly,
let $R$ be a vector bundle on $T$ so that
$W_- \oplus R$ is isomorphic to
$\RR^N \times T$, for some $N$.
Then $W_- \oplus R$ extends to a trivial $\RR^N$-vector bundle 
$F_-$ on $Y$. Let $F_+$ be the result of gluing the vector bundle
$W_+ \oplus R$ (on $T$) with $\RR^N \times \overline{Y-T}$
(on $\overline{Y-T}$), using the clutching isomorphism
 \begin{equation}
(W_+ \oplus R) \big|_{\partial T} \stackrel{J \oplus Id}{\longrightarrow} 
(W_- \oplus R) \big|_{\partial T}
\longrightarrow \RR^N \times \partial T
 \end{equation}
along $\partial T$. 

Let $h^R$ be a Hermitian inner product on $R$ and let
$\nabla^R$ be a compatible connection. Choose $h^{F_\pm}$ and 
$\nabla^{F_\pm}$ to
agree with $h^{W_\pm \oplus R}$ and $\nabla^{W_\pm \oplus R}$ on $T$.
Let $V_1 \in \End(F_+, F_-)$ be the result of gluing
$\sigma^* {c} \big|_{W_+} \oplus Id_R$ (on $T$) with the identity map
$\RR^N \rightarrow \RR^N$ (on $\overline{Y-T}$). Put $V = V_1 \oplus
V_1^* \in \End(F)$. Then 
$(F, h^F, \nabla^F, V)$ satisfies the claims of the lemma.
 \end{proof}

Hereafter we assume that $(F, h^F, \nabla^F, V)$ satisfies
Lemma \ref{AH}.
Note that $[F]$ lies in $K^r(Y)$.
For $s > 0$, define a superconnection $C_s$ on $F$ by
 \begin{equation}
C_s = s u^{1/2} V + \nabla^F.
 \end{equation}
Then
 \begin{equation}
\lim_{s \rightarrow 0} 
u^{{r}/{2}} R_u \str \left( e^{- u^{-1} C_s^2} \right)
= \omega(\nabla^F).
 \end{equation}
Also, from \cite[Theorem 1.2]{Bismut-Zhang},
 \begin{equation}
\lim_{s \rightarrow \infty} 
u^{{r}/{2}} R_u \str \left( e^{- u^{-1} C_s^2} \right) = 
\frac{\omega (\nabla^E)}{
\Td \left( \widehat{\nabla}^{\nu} \right)}
\wedge \delta \mstrut _X
 \end{equation}
as currents.

 \begin{definition} \cite[Definition 1.3]{Bismut-Zhang}
 Define $\gamma \in \dO(Y;\R)^{r-1}/\Image(d)$ by
 \begin{equation} \label{gamma}
 \gamma = 
u^{{r}/{2}}
R_u \int_0^\infty \str \left( u^{-1} \frac{dC_s}{ds} e^{- u^{-1} C_s^2} \right)
ds = 
u^{{r}/{2}}
R_u \int_0^\infty \str \left( u^{-1/2} V e^{- u^{-1} C_s^2} \right)
ds.
 \end{equation}
 \end{definition}

The integral on the right-hand side of (\ref{gamma}) is well-defined, as a
current on $Y$, by \cite[Theorem 1.2]{Bismut-Zhang}.  By \cite[Remark
1.5]{Bismut-Zhang}, $\gamma$ is a locally integrable differential form on $Y$
whose wave front set is contained in $\nu^*$.

 \begin{proposition} \cite[Theorem 1.4]{Bismut-Zhang}
We have
   \begin{equation}\label{eq:46}
     d\gamma = \omega(\nabla^F) - 
\frac{\omega (\nabla^E)}{
\Td \left( \widehat{\nabla}^{\nu} \right)}
\wedge \delta \mstrut _X.
   \end{equation}
 \end{proposition}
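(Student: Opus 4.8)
The plan is to establish the transgression formula \eqref{eq:46} by the standard Chern–Simons/superconnection argument, identifying $d\gamma$ with the difference of the two limiting values of the Chern character form of $C_s$. Concretely, I would first record the elementary identity
\begin{equation}
  \frac{d}{ds}\,\str\!\left( e^{-u^{-1}C_s^2} \right) = -\,u^{-1}\,d\,\str\!\left( \frac{dC_s}{ds}\, e^{-u^{-1}C_s^2} \right),
\end{equation}
which holds in currents on $Y$ because $C_s^2$ has degree $(2,0)$ and the supertrace kills supercommutators (this is the transgression lemma underlying Quillen's Chern character, as in Subsection~\ref{subsec:1.3}). Multiplying by $u^{r/2}R_u$ and integrating over $s\in(0,\infty)$, the left side telescopes to $\lim_{s\to\infty} - \lim_{s\to 0}$ of $u^{r/2}R_u\str(e^{-u^{-1}C_s^2})$, while the right side is $-d$ applied to the integrand defining $\gamma$ in \eqref{gamma}.

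Next I would substitute the two limits already computed just above the statement: the $s\to 0$ limit is $\omega(\nabla^F)$, and the $s\to\infty$ limit is $\dfrac{\omega(\nabla^E)}{\Td(\widehat{\nabla}^{\nu})}\wedge\delta\mstrut_X$, the latter by \cite[Theorem 1.2]{Bismut-Zhang}. This immediately yields
\begin{equation}
  d\gamma \;=\; \omega(\nabla^F) \;-\; \frac{\omega(\nabla^E)}{\Td(\widehat{\nabla}^{\nu})}\wedge\delta\mstrut_X,
\end{equation}
which is \eqref{eq:46}. The second equality in \eqref{gamma} for $\gamma$ itself follows from $\frac{dC_s}{ds} = u^{1/2}V$, so $u^{-1}\frac{dC_s}{ds} = u^{-1/2}V$; this is purely formal and needs no separate argument.

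The only genuine subtlety — and the step I expect to be the main obstacle — is justifying that all of this is legitimate at the level of \emph{currents} rather than smooth forms, since $C_s$ degenerates as $s\to\infty$ and the limiting object is singular along $X$. Here I would simply invoke \cite[Theorem 1.2]{Bismut-Zhang}, which guarantees that the $s$-integral converges in the space of currents on $Y$ uniformly enough to commute with $d$, and that the $s\to\infty$ limit exists as the stated current; combined with \cite[Remark 1.5]{Bismut-Zhang} on the local integrability and wave front set of $\gamma$, this puts the transgression identity on rigorous footing. Thus the proposition is essentially a restatement of \cite[Theorem 1.4]{Bismut-Zhang} in our notation and normalization (note the $\Td(\widehat{\nabla}^{\nu})$ and $u$-bookkeeping built into $\iota_*$ and into \eqref{eq:9}), and I would present it as such, citing Bismut–Zhang for the analytic input and supplying only the short transgression computation to fix conventions.
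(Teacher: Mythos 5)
Your proof is correct and matches the intent of the paper, which offers no argument of its own but simply cites \cite[Theorem 1.4]{Bismut-Zhang} for this proposition. Your transgression computation — differentiating $\str(e^{-u^{-1}C_s^2})$ in $s$, using Duhamel and the vanishing of supertraces of supercommutators to identify the result with $-u^{-1}d\,\str\bigl(\tfrac{dC_s}{ds}e^{-u^{-1}C_s^2}\bigr)$, integrating over $s$, and inserting the two limiting Chern character currents already recorded in the text — is exactly the standard derivation underlying the Bismut--Zhang theorem, and your decision to defer the convergence and current-level justification to \cite[Theorem 1.2 and Remark 1.5]{Bismut-Zhang} is the right call, since those analytic points are precisely what Bismut--Zhang establish.
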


 \begin{definition} \label{pushforwarddef}
Given a generator ${\mathcal E} = \left(
E, h^E, \nabla^E, \phi \right)$ for $\cK^0(X)$, define
$\check{\iota}_* ({\mathcal E}) 
\in \dcK^r(Y)$ to be the element represented by
the quadruple
 \begin{equation}
\check{\iota}_* ({\mathcal E}) = \left( F, h^F, \nabla^F, \iota_*(\phi) -
\gamma \right).  
 \end{equation}
 \end{definition}

 \begin{lemma} \label{omegaemb}
$\omega(\check{\iota}_* ({\mathcal E})) = \iota_* (\omega({\mathcal E}))$.
 \end{lemma}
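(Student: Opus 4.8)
The plan is to compute $\omega(\check{\iota}_*(\mathcal E))$ directly from its definition and match it, term by term, against $\iota_*(\omega(\mathcal E))$. By Definition~\ref{pushforwarddef} we have $\check{\iota}_*(\mathcal E) = (F, h^F, \nabla^F, \iota_*(\phi) - \gamma)$, so applying the homomorphism $\omega$ (now on currential $K$-theory, where it is given by the same formula $\omega(\mathcal E) = \omega(\nabla^F) + d(\text{current part})$) yields
\begin{equation}
\omega(\check{\iota}_*(\mathcal E)) = \omega(\nabla^F) + d\bigl(\iota_*(\phi) - \gamma\bigr) = \omega(\nabla^F) + d\,\iota_*(\phi) - d\gamma.
\end{equation}

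Next I would substitute the formula for $d\gamma$ from the Bismut--Zhang identity \eqref{eq:46}, namely $d\gamma = \omega(\nabla^F) - \frac{\omega(\nabla^E)}{\Td(\widehat{\nabla}^{\nu})} \wedge \delta\mstrut_X$. The $\omega(\nabla^F)$ terms cancel, leaving
\begin{equation}
\omega(\check{\iota}_*(\mathcal E)) = d\,\iota_*(\phi) + \frac{\omega(\nabla^E)}{\Td(\widehat{\nabla}^{\nu})} \wedge \delta\mstrut_X.
\end{equation}
Now I would use that the pushforward map $\iota_*$ on forms/currents commutes with $d$ — since $\iota_*(\psi) = \frac{\psi}{\Td(\widehat{\nabla}^{\nu})} \wedge \delta\mstrut_X$, and $\Td(\widehat{\nabla}^{\nu})$ is closed while $\delta\mstrut_X$ is a closed current (Stokes: $X$ has no boundary), we get $d\,\iota_*(\phi) = \iota_*(d\phi)$. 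The one subtle point to check is that $\iota_*(d\phi)$ is really $\frac{d\phi}{\Td(\widehat{\nabla}^{\nu})} \wedge \delta\mstrut_X$, which uses that $d$ of $\frac{1}{\Td}$ contributes nothing beyond the product rule and that $d(\psi \wedge \delta\mstrut_X) = d\psi \wedge \delta\mstrut_X$ for a closed current $\delta\mstrut_X$ — this is exactly the sense in which $\delta\mstrut_X$ represents the cohomology Thom class. Then
\begin{equation}
\omega(\check{\iota}_*(\mathcal E)) = \iota_*(d\phi) + \iota_*(\omega(\nabla^E)) = \iota_*\bigl(\omega(\nabla^E) + d\phi\bigr) = \iota_*(\omega(\mathcal E)),
\end{equation}
using the definition $\omega(\mathcal E) = \omega(\nabla^E) + d\phi$ and the linearity of $\iota_*$.

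This is essentially a bookkeeping argument, so there is no serious obstacle; the only place requiring a moment's care is the commutation $d \circ \iota_* = \iota_* \circ d$ on currents, i.e.\ verifying that the factor $\frac{1}{\Td(\widehat{\nabla}^{\nu})}$ and the current $\delta\mstrut_X$ are both handled correctly under $d$ (closedness of $\delta\mstrut_X$ as a current on $Y$ and of the Todd form on $X$). One should also note for well-definedness that $\gamma$ has wave front set in $\nu^*$ and $\iota_*(\phi)$ is supported on $X$, so the current part indeed lies in the appropriate subgroup and $\omega$ is being applied legitimately within currential $K$-theory; but none of this affects the displayed chain of equalities.
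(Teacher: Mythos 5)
Your proposal is correct and follows exactly the same computation as the paper: apply $\omega$ to the defining quadruple, substitute the Bismut--Zhang identity \eqref{eq:46} for $d\gamma$, cancel $\omega(\nabla^F)$, and commute $d$ past $\iota_*$ using the closedness of $\Td(\widehat{\nabla}^{\nu})$ and of the current $\delta_X$. The paper simply compresses this into a single displayed chain of equalities, whereas you have spelled out the intermediate steps and the justification for $d\circ\iota_* = \iota_*\circ d$.
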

 \begin{proof}
We have
 \begin{equation}
\omega(\check{\iota}_* ({\mathcal E})) = 
\omega(\nabla^F) + d(\iota_*(\phi) - \gamma) =
\frac{\omega(\nabla^E) + d\phi}{
\Td \left( \widehat{\nabla}^{\nu} \right)
}
\wedge \delta \mstrut _X = \iota_* (\omega({\mathcal E})),
 \end{equation}
which proves the lemma.
 \end{proof}

 \begin{proposition}
 \begin{enumerate}
 \item
The pushforward $\check{\iota}_* ({\mathcal E}) \in \dcK^r(Y)$ is independent
of the choices of $F$, $h^F$, $\nabla^F$ and $V$, subject to~\eqref{eq:3.5}. 
 \item
The assignment
${\mathcal E} \rightarrow \check{\iota}_* ({\mathcal E})$ factors
through a map $\check{\iota}_* : \cK^0(X) \rightarrow \dcK^r(Y)$.
 \end{enumerate}
 \end{proposition}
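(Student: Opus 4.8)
The plan is to prove both statements by reducing them to a single relative statement: if $\mathcal{E}'$ is a generator for $\cK^0([0,1]\times X)$, then the two ends $\check{\iota}_*(A_0^*\mathcal{E}')$ and $\check{\iota}_*(A_1^*\mathcal{E}')$ of its pushforward differ by $j\bigl(\int_0^1 \omega(\check{\iota}_*\mathcal{E}')\bigr)$ in $\dcK^r(Y)$, in exact parallel with Lemma~\ref{restriction}. For part~(1), I would work on $[0,1]\times Y$. Given two choices $(F_0,h^{F_0},\nabla^{F_0},V_0)$ and $(F_1,h^{F_1},\nabla^{F_1},V_1)$ satisfying \eqref{eq:3.5} for the same $\mathcal{E}$, one can first stabilize so that $F_0\cong F_1$ as graded bundles (by the usual argument that two stabilized extensions of the same clutched bundle agree after further stabilization), and then build a bundle $F'$ on $[0,1]\times Y$ with metric, connection, and endomorphism $V'$ interpolating between the two data sets, arranged to be a product near the two ends and, crucially, to satisfy \eqref{eq:3.5} fiberwise over $[0,1]$ with respect to the constant family $\mathcal{E}$ (so $\Ker(V')=\Sp^\nu\otimes E$ pulled back, $\partial V' = c\otimes\Id$ throughout). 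Since $\phi$ is unchanged, $\iota_*(\phi)$ is constant along $[0,1]$. The Bismut--Zhang current $\gamma'$ of the superconnection $C'_s = su^{1/2}V' + \nabla^{F'}$ on $[0,1]\times Y$ restricts at the ends to $\gamma_0,\gamma_1$, and Lemma~\ref{restriction} (in the currential setting, using Remark~\ref{restriction2}) gives $\check{\iota}_*(\mathcal{E})_1 - \check{\iota}_*(\mathcal{E})_0 = j\bigl(\int_0^1 \omega(\check{\iota}_*\mathcal{E}')\bigr)$. But by Lemma~\ref{omegaemb}, $\omega(\check{\iota}_*\mathcal{E}') = \iota_*(\omega(\mathcal{E}'))$, and since $\mathcal{E}'$ is the constant family its Chern character form has no $dt$-component, so $\int_0^1 \omega(\check{\iota}_*\mathcal{E}') = 0$ in $\dO(Y;\R)^{r}/\dO(Y;\R)_K^r$. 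Hence $\check{\iota}_*(\mathcal{E})_0 = \check{\iota}_*(\mathcal{E})_1$.

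For part~(2), I must check that $\check{\iota}_*$ respects the defining relation of $\cK^0(X)$: given a short exact sequence $0\to E_1\to E_2\to E_3\to 0$ of Hermitian bundles with connections and $\phi_2 = \phi_1+\phi_3 - CS(\nabla^{E_1},\nabla^{E_2},\nabla^{E_3})$, one needs $\check{\iota}_*(\mathcal{E}_2) = \check{\iota}_*(\mathcal{E}_1) + \check{\iota}_*(\mathcal{E}_3)$. By part~(1) the answer is independent of the auxiliary Bismut--Zhang data, so I may choose, via Lemma~\ref{AH} applied to the orthogonal sum $\Sp^\nu\otimes(E_1\oplus E_3)$ and to $\Sp^\nu\otimes E_2$, data $(F^{(i)},h^{F^{(i)}},\nabla^{F^{(i)}},V^{(i)})$ for $i=1,2,3$. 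The standard device is again to pass to $W=[0,1]\times X$, let $p:W\to X$ be the projection, and on $W$ use $\widetilde{\mathcal{E}} = (p^*E_2, p^*h^{E_2}, \widetilde{\nabla}, \widetilde{\phi})$ where $\widetilde\nabla$ equals $p^*\nabla^{E_2}$ near $\{1\}\times X$ and $p^*(\nabla^{E_1}\oplus\nabla^{E_3})$ near $\{0\}\times X$, and $\widetilde\phi$ interpolates $p^*\phi_2$ and $p^*(\phi_1+\phi_3)$. Pushing $\widetilde{\mathcal{E}}$ forward under $\iota\times\id_{[0,1]}: W\hookrightarrow [0,1]\times Y$ and restricting to the two ends, Lemma~\ref{restriction} gives
\begin{equation}
\check{\iota}_*(\mathcal{E}_2) - \bigl(\check{\iota}_*(\mathcal{E}_1) + \check{\iota}_*(\mathcal{E}_3)\bigr) = j\Bigl(\textstyle\int_0^1 \iota_*(\omega(\widetilde{\mathcal{E}}))\Bigr),
\end{equation}
where on the right I used Lemma~\ref{omegaemb}. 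Computing $\int_0^1 \omega(\widetilde{\mathcal{E}})$ exactly as in the proof of Lemma~\ref{restriction} yields $CS(\nabla^{E_1},\nabla^{E_2},\nabla^{E_3}) + \phi_2 - \phi_1 - \phi_3$ modulo $d$-exact and modulo $\dO(X;\R)_K$, which vanishes by the relation imposed on the $\phi_i$'s; applying $\iota_*$, which is linear and sends $d$-exact forms to $d$-exact currents and $\dO_K$ into $\dO_K$, the right-hand side is $j(0)=0$. This proves $\check{\iota}_*$ descends to $\cK^0(X)$.

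The main obstacle is the bookkeeping in part~(1): one must verify that the interpolating data $(F',h^{F'},\nabla^{F'},V')$ on $[0,1]\times Y$ can genuinely be chosen to satisfy the isomorphism \eqref{eq:3.5} for the \emph{constant} family over the whole interval — it is not enough to interpolate arbitrarily, since the conclusion $\int_0^1\omega(\check{\iota}_*\mathcal{E}')=0$ relies on $\omega(\nabla^{F'})+d(\text{stuff})$ having no $dt$-component modulo $\dO_K$, and this is exactly what \eqref{eq:3.5} (constant in $t$) guarantees via Lemma~\ref{omegaemb}. Carrying this out cleanly requires noting that the space of data satisfying \eqref{eq:3.5} for a fixed $\mathcal{E}$, after enough stabilization, is connected (indeed convex after fixing $F$ and $h^F$, since the connections form an affine space and the compatible $V$'s with the prescribed $\Ker$ and $\partial V$ form a convex set), so any two can be joined by a product-near-the-ends path. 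Also one should check that the wave front condition is preserved along the homotopy so that everything stays in the currential groups with the exact sequences \eqref{eq:4.5}, \eqref{eq:5.5} available; this follows from \cite[Remark 1.5]{Bismut-Zhang} applied on $[0,1]\times Y$ together with the fact that $\iota_*(\phi)$ has wave front set in the conormal of $X$.
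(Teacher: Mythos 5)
Your proof follows essentially the same route as the paper's: for both parts you pass to a product $[0,1]\times Y$ (resp.\ $[0,1]\times X$), apply Lemma~\ref{restriction}/Remark~\ref{restriction2} to express the difference of the two endpoint pushforwards as $j\bigl(\int_0^1 \omega(\,\cdot\,)\bigr)$, and then invoke Lemma~\ref{omegaemb} to identify the integrand and show the integral vanishes. One imprecision worth flagging in your discussion of part~(1): the parenthetical claim that the set of admissible $V$'s (with $F$, $h^F$ fixed, prescribed $\Ker$ and $\partial V$) is \emph{convex} is false. Over $Y\setminus X$ the constraint is that $V$ be an invertible odd self-adjoint endomorphism, and that set is not convex --- already for a rank-two graded bundle over a point, $V=\left(\begin{smallmatrix}0&a\\\bar a&0\end{smallmatrix}\right)$ with $a\in\CC^{*}$ gives a non-convex set. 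What is actually needed, and what the paper quietly assumes when it takes ``a smooth $1$-parameter family of data interpolating,'' is path-connectedness of the space of such data after stabilization; this holds, but not by a straight-line homotopy. Since the rest of your argument does not depend on convexity per se, this is a minor slip rather than a structural gap, and the proof is otherwise the same as the paper's.
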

 \begin{proof}
Let $G$ be a complex vector bundle on $Y$ with Hermitian metric
$h^G$ and compatible connection $\nabla^G$. If we put
$F^\prime_\pm = F_\pm \oplus G$, $h^{F^\prime_\pm} = h^{F_\pm \oplus G}$,
$\nabla^{F^\prime_\pm} = \nabla^{F_\pm \oplus G}$ and
$V^\prime = V \oplus 
 \begin{pmatrix}
0 & I_G \\
I_G & 0
 \end{pmatrix}$
then it is easy to check that $\gamma$ does not change.
Clearly
$\left( F^\prime, h^{F^\prime}, \nabla^{F^\prime}, \iota_*(\phi) -
\gamma \right)$ equals
$\left( F, h^F, \nabla^F, \iota_*(\phi) -
\gamma \right)$ in $\dcK^r(Y)$.

To prove part (1), suppose that for $i \in \{0,1\}$,
$\left( F_i, h^{F_i}, \nabla^{F_i}, V_i \right)$ are two different
choices of data as in the statement of the proposition.  Since
$F_{i,+} - F_{i,-}$ represent the same class in $K^r(Y)$ for
$i \in \{0,1\}$, the preceding paragraph implies that we can stabilize
to put ourselves into the situation 
that $F_{i,+} = F_+$ and $F_{i,-} = F_-$ for some fixed
$\ZZ/2\ZZ$-graded vector bundle $F_\pm$ on $Y$.

For $t \in [0,1]$, let 
$\left(h^{F}(t), \nabla^{F}(t), V(t) \right)$ be a smooth $1$-parameter
family of data interpolating between
$\left( h^F_0, \nabla^F_0, V_0 \right)$ and
$\left( h^F_1, \nabla^F_1, V_1 \right)$.
Consider the product embedding $\iota^\prime :
[0,1] \times X \rightarrow [0,1] \times Y$.
Let ${\mathcal E}^\prime$ be the pullback of ${\mathcal E}$ to
$[0,1] \times X$.
Let $F^\prime$ be the pullback of $F$ to $[0,1] \times Y$ and 
let $\left(h^{F^\prime}, \nabla^{F^\prime}, V^\prime \right)$ be the
ensuing data on $F^\prime$ coming from the $1$-parameter family.
Construct 
$\gamma^\prime \in 
\dO([0,1] \times Y;\R)^{r-1}/\Image(d)$
from (\ref{gamma}).
Put 
 \begin{equation}
\iota^\prime_*({\mathcal E}^\prime) = 
(F^\prime, h^{F^\prime}, \nabla^{F^\prime},
\iota^\prime_*(\phi^\prime) - \gamma^\prime).
 \end{equation}
By 
Remark \ref{restriction2} 
(or more precisely its extension to currential K-theory),
 \begin{equation}
\left( F, h^F_1, \nabla^F_1, \iota_*(\phi) - \gamma_1 \right) -
\left( F, h^F_0, \nabla^F_0, \iota_*(\phi) - \gamma_0 \right) =
j \left( \int_0^1 \omega(\iota^\prime_*({\mathcal E}^\prime)) \right)
 \end{equation} 
in $\dcK^r(Y)$.
However, using Lemma \ref{omegaemb},
$\omega(\iota^\prime_*({\mathcal E}^\prime)) =
\iota^\prime_* (\omega({\mathcal E}^\prime))$ is the pullback of
$\iota_* (\omega({\mathcal E}))$ from $Y$ to $[0,1] \times Y$.
In particular, $\int_0^1 \omega(\iota^\prime_*({\mathcal E}^\prime)) = 0$.
This proves part (1) of the proposition.

To prove part (2), suppose that we have a relation
${\mathcal E}_2 = {\mathcal E}_1  + {\mathcal E}_3$ in $\cK^0(X)$
coming from a short exact sequence (\ref{shortexact}).
Let $p : [0,1] \times X \rightarrow X$ be the projection map.
Put $E^\prime = p^* E_2$.
Let $\nabla^{E^\prime}$ be a unitary connection on $E$ which is
$p^* \nabla^{E_2}$ near $\{1\} \times X$ and which is
$p^* (\nabla^{E_1} \oplus \nabla^{E_3})$ near $\{0\} \times X$.
Choose $\phi^\prime \in \Omega([0,1] \times X; \R)^{-1}/\Image(d)$
which equals $p^* \phi_2$ near $\{1\} \times X$ and which equals
$p^*(\phi_1 + \phi_3)$ near $\{0\} \times X$. Consider the
product embedding $\iota^\prime : [0,1] \times X \rightarrow [0,1]
\times Y$ and construct $\iota^\prime_*(
{\mathcal E}^\prime) \in
\dcK^0([0,1] \times Y)$ as in Definition \ref{pushforwarddef}.
For $i \in \{0,1\}$, let $A_i : Y \rightarrow [0,1] \times Y$
be the embedding $A_i(y) = (i,y)$. From Lemmas \ref{restriction} and 
\ref{omegaemb},
 \begin{equation}
A_1^*  \iota^\prime_*(
{\mathcal E}^\prime) - A_0^* \iota^\prime_*(
{\mathcal E}^\prime) = j \left( \int_0^1 \iota^\prime_*(
\omega({\mathcal E}^\prime) ) \right) =
j \left( \iota_* \int_0^1 
\omega({\mathcal E}^\prime) \right) = 0,
 \end{equation}
since the relation 
${\mathcal E}_2 = {\mathcal E}_1  + {\mathcal E}_3$ and Lemma
\ref{restriction} imply that
$\int_0^1 \omega({\mathcal E}^\prime)$ vanishes.
Hence $\iota_*({\mathcal E}_2) = \iota_*({\mathcal E}_1) +
\iota_*({\mathcal E}_3)$. 
This proves part (2) of the proposition.
 \end{proof}

\section{Topological index}\label{sec:4}

In this section we define the topological index in differential K-theory.  We
first consider two fiber bundles $X_1 \rightarrow B$ and $X_2 \rightarrow B$,
each equipped with a Riemannian structure and a differential $\spinc$
structure in the sense of the previous section.  We now assume that we have a
fiberwise isometric embedding $\iota : X_1 \rightarrow X_2$.  The preceding
section constructed a pushforward $\check{\iota}_* : \cK^0(X_1) \rightarrow
\dcK^r(X_2)$. To define the topological index we will eventually want to
compose $\check{\iota}_*$ with the pushforward under the fibration $X_2
\rightarrow B$.  However, there is a new issue because the horizontal
distributions on the two fiber bundles $X_1$ and $X_2$ need not be
compatible. Hence we define a correction form $\widetilde{C}$ and, in
Definition~\ref{modified}, a modified embedding pushforward
$\check{\iota}^{mod}_* : \cK^0(X_1) \rightarrow \WFcK^r(X_2)$.

To define the topological index, we specialize to the case when $X_2$ is $S^N
\times B$ for some even $N$, equipped with a Riemannian structure coming from
a fixed Riemannian metric on $S^N$ and the product horizontal distribution.
In this case we show that the pushforward of $\WFcK^r(S^N \times B)$ from
$S^N \times B$ to $B$, 
as defined in Definition~\ref{analindex},
can be written as an explicit map $\check{\pi}^{prod}_* : \WFcK^r(S^N \times
B) \rightarrow \cK^{r-N}(B)$ in terms of a K\"unneth-type formula for
$\WFcK^r(S^N \times B)$. This shows that
$\check{\pi}^{prod}_*$ can be computed without any spectral analysis and, in
particular, can be defined without the assumption about vector bundle
kernel. Relabeling $X_1$ as $X$, we then define the topological index
$\ind^{top} : \cK^0(X) \rightarrow \cK^{-n}(B)$ by $\ind^{top} =
\check{\pi}^{prod}_* \circ \check{\iota}^{mod}_*$.

\subsection{Construction of the topological index}

Let $\pi_1 : X_1 \rightarrow B$ and $\pi_2 : X_2 \rightarrow B$ be fiber
bundles over $B$, with compact fibers $X_{1,b}$ and $X_{2,b}$ of even
dimension $n_1$ and $n_2$, respectively.  Let $\iota : X_1 \rightarrow X_2$
be a fiberwise embedding of even codimension $r$, i.e., $\iota$ is an
embedding, $\pi_2 \circ \iota = \pi_1$, and $n_2=n_1+r$.  Let $\nu$ be the
normal bundle of $X_1$ in $X_2$.  There is a short exact sequence
 \begin{equation} \label{ses}
0 \longrightarrow T^VX_1 \longrightarrow \iota^* T^VX_2 \longrightarrow \nu 
\longrightarrow 0, 
 \end{equation}
of vector bundles on $X_1$.  Suppose that $\pi_1$ and $\pi_2$ have Riemannian
structures.

 \begin{definition} \label{iotacomp}
 The map $\iota$ is {\em compatible with the Riemannian structures on $\pi_1$
and $\pi_2$} if for each $b \in B$, $\iota_b : X_{1,b} \rightarrow X_{2,b}$
is an isometric embedding.
 \end{definition}

\noindent
 The intersection of $T^H X_2$ with $TX_1$ defines a horizontal distribution
$(T^HX_2) \big|_{X_1}$ on $X_1$. We do \emph{not} assume that it coincides
with $T^HX_1$.  It follows that the orthogonal projection of $\iota^*
\nabla^{T^VX_2}$ to $T^VX_1$ is \emph{not} necessarily equal to
$\nabla^{T^VX_1}$.

In the rest of this section we assume that $\iota$ is compatible with the
Riemannian structures on $\pi_1$ and $\pi_2$. Then $\nu$ inherits a
Riemannian structure from \eqref{ses}, which is split by identifying~$\nu $
as the orthogonal complement to~$T^VX_1$ in~$\iota ^*T^VX_2$.  Namely, the
metric $g^\nu$ is the quotient inner product from~$g^{T^VX_2}$ and the
connection $\nabla^\nu$ is compressed from $\iota^* \nabla^{T^VX_2}$.

We also assume a certain 
compatibility of the 
differential $\spinc$-structures on~$\pi _1$, $\pi _2$ and 
$\iota$. To describe this compatibility, recall the discussion of
$\spinc$-structures from Subsection \ref{subsec:1.0}.
Over~$X_1$ we have principal bundles~$\sF_1$, $\sF_\nu$ and 
$\iota^* \sF_2$, with structure groups~$\Spinc(n_1)$, $\Spinc(r)$
and $\Spinc(n_2)$, respectively.
They project to the oriented orthonormal frame bundles $\sB_1$,
$\sB_\nu$ and $\iota^* \sB_2$.
The
embedding~$\iota $ gives a reduction $\sB_1\times \sB_\nu
\hookrightarrow \iota^* \sB_2$ of $\iota^* \sB_2$ 
which is compatible with the inclusion
$SO_{n_1}\times SO_{r}\hookrightarrow SO_{n_2}$.  Then we postulate that
we have a lift
  \begin{equation}\label{eq:1}
     \sF_1\times \sF_\nu \to \iota^*\sF_2 
  \end{equation}
of $\sB_1\times \sB_\nu
\hookrightarrow \iota^* \sB_2$
which is compatible with the 
homomorphism $\Spinc({n_1})\times \Spinc({r})\to
\Spinc({n_2})$.  (The kernel of this homomorphism is a $U(1)$-factor
embedded anti-diagonally.)  Finally,
we suppose that the three $\spinc$-connections 
are compatible in the sense that
the $U(1)$-connection on~$\iota^* \Ker(\sF_2\to\sB_2)$ pulls back
under~\eqref{eq:1} to the tensor product of the $U(1)$-connections 
on~$\Ker(\sF_1 \rightarrow \sB_1)$
and~$\Ker(\sF_\nu \rightarrow \sB_2)$.
Said in terms of the
characteristic line bundles, there is an isomorphism $L^{T^VX_1} \otimes
L^\nu \to \iota^* L^{T^VX_2}$ which is compatible with the metrics and 
connections. 

We now prove a lemma which shows that the elements of the image of 
$\check{\iota}_*$ have good wave front support.

 \begin{lemma} \label{wflemma}
$\nu^* \subset T^*X_2 \big|_{X_1}$ 
intersects $N^*_V X_2 = \pi_2^* T^*B \subset T^*X_2$ only in the zero section.
 \end{lemma}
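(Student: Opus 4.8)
The plan is to reduce this to a pointwise linear-algebra statement. Fix $x\in X_1$ and set $b = \pi_1(x) = \pi_2(x)$. By definition $\nu^*$ is the conormal bundle of $X_1$ in $X_2$, so its fiber at $x$ is the annihilator of $T_xX_1$ inside $T^*_xX_2$. Likewise the fiber of $N^*_VX_2 = \pi_2^*T^*B$ at $x$ is the annihilator of $T^V_xX_2 = \Ker\bigl((d\pi_2)_x\bigr)$ inside $T^*_xX_2$. A covector $\xi$ lying in both fibers therefore annihilates the subspace $T_xX_1 + T^V_xX_2$ of $T_xX_2$, so it is enough to prove the spanning identity $T_xX_1 + T^V_xX_2 = T_xX_2$: granting it, such a $\xi$ vanishes, i.e.\ lies on the zero section.

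For the spanning identity I would differentiate the relation $\pi_2\circ\iota = \pi_1$. Identifying $T_xX_1$ with its image in $T_xX_2$, this gives $(d\pi_2)_x\big|_{T_xX_1} = (d\pi_1)_x : T_xX_1 \to T_bB$, which is surjective because $\pi_1$ is a submersion. Hence, given any $v\in T_xX_2$, we may choose $w\in T_xX_1$ with $(d\pi_2)_x(w) = (d\pi_2)_x(v)$; then $v - w\in\Ker\bigl((d\pi_2)_x\bigr) = T^V_xX_2$, and $v = w + (v-w)$ exhibits $v$ in $T_xX_1 + T^V_xX_2$. This proves the identity, hence the lemma.

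I do not expect a real obstacle: the argument is just the observation that $\pi_2\circ\iota = \pi_1$ together with the submersivity of $\pi_1$ forces $X_1$ to be transverse to the fibers of $\pi_2$. The only thing demanding care is the bookkeeping of which tangent subspace each of the two conormal bundles $\nu^*$ and $N^*_VX_2$ annihilates, so that ``intersection only in the zero section'' becomes exactly the spanning statement $T_xX_1 + T^V_xX_2 = T_xX_2$. One could equally phrase the proof using the short exact sequence \eqref{ses}, which already records that $\nu$ is the quotient $\iota^*T^VX_2/T^VX_1$, but the direct tangent-space argument above seems the most transparent.
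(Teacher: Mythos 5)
Your proof is correct and follows essentially the same strategy as the paper: reduce to the pointwise linear-algebra fact that $T_xX_1 + T^V_xX_2 = T_xX_2$ at each $x\in X_1$. The paper obtains this spanning identity implicitly by a dimension count from the observed intersection $T_xX_1 \cap T^V_xX_2 = T^V_xX_1$, while you obtain it directly from the surjectivity of $d\pi_1 = d\pi_2\big|_{TX_1}$; both derivations are routine and equivalent, so this is only a cosmetic difference.
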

 \begin{proof}
Suppose that
$x_1 \in X_1$ and $\xi \in \nu^*_{x_1} \cap (N^*_V X_2)_{x_1}$.
Then $\xi$ annihilates $T_{x_1}X_1$ and $(\iota^*T^VX_2)_{x_1}$.
Since $T_{x_1}X_1 \cap (\iota^*T^VX_2)_{x_1} = T^V_{x_1}X_1$, it
follows easily that $T_{x_1}X_1 + (\iota^*T^VX_2)_{x_1} =
(\iota^*TX_2)_{x_1}$. Thus $\xi$ vanishes.
 \end{proof}

Hence for a generator ${\mathcal E}$ of $\cK^0(X_1)$, the element
$\check{\iota}_*({\mathcal E}) \in \dcK^r(X_2)$ is the image of
a unique element in $\WFcK^r(X_2)$, which we will also call 
$\check{\iota}_*({\mathcal E})$.

We now define a certain correction term to take into account the
possible non-compatibility between the horizontal distributions
on $X_1$ and $X_2.$ 
That is, using (\ref{ses}), we construct an explicit form
$\widetilde{C} \in \Omega (X_1; \R)^{-1}/\Image(d)$ so that
 \begin{equation} \label{dC}
d\widetilde{C} = \iota^* 
\Td \left( \widehat{\nabla}^{T^VX_2} \right)
- \Td \left( \widehat{\nabla}^{T^VX_1} \right) \wedge
\Td \left( \widehat{\nabla}^{\nu} \right).
 \end{equation}
Namely, put $W = [0,1] \times X_1$ and let $p : W \rightarrow X_1$
be the projection map.  Put $F =  p^* \iota^* T^VX_2$.
Consider a $\spinc$-connection $\widehat{\nabla}^F$ on $F$ which is
$\iota^* \widehat{\nabla}^{T^VX_2}$ near $\{1\} \times X_1$ and which is
$\widehat{\nabla}^{T^VX_1} \oplus \widehat{\nabla}^{\nu}$ near
$\{0\} \times X_1$. Then
$\widetilde{C} = \int_0^1 \Td \left( \widehat{\nabla}^F \right)
\in \Omega (X_1; \R)^{-1}/\Image(d)$.

 \begin{lemma} \label{vanishes}
Suppose that $(T^HX_2) \big|_{X_1} = T^HX_1$. 
Then 
 \begin{enumerate}
 \item The orthogonal projection of $\iota^* \nabla^{T^VX_2}$ to
$T^VX_1$ equals $\nabla^{T^VX_1}$, and
 \item
$\widetilde{C} = 0$.
 \end{enumerate}
 \end{lemma}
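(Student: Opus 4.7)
My plan is to treat the two parts in sequence, with Part~(1) providing the structural input for Part~(2).

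For Part~(1), I would pick any Riemannian metric $g^{TB}$ on $B$ and form $g^{TX_i}=g^{T^VX_i}\oplus\pi_i^*g^{TB}$ on each $X_i$. The hypothesis $(T^HX_2)\big|_{X_1}=T^HX_1$ then makes $\iota$ an isometric embedding whose normal bundle is $\nu$, and the Gauss formula reads
\[
\iota^*\nabla^{TX_2}_U e=\nabla^{TX_1}_U e+II(U,e),\qquad II(U,e)\in\Gamma(\nu),
\]
for $U\in\Gamma(TX_1)$ and $e\in\Gamma(T^VX_1)$. Since $\nabla^{T^VX_i}=P^V_i\nabla^{TX_i}P^V_i$ (which is independent of~$g^{TB}$) and since the hypothesis forces $P^V_2\big|_{TX_1}=P^V_1$ (the horizontal--vertical decompositions of $TX_1$ and $TX_2$ restrict compatibly along $X_1$), applying $P^V_2$ gives $\iota^*\nabla^{T^VX_2}_U e=\nabla^{T^VX_1}_U e+II(U,e)$; the $\nu$-valued second fundamental form term is then killed by the projection $P^V_1\colon\iota^*T^VX_2=T^VX_1\oplus\nu\to T^VX_1$, yielding Part~(1).

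For Part~(2), I would use Part~(1) together with the definition $\nabla^\nu=P_\nu\iota^*\nabla^{T^VX_2}P_\nu$ to conclude that the two $SO$-connections on $\iota^*T^VX_2=T^VX_1\oplus\nu$ have matching diagonal blocks, so their difference $S$ is a 1-form on $X_1$ with values in the off-diagonal part of $\mathfrak{so}(T^VX_1\oplus\nu)$. The differential $\spinc$-structure compatibility further guarantees that the two induced line-bundle connections agree, so that $\iota^*\widehat{\nabla}^{T^VX_2}$ and $\widehat{\nabla}^{T^VX_1}\oplus\widehat{\nabla}^\nu$ differ only by the $\spinc$-lift of $S$. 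Choosing the linear interpolation $\widehat{\nabla}^F=p^*(\widehat{\nabla}^{T^VX_1}\oplus\widehat{\nabla}^\nu)+s\,p^*S$ on $F$ over $W$, with $s$ the coordinate on $[0,1]$, the curvature decomposes as
\[
F^W=F_0+ds\wedge S+s\,d^{\nabla_0}S+s^2\,S\wedge S,
\]
and $\Td(F^W)=\widehat{A}(F^W)\cdot e^{c_1(L)/2}$ with $c_1(L)$ unchanged under the interpolation. Extracting the $ds$-component and integrating over $[0,1]$ gives $\widetilde{C}$; the off-diagonal structure of $S$, combined with the trace identities for $\widehat{A}$, should then make this form a total differential on $X_1$, i.e.\ $\widetilde{C}\in\Image(d)$.

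The main obstacle is Part~(2). Part~(1) only provides the block form of the difference of connections; establishing that the Chern-Simons transgression of $\Td$ under the off-diagonal perturbation $S$ actually lies in $\Image(d)$ requires a genuine calculation, for which the essential levers are the constancy of the line-bundle contribution under the interpolation and the restricted form of traces of products of the off-diagonal 1-form $S$ against block-diagonal polynomials in the curvature.
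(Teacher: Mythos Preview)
Your Part~(1) is correct and essentially the same as the paper's: you use the Gauss formula for the isometric embedding, the paper phrases it as ``naturality'' of the projected connection under $P_{12}\:\iota^*TX_2\to TX_1$, but both amount to compressing $\iota^*\nabla^{TX_2}$ first to $TX_1$ and then to $T^VX_1$.

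For Part~(2) the paper takes a shorter route than you do. It asserts that under the hypothesis $(T^HX_2)\big|_{X_1}=T^HX_1$ one has
\[
\iota^*\widehat{\nabla}^{T^VX_2}=\widehat{\nabla}^{T^VX_1}\oplus\widehat{\nabla}^{\nu}
\]
as connections, not merely that their diagonal blocks agree. Granting this, the interpolating connection $\widehat{\nabla}^F$ on $[0,1]\times X_1$ can be taken to be the constant pullback, so $\Td(\widehat{\nabla}^F)$ has no $ds$-component and $\widetilde{C}=\int_0^1\Td(\widehat{\nabla}^F)=0$ on the nose. There is then nothing to transgress and no ``trace identity'' computation is needed.

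Your proposal, by contrast, concedes a nonzero off-diagonal piece $S$ and then attempts to show that the $\Td$-transgression along $\nabla_0+sS$ lands in $\Image(d)$. You do not carry this out; you only say it ``should'' follow. That is the genuine gap in your write-up. Note also that your own Gauss-formula reasoning in Part~(1) produces a second-fundamental-form term $II(U,e)\in\Gamma(\nu)$, which is exactly the off-diagonal block of $\iota^*\nabla^{T^VX_2}$ in the splitting $T^VX_1\oplus\nu$; so if you want to follow the paper's line you would need to explain why this term vanishes (or why it is irrelevant to $\widetilde{C}$), and if you want to follow your own line you must actually do the computation you outline. The symmetry under the gauge transformation $\sigma=\mathrm{id}_{T^VX_1}\oplus(-\mathrm{id}_\nu)$, which sends $S\mapsto -S$ and fixes $\nabla_0$, is a natural tool here, but by itself it only yields $CS_{\Td}(\nabla_0-S,\nabla_0+S)=0$, not $CS_{\Td}(\nabla_0,\nabla_0+S)=0$; so more is required.
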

 \begin{proof}
Suppose that $(T^HX_2) \big|_{X_1} = T^HX_1$. 
Choose a Riemannian metric $g^{TB}$ on $B$ and construct
$g^{TX_2}$, $\nabla^{TX_2}$, $\nabla^{T^VX_2}$,
$g^{TX_1}$, $\nabla^{TX_1}$ and $\nabla^{T^VX_1}$ as in
Definition \ref{Bismutconnection}. Let
$P_{12} : \iota^* TX_2 \rightarrow TX_1$ be orthogonal projection.
By naturality,
$\nabla^{TX_1} = P_{12} \circ \iota^* \nabla^{TX_2} \circ P_{12}$ and
$\nabla^{T^VX_1} = P_{12} \circ \iota^* \nabla^{T^VX_2} \circ P_{12}$.
This proves part (1) of the lemma.

As $\iota^* \widehat{\nabla}^{T^VX_2} = \widehat{\nabla}^{T^VX_1} \oplus
\widehat{\nabla}^{\nu}$, it follows that $\widetilde{C} = 0$.
This proves part (2) of the lemma.
 \end{proof}

 \begin{definition} \label{modified}
Define the modified pushforward $\check{\iota}_*^{mod} ({\mathcal E}) \in
\WFcK^r(X_2)$ by
 \begin{equation}
\check{\iota}_*^{mod} ({\mathcal E}) = \check{\iota}_* ({\mathcal E}) -
j \left( \frac{\widetilde{C}
}{
\iota^* 
\Td \left( \widehat{\nabla}^{T^VX_2} \right)
 \wedge
\Td \left( \widehat{\nabla}^{\nu} \right)
} \wedge \omega({\mathcal E}) \wedge \delta_{X_1}\right).
 \end{equation}
 \end{definition}

 \begin{lemma} \label{pushmod}
 \begin{equation}
\omega(\check{\iota}_*^{mod} ({\mathcal E})) =
\frac{\Td \left( \widehat{\nabla}^{T^VX_1} \right)}
{
\iota^* 
\Td \left( \widehat{\nabla}^{T^VX_2} \right)
} \wedge \omega({\mathcal E}) \wedge \delta_{X_1}.
 \end{equation}
 \end{lemma}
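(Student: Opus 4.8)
The plan is to push the defining formula of Definition~\ref{modified} through the homomorphism $\omega$ and then collapse everything using identities already in hand. Write $\check{\iota}_*^{mod}({\mathcal E}) = \check{\iota}_*({\mathcal E}) - j(\beta)$, where
\[
\beta = \frac{\widetilde{C}}{\iota^*\Td(\widehat{\nabla}^{T^VX_2}) \wedge \Td(\widehat{\nabla}^{\nu})} \wedge \omega({\mathcal E}) \wedge \delta_{X_1}
\]
is the current appearing in that definition. Since $j(\beta)$ is represented by the quadruple $(0,0,0,\beta)$, one has $\omega(j(\beta)) = d\beta$, so $\omega(\check{\iota}_*^{mod}({\mathcal E})) = \omega(\check{\iota}_*({\mathcal E})) - d\beta$. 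For the first summand I would quote Lemma~\ref{omegaemb} together with the definition of $\iota_*$ on forms, which give $\omega(\check{\iota}_*({\mathcal E})) = \iota_*(\omega({\mathcal E})) = \frac{\omega({\mathcal E})}{\Td(\widehat{\nabla}^{\nu})} \wedge \delta_{X_1}$. Thus the entire content is the evaluation of $d\beta$.

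First I would observe that $\omega({\mathcal E})$ is closed and $\delta_{X_1}$ is a closed current — the fibers of $\pi_1$ are closed manifolds, so integration over $X_1$ produces no boundary term — hence $\omega({\mathcal E}) \wedge \delta_{X_1}$ is closed. Next, the denominator $\iota^*\Td(\widehat{\nabla}^{T^VX_2}) \wedge \Td(\widehat{\nabla}^{\nu})$ is a closed form on $X_1$ whose degree-zero component is $1$, hence a unit with closed inverse; therefore $d$ passes through both this factor and through $\omega({\mathcal E}) \wedge \delta_{X_1}$, leaving $d\beta = \frac{d\widetilde{C}}{\iota^*\Td(\widehat{\nabla}^{T^VX_2}) \wedge \Td(\widehat{\nabla}^{\nu})} \wedge \omega({\mathcal E}) \wedge \delta_{X_1}$. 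Substituting~\eqref{dC} for $d\widetilde{C}$ and simplifying (the Todd forms have even degree, so the factors commute and the divisions are unambiguous) gives
\[
d\beta = \left(\frac{1}{\Td(\widehat{\nabla}^{\nu})} - \frac{\Td(\widehat{\nabla}^{T^VX_1})}{\iota^*\Td(\widehat{\nabla}^{T^VX_2})}\right) \wedge \omega({\mathcal E}) \wedge \delta_{X_1}.
\]
Subtracting this from $\omega(\check{\iota}_*({\mathcal E})) = \frac{1}{\Td(\widehat{\nabla}^{\nu})} \wedge \omega({\mathcal E}) \wedge \delta_{X_1}$, the $\Td(\widehat{\nabla}^{\nu})^{-1}$ terms cancel and exactly $\frac{\Td(\widehat{\nabla}^{T^VX_1})}{\iota^*\Td(\widehat{\nabla}^{T^VX_2})} \wedge \omega({\mathcal E}) \wedge \delta_{X_1}$ survives, which is the assertion.

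I do not anticipate a genuine obstacle; the work is entirely bookkeeping in the currential setting. The points to be careful about are: checking that $\beta$ and $d\beta$ are well-defined currents with wave front set disjoint from the conormal bundle of the fibers — this rests on Lemma~\ref{wflemma} and on the fact (recorded just after~\eqref{gamma}) that $\gamma$ is a locally integrable form with wave front set contained in $\nu^*$, so that $\check{\iota}_*^{mod}({\mathcal E})$ really lands in $\WFcK^r(X_2)$ and $\omega$ of it is an honest element of $\WFO(X_2;\R)^r$; and recording the conventions that $d(\alpha \wedge \delta_{X_1}) = d\alpha \wedge \delta_{X_1}$ for a form $\alpha$ on $X_1$ and that exterior differentiation commutes with multiplication by a unit closed form. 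Once these are in place, the lemma follows from the two displayed computations above.
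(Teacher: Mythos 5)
Your proof is correct and follows essentially the same route as the paper: apply $\omega$ to the defining formula of $\check{\iota}_*^{mod}$, use $\omega(j(\beta)) = d\beta$ and Lemma~\ref{omegaemb}, pull $d$ onto $\widetilde{C}$, substitute \eqref{dC}, and cancel. The only differences are cosmetic: you make explicit the Leibniz-rule justification (all other factors are closed, the inverse Todd form is a closed unit) which the paper leaves tacit, and your parenthetical reason that $\delta_{X_1}$ is closed should really be that $X_1$ is a manifold without boundary properly embedded in $X_2$, not that the fibers of $\pi_1$ are closed, though this does not affect the argument.
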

 \begin{proof}
We have
 \begin{align}
\omega(\check{\iota}_*^{mod} ({\mathcal E})) = & 
\omega(\check{\iota}_*({\mathcal E})) -
\frac{d\widetilde{C}}{\iota^* 
\Td \left( \widehat{\nabla}^{T^VX_2} \right)
 \wedge
\Td \left( \widehat{\nabla}^{\nu} \right)
} \wedge \omega({\mathcal E}) \wedge \delta_{X_1} \\
= & 
\frac{\omega({\mathcal E})}{
\Td \left( \widehat{\nabla}^{\nu} \right)
}
\wedge \delta \mstrut _{X_1} - \notag \\ 
&  \frac{
\iota^* 
\Td \left( \widehat{\nabla}^{T^VX_2} \right)
- \Td \left( \widehat{\nabla}^{T^VX_1} \right) \wedge
\Td \left( \widehat{\nabla}^{\nu} \right)}{\iota^* 
\Td \left( \widehat{\nabla}^{T^VX_2} \right)
 \wedge
\Td \left( \widehat{\nabla}^{\nu} \right)
} \wedge \omega({\mathcal E}) \wedge \delta_{X_1} \notag \\
= & \frac{
\Td \left( \widehat{\nabla}^{T^VX_1} \right)
}{
\iota^* 
\Td \left( \widehat{\nabla}^{T^VX_2} \right)
} \wedge \omega({\mathcal E}) \wedge \delta_{X_1}. \notag
 \end{align}
This proves the lemma.
 \end{proof}

In the next 
lemma we consider the submersion pushforward in the case
of a product bundle, under the assumption that the differential
K-theory class on the total space has an almost-product form.

 \begin{lemma} \label{productpushforward} Let $Z$ be a compact Riemannian
$\spinc$-manifold of even dimension $n$ with a unitary connection
$\nabla^{L_Z}$ on the characteristic 
line bundle $L^Z$.  Let $\pi^Z : Z \rightarrow \pt$ be
the map to a point.  Let $B$ be any manifold.  Let $\pi^{prod} : Z \times B
\rightarrow B$ be projection on the second factor.  Let $T^H_{prod}(Z \times
B)$ be the product horizontal distribution on the fiber bundle $Z \times B
\rightarrow B$.  Let $p : Z \times B \rightarrow Z$ be projection on the
first factor.  Suppose that ${\mathcal E}^Z=(E^Z, h^{E^Z}, \nabla^{E^Z},
\phi^Z)$ and ${\mathcal E}^B=(E^B, h^{E^B}, \nabla^{E^B}, \phi^B)$ are
generators for $\cK^n(Z)$ and $\cK^{r-n}(B)$, respectively, for some even
integer $r$.  Let $\pi^Z_*(E^Z) \in K^0(\pt)$ denote the K-theory pushforward
of $[E^Z] \in K^n(Z)$ under the map $\pi^Z : Z \rightarrow \pt$.  (We can
identify $\pi^Z_*(E^Z)$ with $\int_Z \Td( \widehat{\nabla}^{TZ}) \wedge
\omega(\nabla^{E^Z}) = \Index(D^{Z,E^Z}) \in \ZZ$.)  Given $\phi \in \WFO(Z
\times B)^{r-1}/\Image(d)$, put ${\mathcal E} = (p^* {\mathcal E}^Z) \cdot
((\pi^{prod})^* {\mathcal E}^B) + j(\phi)$. Then
 \begin{equation}
\check{\pi}^{prod}_* {\mathcal E} = \pi^Z_*(E^Z) \cdot {\mathcal E}^B +
j(\pi^{prod}_*(\phi)) 
 \end{equation}
in $\cK^{r-n}(B)$. 
 \end{lemma}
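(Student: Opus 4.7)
The plan is to exploit additivity of $\check{\pi}^{prod}_*$ on $\WFcK^r(Z\times B)$ together with the drastic simplification of the Bismut superconnection in the product situation. Additivity lets me split $\mathcal{E}$ into the two pieces $\mathcal{E}^{prod} := (p^*\mathcal{E}^Z)\cdot((\pi^{prod})^*\mathcal{E}^B)$ and $j(\phi)$, and compute the analytic pushforward of each separately. The second piece is easy: represent $j(\phi)$ by the generator $(0,0,0,\phi)$ with zero underlying bundle, so $\Ker(D^V) = 0$ and the Bismut--Cheeger eta form is trivially zero; Definition~\ref{analindex} then immediately gives $\check{\pi}^{prod}_*(j(\phi)) = j(\pi^{prod}_*(\phi))$.

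For $\mathcal{E}^{prod}$, the product horizontal distribution makes the fiber-bundle curvature $T$ vanish, so the $c(T)/4$ term drops out of~\eqref{eq:23}. Using the tensor-product form of the twisting bundle $F = p^*E^Z\otimes(\pi^{prod})^* E^B$, I identify $\sH$ with $\Gamma(Z;\Sp^Z\otimes E^Z)\otimes E^B$; under this identification $D^V = D^{Z,E^Z}\otimes \mathrm{id}_{E^B}$, $\nabla^{\sH} = \mathrm{id}\otimes\nabla^{E^B}$, and these two operators commute. Consequently $A_s^2 = s^2 u (D^{Z,E^Z})^2\otimes 1 + 1\otimes(\nabla^{E^B})^2$, which factorizes nicely when exponentiated. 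Moreover the $L^2$-kernel bundle $\Ker(D^V)$ equals $\Ker(D^{Z,E^Z})\otimes E^B$ as a $\zt$-graded Hermitian bundle, and the projected connection reduces to $\mathrm{id}\otimes\nabla^{E^B}$.

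The main substantive step, and the place where something beyond bookkeeping is required, is to show that the eta form $\widetilde{\eta}$ vanishes for this product superconnection. I would verify it as follows: $\frac{dA_s}{ds} = u^{1/2}D^{Z,E^Z}\otimes 1$ is $\zt$-odd on the first tensor factor, while $e^{-u^{-1}A_s^2}$ factors as commuting, grading-preserving exponentials. Taking the supertrace over $\sH$ as a product supertrace isolates the factor $\str\bigl(D^{Z,E^Z}e^{-s^2(D^{Z,E^Z})^2}\bigr)$, which vanishes because a grading-reversing operator has zero supertrace. Hence $\widetilde{\eta}=0$ in the integrand of~\eqref{etaform}.

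It remains to evaluate $\pi^{prod}_*(\phi^{prod})$, where $\phi^{prod}$ is the three-term expression from~\eqref{product}. Two of those terms carry the factor $p^*\phi^Z$; since $\phi^Z$ has total degree $n-1$ with $n$ even and $\deg u = 2$, its actual form degree in the $Z$-direction is odd, and so the fiber integral against the even-degree form $\Td(\widehat{\nabla}^{TZ})$ over the even-dimensional fiber $Z$ vanishes. The remaining middle term gives $\bigl(\int_Z\Td(\widehat{\nabla}^{TZ})\wedge\omega(\nabla^{E^Z})\bigr)\cdot \phi^B = \pi^Z_*(E^Z)\cdot \phi^B$. Choosing orthonormal bases for $\Ker(D^{Z,E^Z})_\pm$ identifies the $\zt$-graded bundle-with-connection $\Ker(D^{Z,E^Z})\otimes E^B$ with the representative of $\pi^Z_*(E^Z)\cdot[E^B]$ in $\cK^{r-n}(B)$, and the form parts match, since multiplication of $\mathcal{E}^B$ by the integer $\pi^Z_*(E^Z)$ via~\eqref{product} precisely multiplies $\phi^B$ by that integer. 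Assembling the two pieces completes the proof.
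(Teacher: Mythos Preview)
Your proof is correct and follows essentially the same approach as the paper: both rely on the product structure to identify $\Ker(D^V)=\Ker(D^{Z,E^Z})\otimes E^B$ with its projected connection, on the vanishing of the Bismut--Cheeger eta form via the parity/odd-supertrace argument, and on the same fiber-integration bookkeeping that kills the terms containing $p^*\phi^Z$. The only difference is organizational---you split off $j(\phi)$ and handle it separately, whereas the paper absorbs it into the single generator of~\eqref{write} and carries it through the computation in one pass; since Definition~\ref{analindex} is manifestly additive in the form slot for fixed bundle data, the two organizations are equivalent.
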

 \begin{proof}
Using (\ref{product}), we can write
 \begin{align} \label{write}
{\mathcal E} = & \left(
p^* E^Z \otimes (\pi^{prod})^* E^B,
p^* h^{E^Z} \otimes (\pi^{prod})^* h^{E^B},
p^* \nabla^{E^Z} \otimes I + I \otimes (\pi^{prod})^* \nabla^{E^B}, \right. \\
& \left. p^* \phi^Z \wedge (\pi^{prod})^* \omega(\nabla^{E^B}) +
p^* \omega(\nabla^{E^Z}) \wedge (\pi^{prod})^* \phi^B +
p^* \phi^Z \wedge (\pi^{prod})^* d\phi^B + \phi
\right). \notag
 \end{align}
Also, in this product situation, we have 
$\nabla^{T^V(Z \times B)} = p^* \nabla^{TZ}$,
$\Ker(D^V) = \Ker(D^{Z,E^Z}) \otimes E^B$,
$h^{\Ker(D^V)} = h^{\Ker(D^{Z,E^Z})} \otimes h^{E^B}$ and
$\nabla^{\Ker(D^V)} = I_{\Ker(D^{Z,E^Z})} \otimes  \nabla^{E^B}$.
Regarding the eta form, as $[D^V, \nabla^{\mathcal H}] =
(\nabla^{\mathcal H})^2 = T = 0$, 
we have
 \begin{equation}
\tilde{\eta} = 
u^{\frac{r-n}{2}} R_u \int_0^\infty \STr \left( u^{-1} \frac{dA_s}{ds}
e^{- u^{-1} A_s^2} \right) ds =
u^{\frac{r-n}{2}} R_u \int_0^\infty \STr \left( u^{- 1/2} D^V
e^{- s^2 (D^V)^2} \right) ds = 0
\end{equation}
in $\Omega (X;\R)^{r-n-1}/\Image(d)$,
for parity reasons.
Then
 \begin{align} \label{pusheqn}
\check{\pi}^{prod}_* {\mathcal E} = &
\left( \Ker(D^{Z,E^Z}) \otimes E^B,
h^{\Ker(D^{Z,E^Z})} {\otimes} h^{E^B},
I_{\Ker(D^{Z,E^Z})} {\otimes} \nabla^{E^B}, \right. \\
& \left. 
\pi^{prod}_*
\left(
p^* \phi^Z \wedge (\pi^{prod})^* \omega(\nabla^{E^B}) +
p^* \omega(\nabla^{E^Z}) \wedge (\pi^{prod})^* \phi^B +
p^* \phi^Z \wedge (\pi^{prod})^* d\phi^B + \phi
\right) \right) \notag \\
= &
\left( \Ker(D^{Z,E^Z}) \otimes E^B,
h^{\Ker(D^{Z,E^Z})} {\otimes} h^{E^B},
I_{\Ker(D^{Z,E^Z})} {\otimes} \nabla^{E^B}, \right. \notag \\
& \left. 
\pi^Z_*(\omega(\nabla^{E^Z})) \cdot \phi^B +
\pi^{prod}_*(\phi)
\right) \notag \\
= & \pi^Z_*(E^Z) \cdot {\mathcal E}^B +
j(\pi^{prod}_*(\phi) ). \notag 
 \end{align}
This proves the lemma.
 \end{proof}

The next lemma is a technical result, which will be used later,
about the functoriality of reduced eta invariants with respect
to product structures.

 \begin{lemma} \label{func}
Under the hypotheses of Lemma \ref{productpushforward}, suppose in 
addition that $B$ is an odd-dimensional 
closed $\spinc$-manifold, equipped with 
a Riemannian metric $g^{TB}$ and a unitary connection $\nabla^{L^B}$.
Then $\overline{\eta}(B, \check{\pi}_*^{prod} {\mathcal E}) =
\overline{\eta}(Z \times B, {\mathcal E})$ in 
$u^{\frac{r-n-\dim(B)-1}{2}} \cdot (\RR/\ZZ)$.
 \end{lemma}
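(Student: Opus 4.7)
My plan is to expand both sides using Definition \ref{bigreducedeta} (extended to degree $r-n$ and $r$ respectively), then match the Dirac and form contributions separately. Writing ${\mathcal E}^Z = (E^Z, h^{E^Z}, \nabla^{E^Z}, \phi^Z)$ and ${\mathcal E}^B = (E^B, h^{E^B}, \nabla^{E^B}, \phi^B)$, and letting $\phi_{\mathrm{tot}}$ denote the differential form appearing in the representative (\ref{write}) of ${\mathcal E}$, Lemma \ref{productpushforward} gives $\check{\pi}^{prod}_* {\mathcal E} = (\Ker(D^{Z,E^Z}) \otimes E^B,\, h,\, \nabla,\, \Index(D^{Z,E^Z}) \cdot \phi^B + \pi^{prod}_* \phi)$, where $\Ker(D^{Z,E^Z})$ is a finite-dimensional $\ZZ/2\ZZ$-graded vector space pulled back from a point and where I have already used the Atiyah--Singer identity $\pi^Z_*(\omega(\nabla^{E^Z})) = \Index(D^{Z,E^Z})$. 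The right-hand side is computed by applying Definition \ref{bigreducedeta} directly to ${\mathcal E}$ on $Z \times B$ with its product $\spinc$ structure. The task reduces to matching the eta contributions and the form contributions separately, modulo $\ZZ$ in appropriate $u$-units.

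The key analytic input is a product formula for the reduced eta invariant. Separating variables, $D^{Z \times B, E^{\mathrm{tot}}} = D^{Z, E^Z} \otimes 1 + \gamma^Z \otimes D^{B, E^B}$ on $\Sp^Z \otimes \Sp^B \otimes E^{\mathrm{tot}}$, where $\gamma^Z$ is the chirality on $\Sp^Z$. On $(\Ker D^{Z,E^Z})^\perp \otimes L^2(B, \Sp^B \otimes E^B)$, a $2 \times 2$ block calculation (pairing the $\pm \sqrt{\mu}$-eigenvectors of $D^Z$ that are exchanged by $\gamma^Z$) shows the spectrum consists of values $\pm \sqrt{\mu + \lambda^2}$ and is therefore symmetric, so this subspace contributes nothing to either $\eta$ or $\dim \Ker$. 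On $\Ker(D^{Z,E^Z}) \otimes L^2(B, \Sp^B \otimes E^B)$ the operator restricts to $\gamma^Z \otimes D^{B,E^B}$, yielding $\eta(D^{Z \times B, E^{\mathrm{tot}}}) = \Index(D^{Z,E^Z}) \cdot \eta(D^{B, E^B})$ and $\dim \Ker(D^{Z \times B, E^{\mathrm{tot}}}) = \dim \Ker(D^{Z,E^Z}) \cdot \dim \Ker(D^{B,E^B})$. Converting to reduced eta, the discrepancy $\overline{\eta}(D^{Z \times B, E^{\mathrm{tot}}}) - \Index(D^{Z,E^Z}) \cdot \overline{\eta}(D^{B,E^B}) = \dim \Ker(D^{Z,E^Z}_-) \cdot \dim \Ker(D^{B,E^B})$ is an integer and hence vanishes modulo $\ZZ$. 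The same separation of variables, with $\Ker(D^{Z,E^Z})$ now viewed as a trivial graded bundle on $B$ pulled back from a point, shows that the super-reduced eta of $D^{B, \Ker(D^{Z,E^Z}) \otimes E^B}$ equals $\Index(D^{Z,E^Z}) \cdot \overline{\eta}(D^{B,E^B})$ exactly. Hence both Dirac contributions agree modulo $\ZZ$.

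For the form integrals, multiplicativity $\Td(\widehat{\nabla}^{T(Z \times B)}) = p^* \Td(\widehat{\nabla}^{TZ}) \wedge (\pi^{prod})^* \Td(\widehat{\nabla}^{TB})$ together with Fubini allow $\int_{Z \times B} \Td(\widehat{\nabla}^{T(Z \times B)}) \wedge \phi_{\mathrm{tot}}$ to be split into four pieces corresponding to the summands of $\phi_{\mathrm{tot}}$ in (\ref{write}). The two pieces containing $p^* \phi^Z$ vanish because $\int_Z \Td(\widehat{\nabla}^{TZ}) \wedge \phi^Z$ would sit in total degree $-1$, forcing a half-integer power of $u$ which is not present in the $\R$-graded framework. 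The piece with $p^* \omega(\nabla^{E^Z})$ contributes $\Index(D^{Z,E^Z}) \cdot \int_B \Td(\widehat{\nabla}^{TB}) \wedge \phi^B$ by Atiyah--Singer, and the remaining piece is $\int_B \Td(\widehat{\nabla}^{TB}) \wedge \pi^{prod}_* \phi$ by the very definition of $\pi^{prod}_*$. These match exactly the form integral coming from the representative of $\check{\pi}^{prod}_* {\mathcal E}$ on the left. The main technical subtlety is verifying that the eta discrepancy above is a genuine integer rather than a half-integer; this integrality is precisely what forces the conclusion to be stated modulo $u^{(r-n-\dim(B)-1)/2} \cdot \ZZ$ rather than a finer subgroup.
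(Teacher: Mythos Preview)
Your proof is correct and follows essentially the same approach as the paper: both expand each side via Definition~\ref{bigreducedeta}, invoke separation of variables for the product Dirac operator to match the eta contributions modulo~$\ZZ$, and use multiplicativity of $\Td$ together with Fubini to match the form integrals. You supply more detail than the paper does---in particular the explicit block-diagonal spectral argument and the computation of the integer discrepancy $\dim\Ker(D^{Z,E^Z}_-)\cdot\dim\Ker(D^{B,E^B})$, as well as the degree-parity reason the $\phi^Z$ terms vanish---whereas the paper simply asserts the product formula and drops the $\phi^Z$ terms without comment.
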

 \begin{proof}
Using (\ref{pusheqn}), we have
 \begin{align}
\overline{\eta}(B, \check{\pi}_*^{prod} {\mathcal E}) = &
u^{\frac{r-n-\dim(B)-1}{2}} \cdot
\Index(D^{Z,E^Z}) \cdot \overline{\eta}(D^{B,E^B}) + \\
& \int_B \Td(\widehat{\nabla}^{TB}) \wedge
\left( (\pi^Z_*(\omega(\nabla^{E^Z})) \cdot \phi^B + \pi^{prod}_*(\phi)
\right). \notag
\end{align}
By separation of variables, it is easy to show that
\begin{equation}
\Index(D^{Z,E^Z}) \cdot \overline{\eta}(D^{B,E^B}) \, = \,
\overline{\eta}(D^{Z \times B,p^*E^Z \otimes \pi^*E^B}).
\end{equation}
Next,
\begin{align}
\int_B \Td(\widehat{\nabla}^{TB}) \wedge
(\pi^Z_*(\omega(\nabla^{E^Z})) \cdot \phi^B \, & = \,
\left( \int_Z \Td(\widehat{\nabla}^{TZ}) \wedge
\omega(\nabla^{E^Z}) \right) \cdot
\int_B \Td(\widehat{\nabla}^{TB}) \wedge
\phi^B \\
& = \, \int_{Z \times B} 
\Td(\widehat{\nabla}^{T(Z \times B)}) \wedge
p^* \omega(\nabla^{E^Z}) \wedge (\pi^{prod})^* \phi^B. \notag
\end{align}
Also,
\begin{equation}
\int_B \Td(\widehat{\nabla}^{TB}) \wedge
\pi^{prod}_*(\phi) \, = \,
\int_{Z \times B} 
\Td(\widehat{\nabla}^{T(Z \times B)}) \wedge
\phi.
\end{equation}
Hence
\begin{align}
\overline{\eta}(B, \check{\pi}_*^{prod} {\mathcal E})
= & 
u^{\frac{r-n-\dim(B)-1}{2}} \cdot
\overline{\eta}(D^{Z \times B,p^*E^Z \otimes \pi^*E^B}) + \\
& \int_{Z \times B} 
\Td(\widehat{\nabla}^{T(Z \times B)}) \wedge
\left( p^* \omega(\nabla^{E^Z}) \wedge (\pi^{prod})^* \phi^B + \phi \right).
\notag
 \end{align}

On the other hand, from (\ref{write}),
 \begin{align}
\overline{\eta}(Z \times B, {\mathcal E}) = & 
u^{\frac{r-n-\dim(B)-1}{2}} \cdot
\overline{\eta}(D^{Z \times B,p^*E^Z \otimes \pi^*E^B}) + 
\int_{Z \times B} \Td(\widehat{\nabla}^{T(Z \times B)}) \wedge \\
& \left( p^* \phi^Z \wedge (\pi^{prod})^* \omega(\nabla^{E^B}) +
p^* \omega(\nabla^{E^Z}) \wedge (\pi^{prod})^* \phi^B +
p^* \phi^Z \wedge (\pi^{prod})^* d\phi^B + \phi \right) \notag \\
= & u^{\frac{r-n-\dim(B)-1}{2}} \cdot
\overline{\eta}(D^{Z \times B,p^*E^Z \otimes \pi^*E^B}) + \notag \\
& \int_{Z \times B} 
\Td(\widehat{\nabla}^{T(Z \times B)}) \wedge
\left( p^* \omega(\nabla^{E^Z}) \wedge (\pi^{prod})^* \phi^B + \phi \right).
\notag
 \end{align}
This proves the lemma.
 \end{proof}

We now work towards the construction of the topological
index, beginning with a result about embedding in spheres. 

 \begin{lemma} \label{embeddingexists}
Suppose that $\pi : X \rightarrow B$ is a fiber bundle with $X$ compact
and even-dimensional fibers of dimension $n$.
Suppose that $\pi$ has a Riemannian structure.
Given $N$ even, let $\pi^{prod} : S^N \times B \rightarrow B$ be the
product bundle.
Then for large $N$, there are an
embedding $\iota : X \rightarrow S^N \times B$ and a
Riemannian metric on $S^N$ (independent of $b \in B$) so that
$\iota$ is compatible with
the Riemannian structures on $\pi$ and $\pi^{prod}$.
(In applying Definition \ref{iotacomp}, we take
$X_1 = X$ and $X_2 = S^N \times B$.)
 \end{lemma}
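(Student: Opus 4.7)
The plan is to reduce the problem to a single application of Nash's $C^\infty$ isometric embedding theorem on $X$ itself, and then transfer the resulting embedding to a sphere by an elementary construction with a carefully chosen metric on $S^N$.

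First I would fix any Riemannian metric $g^{TB}$ on $B$ and combine it with the given Riemannian structure $(g^{T^VX}, T^HX)$ on $\pi$ to obtain a Riemannian metric $g^{TX} = g^{T^VX} \oplus \pi^*g^{TB}$ on the compact manifold $X$. Nash's $C^\infty$ isometric embedding theorem then produces a smooth isometric embedding $F : (X, g^{TX}) \to \RR^K$ for some $K$. The crucial observation is that for each $b \in B$, the restriction $F|_{X_b} : X_b \to \RR^K$ is automatically isometric with respect to $g^{T^VX}|_{X_b}$, because $TX_b = T^VX|_{X_b}$ and $g^{TX}$ restricts to $g^{T^VX}$ on $T^VX$. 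Thus $F$ is already fiberwise isometric, regardless of the choice of $g^{TB}$.

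Next I would transfer the image to a sphere. Choose any even $N \geq K$ and extend $F$ to $\widetilde{F} : X \to \RR^N$ by padding with zeros; this preserves the fiberwise isometric property. Let $\sigma : \RR^N \to S^N \setminus \{p\}$ be the inverse of stereographic projection from a chosen point $p \in S^N$, and define $G := \sigma \circ \widetilde{F} : X \to S^N$ and $\iota(x) := (G(x), \pi(x)) \in S^N \times B$. Compactness of $X$ together with injectivity of $F$ makes $\iota$ a smooth embedding, and by construction $\pi^{prod} \circ \iota = \pi$, so $\iota$ is a fiberwise embedding.

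Finally I would construct the metric on $S^N$. Let $g_\flat := (\sigma^{-1})^*g_{\mathrm{Eucl}}$ on $S^N \setminus \{p\}$. Since $G(X)$ is compact and contained in $S^N \setminus \{p\}$, pick a smooth cutoff $\chi : S^N \to [0,1]$ equal to $1$ on a neighborhood of $G(X)$ and equal to $0$ near $p$, and let $g_0$ be any fixed smooth Riemannian metric on $S^N$ (for instance, the round one). Set $g^{S^N} := \chi\, g_\flat + (1-\chi)\, g_0$, where $\chi\,g_\flat$ is extended smoothly by zero near $p$; this is a convex combination of positive-definite tensors and hence a smooth Riemannian metric on $S^N$, agreeing with $g_\flat$ on a neighborhood of $G(X)$. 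Because $\sigma$ is by definition an isometry $(\RR^N, g_{\mathrm{Eucl}}) \to (S^N \setminus \{p\}, g_\flat)$ and $\widetilde{F}$ is fiberwise isometric, the map $G|_{X_b}$ is isometric for each $b$, which is precisely the compatibility condition of Definition~\ref{iotacomp} with $X_1 = X$ and $X_2 = S^N \times B$ equipped with the product horizontal distribution and the metric $g^{S^N}$.

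The only nontrivial input is Nash's theorem itself; everything after that is a routine combination of stereographic projection, a cutoff argument, and verification from the definitions, so no serious obstacle is anticipated.
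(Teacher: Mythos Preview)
Your proof is correct and follows essentially the same approach as the paper: build $g^{TX}$ from an auxiliary $g^{TB}$ and the Riemannian structure on $\pi$, isometrically embed $(X,g^{TX})$ into a sphere, and take the graph $\iota(x)=(e(x),\pi(x))$. The paper simply asserts the existence of an isometric embedding of $X$ into an even-dimensional sphere with \emph{some} Riemannian metric, whereas you spell out this step via Nash into $\RR^K$, stereographic projection, and a cutoff metric on $S^N$; this is extra detail rather than a different strategy.
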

 \begin{proof}
Let $g^{TB}$ be any Riemannian metric on $B$. Using the Riemannian
structure on $\pi$, there is a corresponding Riemannian metric
$g^{TX}$ on $X$. 
Let $e : X \rightarrow S^N$ be any isometric embedding of $X$ into
an even-dimensional sphere with some Riemannian metric.
Put $\iota(x) = (e(x), \pi(x)) \in S^N \times B$.
 \end{proof}

Next, we establish a K\"unneth-type formula for the differential K-theory of
$S^N \times B$.  We endow $S^N$ with an arbitrary
Riemannian metric and an arbitrary
unitary connection $\nabla^{L^{S^N}}$ on its
characteristic line bundle $L^{S^N}$. 

 \begin{lemma}\label{thm:88}
 Given ${\mathcal E} \in \WFcK^r(S^N \times B)$ with $r$ and $N$ even,
consider the fibering $\pi^{prod} : S^N \times B \rightarrow B$.  Let $p :
S^N \times B \rightarrow S^N$ be projection onto the first factor.  Then
there are generators $\{ {\mathcal E}^{S^N}_i \}_{i=1}^2$ for $\cK^N(S^N)$,
generators $\{ {\mathcal E}^{B}_i \}_{i=1}^2$ for $\cK^{r-N}(B)$, and  $\phi
\in \WFO(S^N \times B)^{r-1}/\Image(d)$ so that
 \begin{equation} \label{decomposition}
 {\mathcal E} = \sum_{i=1}^2 (p^* {\mathcal E}^{S^N}_i) \cdot ((\pi^{prod})^*
{\mathcal E}^B_i) + j(\phi).
 \end{equation}
 \end{lemma}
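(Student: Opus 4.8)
The plan is to establish the Künneth-type decomposition \eqref{decomposition} by reducing it to the ordinary Künneth theorem in topological $K$-theory together with a correction-form argument. First I would recall that since $S^N$ has even dimension $N$, its reduced $K$-theory is $\widetilde{K}^0(S^N) \cong \ZZ$, generated by a class $\beta$ (a suitably normalized Bott-type generator) with $\ch(\beta)$ supported, up to exact forms, at the generator of $H^N(S^N)$; together with the trivial line bundle this gives $K^0(S^N)\cong \ZZ\oplus \ZZ$. Because $S^N$ is a finite CW complex of finite type, the external product $K^\bullet(S^N)\otimes_{K^\bullet(\pt)} K^\bullet(B) \to K^\bullet(S^N\times B)$ is an isomorphism; concretely, writing $c(\mathcal E)\in K^r(S^N\times B)$ for the underlying topological class of our given $\mathcal E \in \WFcK^r(S^N\times B)$, there exist topological classes that exhibit $c(\mathcal E)$ as $\sum_{i=1}^2 p^* x_i \cdot (\pi^{prod})^* y_i$ with $x_i \in K^N(S^N)$ and $y_i \in K^{r-N}(B)$ — two terms suffice precisely because $K^\bullet(S^N)$ is free of rank $2$ over $K^\bullet(\pt)$.

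Next I would promote these topological classes to differential ones. Choose generators ${\mathcal E}^{S^N}_i$ for $\cK^N(S^N)$ lifting $x_i$: for the rank piece this is a trivial bundle with trivial connection and zero form; for the Bott piece one picks any Hermitian bundle with unitary connection representing $\beta$ on $S^N$ together with some form. Choose generators ${\mathcal E}^B_i$ for $\cK^{r-N}(B)$ lifting $y_i$ — here one must be mildly careful that ${\mathcal E}^B_i$ represents $y_i$ on the nose, which is possible since $c:\cK^{r-N}(B)\to K^{r-N}(B)$ is surjective by the exact sequence~\eqref{eq:5}. Form the candidate $\mathcal F := \sum_{i=1}^2 (p^* {\mathcal E}^{S^N}_i)\cdot((\pi^{prod})^* {\mathcal E}^B_i) \in \cK^r(S^N\times B)$; note this product makes sense because on ordinary (not currential) differential $K$-theory we have the ring structure~\eqref{product}, and the pullbacks of smooth generators are smooth. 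By construction $c(\mathcal F) = c(\mathcal E)$ in $K^r(S^N\times B)$. Hence, by the exactness of~\eqref{eq:5.52}, $\mathcal E - \mathcal F = j(\psi)$ for some $\psi \in \WFO(S^N\times B;\R)^{r-1}/\bigl(\WFO(S^N\times B;\R)_K^{r-1}\bigr)$; choosing a representative form $\phi$ for $\psi$ gives \eqref{decomposition}. (One checks the wave-front condition is automatic: $\mathcal E \in \WFcK^r$ means $\omega(\mathcal E)$ has the required wave front property, $\omega(\mathcal F)$ is smooth, and $\phi$ can be taken with $d\phi = \omega(\mathcal E)-\omega(\mathcal F)$ plus an exact smooth correction, so $\phi \in \WFO$.)

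The main obstacle, and the step I expect to require the most care, is the passage from a topological Künneth decomposition to one at the level of \emph{differential} $K$-theory with control on the differential form $\phi$ — in particular verifying that $\phi$ can be chosen in $\WFO$ rather than just in $\dO$, and that only two summands (matching the free rank of $K^\bullet(S^N)$ over the coefficient ring) are genuinely needed rather than a larger number forced by the relations in the finite-dimensional model. This is handled by the argument above: the two-term bound comes entirely from the topological Künneth theorem for the even sphere, and the $\WFO$ control comes from the fact that the discrepancy $\mathcal E - \mathcal F$ lies in the image of $j$ restricted to $\WFO$-forms, using the compatibility of the exact sequences \eqref{eq:4.52}–\eqref{eq:5.52} with \eqref{eq:4}–\eqref{eq:5}. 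A minor additional point to check is that the external product of a $\WFcK$ class on $S^N\times B$ with smooth product-type data stays within $\WFcK$, which follows since smooth forms do not enlarge wave front sets.
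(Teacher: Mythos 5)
Your proof is correct and follows essentially the same route as the paper's: apply the topological K\"unneth formula to $c(\mathcal{E})$ over the base $K^\bullet(\pt)$ to get a two-term decomposition, lift the $K$-theory factors to differential $K$-theory classes, and then invoke exactness of~\eqref{eq:5.52} to produce the correction current~$\phi$. The only difference is one of exposition: you spell out why two summands suffice (freeness of $K^\bullet(S^N)$ of rank $2$ over $\R$) and sketch why the wave front condition on~$\phi$ holds, whereas the paper simply cites~\eqref{eq:5.52}, which already delivers $\phi$ in the $\WFO$-quotient directly and makes the extra verification superfluous.
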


        \begin{proof}
 By the K\"unneth formula in K-theory, we can write 
  \begin{equation}\label{eq:102}
     c(\mathcal{E}) = \sum\limits_{i=1}^2 p^*e_i^{S^N}\cdot (\pprod)^*e_i^B 
  \end{equation}
for additive generators $\{e_i^{S^N}\}_{i=1}^2$ of $K^N(S^N)$ and classes 
$\{e_i^B\}_{i=1}^2$ in
$K^{r-N}(B)$.  Lift the $e_i$'s to differential K-theory
classes~$\mathcal{E}_i$.
Then the
exact sequence~\eqref{eq:5.52} implies the existence of~$\phi $. 
        \end{proof}

        \begin{remark}[]\label{thm:2}
It is possible to replace the compact manifold~$S^N$ in Lemma \ref{thm:88}
with the noncompact affine space~$\AA^N$, provided that we use currential
$K$-theory with compact supports.  In that case the summation
in~\eqref{decomposition} would only have a single term, and we could remove
the assumption that $X$~is compact in Lemma~\ref{embeddingexists}.  We chose
to avoid introducing compact supports, at the expense of having a slightly
more complicated lemma.
        \end{remark}

Using Lemma \ref{productpushforward}, we obtain an explicit formula
for the pushforward, under the product submersion $S^N \times B \rightarrow B$,
of a differential K-theory class of the type considered in
Lemma \ref{thm:88}. We now show that the result is independent of the
particular K\"unneth-type representation chosen.  

 \begin{lemma} \label{decpushforward}
Given a generator ${\mathcal E}$ for $\WFcK^r(S^N \times B)$ with
$r$ and $N$ even, write 
${\mathcal E}$ as in (\ref{decomposition}). Apply
the map $\check{\pi}^{prod}_*$ in Lemma \ref{productpushforward}
to ${\mathcal E}$ in the case $Z = S^N$, to get an element of
$\cK^{r-N}(B)$. Then the result factors through a map 
$\check{\pi}_*^{prod} : \WFcK^r(S^N \times B) \rightarrow \cK^{r-N}(B)$, 
which is independent
of the particular decomposition (\ref{decomposition}) chosen.
 \end{lemma}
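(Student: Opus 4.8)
The plan is to show that the element $\check{\pi}_*^{prod}\mathcal{E}\in\cK^{r-N}(B)$ constructed via Lemma~\ref{productpushforward} is unchanged if we replace the K\"unneth-type decomposition~\eqref{decomposition} by another one, and that the resulting map is additive on relations. First I would exploit the fact that the difference of any two decompositions of the same $\mathcal{E}$ is a decomposition of $0$, so it suffices to prove: if $\sum_{i=1}^2 (p^*\mathcal{E}_i^{S^N})\cdot((\pi^{prod})^*\mathcal{E}_i^B)+j(\phi)=0$ in $\WFcK^r(S^N\times B)$, then $\sum_i\pi^{S^N}_*(E_i^{S^N})\cdot\mathcal{E}_i^B+j(\pi^{prod}_*(\phi))=0$ in $\cK^{r-N}(B)$. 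I would split the verification according to the two exact sequences~\eqref{eq:4.52} and~\eqref{eq:5.52}: the map $\omega\circ\check{\pi}_*^{prod}$ is pinned down by Lemma~\ref{productpushforward} together with $\omega(\mathcal{E})=\sum_i p^*\omega(\mathcal{E}_i^{S^N})\wedge(\pi^{prod})^*\omega(\mathcal{E}_i^B)+d\phi$, and a short computation using $\int_{S^N}\Td(\widehat{\nabla}^{TS^N})\wedge(-)$ as the fiberwise integral shows $\omega(\check{\pi}_*^{prod}\mathcal{E})=\pi^{prod}_*(\omega(\mathcal{E}))$, hence depends only on $\mathcal{E}$. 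Likewise $c(\check{\pi}_*^{prod}\mathcal{E})=\pi^{prod}_*(c(\mathcal{E}))\in K^{r-N}(B;\ZZ)$ by the K\"unneth formula in topological $K$-theory, again depending only on $\mathcal{E}$.

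Because $\omega$ and $c$ together determine a differential $K$-theory class only up to the image of $j$ restricted to flat classes, the remaining point is to control the "flat" ambiguity: if two a priori different outputs $\check{\pi}_*^{prod}\mathcal{E}$ and $(\check{\pi}_*^{prod})'\mathcal{E}$ agree after applying $\omega$ and $c$, their difference lies in $i(K^{r-N-1}(B;\RZ))$, so I must show this difference vanishes. Here I would use the homotopy-theoretic flexibility built into the decomposition: given two decompositions of the same $\mathcal{E}$, interpolate over $[0,1]\times S^N\times B$ by a decomposition $\mathcal{E}'$ of $p^*\mathcal{E}$ whose restrictions to the two ends are the given ones (this is possible because the lift of a fixed $K$-theory K\"unneth decomposition to differential $K$-theory is unique up to $j$ of a form, and one can connect the two chosen $\phi$'s and the two chosen lifts $\mathcal{E}_i^{S^N},\mathcal{E}_i^B$ by paths). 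Then apply $\check{\pi}^{prod,[0,1]}_*$ fiberwise over $[0,1]\times B$ and invoke Lemma~\ref{restriction} (in its currential extension, Remark~\ref{restriction2}): the difference of the two outputs equals $j\bigl(\int_0^1\omega(\check{\pi}^{prod,[0,1]}_*\mathcal{E}')\bigr)$, and since $\omega(\check{\pi}^{prod,[0,1]}_*\mathcal{E}')=\pi^{prod,[0,1]}_*(\omega(\mathcal{E}'))$ is pulled back from $B$ (being $\pi^{prod}_*$ of a form on $S^N\times B$, the interpolation in the $[0,1]$-direction of $\omega$ being exact), the fiberwise integral $\int_0^1$ of it over $[0,1]$ vanishes.

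The same interpolation argument, now over a relation $\mathcal{E}_2=\mathcal{E}_1+\mathcal{E}_3$ in $\WFcK^r(S^N\times B)$ realized by a generator $\mathcal{E}'$ on $[0,1]\times S^N\times B$ as in the proof of Proposition~\ref{etapairing}(2), shows that $\check{\pi}_*^{prod}$ respects relations, so it descends to a homomorphism on $\WFcK^r(S^N\times B)$. I expect the main obstacle to be the bookkeeping in the interpolation step: one must check that the $\WFO$-condition on the error form $\phi$ is preserved along the homotopy (so that $\pi^{prod}_*(\phi)$ remains a smooth form on $B$ throughout and Lemma~\ref{restriction} genuinely applies in the currential setting), and that the path of K\"unneth representatives can be chosen so that $\mathcal{E}'$ is an honest generator of currential $K$-theory on $[0,1]\times S^N\times B$. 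Once that is in place, everything else is a routine application of the two exact sequences and the computation of $\omega$ and $c$ already carried out for Lemma~\ref{productpushforward}.
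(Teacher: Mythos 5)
Your strategy is genuinely different from the paper's: you replace the direct algebraic verification with a homotopy-theoretic interpolation over $[0,1]\times S^N\times B$ followed by an appeal to Lemma~\ref{restriction}/Remark~\ref{restriction2}, whereas the paper fixes once and for all the $K$-theory basis $\{u^{N/2}\cdot 1,\, u^{N/2}\cdot x\}$ of $K^0(S^N)$ normalized by $\pi^{S^N}_*(u^{N/2}x)=1$, reads off the explicit formula $\check\pi^{prod}_*\mathcal{E}=\mathcal{E}_2^B+j(\pi^{prod}_*\phi)$ from Lemma~\ref{productpushforward}, records the exact ambiguity in the decomposition as the finite list of moves~\eqref{eq:103} (shifts of the lifts by $j(\alpha_i^{S^N})$, $j(\alpha_i^B)$ and the matching correction of $\phi$), and then verifies directly that the resulting change~\eqref{expression} vanishes, using the degree-reason vanishing $\pi^{S^N}_*(\alpha_i^{S^N})=0$, $\pi^{S^N}_*(\omega(\mathcal{E}_1^{S^N}))=0$, and the normalization $\pi^{S^N}_*(\omega(\mathcal{E}_2^{S^N}))=1$.

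The place where your plan has a gap is exactly the bookkeeping you flagged, and it is more than bookkeeping. For the interpolated class $\mathcal{E}'$ on $[0,1]\times S^N\times B$ to be in the form~\eqref{decomposition} relative to the fibration $[0,1]\times S^N\times B\to[0,1]\times B$, the factors $\mathcal{E}_i^{S^N,\prime}$ must live on $S^N$ itself, pulled back along $p$; they cannot depend on $t$. So you cannot literally ``connect the two chosen lifts $\mathcal{E}_i^{S^N}$ by paths'' inside the decomposition. The only way to make the interpolation legal is to hold the $S^N$-side lifts fixed and push the discrepancy $j(\alpha_i^{S^N})$ into the error form $\phi'$ via the module identity~\eqref{eq:19}, $j(\alpha)\cdot\cb=j(\alpha\wedge\omega(\cb))$. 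But that absorption is precisely the algebra~\eqref{eq:103}--\eqref{expression} that the paper carries out directly; once you have performed it, the interpolation step adds nothing and the invariance already follows from the degree computation. So the homotopy argument is not wrong in spirit, but as written it quietly relies on the same module-structure computation it is trying to avoid, and without the absorption step the object $\mathcal{E}'$ you want to push forward is not actually of the admissible Künneth shape. A smaller point: when you invoke Lemma~\ref{restriction}, you also need $\check\pi^{prod,[0,1]}_*$ to commute with the restrictions $A_i^*$; this does hold because Lemma~\ref{productpushforward} is an explicit fiberwise formula over $B$, but it should be said.
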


        \begin{proof}
 We refer to the notation in the proof of Lemma~\ref{thm:88}.  Let 
$\{1,x\}$~be
an additive basis of~$K^0(S^N)$, where $1$~is the trivial bundle of rank~1
and $x$~has rank~0.  Choose~$e_1^{S^N}=u^{{N}/{2}}1$ 
and~$e_2^{S^N}=u^{{N}/{2}}x$,
where $u$ denotes the Bott element in K-theory.
Without loss of generality, we can
assume that $x$ is chosen so that $\pi _*^{S^N}(u^{N/2}x)=1
\in \ZZ$.  Given a differential K-theory class ${\mathcal E}$ as in
\eqref{decomposition}, Lemma \ref{productpushforward} implies that
  \begin{equation}\label{eq:104}
     \check\pi^{prod}_*\mathcal{E} = \mathcal{E}_2^B + j(\pi^{prod}_*(\phi) ). 
  \end{equation}
Now a different decomposition, as in \eqref{decomposition}, of the
same differential K-theory class ${\mathcal E}$, can only arise by the
changes
  \begin{equation}\label{eq:103}
     \begin{aligned} \mathcal{E}_i^{S^N} &\longrightarrow \mathcal{E}_i^{S^N}
      + j(\alpha _i^{S^N}), \\ \mathcal{E}_i^{B} &\longrightarrow
      \mathcal{E}_i^{B} + j(\alpha _i^B), \\ \phi &\longrightarrow \phi
      - \sum\limits_{i=1}^2 \bigl[p^*\alpha _i^{S^N}\wedge (\pprod)^*\omega
      (\mathcal{E}_i^{B}) \;+\; p^*\omega (\mathcal{E}_i^{S^N}) \wedge
      (\pprod)^*\alpha _i^B\bigr]\end{aligned} 
  \end{equation}
for some $\alpha_i^{S^N} \in \Omega (S^N;\R)^{N-1}/\Image(d)$ and
$\alpha_i^{B} \in \Omega (B;\R)^{r-N-1}/\Image(d)$
The ensuing change in the right-hand side of (\ref{eq:104}) is
\begin{equation} \label{expression}
j (\alpha_2^B) - \sum\limits_{i=1}^2 j \left( \pi_*^{prod} \left(
p^*\alpha _i^{S^N}\wedge (\pprod)^*\omega
      (\mathcal{E}_i^{B}) \;+\; p^*\omega (\mathcal{E}_i^{S^N}) \wedge
      (\pprod)^*\alpha _i^B \right) \right).
\end{equation}
As $\pi^{S^N}_* \left( \alpha_i^{S^N} \right) = 
\pi^{S^N}_* \left( \omega (\mathcal{E}_1^{S^N}) \right) = 0$
and
$\pi^{S^N}_* \left( \omega (\mathcal{E}_2^{S^N}) \right) = 1$,
the expression in (\ref{expression}) vanishes. The lemma follows.
        \end{proof}

The point of Lemma \ref{decpushforward} is that it gives us a
well-defined map $\check{\pi}_*^{prod} : \WFcK^{r}(S^N \times B) \rightarrow
\cK^{r-N}(B)$ which agrees with the pushforward defined in
Section \ref{sec:3} when applied to elements of $\WFcK^{r}(S^N \times B)$
that are written in the form (\ref{decomposition}), and which can be
computed explicitly, but does not need
any spectral analysis.  In particular, $\check{\pi}_*^{prod}$ is defined
without 
any condition about vector bundle kernel.  (Note that if $E$ is a
general Hermitian vector bundle on $S^N \times B$ and $\nabla^E$ is
a general compatible connection on $E$ then there is no reason that
$\Ker(D^V)$ should form a vector bundle on $B$.)

We now define the topological index for compact base spaces~$B$; the
extension to proper submersions with noncompact~$B$ is described at the end
of Section~\S\ref{sec:6}.

 \begin{definition} Let $\pi : X \rightarrow B$ be a fiber bundle with $X$
compact.  Put $n = \dim(X) - \dim(B)$, which we assume to be even.  Suppose
that $\pi$ has a Riemannian structure.  Construct $N$ and $\iota$ from Lemma
\ref{embeddingexists}.  Given a generator ${\mathcal E}$ for $\cK^0(X)$,
construct $\check{\iota}_*^{mod} ({\mathcal E}) \in \WFcK^{N-n}(S^N \times B)$
from Definition \ref{modified}. Write $\check{\iota}_*^{mod} ({\mathcal E})$ as
in equation (\ref{decomposition}). Using Lemma \ref{decpushforward}, define the
{\em topological index} $\ind^{top}({\mathcal E}) \in \cK^{-n}(B)$ by
 \begin{equation}
\ind^{top}({\mathcal E}) = \check{\pi}_*^{prod}(\check{\iota}_*^{mod} 
({\mathcal E})).
 \end{equation}
 \end{definition}

 \begin{lemma} \label{omegaagrees}
 \begin{equation}
\omega(\ind^{top}({\mathcal E})) = \pi_*(\omega({\mathcal E})).
 \end{equation}
 \end{lemma}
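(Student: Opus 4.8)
The plan is to run the identity back through the two computational lemmas already in hand, \ref{pushmod} and \ref{omegapush}, using Lemma~\ref{decpushforward} to move the product projection pushforward into the setting where those lemmas apply. Throughout, set $X_2 = S^N\times B$ with its product Riemannian and differential $\spinc$ structure, and let $\nu$ be the normal bundle of $X=X_1$ in $X_2$, so that $\check{\iota}_*^{mod}({\mathcal E})\in\WFcK^{N-n}(S^N\times B)$ and $\ind^{top}({\mathcal E})=\check{\pi}_*^{prod}(\check{\iota}_*^{mod}({\mathcal E}))\in\cK^{-n}(B)$.

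First I would show $\omega\bigl(\check{\pi}_*^{prod}({\mathcal E}')\bigr)=\pi^{prod}_*\bigl(\omega({\mathcal E}')\bigr)$ for every ${\mathcal E}'\in\WFcK^r(S^N\times B)$. Since $\check{\pi}_*^{prod}$ agrees with the analytic pushforward of Section~\ref{sec:3} by Lemma~\ref{decpushforward}, this is exactly the statement of Lemma~\ref{omegapush} for the product submersion $\pi^{prod}$; its proof uses only \eqref{eq:2.10} and the definitions of $\omega$ and of the fibrewise integral, so it applies verbatim in each even degree. (Alternatively one can see it directly: write ${\mathcal E}'$ in the form~\eqref{decomposition} via Lemma~\ref{thm:88}, apply Lemma~\ref{productpushforward} to get formula~\eqref{eq:104}, take $\omega$, and use that $\omega$ is multiplicative on products together with $\int_{S^N}\Td(\widehat{\nabla}^{TS^N})\wedge\omega({\mathcal E}_i^{S^N})=\pi^{S^N}_*(E_i^{S^N})\in\ZZ$ and the projection formula for $\pi^{prod}_*$.) Applying this to ${\mathcal E}'=\check{\iota}_*^{mod}({\mathcal E})$ gives
\[
\omega(\ind^{top}({\mathcal E}))=\pi^{prod}_*\bigl(\omega(\check{\iota}_*^{mod}({\mathcal E}))\bigr).
\]

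Next I would substitute Lemma~\ref{pushmod}, which gives
\[
\omega(\check{\iota}_*^{mod}({\mathcal E}))=\frac{\Td(\widehat{\nabla}^{T^VX})}{\iota^*\Td(\widehat{\nabla}^{T^VX_2})}\wedge\omega({\mathcal E})\wedge\delta_X,
\]
and apply $\pi^{prod}_*$, which by \eqref{fiberint} is fibrewise integration against $\Td(\widehat{\nabla}^{T^VX_2})$. The integrand is a current supported on $X$, and by Lemma~\ref{wflemma} it lies in $\WFO$, so $\pi^{prod}_*$ is defined on it; pairing $\delta_X$ against test forms on $S^N\times B$ reduces the fibre integral over $S^N\times B\to B$ to the fibre integral over $\pi\colon X\to B$ of the $\iota^*$-pullback, i.e. $\pi^{prod}_*(\rho\wedge\delta_X)=\int_{X/B}\iota^*\Td(\widehat{\nabla}^{T^VX_2})\wedge\rho$ for a form $\rho$ on $X$. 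With $\rho=\dfrac{\Td(\widehat{\nabla}^{T^VX})}{\iota^*\Td(\widehat{\nabla}^{T^VX_2})}\wedge\omega({\mathcal E})$ the factor $\iota^*\Td(\widehat{\nabla}^{T^VX_2})$ cancels the denominator, leaving
\[
\pi^{prod}_*\bigl(\omega(\check{\iota}_*^{mod}({\mathcal E}))\bigr)=\int_{X/B}\Td(\widehat{\nabla}^{T^VX})\wedge\omega({\mathcal E})=\pi_*(\omega({\mathcal E})),
\]
which is Lemma~\ref{omegaagrees}. This cancellation is precisely what the correction form $\widetilde{C}$ in Definition~\ref{modified} was built to arrange.

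The computation is essentially bookkeeping, and I do not expect a genuine obstacle. The only point deserving care is the reduction of $\pi^{prod}_*$ on currents of the shape $(\text{form on }X)\wedge\delta_X$ to fibrewise integration over $X$ along $\pi$; this rests on Lemma~\ref{wflemma} (so that one is inside $\WFcK$ and $\pi^{prod}_*$ is defined on the current in question) together with the elementary identity $\langle\delta_X,\eta\rangle=\int_X\iota^*\eta$ for a compactly supported test form $\eta$ on $S^N\times B$, which is compatible with integrating out the $B$-directions.
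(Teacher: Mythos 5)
Your proposal is correct and follows the same route as the paper's proof, which consists of exactly the chain of equalities you give: expand $\ind^{top}$ as $\check{\pi}_*^{prod}\circ\check{\iota}_*^{mod}$, commute $\omega$ past $\check{\pi}_*^{prod}$ via Lemma~\ref{omegapush}, substitute Lemma~\ref{pushmod}, and then cancel $\iota^*\Td(\widehat{\nabla}^{T^V(S^N\times B)})$ when the fiber integral against $\delta_X$ is converted to a fiber integral over $X/B$. The paper presents this tersely as a single display; you additionally spell out the wave-front-set justification for applying $\pi^{prod}_*$ to the current and the alternative direct verification via the K\"unneth decomposition, but the underlying argument is identical.
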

 \begin{proof}
From Lemmas \ref{omegapush} and \ref{pushmod},
 \begin{align}
\omega(\ind^{top}({\mathcal E})) &=
\omega(\check{\pi}_*^{prod}(\check{\iota}_*^{mod} 
({\mathcal E}))) \\&=
{\pi}_*^{prod}(\omega(\check{\iota}_*^{mod} 
({\mathcal E}))) \notag \\
&=  {\pi}_*^{prod} \left( \frac{
\Td \left( \widehat{\nabla}^{T^VX} \right)
}{
\iota^* 
\Td \left( \widehat{\nabla}^{T^V(S^N \times B)} \right)
} \wedge \omega({\mathcal E}) \wedge \delta_{X} \right) \notag \\
&= \pi_*(\omega({\mathcal E})). \notag
 \end{align}
This proves the lemma.
 \end{proof}

 \begin{proposition}
The following diagram commutes :
  \begin{equation}
     \xymatrix{ 0 \ar[r]& \dfrac{\Omega (X;\R)^{-1}}{\Omega
     (X;\R)_K^{-1}}\ar[d]^{\pi_*}\ar[r]^{\quad
           j}&\cK^0(X)\ar[d]^{\ind^{top}}\ar[r]^{c\quad} &
           K^0(X;\ZZ)\ar[d]^{\ind^{top}}\ar[r] &0 \\ 0 \ar[r]&
           \dfrac{\Omega (B;\R)^{-n-1}}{\Omega
     (B;\R)_K^{-n-1}}\ar[r]^{\quad j}&\cK^{-n}(B)\ar[r]^{c\quad} &
           K^{-n}(B;\ZZ)\ar[r]& 0. } 
  \end{equation}
\end{proposition}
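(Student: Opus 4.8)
The plan is to verify the commutativity of the two squares separately. Consider first the right-hand square, which asserts $c(\ind^{top}(\mathcal{E})) = \ind^{top}(c(\mathcal{E}))$, where the right-hand $\ind^{top}$ is the topological index in ordinary $K$-theory built from the same embedding data $\iota : X \to S^N \times B$ and projection $\pi^{prod} : S^N \times B \to B$. Both $\check{\iota}_*^{mod}$ and $\check{\pi}_*^{prod}$ are compatible with the map $c$ to ordinary $K$-theory: applying $c$ to Definition~\ref{pushforwarddef} (and Definition~\ref{modified}, noting the correction term lies in the image of $j$, hence dies under $c$) gives the class $[F] \in K^r(X_2)$, which by Lemma~\ref{AH} and the excision construction is precisely the ordinary $K$-theory embedding pushforward of $[E]$. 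Likewise, applying $c$ to the formula \eqref{eq:104} for $\check{\pi}_*^{prod}$ (the $j(\pi^{prod}_*(\phi))$ term vanishes under $c$) recovers the ordinary $K$-theory projection pushforward, via the same K\"unneth decomposition \eqref{eq:102} used in ordinary $K$-theory. Composing, $c \circ \ind^{top} = \ind^{top} \circ c$.

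For the left-hand square, I must show $\ind^{top}(j(\alpha)) = j(\pi_*(\alpha))$ for $\alpha \in \Omega(X;\R)^{-1}/\Omega(X;\R)_K^{-1}$. From \eqref{eq:19} we have $j(\alpha)\cdot\cb = j(\alpha \wedge \omega(\cb))$; taking $\cb$ to be the trivial generator, $j(\alpha) = (\underline{\CC}^0, h, d, \alpha)$ is the class with zero vector bundle and form part $\alpha$. Feeding this through Definition~\ref{pushforwarddef}: the Bismut--Zhang data $(F, h^F, \nabla^F, V)$ associated to the zero bundle can be taken to be $(F_+ = F_- , \mathrm{id})$ with $\gamma = 0$, so $\check{\iota}_*(j(\alpha)) = j(\iota_*(\alpha))$. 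The correction term in Definition~\ref{modified} involves $\omega(j(\alpha)) = d\alpha$, which is exact, so modulo the image of $j$ on exact forms it contributes nothing new; more carefully one checks $\check{\iota}_*^{mod}(j(\alpha)) = j\bigl(\Td(\widehat{\nabla}^{T^VX_1})/\iota^*\Td(\widehat{\nabla}^{T^VX_2}) \wedge d\alpha \wedge \delta_{X_1}\bigr)$ in $\WFcK^r(S^N\times B)$ using \eqref{dC}. Applying \eqref{eq:104}, i.e. $\check{\pi}_*^{prod}(j(\beta)) = j(\pi^{prod}_*(\beta))$, and unwinding the definitions of $\iota_*$, $\pi^{prod}_*$ and $\pi_* = \pi^{prod}_* \circ \iota_*$ (all of which carry the compensating Todd factors by design), yields $\ind^{top}(j(\alpha)) = j(\pi_*(\alpha))$. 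One must check this is well-defined on the quotient $\Omega(X;\R)^{-1}/\Omega(X;\R)_K^{-1}$, which follows since $\pi_*$ sends $\Omega(X;\R)_K^{-1}$ into $\Omega(B;\R)_K^{-n-1}$ by the families index theorem in ordinary $K$-theory (or directly from Lemma~\ref{omegaagrees} applied to classes in the image of $j$).

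The main obstacle is the left-hand square: one must track the Todd-form normalizations through the chain $\check{\pi}_*^{prod} \circ \check{\iota}_*^{mod}$ and confirm they telescope to exactly the single factor $\Td(X/B)$ appearing in $\pi_*$, while simultaneously handling the passage from currents (the class $\check{\iota}_*^{mod}(j(\alpha))$ lives in the currential group $\WFcK^r(S^N\times B)$) back to smooth forms after the fiber integration $\pi^{prod}_*$. The wave-front condition guaranteed by Lemma~\ref{wflemma} is what makes $\pi^{prod}_*$ applicable, and the K\"unneth bookkeeping of Lemma~\ref{decpushforward} is what makes the projection pushforward explicit; both are in hand. So the proof is essentially a diagram chase combining Lemma~\ref{omegaagrees} (for the form-level compatibility feeding the well-definedness), the $K$-theoretic content of Lemma~\ref{AH} and the excision construction (for the right square), and the identity \eqref{eq:104} together with the definitions of the pushforward forms (for the left square).
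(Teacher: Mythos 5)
Your overall strategy matches the paper's: for the right square you observe that applying $c$ to the definitions of $\check{\iota}_*$, $\check{\iota}_*^{mod}$ and $\check{\pi}_*^{prod}$ recovers the ordinary $K$-theory pushforwards (the correction term in Definition~\ref{modified} and the $j(\pi^{prod}_*(\phi))$ term lie in $\ker c$), and for the left square you feed $j(\alpha)=(0,0,0,\alpha)$ through the definitions and chase the Todd factors. That is exactly what the paper does.

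However the left-square computation is not carried out correctly. The displayed intermediate formula
\[
\check{\iota}_*^{mod}(j(\alpha)) \;\overset{?}{=}\; j\Bigl(\tfrac{\Td(\widehat\nabla^{T^VX_1})}{\iota^*\Td(\widehat\nabla^{T^VX_2})}\wedge d\alpha\wedge\delta_{X_1}\Bigr)
\]
has the wrong degree: the right-hand side lives in $\WFO(S^N\times B;\R)^{r}$, not $\WFO(S^N\times B;\R)^{r-1}$, so it cannot be in the domain of $j:\WFO^{r-1}/\WFO_K^{r-1}\to\WFcK^r$. (With $\alpha$ in place of $d\alpha$ the statement becomes correct \emph{modulo exact forms}, but that equivalence itself requires the integration-by-parts step you are trying to avoid.) The preceding sentence, that the $\widetilde C$-correction ``contributes nothing new'' because $\omega(j(\alpha))=d\alpha$ is exact, is not right either: $\widetilde C\wedge d\alpha\wedge\delta_{X_1}$ is not exact and the correction term is genuinely nonzero. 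Finally, the assertion $\pi_*=\pi^{prod}_*\circ\iota_*$ is false at the level of forms when $T^HX\neq (T^H_{prod}(S^N\times B))|_X$; the whole point of $\check\iota_*^{mod}$ is to correct for this, and verifying the telescoping is the substance of the proof. The actual argument (as in the paper) is: $\check\iota_*^{mod}(j(\alpha))=j\bigl(\iota_*(\alpha)-\tfrac{\widetilde C}{\iota^*\Td(\widehat\nabla^{T^V(S^N\times B)})\wedge\Td(\widehat\nabla^\nu)}\wedge d\alpha\wedge\delta_X\bigr)$; applying $\check\pi^{prod}_*$ gives $j\bigl(\int_{X/B}\tfrac{\iota^*\Td(\widehat\nabla^{T^V(S^N\times B)})}{\Td(\widehat\nabla^\nu)}\wedge\alpha-\tfrac{\widetilde C}{\Td(\widehat\nabla^\nu)}\wedge d\alpha\bigr)$; then one integrates by parts and invokes \eqref{dC}, $d\widetilde C=\iota^*\Td(\widehat\nabla^{T^V(S^N\times B)})-\Td(\widehat\nabla^{T^VX})\wedge\Td(\widehat\nabla^\nu)$, to obtain $j(\int_{X/B}\Td(\widehat\nabla^{T^VX})\wedge\alpha)=j(\pi_*(\alpha))$. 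Your proposal identifies all the right ingredients but asserts the conclusion (``by design'') instead of performing this computation, and the one intermediate formula you do display is incorrect.
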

\begin{proof}
The right-hand square commutes from our construction of 
$\ind^{top} : \cK^0(X) \rightarrow \cK^{-n}(B)$; see the discussion 
at the beginning of Section \ref{sec:2} of the K-theory pushforward
under an embedding. To see that the left-hand
square commutes, suppose that
$\phi \in \frac{\Omega (X;\R)^{-1}}{\Omega(X;\R)_K^{-1}}$.
Then 
\begin{align}
\ind^{top}(j(\phi)) = & \check{\pi}^{prod}_*(\check{\iota}^{mod}_*(
j(\phi))) \\
 = & \check{\pi}^{prod}_* \left( j \left(
\frac{\phi}{\Td \left( \widehat{\nabla}^{\nu} \right)} \wedge \delta_X - 
\frac{\widetilde{C}
}{
\iota^* 
\Td \left( \widehat{\nabla}^{T^V(S^N \times B} \right)
 \wedge
\Td \left( \widehat{\nabla}^{\nu} \right)
} \wedge d\phi \wedge \delta_{X} \right) \right) \notag \\
= & j \left( \int_X \left( \frac{\iota^* 
\Td \left( \widehat{\nabla}^{T^V(S^N \times B} \right)
}{
\Td \left( \widehat{\nabla}^{\nu} \right)} \wedge \phi -
\frac{\widetilde{C}}{\Td \left( \widehat{\nabla}^{\nu} \right)} 
\wedge d\phi \right) \right) \notag \\
= & j \left( \int_X \frac{\iota^* 
\Td \left( \widehat{\nabla}^{T^V(S^N \times B} \right) - d\widetilde{C}
}{
\Td \left( \widehat{\nabla}^{\nu} \right)} \wedge \phi \right) \notag \\
= & j \left( \int_X \Td \left( T^VX \right) \wedge \phi \right)
= j(\pi_*(\phi)). \notag
\end{align}
This proves the lemma.
\end{proof} 

From what has been said so far, the map $\ind^{top} : \cK^0(X) \rightarrow
\cK^{-n}(B)$ depends on the Riemannian structure on $\pi$ and, possibly, on
the embedding $\iota$.  We prove in Corollary~\ref{indie} that it is in fact
independent of~$\iota $.

\section{Index theorem: vector bundle kernel}\label{sec:5}

In this section we prove our index theorem for families of Dirac operators,
under the assumption of vector bundle kernel and compact base space.

In terms of the diagram
  \begin{equation} \label{comm5}
     \xymatrix{ 0 \ar[r]&
           K^{-1}(X;\RZ)\ar[d]^{}\ar[r]^{\quad
           j}&\cK^0(X)\ar[d]_{\ind^{an}}\ar@<1ex>[d]^{\ind^{top}}\ar[r]^{\omega\quad}
           & \Omega(X;\R)_K^0\ar[d]^{\pi_*}\ar[r] &0 \\
     0 \ar[r]& 
           K^{-n-1}(B;\RZ)\ar[r]^{\quad j}&\cK^{-n}(B)\ar[r]^{\omega\quad} &
           \Omega(B;\R)_K^{-n}\ar[r]& 0, } 
  \end{equation}
we know that if ${\mathcal E} \in \cK^0(X)$ then $\omega(\ind^{an}({\mathcal
E}) - \ind^{top}({\mathcal E})) = 0$. Hence $\ind^{an}({\mathcal E}) -
\ind^{top}({\mathcal E})$ is the image under $j$ of a unique element in
$K^{-n-1}(B; \RR/\ZZ)$. We now apply the method of proof of \cite[Section
4]{Lott} to prove that the difference vanishes, by computing its pairings
with elements of $K_{-n-1}(B)$. From Lemma \ref{etapairing}(2), such pairings
are given by reduced eta-invariants. As in \cite[Section 4]{Lott}, the
pairing with an element of $K_{-n-1}(B)$ becomes a computation of reduced
$\eta$-invariants on $X$ after taking adiabatic limits. A new ingredient is
the use of the main theorem of \cite{Bismut-Zhang} in order to relate the
reduced eta-invariants of a manifold and an embedded submanifold.

 \begin{theorem} \label{maintheorem}
 Let $\pi \:X\to B$ be a fiber bundle with compact fibers of even dimension.
Suppose that $\pi $~is equipped with a Riemannian structure and a
differential $\spinc$ structure.  Assume that $X$~is compact and that
$\Ker(D^V)\to B$ is a vector bundle.  Then for all~${\mathcal E} \in
\cK^0(X)$ we have $\ind^{an}({\mathcal E}) = \ind^{top}({\mathcal E})$.
 \end{theorem}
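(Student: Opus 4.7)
The plan is to work with the diagram (\ref{comm5}). Lemmas \ref{omegapush} and \ref{omegaagrees} give $\omega\bigl(\ind^{an}\mathcal{E}-\ind^{top}\mathcal{E}\bigr)=0$, so there is a unique class $\mathcal{T}=\mathcal{T}(\mathcal{E})\in K^{-n-1}(B;\RZ)$ with $\ind^{an}\mathcal{E}-\ind^{top}\mathcal{E}=j(\mathcal{T})$, and the goal becomes to show $\mathcal{T}=0$. Since $B$ is compact, this can be detected by the $\RZ$-valued Pontryagin pairing with $K$-homology: it suffices to prove $\langle b,\mathcal{T}\rangle=0$ for every $b\in K_{-n-1}(B;\ZZ)$. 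Represent $b=f_*[B']$, in Baum--Douglas style, by a smooth map $f:B'\to B$ from a closed odd-dimensional $\spinc$ manifold, and equip $B'$ with an auxiliary Riemannian metric and a compatible unitary connection on its characteristic line bundle. By Proposition \ref{etapairing}(3), this pairing equals, up to a power of~$u$, the quantity $\overline\eta\bigl(B',f^*j(\mathcal{T})\bigr)$, so the problem reduces to showing $\overline\eta\bigl(B',f^*\ind^{an}\mathcal{E}\bigr)=\overline\eta\bigl(B',f^*\ind^{top}\mathcal{E}\bigr)$ modulo~$\ZZ$.

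Form the pullback submersion $\pi':X'=f^*X\to B'$ with its induced Riemannian and differential $\spinc$ structures, and let $\iota':X'\to S^N\times B'$ be the pulled-back fiberwise embedding; both the analytic and topological indices are natural under this base change. On the analytic side, unwinding Definition \ref{analindex} gives
\begin{equation}
\overline\eta(B',\ind^{an}(f^*\mathcal{E})) = \overline\eta(D^{B',\Ker(D^{V'})}) + \int_{B'}\Td(\widehat\nabla^{TB'})\wedge\bigl(\pi'_*(f^*\phi)+\widetilde\eta\bigr),
\end{equation}
and Dai's adiabatic limit theorem \cite{Dai}, applicable since $\Ker(D^{V'})$ is a bundle on $B'$, identifies the right side with $\overline\eta(D^{X',f^*E})+\int_{X'}\Td(\widehat\nabla^{T^VX'})\wedge f^*\phi=\overline\eta(X',f^*\mathcal{E})$ mod $\ZZ$. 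On the topological side, $f^*\ind^{top}\mathcal{E}=\check\pi_*^{prod}\check\iota_*^{mod}(f^*\mathcal{E})$. Writing $\check\iota_*^{mod}(f^*\mathcal{E})$ in the K\"unneth form (\ref{decomposition}) using Lemma \ref{thm:88}, Lemmas \ref{productpushforward} and \ref{func} together yield $\overline\eta(B',\check\pi_*^{prod}\check\iota_*^{mod}(f^*\mathcal{E}))=\overline\eta(S^N\times B',\check\iota_*^{mod}(f^*\mathcal{E}))$ mod $\ZZ$. Unpacking Definitions \ref{pushforwarddef} and \ref{modified}, identifying this with $\overline\eta(X',f^*\mathcal{E})$ is the content of the Bismut--Zhang embedding formula for reduced eta invariants \cite{Bismut-Zhang} applied to the auxiliary data $(F,h^F,\nabla^F,V)$ of Lemma \ref{AH}, together with the contribution from the correction form $\widetilde C$ and the current $\gamma$. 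Combining the analytic and topological computations gives $\overline\eta(B',f^*j(\mathcal{T}))=0$ for every $f$, hence $\mathcal{T}=0$.

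The main obstacle is the topological-side matching: one must carefully align the Bismut--Zhang eta identity with the concrete $\check\iota_*^{mod}$, tracking the locally integrable current $\gamma$, the correction form $\widetilde C$ that compensates for the possibly non-matching horizontal distributions on $X'$ and $S^N\times B'$, and the $\Td$-normalizations absorbed into our definitions of $\iota_*$ and $\pi_*$. A subsidiary technical point is the extension of Lemma \ref{func} from the product-type classes of Lemma \ref{productpushforward} to arbitrary elements of $\WFcK^{N-n}(S^N\times B')$, which follows from Lemma \ref{thm:88} and the $\omega$-exact sequence (\ref{eq:4.52}). The analytic side, by contrast, is a direct application of Dai's theorem plus $\Td$-bookkeeping, and the reduction to $K$-homology pairings via reduced eta invariants is the familiar strategy of \cite[\S4]{Lott}.
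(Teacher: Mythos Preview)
Your overall architecture matches the paper's: reduce $\ind^{an}-\ind^{top}$ to a class $\mathcal{T}\in K^{-n-1}(B;\RZ)$, detect it by pairing with $K$-homology cycles represented by $\spinc$ manifolds $B'$, rewrite those pairings as reduced eta invariants via Proposition~\ref{etapairing}, and then invoke Dai on the analytic side and Bismut--Zhang on the topological side. The ingredients are all correct, and your handling of Lemma~\ref{func} via the K\"unneth decomposition is fine.

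There is, however, a genuine gap in the analytic-side step. Dai's theorem~\eqref{eq:90} is an \emph{adiabatic limit} statement: it identifies $\overline{\eta}(D^{B',\Ker(D^{V'})})+\int_{B'}\Td(\widehat\nabla^{TB'})\wedge\widetilde\eta$ with $\lim_{\epsilon\to 0}\overline{\eta}(D^{(X')^\epsilon,f^*E})$, not with $\overline{\eta}(D^{X',f^*E})$ at any fixed metric. For a general Riemannian structure on $\pi'$ and metric on $B'$, the eta invariant of $X'$ does not split as you claim; this non-multiplicativity is precisely what Dai's theorem quantifies. A related symptom: your $\phi$-term $\int_{X'}\Td(\widehat\nabla^{T^VX'})\wedge f^*\phi$ is missing the factor $\pi'^*\Td(\widehat\nabla^{TB'})$, and even with it inserted it does not equal $\int_{X'}\Td(\widehat\nabla^{TX'})\wedge f^*\phi$ unless the connection on $TX'$ splits. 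On the topological side there is a mirror issue: Bismut--Zhang produces $\overline{\eta}(D^{X',f^*E})$ for the metric on $X'$ \emph{induced from the embedding} $X'\hookrightarrow S^N\times B'$, which in general differs from the one coming from the Riemannian structure on $\pi'$ (this is exactly why $\widetilde C$ appears). So you have two a priori different eta invariants of $X'$ on the two sides, and neither equals what Dai actually computes.

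The paper closes this gap with a step you omit entirely: a bordism-invariance argument (Lemma~\ref{variation}) showing that the difference $C_1-C_2$ is unchanged under deforming the horizontal distribution $T^HX$ and rescaling the fiber metric by $\epsilon$. This is what permits first replacing $T^HX$ by $(T^H_{prod}(S^N\times B))|_X$ (so that $\widetilde C=0$ and the two metrics on $X'$ agree), and then passing to $\epsilon\to 0$ where Dai's theorem applies. Without this deformation argument, your two computations do not match up. Your final paragraph misdiagnoses the difficulty: the analytic side is not ``a direct application of Dai's theorem plus $\Td$-bookkeeping''---the reduction to the adiabatic regime is the substantive missing step.
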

 \begin{proof} The short exact sequence (\ref{eq:4.5}), along with Lemmas
\ref{omegapush} and \ref{omegaagrees}, implies that $\ind^{an}({\mathcal E}) -
\ind^{top}({\mathcal E})$ lifts uniquely to an element ${\mathcal T}$ of
$K^{-n-1}(B; \RR/\ZZ)$. We want to show that this element vanishes.  To do
so, we use the method of proof of \cite[Section 4]{Lott}. From the universal
coefficient theorem and the divisibility of $\RR/\ZZ$, it suffices to show
that for all $\alpha \in K_{-n-1}(B; \ZZ)$, the pairing $\langle \alpha,
{\mathcal T} \rangle$ vanishes in $\RR/\ZZ$.  From \cite{Hopkins-Hovey},
$K_{-n-1}(B; \ZZ)$ is generated by elements of the form $\alpha = f_* [M]$
where $M$ is a closed odd-dimensional 
$\spinc$-manifold, $[M] \in K_{-n-1}(M;
\ZZ)$ is the fundamental class of $M$
(shifted from $K_{\dim(M)}(M; \ZZ)$ to $K_{-n-1}(M; \ZZ)$ 
using Bott periodicity)
and $f : M \rightarrow B$ is a smooth map.
(The argument in
\cite[Section 4]{Lott} used instead the Baum-Douglas description of
K-homology \cite{Baum-Douglas}, which essentially involves an additional
vector bundle on $M$.)  As $\langle \alpha, {\mathcal T} \rangle = \langle
[M], f^* {\mathcal T} \rangle$, we can effectively pull everything back to
$M$ and so reduce to considering the case when $B$ is an arbitrary closed
odd-dimensional $\spinc$-manifold.

Now suppose that ${\mathcal E} = \left( E, h^E, \nabla^E, \phi \right)$.
Recall the construction of $\check{\iota}_* ({\mathcal E}) =
\left( F, h^F, \nabla^F, \iota_*(\phi) +
\gamma \right)$ from
Definition \ref{pushforwarddef}.

In the rest of this proof, all equalities will be taken modulo the integers,
so will be written as congruences.
We equip $B$ with a Riemannian metric $g^{TB}$,
and the characteristic line bundle $L^B$ with
a unitary connection $\nabla^{L^B}$.  We equip the fiber bundle $S^N
\times B \rightarrow B$ with the product horizontal connection
$T^H_{prod}(S^N \times B)$. Then $S^N \times B$ has the product Riemannian
metric, from which the submanifold $X$ acquires a Riemannian metric.

By Proposition \ref{etapairing} and Lemma \ref{func},
 \begin{equation}
u^{-\frac{\dim(X)+1}{2}} \langle [B], {\mathcal T} \rangle \equiv C_1 - C_2
 \end{equation}
in $u^{-\frac{\dim(X)+1}{2}} \cdot (\RR/\ZZ)$, 
where
 \begin{align}
C_1 \equiv & \overline{\eta}(B, \ind^{an}({\mathcal E})) \\
\equiv & u^{-\frac{\dim(X)+1}{2}}
\overline{\eta} \left( D^{B,{\Ker(D^V)_+}} \right) -
u^{-\frac{\dim(X)+1}{2}}
\overline{\eta} \left( D^{B,{\Ker(D^V)_-}} \right) +
\int_B \Td \left( \widehat{\nabla}^{TB} \right)
\wedge (\pi_* (\phi) + \widetilde{\eta})
\notag \\
\equiv & u^{-\frac{\dim(X)+1}{2}}
\overline{\eta} \left( D^{B,{\Ker(D^V)_+}} \right) -
u^{-\frac{\dim(X)+1}{2}}
\overline{\eta} \left( D^{B,{\Ker(D^V)_-}} \right) +
\int_B \Td \left( \widehat{\nabla}^{TB} \right) \wedge
\widetilde{\eta} + \notag \\
& 
\int_X \pi^* \Td \left( \widehat{\nabla}^{TB} \right) \wedge
\Td \left( \widehat{\nabla}^{T^VX} \right) \wedge \phi \notag
 \end{align}
and, using Lemma \ref{func},
 \begin{align} \label{C2}
C_2 \equiv & \overline{\eta}(B, \ind^{top}({\mathcal E})) \equiv
\overline{\eta}(S^N \times B, \check{\iota}_*^{mod}({\mathcal E})) \\
\equiv & u^{-\frac{\dim(X)+1}{2}}
\overline{\eta} \left( D^{S^N \times B, F_+} \right) -
u^{-\frac{\dim(X)+1}{2}}
\overline{\eta} \left( D^{S^N \times B, F_-} \right) +
\int_{S^N \times B} \Td \left( \widehat{\nabla}^{T(S^N \times B)} \right) 
\wedge \notag \\
& \left( \iota_* \phi - \gamma -
\frac{\widetilde{C}
}{
\iota^* 
\Td \left( \widehat{\nabla}^{T^V(S^N \times B)} \right) \wedge
\Td \left( \widehat{\nabla}^{\nu} \right)
}
\wedge \omega({\mathcal E}) \wedge \delta_{X} \right)
\notag \\
\equiv & 
u^{-\frac{\dim(X)+1}{2}}
\overline{\eta} \left( D^{S^N \times B, F_+} \right) -
u^{-\frac{\dim(X)+1}{2}}
\overline{\eta} \left( D^{S^N \times B, F_-} \right) + \notag \\
& 
\int_{X} 
\frac{\iota^* \Td \left( \widehat{\nabla}^{T(S^N \times B)} \right)
}{
\Td \left( \widehat{\nabla}^{\nu} \right)
}
\wedge \phi - 
\int_{S^N \times B} \Td \left( \widehat{\nabla}^{T(S^N \times B)} 
\right) \wedge
\gamma - 
\notag \\
&
\int_{X} 
\frac{\pi^* \Td \left( \widehat{\nabla}^{TB} \right)
}{
\Td \left( \widehat{\nabla}^{\nu} \right)}
\wedge \widetilde{C}
\wedge (\omega(\nabla^E) + d\phi). \notag
 \end{align}
From \cite[Theorem 2.2]{Bismut-Zhang},
 \begin{equation}
\overline{\eta} 
\left( D^{S^N \times B,{F_+}} \right) -
\overline{\eta} 
\left( D^{S^N \times B,{F_-}} \right) \equiv 
\overline{\eta} 
\left( D^{X,E} \right) +
u^{\frac{\dim(X)+1}{2}}
\int_{S^N \times B} \Td \left( \widehat{\nabla}^{T(S^N \times B)} \right)
\wedge \gamma.
 \end{equation}
Thus
 \begin{align}
C_2 \equiv & 
u^{-\frac{\dim(X)+1}{2}} \overline{\eta} \left( D^{X,E} \right) -
\int_{X} 
\frac{\pi^* \Td \left( \widehat{\nabla}^{TB} \right)
}{
\Td \left( \widehat{\nabla}^{\nu} \right)}
\wedge
\widetilde{C}
\wedge \omega(\nabla^E)
+ \int_{X} 
\frac{\iota^* \Td \left( \widehat{\nabla}^{T(S^N \times B)} \right)
}{
\Td \left( \widehat{\nabla}^{\nu} \right)}
\wedge \phi - \\ 
& \int_{X} 
\frac{\pi^* \Td \left( \widehat{\nabla}^{TB} \right)
}{
\Td \left( \widehat{\nabla}^{\nu} \right)}
\wedge
\widetilde{C}
\wedge d\phi. \notag
 \end{align}
Now
 \begin{align} \label{C3}
& \int_{X} 
\frac{\iota^* \Td \left( \widehat{\nabla}^{T(S^N \times B)} \right)
}{
\Td \left( \widehat{\nabla}^{\nu} \right)}
\wedge
\phi - 
\int_{X} 
\frac{\pi^* \Td \left( \widehat{\nabla}^{TB} \right)
}{
\Td \left( \widehat{\nabla}^{\nu} \right)}
\wedge
\widetilde{C}
\wedge d\phi  \equiv \\
&
\int_{X} 
\frac{\iota^* \Td \left( \widehat{\nabla}^{T(S^N \times B)} \right)
}{
\Td \left( \widehat{\nabla}^{\nu} \right)}
\wedge \phi - 
\int_{X} 
\frac{\pi^* \Td \left( \widehat{\nabla}^{TB} \right)
}{
\Td \left( \widehat{\nabla}^{\nu} \right)}
\wedge
d\widetilde{C}
\wedge \phi \equiv \notag \\
& \int_{X} 
\frac{\iota^* \Td \left( \widehat{\nabla}^{T(S^N \times B)} \right)
}{
\Td \left( \widehat{\nabla}^{\nu} \right)}
\wedge
\phi - \notag \\
& \int_{X} 
\frac{\pi^* \Td \left( \widehat{\nabla}^{TB} \right)
}{
\Td \left( \widehat{\nabla}^{\nu} \right)}
\wedge
\left(
\iota^* 
\Td \left( \widehat{\nabla}^{T^V(S^N \times B)} \right)
- \Td \left( \widehat{\nabla}^{T^VX} \right) \wedge
\Td \left( \widehat{\nabla}^{\nu} \right)
\right)
\wedge \phi \equiv \notag \\
& \int_X \pi^* \Td \left( \widehat{\nabla}^{TB} \right) \wedge
\Td \left( \widehat{\nabla}^{T^VX} \right) \wedge
\phi. \notag
 \end{align}
Then 
 \begin{align}
C_1 - C_2 \equiv & u^{-\frac{\dim(X)+1}{2}}
\overline{\eta} \left( D^{B,{\Ker(D^V)_+}} \right) -
u^{-\frac{\dim(X)+1}{2}}
\overline{\eta} \left( D^{B,{\Ker(D^V)_-}} \right) + 
\int_B \Td \left( \widehat{\nabla}^{TB} \right) \wedge
\widetilde{\eta}
- \\
& u^{-\frac{\dim(X)+1}{2}}
\overline{\eta} \left( D^{X,E} \right) + 
\int_{X} 
\frac{\pi^* \Td \left( \widehat{\nabla}^{TB} \right)
}{
\Td \left( \widehat{\nabla}^{\nu} \right)}
\wedge \widetilde{C}
\wedge \omega(\nabla^E). \notag
 \end{align}

 The next lemma, stated in terms of bordisms, shows that $C_1 - C_2$ is
unchanged by certain perturbations.

 \begin{lemma} \label{variation}
Suppose that $B = \partial B^\prime$ for some even-dimensional compact
$\spinc$-manifold $B^\prime$.
Suppose that the structures,
$g^{TB}$, $\nabla^{L^B}$, $\pi : X \rightarrow B$, $T^HX$, 
$\iota : X \rightarrow S^N \times B$, $E \rightarrow X$ and
$\nabla^E$ extend to structures
$g^{TB^\prime}$, $\nabla^{L^{B^\prime}}$, 
$\pi^\prime : X^\prime \rightarrow B^\prime$, $T^HX^\prime$, 
$\iota^\prime : X^\prime \rightarrow S^N \times B^\prime$, 
$E^\prime \rightarrow X^\prime$ and
$\nabla^{E^\prime}$ over $B^\prime$,
which are product-like near $B = \partial B^\prime$.
Suppose that $\Ker(D^V)^\prime$ forms a $\ZZ/2\ZZ$-graded
vector bundle on $B^\prime$.
Then $C_1 - C_2 \equiv 0$.
 \end{lemma}
 \begin{proof}
From Lemma \ref{etapairing},
 \begin{equation} \label{first}
u^{-\frac{\dim(X)+1}{2}}
\overline{\eta} \left( D^{B,{\Ker(D^V)_+}} \right) -
u^{-\frac{\dim(X)+1}{2}}
\overline{\eta} \left( D^{B,{\Ker(D^V)_-}} \right) \equiv
\int_{B^\prime} \Td (\widehat{\nabla}^{TB^\prime}) \wedge
\omega \left( \nabla^{\Ker(D^V)^\prime} \right)
 \end{equation}
and
 \begin{equation}
u^{-\frac{\dim(X)+1}{2}}
\overline{\eta} \left( D^{X,E} \right) \equiv
\int_{X^\prime} \Td (\widehat{\nabla}^{TX^\prime}) \wedge
\omega \left( \nabla^{E^\prime} \right).
 \end{equation}
Also,
 \begin{align}
\int_B \Td \left( \widehat{\nabla}^{TB} \right) \wedge
\widetilde{\eta} \equiv &
\int_{B^\prime} \Td \left( \widehat{\nabla}^{TB^\prime} \right) \wedge
d\widetilde{\eta}^\prime \\
\equiv &
\int_{X^\prime} (\pi^\prime)^*
\Td \left( \widehat{\nabla}^{TB^\prime} \right) \wedge
\Td \left( \widehat{\nabla}^{T^V X^\prime} \right) \wedge
\omega(\nabla^{E^\prime}) - \notag \\
& \int_{B^\prime} \Td \left( \widehat{\nabla}^{TB^\prime} \right) \wedge
\omega \left( \nabla^{\Ker(D^V)^\prime} \right) \notag
 \end{align}
and
 \begin{align} \label{last}
& \int_{X} 
\frac{\pi^* \Td \left( \widehat{\nabla}^{TB} \right)
}{
\Td \left( \widehat{\nabla}^{\nu} \right)}
\wedge \widetilde{C}
\wedge \omega(\nabla^E) \equiv \\
& \int_{X^\prime} 
\frac{(\pi^\prime)^* \Td \left( \widehat{\nabla}^{TB^\prime} \right)
}{
\Td \left( \widehat{\nabla}^{\nu^\prime} \right)}
\wedge d\widetilde{C}^\prime
\wedge \omega(\nabla^{E^\prime}) \equiv \notag \\
& 
\int_{X^\prime} 
\frac{(\pi^\prime)^* \Td \left( \widehat{\nabla}^{TB^\prime} \right)
}{
\Td \left( \widehat{\nabla}^{\nu^\prime} \right)}
\wedge 
\left(
(\iota^\prime)^* 
\Td \left( \widehat{\nabla}^{T^V (S^N \times B^\prime)} \right)
- \Td \left( \widehat{\nabla}^{T^VX^\prime} \right) \wedge
\Td \left( \widehat{\nabla}^{\nu^\prime} \right)
\right)
\wedge \omega(\nabla^{E^\prime}) \equiv \notag \\
& \int_{X^\prime} 
\Td \left( \widehat{\nabla}^{TX^\prime} \right)
\wedge \omega(\nabla^{E^\prime}) -
\int_{X^\prime} 
(\pi^\prime)^* \Td \left( \widehat{\nabla}^{TB^\prime} \right)
\wedge \Td \left( \widehat{\nabla}^{T^VX^\prime} \right) 
\wedge \omega(\nabla^{E^\prime}).
\notag
 \end{align}
The lemma follows from combining equations (\ref{first})-(\ref{last}).
 \end{proof}

Continuing with the proof of Theorem \ref{maintheorem},
we apply Lemma \ref{variation} with $B^\prime = [0,1] \times B$,
so $\partial B^\prime = B_1 - B_0$.
If $p : [0,1] \times B \rightarrow B$ is the projection map then we take
all of the structures on $B^\prime$ to be pullbacks under $p$
of the corresponding structures on 
$B$, except for the horizontal distribution $T^H X^\prime$.
Note that the property of having vector bundle kernel is
independent of the choice of horizontal distribution. 
We choose $T^HX^\prime$ to equal $p^* T^H X$ near
$\{1\} \times B$, and to equal
$p^* (T^H_{prod}(S^N \times B)) \big|_{X}$ near $\{0\} \times B$.
Then Lemma \ref{variation} implies the computation of 
$C_1 - C_2$ for $B_1$ equals that for $B_0$. Thus
without loss of generality, we can assume that
$T^H X = (T^H_{prod}(S^N \times B)) \big|_{X}$. 
In this case, $\widetilde{C}$ vanishes from
Lemma \ref{vanishes}.

Next, we apply Lemma \ref{variation} with $B^\prime = [0,1] \times B$
and with all of the structures on $B^\prime$ pulling back from $B$,
except for the Riemannian metrics. 
Given $\epsilon > 0$, let $\rho : [0,1] \rightarrow \RR^+$
be a smooth function which is $\epsilon$ near $\{0\}$ and which
is $1$ near $\{1\}$. Multiply the fiberwise metrics for the Riemannian
structures $\pi^\prime : [0,1] \times X \rightarrow [0,1] \times B$ and
$\pi_{prod}^\prime : [0,1] \times S^N \times B \rightarrow [0,1] \times B$
by a factor $\rho(t)$, for $t \in [0,1]$. 
By doing so, we do not alter the property of having
vector bundle kernel. 
Then Lemma \ref{variation} implies the computation of 
$C_1 - C_2$ for $B_1$ equals that for $B_0$. That is,
$C_1 - C_2$ is unchanged after
scaling the metrics by $\epsilon$.

Hence it suffices to compute $C_1 - C_2$ in the limit when 
$\epsilon \rightarrow 0$. In this case, it is
known \cite[Theorem 0.1]{Dai}, \cite[Section 4]{Lott} that 
 \begin{equation}\label{eq:90}
\lim_{\epsilon \rightarrow 0}
\overline{\eta} \left( D^{X,E} \right) \equiv 
\overline{\eta} \left( D^{B,{\Ker(D^V)_+}} \right) -
\overline{\eta} \left( D^{B,{\Ker(D^V)_-}} \right) + 
u^{\frac{\dim(X)+1}{2}}
\int_B \Td \left( \widehat{\nabla}^{TB} \right) \wedge
\widetilde{\eta}
 \end{equation}
in $\RR/\ZZ$. Thus $C_1 - C_2 \equiv 0$. The theorem follows.
 \end{proof}

 \begin{corollary}
 \begin{enumerate}
 \item The assignment ${\mathcal E} \rightarrow \ind^{an}({\mathcal E})$
factors through a map $\cK^0(X) \rightarrow \cK^{-n}(B)$.
 \item The map $\ind^{top} : \cK^0(X) \rightarrow \cK^{-n}(B)$
is independent of the choice of embedding $\iota$.
 \end{enumerate}
 \end{corollary}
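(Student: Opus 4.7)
The plan is to deduce both statements directly from Theorem~\ref{maintheorem}. The topological index has been constructed as a homomorphism on $\cK^0(X)$ from the outset: the definition $\ind^{top} = \check\pi^{prod}_* \circ \check\iota^{mod}_*$ combines the embedding pushforward (shown in Section~\ref{sec:2} to descend to $\cK^0(X)$), the modification by the correction form $\widetilde C$, and the product projection $\check\pi^{prod}_*$ of Lemma~\ref{decpushforward}. By contrast, $\ind^{an}$ has only been defined on individual generators via Definition~\ref{analindex}. Theorem~\ref{maintheorem} provides the bridge: it asserts equality of the two indices on every generator ${\mathcal E}$ for which the vector bundle kernel hypothesis holds.

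For part (1), I would take any short-exact-sequence relation ${\mathcal E}_2 = {\mathcal E}_1 + {\mathcal E}_3$ among such generators and simply chain the equalities
\begin{equation*}
\ind^{an}({\mathcal E}_2) - \ind^{an}({\mathcal E}_1) - \ind^{an}({\mathcal E}_3) = \ind^{top}({\mathcal E}_2) - \ind^{top}({\mathcal E}_1) - \ind^{top}({\mathcal E}_3) = 0,
\end{equation*}
using Theorem~\ref{maintheorem} for the left equality and the already-established descent of $\ind^{top}$ to $\cK^0(X)$ for the right. This exhibits $\ind^{an}$ as a well-defined homomorphism $\cK^0(X) \to \cK^{-n}(B)$.

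For part (2), I would observe that Definition~\ref{analindex} produces $\ind^{an}({\mathcal E})$ purely from intrinsic data on $\pi$---the Bismut superconnection, the Bismut--Cheeger eta form, and the kernel bundle---with no reference to any embedding. Consequently, if $\iota_1$ and $\iota_2$ are two embeddings satisfying Lemma~\ref{embeddingexists}, then for every generator ${\mathcal E}$ with vector bundle kernel,
\begin{equation*}
\ind^{top}_{\iota_1}({\mathcal E}) = \ind^{an}({\mathcal E}) = \ind^{top}_{\iota_2}({\mathcal E}),
\end{equation*}
so $\ind^{top}$ does not depend on $\iota$.

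The main obstacle I anticipate is the vector bundle kernel hypothesis itself: in a given relation the three Dirac families attached to ${\mathcal E}_1, {\mathcal E}_2, {\mathcal E}_3$ need not all have kernels of locally constant rank, so the one-line argument above strictly verifies the factoring only within the subcategory of generators where the hypothesis is in force. The fully general statement is deferred to Section~\ref{sec:6}, where a Mishchenko--Fomenko perturbation extends $\ind^{an}$ to arbitrary~${\mathcal E}$; once that extension is available, the same two-line deductions go through verbatim.
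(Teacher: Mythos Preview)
Your argument is correct and matches the paper's own proof essentially line for line: part~(1) follows because $\ind^{top}$ already descends to $\cK^0(X)$ and Theorem~\ref{maintheorem} identifies it with $\ind^{an}$, while part~(2) follows because $\ind^{an}$ is defined without reference to any embedding. Your final paragraph about the vector-bundle-kernel hypothesis is a fair observation that the paper itself leaves implicit; the corollary is meant to be read under the standing hypotheses of Theorem~\ref{maintheorem}, and the unrestricted version is indeed re-proved as Corollary~\ref{indie} once the general analytic index of Definition~\ref{analindex2} is available.
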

 \begin{proof}
Part (1) follows from Theorem \ref{maintheorem} and the fact that
the assignment ${\mathcal E} \rightarrow \ind^{top}({\mathcal E})$
factors through a map $\cK^0(X) \rightarrow \cK^{-n}(B)$.
Part (2) follows from Theorem \ref{maintheorem} and the fact that
$\ind^{an}$ is independent of the choice of embedding $\iota$.
 \end{proof}

\section{Index theorem: general case}\label{sec:6}

In this section 
we complete the proof of the differential $K$-theory index theorem.  

In general, the kernels of a family of Dirac operators need not form a vector
bundle. In such a case, 
the basic idea is to perform a finite-rank perturbation of
the operators, in order to effectively 
reduce to the case of vector bundle kernel.
One way to do this, used in \cite{Atiyah-Singer} is to enlarge the domain of
$(D^V)_+$ by the sections of a 
trivial bundle over $B$, in order to make a finite rank
change so that $(D^V)_+$ becomes surjective; this implies
vector bundle kernel.  We instead follow the method of
\cite[Section 5]{Lott}, which uses a lemma of
Mischenko-Fomenko (Lemma~\ref{thm:99}) to find
a finite rank subbundle of the infinite rank bundle $\sH$
which captures the index. Adding on this finite rank subbundle,
with the opposite grading, allows one to alter the operator to make
it invertible.

An additional technical issue arises in trying to construct the eta form.
We want to make the $D^V$-term in the integrand 
invertible for large $s$, but we want to keep the small-$s$
asymptotics of the unperturbed Bismut superconnection. As in
\cite[Section 5]{Lott}, we use the trick of ``time-varying $\eta$-forms'', 
which originated in \cite{Melrose-Piazza}.

In Subsection \ref{recall} we recall some facts about ``time-varying
$\eta$-invariants'' and ``time-varying $\eta$-forms''.  In Subsection
\ref{subsec:6.12} we review the Mischenko-Fomenko result and and construct
the analytic pushforward in the general case (Definition~\ref{analindex2}).
After these preliminaries, in Subsection \ref{general} we prove the general
index theorem along the lines of the argument in the previous section.
Finally, in Subsection~\ref{subsec:6.1} we use the limit theorem in the
appendix to extend the theorem to proper fiber bundles with arbitrary base.

\subsection{Eta invariants and eta forms} \label{recall}

We first review some material from \cite{Lott} about eta invariants and
eta forms, which is an adaptation of
\cite{Bismut-Cheeger} to the time-varying case. 

Let $B$ be a closed odd-dimensional manifold.
Let ${\mathcal D}$ be a smooth $1$-parameter family of
first-order self-adjoint elliptic pseudodifferential operators
$D(s)$ on $B$, such that
 \begin{itemize}
 \item There are a $\delta > 0$ and a 
first-order self-adjoint elliptic pseudodifferential operator $D_0$ on 
$X$ such that for $s \in (0, \delta)$, we have $D(s) = s D_0$.
 \item There are a $\Delta > 0$ and a 
first-order self-adjoint elliptic pseudodifferential operator $D_\infty$ on 
$X$ such that for $s \in (\Delta, \infty)$, we have $D(s) = 
s D_\infty$.
 \end{itemize}

For $z \in \CC$ with $Re(z) >>0$, put
 \begin{equation}
\eta({\mathcal D})(z) = \frac{2}{\sqrt{\pi}} \int_0^\infty
s^z \Tr \left( \frac{dD(s)}{ds} e^{-D(s)^2} \right) ds.
 \end{equation}

 \begin{lemma} \cite[Lemma 2]{Lott}
$\eta({\mathcal D})(z)$ extends to a meromorphic function on $\CC$
which is holomorphic near $z=0$.
 \end{lemma}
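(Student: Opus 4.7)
The plan is to split the defining integral as
\begin{equation*}
\eta({\mathcal D})(z) = \frac{2}{\sqrt{\pi}} \left( \int_0^\delta + \int_\delta^\Delta + \int_\Delta^\infty \right) s^z \Tr\!\left( \tfrac{dD(s)}{ds} e^{-D(s)^2} \right) ds
\end{equation*}
and analyze each of the three pieces separately. The middle integral, over the compact interval $[\delta, \Delta]$, is immediately an entire function of $z$: the operator integrand is smooth in $s$ and gives a continuous trace, hence a bounded function on $[\delta,\Delta]$, while $s^z$ is entire in $z$, so differentiation under the integral sign is justified.

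For the tail, where $D(s) = s D_\infty$, the integrand becomes $s^z \Tr(D_\infty e^{-s^2 D_\infty^2})$. Because $D_\infty$ is a first-order self-adjoint elliptic pseudodifferential operator on the closed manifold $B$, it has discrete spectrum with finite-dimensional kernel; since $D_\infty$ annihilates its kernel, the trace effectively runs only over eigenvalues $|\lambda|\geq\mu$, where $\mu>0$ is the spectral gap. Writing $D_\infty e^{-s^2 D_\infty^2}=(D_\infty e^{-D_\infty^2})\cdot e^{-(s^2-1)D_\infty^2}$ and using standard trace-class estimates gives $|\Tr(D_\infty e^{-s^2 D_\infty^2})|\leq Ce^{-(s^2-1)\mu^2}$ for $s$ large, so the tail integral converges uniformly on compacta of $\CC$ and defines an entire function of $z$.

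For the head, where $D(s) = s D_0$, the substitution $t = s^2$ converts the piece to $\frac{1}{\sqrt{\pi}} \int_0^{\delta^2} t^{(z-1)/2} \Tr(D_0 e^{-tD_0^2})\,dt$. Invoking the classical small-time heat-kernel expansion $\Tr(D_0 e^{-tD_0^2}) \sim \sum_{k\geq 0} a_k\, t^{(k-n-1)/2}$ as $t\to 0^+$ (with $n=\dim B$), one subtracts the first $N$ terms: the remainder is integrable and defines a holomorphic function on an arbitrarily large half-plane as $N$ grows, while the subtracted terms integrate explicitly to a finite sum of simple poles in~$z$. Combining the three pieces yields the claimed meromorphic extension to all of~$\CC$.

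It remains to prove holomorphicity at $z = 0$, which is the only nontrivial point. Extending the head integral from $\delta^2$ to $\infty$ changes it by an entire function (the large-$t$ portion again decays exponentially by the spectral-gap argument applied to $D_0$), so $\eta({\mathcal D})(z)$ agrees, up to an entire summand, with $\frac{\Gamma((z+1)/2)}{\sqrt{\pi}}\,\eta_{D_0}(z)$, where $\eta_{D_0}(z) = \frac{1}{\Gamma((z+1)/2)}\int_0^\infty t^{(z-1)/2}\Tr(D_0 e^{-tD_0^2})\,dt$ is the classical Atiyah--Patodi--Singer eta function of $D_0$. The factor $\Gamma((z+1)/2)/\sqrt{\pi}$ is holomorphic at $z=0$ with value $1$, and $\eta_{D_0}(z)$ is holomorphic at $z=0$ by the classical APS regularity theorem for first-order self-adjoint elliptic operators on closed odd-dimensional manifolds. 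The main obstacle is thus this one classical input; the rest of the argument is bookkeeping around the splitting above.
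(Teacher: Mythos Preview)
The paper does not give a proof of this lemma; it simply cites \cite[Lemma~2]{Lott}. Your argument is the standard one and is essentially what appears in that reference: split the integral according to the three regimes of the family~$\mathcal{D}$, handle the compact middle and the exponentially decaying tail directly, and reduce the small-$s$ piece to the classical eta function of~$D_0$ via the Mellin transform, invoking the Atiyah--Patodi--Singer regularity theorem at $z=0$. One small remark: you appeal to APS regularity ``for first-order self-adjoint elliptic operators on closed odd-dimensional manifolds,'' but note that the hypothesis here allows $D_0$ to be pseudodifferential, so strictly speaking you need the extension of the regularity result to that class (as in APS~III together with Gilkey's work), not just the Dirac case; in the applications of the paper the operators are Dirac-type, so this is not an issue in practice.
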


\noindent
 Define the eta-invariant of ${\mathcal D}$ by 
 \begin{equation}
\eta({\mathcal D}) = \eta({\mathcal D})(0)
 \end{equation}
and define the reduced eta-invariant of 
${\mathcal D}$ by 
 \begin{equation}
\overline{\eta}({\mathcal D}) = 
\frac{\eta({\mathcal D}) + \dim(\Ker(D_\infty))}{2}
\, \, \, \, (mod \, \, \ZZ).
 \end{equation}

 \begin{lemma} \cite[Lemma 3]{Lott} \label{onlydepends}
$\eta({\mathcal D})$ only depends on $D_0$ and $D_\infty$, and
$\overline{\eta}({\mathcal D})$ only depends on $D_0$.
 \end{lemma}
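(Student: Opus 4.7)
The plan is to compare two admissible families by joining them through a smooth one-parameter deformation ${\mathcal D}^{t}$ and then to differentiate $\eta({\mathcal D}^{t})$ in~$t$. Explicitly, I would let $D(s,t)$ be a two-parameter family with $D(s,0)$ and $D(s,1)$ the two families being compared, chosen so that for every $t$ the family $s\mapsto D(s,t)$ satisfies the same boundary hypotheses, with uniform $\delta,\Delta$; for part~(a) I would impose $D(s,t)=sD_{0}$ for $s<\delta$ and $D(s,t)=sD_{\infty}$ for $s>\Delta$ independently of~$t$, and for part~(b) I would allow $D(s,t)=sD_{\infty}(t)$ for $s>\Delta$ while keeping $D_{0}$ fixed.

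The central ingredient is the pointwise identity
\[
\frac{\partial}{\partial s}\Tr\Big(\tfrac{\partial D}{\partial t}\,e^{-D^{2}}\Big) \;=\; \frac{\partial}{\partial t}\Tr\Big(\tfrac{\partial D}{\partial s}\,e^{-D^{2}}\Big),
\]
which I would derive from Duhamel's formula, cyclicity of the trace, and the commutation $[D,e^{-uD^{2}}]=0$. Plugging this into $\eta({\mathcal D}^{t})(z)$ and integrating by parts in~$s$ yields
\[
\tfrac{d}{dt}\eta({\mathcal D}^{t})(z) \;=\; \tfrac{2}{\sqrt{\pi}}\Big[s^{z}\Tr\big(\tfrac{\partial D}{\partial t}\,e^{-D^{2}}\big)\Big]_{0}^{\infty} \;-\; \tfrac{2z}{\sqrt{\pi}}\int_{0}^{\infty}s^{z-1}\Tr\big(\tfrac{\partial D}{\partial t}\,e^{-D^{2}}\big)\,ds.
\]
For part~(a), $\partial_{t}D$ is supported in $s\in[\delta,\Delta]$ uniformly in~$t$, so both boundary terms vanish identically and the remaining integral is entire in~$z$; the prefactor~$z$ then kills it at $z=0$, giving $\tfrac{d}{dt}\eta({\mathcal D}^{t})=0$.

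For part~(b) the only new phenomenon occurs at the $s=\infty$ boundary, which becomes $\lim_{s\to\infty}s^{z+1}\Tr(\dot D_{\infty}\,e^{-s^{2}D_{\infty}^{2}})$. On the open set of $t$ where $\dim\Ker D_{\infty}(t)$ is locally constant, first-order perturbation theory forces $\Tr_{\Ker D_{\infty}}(\dot D_{\infty})=0$ (the derivatives of permanent zero eigenvalues are zero), so this boundary term decays exponentially in~$s$ and $\eta({\mathcal D}^{t})$ is locally constant. At $t$ where eigenvalues of $D_{\infty}(t)$ cross zero, $\eta({\mathcal D}^{t})$ jumps by $\pm 2$ per unit of spectral flow (the jump being localized to the large-$s$ region and determined entirely by $D_{\infty}$), while $\dim\Ker D_{\infty}(t)$, compared at the regular endpoints on either side, jumps by an integer of matching parity; their sum therefore jumps by an even integer, and $\overline{\eta}({\mathcal D}^{t})=\bigl(\eta({\mathcal D}^{t})+\dim\Ker D_{\infty}(t)\bigr)/2$ is constant modulo~$\ZZ$.

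The principal obstacle will be the careful treatment of the $s=\infty$ boundary in part~(b): justifying the interchange of the limits $s\to\infty$ and $z\to 0$, extracting the spectral-flow contribution cleanly from the full family ${\mathcal D}^{t}$ rather than only from $D_{\infty}(t)$, and verifying that the spectral-flow jump and the kernel-dimension jump combine to an even integer. The variational identity, the integration-by-parts step, and the analysis at $s=0$ should be routine once the interpolating family is set up.
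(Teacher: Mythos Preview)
The paper does not prove this lemma; it merely cites \cite[Lemma~3]{Lott}. Your proposal is the standard variational argument and is essentially what one finds in that reference: deform through admissible families, use the transgression identity
\[
\partial_s\Tr\bigl(\partial_t D\,e^{-D^2}\bigr)=\partial_t\Tr\bigl(\partial_s D\,e^{-D^2}\bigr),
\]
integrate by parts in~$s$, and observe that the boundary contributions vanish (part~(a)) or contribute only integer jumps to~$\overline\eta$ (part~(b)). Your derivation of the identity and the treatment of part~(a) are correct as written.

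For part~(b) your outline is right but can be tightened. Once part~(a) is established, $\eta(\mathcal D^t)$ depends only on $D_0$ and $D_\infty(t)$; your computation then shows it is locally constant on the open set where $\dim\Ker D_\infty(t)$ is constant (the key point being $P\dot D_\infty P=0$ there, which you state correctly). The remaining step---that the jumps in $\eta(\mathcal D^t)$ and in $\dim\Ker D_\infty(t)$ combine to even integers---is exactly the standard spectral-flow bookkeeping for the ordinary reduced eta-invariant of $D_\infty(t)$, and you can invoke it directly rather than rederive it. An equivalent and slightly cleaner route is to argue that $\overline\eta(\mathcal D^t)\bmod\ZZ$ is continuous in~$t$ (same proof as for the usual $\overline\eta$, since the possible discontinuities all sit at $s=\infty$ and are governed by $D_\infty$) and locally constant on a dense set, hence constant. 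Either way the argument goes through; the ``principal obstacle'' you flag is routine once you realize the large-$s$ analysis reduces to the classical case.
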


Now suppose that $B$ additionally is a Riemannian $\spinc$-manifold,
equipped with a $\spinc$-connection 
$\widehat{\nabla}^{TB}$ on the spinor bundle $\Sp^B$.
Let $E$ be a $\ZZ/2\ZZ$-graded vector bundle over $B$.
We think of $[E]$ as defining an element of $K^{-n}(B)$, for
some even $n$.
If $A$ is a superconnection on $E$ and $s \in \RR^+$, let
$A_s$ denote the result of multiplying each factor of $u$ in
$A$ by $s^2$.
 
Let ${\mathcal A} = \{A(s)\}_{s \ge 0}$ be a smooth $1$-parameter family
of superconnections on $E$ such that
 \begin{itemize}
 \item There are a $\delta > 0$ and a superconnection $A_0$ on
$E$ such that for $s \in (0, \delta)$, we have $A(s) = (A_0)_s$.
 \item There are a $\Delta > 0$ and a superconnection $A_\infty$ on
$E$ such that for $s \in (\Delta, \infty)$, we have $A(s) = (A_\infty)_s$.
 \end{itemize}

Suppose that $A_\infty$ is invertible. For $z \in \CC$, $Re(z) >> 0$, define
$\widetilde{\eta}({\mathcal A})(z) \in \Omega(B;
\R)^{-n-1}/\Image(d)$ by
 \begin{equation}\label{eq:99}
\widetilde{\eta}({\mathcal A})(z) =
u^{- {n}/{2}} R_u \int_0^\infty z^s \str \left( u^{-1} \frac{dA(s)}{ds}
e^{- u^{-1} A(s)^2} \right) ds.
 \end{equation}

 \begin{lemma} \cite[Lemma 4]{Lott} 
$\widetilde{\eta}({\mathcal A})(z)$ extends to a meromorphic
vector-valued function on $\CC$ with simple poles.  Its residue
at zero vanishes in $\Omega(B; \R)^{-n-1}/\Image(d)$.
 \end{lemma}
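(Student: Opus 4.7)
The plan is to decompose the integration region into the three natural pieces $(0,\delta)$, $(\delta,\Delta)$, and $(\Delta,\infty)$ dictated by the hypotheses on $\mathcal{A}$, and analyze the behavior of each piece separately as a function of $z$. The middle piece $\int_\delta^\Delta s^z \str(u^{-1}\tfrac{dA(s)}{ds}e^{-u^{-1}A(s)^2})\,ds$ is manifestly entire in $z$: the integrand is smooth in $s$ on a compact interval, and $s^z$ is entire. For the large-$s$ piece, where $A(s)=(A_\infty)_s$, I would invoke the hypothesis that $A_\infty$ is invertible. Invertibility forces the operator part of $A_\infty^2$ to be bounded below, so $e^{-u^{-1}(A_\infty)_s^2}$ decays exponentially as $s\to\infty$ while $\tfrac{dA(s)}{ds}$ grows only polynomially. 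Multiplying by $s^z$ still leaves an integrable integrand for all $z$, and this piece of the integral is entire as well.

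The analytic substance lies in the small-$s$ piece, $\int_0^\delta s^z f(s)\,ds$ with $f(s)=\str(u^{-1}\tfrac{d(A_0)_s}{ds}e^{-u^{-1}(A_0)_s^2})$. Here I would appeal to the local index theorem for Bismut-type superconnections (as in Chapter~10 of Berline--Getzler--Vergne), which via Getzler rescaling yields an asymptotic expansion
\begin{equation*}
f(s)\;\sim\;\sum_{k\ge K} a_k\, s^k\qquad(s\to 0^+),\qquad a_k\in\Omega(B;\R).
\end{equation*}
Subtracting off a partial sum and integrating term by term gives
\begin{equation*}
\int_0^\delta s^z f(s)\,ds \;=\; \sum_{k=K}^{N} a_k\,\frac{\delta^{z+k+1}}{z+k+1} \;+\; \int_0^\delta s^z\Big(f(s)-\sum_{k=K}^{N} a_k s^k\Big)\,ds,
\end{equation*}
the tail being holomorphic for $\mathrm{Re}(z)>-N-2$. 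This exhibits the meromorphic continuation with simple poles at $z=-k-1$ and residues $a_k$.

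The residue at $z=0$ is therefore $a_{-1}$, and the claim is that $a_{-1}\in\Image(d)$. I would combine two inputs. First, the transgression identity
\begin{equation*}
d\,\str\!\left(u^{-1}\tfrac{dA(s)}{ds}\,e^{-u^{-1}A(s)^2}\right) \;=\; -\tfrac{d}{ds}\str\!\left(e^{-u^{-1}A(s)^2}\right)
\end{equation*}
shows that the $s$-expansion coefficients on the two sides agree. Second, the local index theorem asserts that $\str(e^{-u^{-1}(A_0)_s^2})$ extends smoothly to $s=0$, so its expansion has only nonnegative integer powers of $s$; in particular its $\tfrac{d}{ds}$ has vanishing $s^{-1}$ coefficient. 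Matching the $s^{-1}$ coefficients across the transgression identity then gives $d\,a_{-1}=0$, and a further Getzler-rescaling argument in fact shows that $f(s)$ is bounded as $s\to 0^+$, forcing $a_k=0$ for all $k<0$. Thus the residue at $z=0$ vanishes, a fortiori in $\Omega(B;\R)^{-n-1}/\Image(d)$.

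The main obstacle is the small-$s$ analysis: controlling $f(s)$ uniformly as $s\to 0^+$ for a general $A_0$ of Bismut type, where the Clifford symbol and curvature terms conspire to cancel potential singularities. Getzler's rescaling technique, together with the treatment of the mean-curvature $c(T)$ correction in the Bismut superconnection, furnishes the required uniformity and yields the asymptotic expansion with no negative-power coefficients.
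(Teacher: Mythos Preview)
You have misread the setting of the lemma. In Subsection~6.1 the bundle $E\to B$ is a \emph{finite-rank} $\ZZ/2\ZZ$-graded vector bundle, and $A_0$ is an arbitrary superconnection on it in the sense of Definition~\ref{thm:1}; it is not the infinite-dimensional Bismut superconnection of Section~\ref{sec:3}. So the appeals to ``local index theorem for Bismut-type superconnections,'' ``Getzler rescaling,'' and the ``mean-curvature $c(T)$ correction'' are out of place. In the finite-dimensional situation the small-$s$ behavior is elementary algebra: writing $A_0=\sum_j u^{(1-j)/2}\omega_j$ one has $(A_0)_s=\sum_j s^{1-j}u^{(1-j)/2}\omega_j$, so $u^{-1}(A_0)_s^2$ has nilpotent positive-form-degree part and the integrand $f(s)$ is $e^{-s^2\omega_0^2}$ times a Laurent polynomial in $s$. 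That already gives the Laurent expansion and the meromorphic continuation with simple poles, without any heat-kernel analysis.

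The real gap is in your residue argument. The transgression identity indeed yields $d\,a_{-1}=0$, but closedness is not exactness, and the lemma asserts exactness. Your escape route, ``a further Getzler-rescaling argument in fact shows that $f(s)$ is bounded as $s\to0^+$,'' is simply false here: once $A_0$ has higher terms $\omega_j$ with $j\ge2$, the Laurent expansion of $f(s)$ genuinely contains negative powers of $s$, and in particular $a_{-1}$ need not vanish as a differential form. What is true, and what must be proved, is that $a_{-1}$ is \emph{exact}. One clean way: deform $A_0$ through $A_0^{(r)}=u^{1/2}\omega_0+\nabla+r\sum_{j\ge2}u^{(1-j)/2}\omega_j$. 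A double-transgression identity (equivalently, closedness of the Chern form of $dr\,\partial_r+ds\,\partial_s+(A_0^{(r)})_s$ on $[0,1]\times(0,\infty)\times B$) shows that modulo exact forms, $\partial_r f^{(r)}(s)$ equals $\partial_s$ of a Laurent series in $s$; but $\partial_s$ of any Laurent series has vanishing $s^{-1}$-coefficient, so $\partial_r a_{-1}^{(r)}$ is exact for all $r$. Since $a_{-1}^{(0)}=0$ (your own observation that $f(s)$ is regular when only $\omega_0$ and $\nabla$ are present), integration in $r$ gives $a_{-1}=a_{-1}^{(1)}\in\Image(d)$.
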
 

\noindent
 Define the eta form of ${\mathcal A}$ by
 \begin{equation}
\widetilde{\eta}({\mathcal A}) = \widetilde{\eta}({\mathcal A})(0).
 \end{equation}
As in Lemma \ref{onlydepends}, $\widetilde{\eta}({\mathcal A})$ only
depends on $A_0$ and $A_\infty$.

Given a superconnection
$A$ on $E$, let $\overline{A}$ denote the associated first-order differential
operator~\cite[Section 3.3]{Berline-Getzler-Vergne}. It is the essentially
self-adjoint operator on $C^\infty(E \otimes \Sp^B)$ obtained by
replacing the Grassmann variables in $A$ by
Clifford variables and replacing $u$ by $1$.
Now given a family
${\mathcal A}$ of superconnections as above and a parameter $\epsilon > 0$,
define a family of operators ${\mathcal D}^{(\epsilon)}$ by
 \begin{equation}
D^{(\epsilon)}(s) = \overline{A(s)_{\epsilon^{-1}}}.
 \end{equation}
(In the fiber bundle situation, this corresponds to multiplying the
fiber lengths by a factor of $\epsilon$.
The paper \cite{Bismut-Cheeger} instead expands the base,
but the two approaches are equivalent.)
Let $\eta({\mathcal D}^{(\epsilon)})$ be the corresponding eta invariant. 
Then a generalization of \cite[(A.1.7)]{Bismut-Cheeger} says that
 \begin{equation}
\lim_{\epsilon \rightarrow 0} \overline{\eta}({\mathcal D}^{(\epsilon)}) =
u^{\frac{\dim(B)+n+1}{2}}
\int_B \Td(\widehat{\nabla}^{TB}) \wedge \widetilde{\eta}({\mathcal A})
\, \, \, \, (mod \, \, \ZZ).
 \end{equation}

\subsection{Analytic pushforward}\label{subsec:6.12}

We continue with the setup of Section \ref{sec:3}, namely a family of
Dirac-type operators, except that we no longer assume that $\Ker(D^V)$ forms
a smooth vector bundle on $B$.  In order to deal with this more general
situation, we will use a perturbation argument, following the approach of
\cite[Section 5]{Lott}.  For this, we need to assume that $B$~is compact.

We first recall a technical lemma of
Mischenko-Fomenko, along with its proof.

 \begin{lemma}[\cite{Mischenko-Fomenko}]\label{thm:99}
Suppose that $B$ is compact.
Then there are finite-dimensional vector subbundles $L_\pm \subset
\sH_\pm$ and complementary closed subbundles
$K_\pm \subset \sH_\pm$, i.e.
 \begin{equation} \label{directsum}
\sH_\pm = K_\pm \oplus L_\pm,
 \end{equation}
so that
$D^V_+ \in \Hom(\sH_+,\sH_-)$ is block diagonal as a map
 \begin{equation}
D^V_+ : K_+ \oplus L_+ \rightarrow K_- \oplus L_-
 \end{equation}
and $D^V_+$ restricts to an isomorphism between
$K_+$ and $K_-$.
(Note that $K_\pm$ may not be orthogonal to $L_\pm$.)
 \end{lemma}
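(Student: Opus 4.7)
My plan is an algebraic finite-rank perturbation in the spirit of Mishchenko--Fomenko, exploiting the Fredholmness of the vertical Dirac operator together with the compactness of $B$. The first step is to construct $L_-$. Since $B$ is compact and $\{D^V_{+,b}\}_{b\in B}$ is a smooth family of Fredholm operators, the cokernels $\coker(D^V_{+,b})$ have uniformly bounded dimension. I would cover $B$ by finitely many open sets on each of which one can choose smooth local sections of $(\Sp^VX)_-\otimes E$ over $X$ whose fiberwise restrictions span a complement of $\Image(D^V_{+,b})$; by collecting enough such sections and making them pointwise linearly independent through a generic perturbation, this yields a trivial rank-$N$ subbundle $L_-\subset\sH_-$ satisfying $L_{-,b}+\Image(D^V_{+,b})=\sH_{-,b}$ at every $b\in B$.

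Next, set $L_+:=(D^V_+)^{-1}(L_-)\subset\sH_+$. The fiberwise count
\[
\dim L_{+,b}=\dim\ker D^V_{+,b}+\dim\bigl(L_{-,b}\cap\Image\, D^V_{+,b}\bigr)=N+\ind(D^V_{+,b})
\]
shows $\dim L_{+,b}$ is constant in $b$ (since $\ind D^V_+$ is), so $L_+$ is a finite-rank smooth subbundle with $D^V_+(L_+)\subset L_-$ by construction. Take $K_+:=L_+^{\perp}$ (orthogonal complement in $\sH_+$) and define $K_-:=D^V_+(K_+)\subset\sH_-$. Since $\ker D^V_+\subset L_+$, the restriction $D^V_+|_{K_+}\colon K_+\to K_-$ is fiberwise injective and surjective onto $K_-$ by definition, hence a bundle isomorphism onto a closed subbundle $K_-$. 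To verify $\sH_-=K_-\oplus L_-$: if $D^V_+\sigma\in K_-\cap L_-$ with $\sigma\in K_+$, then $\sigma\in(D^V_+)^{-1}(L_-)\cap K_+=L_+\cap L_+^{\perp}=0$, giving $K_-\cap L_-=\{0\}$; conversely, any $\eta\in\sH_-$ can be written $\eta=l+D^V_+(\sigma_K+\sigma_L)$ with $l\in L_-$, $\sigma_K\in K_+$, $\sigma_L\in L_+$, yielding $\eta=(l+D^V_+\sigma_L)+D^V_+\sigma_K\in L_-+K_-$. The block-diagonality of $D^V_+$ with respect to these splittings is then immediate from the definitions of $K_\pm$ and $L_\pm$.

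The main obstacle is the very first step --- producing $L_-$ as a globally defined, constant-rank trivial subbundle. This is a standard but nontrivial point in family index theory: it requires compactness of $B$ to pass to a finite cover, and a transversality argument to guarantee pointwise linear independence of the assembled local sections. Once $L_-$ is in hand the remaining construction is pure fiberwise linear algebra, and the local constancy of $\ind D^V_+$ is what ensures $L_+$ has uniform rank. I note that in this construction $K_+$ is orthogonal to $L_+$ by choice, but $K_-=D^V_+(L_+^{\perp})$ is generally not orthogonal to $L_-$, consistent with the parenthetical remark in the lemma.
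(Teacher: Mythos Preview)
Your argument is correct and follows a genuinely different linear-algebraic route from the paper's.  The paper (following Mischenko--Fomenko) first chooses \emph{two} finite-rank subbundles $L'_\pm\subset\sH_\pm$ such that the compressed map $(L'_+)^\perp\to(L'_-)^\perp$ is an isomorphism, then writes $D^V_+$ as a $2\times 2$ matrix $\bigl(\begin{smallmatrix}A&B\\C&D\end{smallmatrix}\bigr)$ with $A$ invertible and sets $K_+=(L'_+)^\perp$, $L_+=(I-A^{-1}B)(L'_+)$, $K_-=(I+CA^{-1})(L'_-)^\perp$, $L_-=L'_-$; block-diagonality is then a matrix computation.  You instead choose only $L_-$ (spanning the cokernel), take $L_+$ to be its full preimage, and let $K_+=L_+^\perp$, $K_-=D^V_+(K_+)$.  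Your construction is more conceptual and yields the bonus that the $\sH_+$ splitting is orthogonal; the paper's shear formulas make closedness of $K_\pm$ transparent (they are graphs of continuous maps), whereas in your approach two points deserve a sentence each.  First, constancy of $\dim L_{+,b}$ alone does not guarantee that $L_+$ is a smooth subbundle; the clean justification is that $\sH_+\oplus L_-\to\sH_-$, $(v,l)\mapsto D^V_+v-l$, is a surjective Fredholm family, hence has smooth finite-rank kernel, and this kernel projects isomorphically onto $L_+$.  Second, closedness of $K_-=D^V_+(K_+)$ is not automatic; it follows because the composite $K_+\xrightarrow{D^V_+}\sH_-\to\sH_-/L_-$ is a continuous bijection of Fr\'echet spaces, hence a topological isomorphism by the open mapping theorem, so $D^V_+|_{K_+}$ is a topological embedding.
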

 \begin{proof}
This is proved in \cite[Lemma 2.2]{Mischenko-Fomenko}.
For completeness, we sketch the argument.  One first finds
finite-dimensional vector subbundles $L^\prime_\pm \subset
\sH_\pm$ so that the projected map $(L^\prime_+)^\perp 
\stackrel{D^V_+}{\rightarrow} \sH_- \rightarrow (L^\prime_-)^\perp$
is an isomorphism. With respect to the orthogonal decomposition
$\sH_\pm = (L^\prime_\pm)^\perp \oplus L^\prime_\pm$, write
 \begin{equation}
D^V_+ =
 \begin{pmatrix}
A & B \\
C & D
 \end{pmatrix}
 \end{equation}
where $A : (L^\prime_+)^\perp 
\rightarrow (L^\prime_-)^\perp$ is an isomorphism.
Set 
 \begin{align}
K_+ & = (L^\prime_+)^\perp, \\
L_+ & = 
\Image((-A^{-1}B + I) : L^\prime_+ \rightarrow \sH_+), \notag \\
K_- & = 
\Image((I + CA^{-1}) : (L^\prime_-)^\perp \rightarrow \sH_-), \notag \\
L_- & = L^\prime_-. \notag
 \end{align}
This proves the lemma.
 \end{proof}

Let $i_- : L_- \rightarrow \sH_-$ be the inclusion map and let $p_+ : \sH_+
\rightarrow L_+$ be the projection map coming from (\ref{directsum}).  Put
$\widetilde{\sH}_\pm = \sH_\pm \oplus L_\mp$.  Given $\alpha \in \CC$, define
$\widetilde{D}^V_+(\alpha) \in \Hom(\widetilde{\sH}_+, \widetilde{\sH}_-)$ by
the matrix
 \begin{equation}
\widetilde{D}^V_+(\alpha) = 
 \begin{pmatrix}
D^V_+ & \alpha i_- \\
\alpha p_+ & 0
 \end{pmatrix}.
 \end{equation}
That is,
 \begin{equation}
\widetilde{D}^V_+(\alpha) (h_+ \oplus l_-) =
(D^V_+ h_+ + \alpha i_- l_- \oplus \alpha p_+ h_+).
 \end{equation} 

 \begin{lemma} \label{invertible}
If $\alpha \neq 0$ then 
$\widetilde{D}^V_+(\alpha)$ is invertible.
 \end{lemma}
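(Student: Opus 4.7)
The plan is to reduce invertibility to block matrix linear algebra by using the Mischenko-Fomenko decompositions of Lemma \ref{thm:99}.

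Using $\sH_\pm = K_\pm \oplus L_\pm$, I would decompose $\widetilde{\sH}_+ = K_+ \oplus L_+ \oplus L_-$ and $\widetilde{\sH}_- = K_- \oplus L_- \oplus L_+$. Since $D^V_+$ is block diagonal in the splitting of $\sH_\pm$, write $D^V_+|_{K_+} = D_K : K_+ \to K_-$ (an isomorphism by the lemma) and $D^V_+|_{L_+} = D_L : L_+ \to L_-$. Because $i_-$ is the inclusion of $L_-$ into $\sH_-$ and $p_+$ is projection of $\sH_+$ onto $L_+$, the operator $\widetilde{D}^V_+(\alpha)$ becomes the block matrix
\[
\widetilde{D}^V_+(\alpha) = \begin{pmatrix} D_K & 0 & 0 \\ 0 & D_L & \alpha I \\ 0 & \alpha I & 0 \end{pmatrix}.
\]

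For $\alpha \neq 0$, I would verify invertibility directly. For injectivity, any element $(k_+, l_+, l_-)$ of the kernel satisfies $\alpha l_+ = 0$ from the third row, hence $\alpha l_- = 0$ from the second, and finally $D_K k_+ = 0$ from the first; since $D_K$ is an isomorphism, the kernel is trivial. A two-sided inverse is then exhibited explicitly as
\[
\widetilde{D}^V_+(\alpha)^{-1} = \begin{pmatrix} D_K^{-1} & 0 & 0 \\ 0 & 0 & \alpha^{-1} I \\ 0 & \alpha^{-1} I & -\alpha^{-2} D_L \end{pmatrix},
\]
whose correctness is confirmed by a direct block matrix multiplication in both orders.

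There is essentially no obstacle once the Mischenko-Fomenko splitting is in hand; the argument is purely formal linear algebra. The only bookkeeping point is to order the summands in $\widetilde{\sH}_+$ and $\widetilde{\sH}_-$ consistently, so that $\alpha p_+$ pairs the $L_+$-summand of the domain with the $L_+$-summand of the codomain, while $\alpha i_-$ pairs the two $L_-$-summands, before writing the block matrix down.
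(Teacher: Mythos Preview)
Your proof is correct and is essentially the same as the paper's. The paper argues injectivity and surjectivity directly without writing out the $3\times 3$ block matrix, but the computations are identical: the paper's explicit preimage $h_+ = (D^V_+|_{K_+})^{-1}k'_- + \alpha^{-1}l'_+$, $l_- = \alpha^{-1}l'_- - \alpha^{-2}D^V_+ l'_+$ is exactly what your inverse matrix encodes.
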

 \begin{proof}
Suppose that $\widetilde{D}^V_+(\alpha) (h_+ \oplus l_-) = 0$.
As $p_+ h_+ = 0$, we know that $h_+ \in K_+$.
Then $D^V_+ h_+ \in K_-$. As $D^V_+ h_+ + \alpha i_- l_- = 0$,
we conclude that $D^V_+ h_+ = 0$ and $l_- = 0$. As $D^V_+$ is
injective on $K_+$, it follows that $h_+ = 0$. Hence
$\widetilde{D}^V_+(\alpha)$ is injective.

Now suppose that $h^\prime_- \oplus l^\prime_+ \in \widetilde{\sH}_-$.
With respect to (\ref{directsum}), write $h^\prime_- = k^\prime_- +
l^\prime_-$.
Put 
 \begin{align}
h_+ = & \left( D^V_+ \big|_{K_+} \right)^{-1} k^\prime_-
+ \alpha^{-1} l^\prime_+,  \\
l_- = & \alpha^{-1} l^\prime_- -
\alpha^{-2} D^V_+ l^\prime_+. \notag
 \end{align}
One can check that
$\widetilde{D}^V_+ ( h_+ \oplus l_- ) =  h^\prime_- \oplus l^\prime_+$.
Thus $\widetilde{D}^V_+(\alpha)$ is surjective.
 \end{proof}

Define $\widetilde{D}^V(\alpha) \in \End(\widetilde{\sH})$ by
$\widetilde{D}^V(\alpha) = \widetilde{D}^V_+(\alpha) \oplus \left(
\widetilde{D}^V_+(\alpha) \right)^*$, an essentially self-adjoint operator on
each fiber $\widetilde{\sH}_b$. As $\widetilde{D}^V(\alpha)$ is a finite-rank
perturbation of $D^V \oplus I_L$, 
and $(I + (D^V)^2)^{-1}$ is compact on each fiber
$\sH_b$, it follows that
$(I + (\widetilde{D}^V(\alpha))^2)^{-1}$ is compact on each fiber
$\widetilde{\sH}_b$.
Lemma \ref{invertible} now implies that if
$\alpha \neq 0$ then $\left( \widetilde{D}^V(\alpha) \right)^2$ has strictly
positive spectrum.  

Give $L$ the projected Hermitian inner product $h^L$ and projected
compatible connection $\nabla^L$ from $\sH$.  Put
$\nabla^{\widetilde{\sH}_\pm} = \nabla^{{\sH}_\pm} \oplus
\nabla^{L_\mp}$.  Let $\alpha : [0, \infty) \rightarrow [0,1]$ be a smooth
function for which $\alpha(s) = 0$ if $s$ is near $0$, and $\alpha(s)=1$ if
$s \ge 1$.  
We view $[L]$~as defining an element of $K^{-n}(B)$.

For $s > 0$, define a superconnection $\widetilde{A}_s$ by
   \begin{equation} \label{perturbedsuper}
     \widetilde{A}_s= s u^{1/2} \widetilde{D}^V(\alpha(s)) + 
\nabla^{\widetilde{\sH}}  - s^{-1} u^{-1/2}\frac{c(T)}{4}. 
   \end{equation}
Then
 \begin{equation}
 \lim_{s \rightarrow 0} u^{-n/2}R_u \STr \left( e^{- u^{-1}
\widetilde{A}_s^2} \right) = \pi_*(\omega(\nabla^E)) - \omega(\nabla^L),
 \end{equation}
while
 \begin{equation}
 \lim_{s \rightarrow \infty} u^{-n/2} R_u \STr \left( e^{- u^{-1} 
\widetilde{A}_s^2} \right) =
0.
 \end{equation}
Note that unlike in Subsection \ref{recall}, we do not have to
use zeta-function regularization because when $s \rightarrow 0$, the
$\sH$ and $L$ factors in $\widetilde{\sH}$ decouple and so we are
reduced to the short-time asymptotics of the Bismut superconnection
(\ref{short}).

Put 
 \begin{equation}
 \tilde{\eta} = u^{-n/2}R_u \int_0^\infty \STr \left( u^{-1} 
\frac{d\widetilde{A}_s}{ds}
e^{- u^{-1} \widetilde{A}_s^2} \right) ds \in \Omega (B;\R)^{-n-1}/\Image(d).
 \end{equation}
It is independent of the particular choice of the function $\alpha$. Also,
 \begin{equation}\label{eq:98}
d\tilde{\eta} = \pi_*(\omega(\nabla^E)) - \omega(\nabla^L).
 \end{equation}

 \begin{definition}\label{analindex2}
Given a generator ${\mathcal E} = \left( E, h^E, \nabla^E, \phi \right)$ for
$\cK^0(X)$, we define the \emph{analytic index}
 \begin{equation} \label{submerformula2}
 \ind^{an}({\mathcal E}) =
\left( L, h^{L}, 
\nabla^{L}, \pi_*(\phi) + \widetilde{\eta}
\right)
 \end{equation}
as an element of $\cK^{-n}(B)$, where $L$~is chosen as in Lemma~\ref{thm:99}.
 \end{definition}
Given a generator ${\mathcal E}$ of $\WFcK^0(X)$, we define
$\ind^{an}({\mathcal E}) \in \cK^{-n}(B)$ by the same formula
(\ref{submerformula2}).  We prove in Corollary~\ref{indie} that this
definition is independent of the choice of~$L$.

 \begin{lemma} \label{omegapush2}
If ${\mathcal E}$ is a generator for $\WFcK^0(X)$ then
$\omega(\ind^{an}({\mathcal E})) = \pi_*(\omega({\mathcal E}))$
in $\Omega(B; \R)^{-n}$.
 \end{lemma}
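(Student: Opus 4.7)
The plan is to imitate the proof of Lemma~\ref{omegapush} verbatim, replacing the use of \eqref{eq:2.10} (which asserted $d\widetilde\eta = \pi_*(\omega(\nabla^E)) - \omega(\nabla^{\Ker(D^V)})$ in the vector-bundle-kernel case) by its general counterpart \eqref{eq:98}, namely
\begin{equation*}
d\widetilde\eta = \pi_*(\omega(\nabla^E)) - \omega(\nabla^L),
\end{equation*}
which was just established from the small-$s$ and large-$s$ asymptotics of the perturbed superconnection $\widetilde A_s$ of \eqref{perturbedsuper}.

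Concretely, writing $\mathcal E = (E,h^E,\nabla^E,\phi)$ and unpacking Definition~\ref{analindex2}, I would compute
\begin{equation*}
\omega(\ind^{an}(\mathcal E)) \;=\; \omega(\nabla^L) + d\bigl(\pi_*(\phi)+\widetilde\eta\bigr)
\;=\; \omega(\nabla^L) + d\pi_*(\phi) + \pi_*(\omega(\nabla^E)) - \omega(\nabla^L).
\end{equation*}
The two $\omega(\nabla^L)$ contributions cancel, leaving $d\pi_*(\phi) + \pi_*(\omega(\nabla^E))$.

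The one nontrivial point to check is that $d$ commutes with the modified fiber integration $\pi_*(\cdot)=\int_{X/B}\Td(\widehat\nabla^{T^VX})\wedge(\cdot)$. Since the fibers of $\pi$ are closed and $\Td(\widehat\nabla^{T^VX})$ is a closed form on $X$, Stokes' theorem for fiber integration gives $d\pi_*(\phi)=\pi_*(d\phi)$ (this works equally for $\phi$ a smooth form or a current in $\WFO(X;\R)^{-1}$ whose wave front avoids $N^*_V X$, using \cite[Theorem 8.2.12]{Hormander} as invoked in \eqref{canintegrate}). Therefore
\begin{equation*}
d\pi_*(\phi) + \pi_*(\omega(\nabla^E)) \;=\; \pi_*\bigl(\omega(\nabla^E)+d\phi\bigr) \;=\; \pi_*(\omega(\mathcal E)),
\end{equation*}
finishing the proof.

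There is no real obstacle: the key input \eqref{eq:98} has already been established by means of the heat-kernel asymptotics for $\widetilde A_s$, and the perturbation-invariance of the construction (independence of the cutoff function $\alpha$ and of the Mischenko--Fomenko subbundle $L$, deferred to Corollary~\ref{indie}) is not needed for this lemma, only for the well-definedness of $\ind^{an}$ as a map on $\cK^0(X)$ itself. Thus the content of the lemma is purely the transgression identity \eqref{eq:98} combined with Stokes' theorem for fiber integration.
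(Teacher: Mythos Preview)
Your proof is correct and follows essentially the same approach as the paper's own proof, which is the one-line computation $\omega(\ind^{an}({\mathcal E})) = \omega(\nabla^L) + d(\pi_*(\phi) + \widetilde{\eta}) = \pi_*(\omega(\nabla^E) + d\phi) = \pi_*(\omega({\mathcal E}))$ using \eqref{eq:98}. Your version is simply more explicit about the intermediate cancellation and the justification of $d\pi_* = \pi_* d$ via Stokes' theorem for closed fibers.
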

 \begin{proof}
We have
 \begin{equation}
\omega(\ind^{an}({\mathcal E})) =
\omega(\nabla^L) + d(\pi_*(\phi) + \widetilde{\eta}) =
\pi_*(\omega(\nabla^E) + d\phi) = 
\pi_*(\omega({\mathcal E})),
 \end{equation}
which proves the lemma.
 \end{proof}

\subsection{General index theorem} \label{general}

Continuing with the assumptions of the previous subsection, suppose that $B$
is a closed odd-dimensional Riemannian manifold with a
$\spinc$-structure. Let $\widehat{\nabla}^{TB}$ be a $\spinc$-connection on
$\Sp^B$.  Combining with the Riemannian structure on $\pi$ and the
differential $\spinc$-structure on $\pi$, we obtain a Riemannian metric
$g^{TX}$ on $X$ and a $\spinc$-connection $\widehat{\nabla}^{TX}$ on~ $\Sp^X$.

As in~\ref{recall}, given a parameter $\epsilon > 0$, we define a family of
pseudodifferential operators ${\mathcal D}^{(\epsilon)}$ (living on $X$) by
 \begin{equation}
D^{(\epsilon)}(s) = \overline{(\widetilde{A}_s)_{\epsilon^{-1}}}.
 \end{equation}
Then the family ${\mathcal D}^{(\epsilon)}$ satisfies the formalism of~
\ref{recall}.

To identify the operators $D_0$ and $D_\infty$ corresponding to the family
${\mathcal D}^{(\epsilon)}$, let $X_\epsilon$ denote the Riemannian structure
on $X$ coming from multiplying $g^{TX}$ in the vertical direction by
$\epsilon$.  If $s$ is near zero then $\alpha(s)$ vanishes and the
superconnection $\widetilde{A}_s$ of (\ref{perturbedsuper}) just becomes the
direct sum of the Bismut superconnection on $\sH$ and the connection on
$\Pi L$, the latter being $L$ with the opposite
grading.  Therefore, $D_0 = D^{X_\epsilon,E} \oplus D^{B, \Pi L}$ is the sum
of ordinary Dirac-type operators on $X_\epsilon$ and $B$.  On the other hand,
if $s > \Delta$ then $\alpha(s) = 1$ and $\widetilde{D}^V(\alpha(s))$ is
$L^2$-invertible.  From (\ref{perturbedsuper}), $D_\infty$ is the
Dirac operator on~$B$ coupled to the superconnection
$\epsilon^{-1} \widetilde{D}^V(1) + \nabla^{\sH} - \epsilon \frac{c(T)}{4}$.
If $\epsilon$ is small then the term $\epsilon^{-1} \widetilde{D}^V(1)$
dominates when computing the spectrum of $D_\infty$, so $D_\infty$ is an
invertible first-order self-adjoint elliptic pseudodifferential operator on
the disjoint union $X \sqcup B$.

Let $\overline{\eta}({\mathcal
D}^{(\epsilon)})$ be the reduced eta invariant of the rescaled family
${\mathcal D}^{(\epsilon)}$.  As in Lemma \ref{onlydepends},
$\overline{\eta}({\mathcal D}^{(\epsilon )})$ only
depends on $D_0$.  It follows that 
 \begin{equation} \label{diff}
\overline{\eta}({\mathcal D}^{(\epsilon)}) =
\overline{\eta}(D^{X_\epsilon,E}) +
\overline{\eta}(D^{B, \Pi L}) = 
\overline{\eta}(D^{X_\epsilon,E}) -
\overline{\eta}(D^{B, L}).
 \end{equation}
A generalization of \cite[Theorem 4.35]{Bismut-Cheeger} says that
 \begin{equation} \label{neweta}
\lim_{\epsilon \rightarrow 0} 
\overline{\eta}({\mathcal D}^{(\epsilon)}) =
u^{\frac{\dim(X)+1}{2}}
\int_B \Td(\widehat{\nabla}^{TB}) \wedge \widetilde{\eta}
(\widetilde{\mathcal A})
\, \, \, \, (mod \, \, \ZZ).
 \end{equation}

We can now go through the proof of Theorem \ref{maintheorem}, using
\eqref{eq:98} and (\ref{diff})+(\ref{neweta}) in place of~\eqref{eq:2.10}
and~\eqref{eq:90}, respectively, to derive the following result.

 \begin{theorem} \label{maintheorem2}
 Suppose that $\pi : X \rightarrow B$ is a smooth fiber bundle with compact
fibers of even dimension. Suppose that $\pi$ is equipped with a Riemannian
structure and a differential $\spinc$-structure.  Assume that $X$~is compact.
Then for all ${\mathcal E} \in \cK^0(X)$, we have $\ind^{an}({\mathcal E}) =
\ind^{top}({\mathcal E})$.
 \end{theorem}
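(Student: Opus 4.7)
\medskip

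\noindent\textbf{Proof plan.} The plan is to replay the argument of Theorem~\ref{maintheorem} verbatim, substituting the time-varying eta-form apparatus of Subsection~\ref{recall} and the perturbed analytic index of Definition~\ref{analindex2} for their vector-bundle-kernel analogues. By Lemmas~\ref{omegapush2} and~\ref{omegaagrees} together with the exact sequence~\eqref{eq:4.5}, the class $\ind^{an}(\mathcal{E}) - \ind^{top}(\mathcal{E})$ lifts uniquely to $\mathcal{T} \in K^{-n-1}(B;\RR/\ZZ)$. The universal coefficient theorem, divisibility of $\RR/\ZZ$, the Hopkins-Hovey description of $K$-homology, and pullback along a map $f\:M\to B$ reduce the problem to showing $\overline{\eta}(B,\ind^{an}(\mathcal{E})) \equiv \overline{\eta}(B,\ind^{top}(\mathcal{E}))\pmod{\ZZ}$ when $B$ is a closed odd-dimensional $\spinc$-manifold, whose Riemannian metric and $\spinc$-connection I would fix at the outset.

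With $B$ so fixed, I would choose a Mischenko-Fomenko splitting $\sH_\pm = K_\pm \oplus L_\pm$ as in Lemma~\ref{thm:99}, form the perturbed superconnection $\widetilde{A}_s$ of~\eqref{perturbedsuper}, and expand $C_1 := \overline{\eta}(B,\ind^{an}(\mathcal{E}))$ and $C_2 := \overline{\eta}(B,\ind^{top}(\mathcal{E})) \equiv \overline{\eta}(S^N\times B,\check{\iota}_*^{mod}(\mathcal{E}))$ exactly as in the derivation of~\eqref{C2}--\eqref{C3}. The expression for $C_2$ is literally unchanged, since the topological index is insensitive to spectral data. The expression for $C_1$ differs only in that $\Ker(D^V)_\pm$ is replaced by $L_\mp$ (so that $\overline{\eta}(D^{B,\Ker(D^V)_+}) - \overline{\eta}(D^{B,\Ker(D^V)_-})$ becomes $-\overline{\eta}(D^{B,L})$) and $\widetilde{\eta}$ is replaced by $\widetilde{\eta}(\widetilde{\mathcal{A}})$; the crucial sign-adjusted identity~\eqref{eq:98} replaces~\eqref{eq:2.10} in the derivation.

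I would then prove a bordism-invariance lemma parallel to Lemma~\ref{variation} but without vector-bundle-kernel hypothesis: given a bordism $B^\prime$ over which all geometric data extend in a product fashion near the boundary, a fiberwise Mischenko-Fomenko decomposition and the associated perturbed superconnection extend smoothly over $B^\prime$, and Stokes' theorem applied to~\eqref{eq:98} and~\eqref{dC} shows that $C_1 - C_2$ is bordism invariant. Applying this to $B^\prime = [0,1]\times B$ first to homotope $T^HX$ into $(T^H_{prod}(S^N\times B))|_X$ (making $\widetilde{C}$ vanish by Lemma~\ref{vanishes}) and then to rescale the vertical metric by a small factor~$\epsilon$, I would reduce to computing $\lim_{\epsilon\to 0}(C_1-C_2)$. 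The generalized adiabatic limit formula~\eqref{diff}$+$\eqref{neweta}, in which the reduced $\eta$-invariants $\overline{\eta}(D^{X_\epsilon,E})$ and $\overline{\eta}(D^{B,L})$ converge together to $u^{(\dim X+1)/2}\int_B \Td(\widehat{\nabla}^{TB})\wedge\widetilde{\eta}(\widetilde{\mathcal{A}})$, plays the role that~\eqref{eq:90} played before and forces the cancellation $C_1 - C_2 \equiv 0$ in $\RR/\ZZ$.

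The main obstacle is the bordism-invariance statement itself: unlike in Lemma~\ref{variation}, there is no vector-bundle-kernel assumption to fall back on, so one must produce the Mischenko-Fomenko subbundles $L^\prime_\pm$ smoothly over the bordism and check that the perturbed eta form $\widetilde{\eta}(\widetilde{\mathcal{A}}^\prime)$ restricts correctly at the ends. This is a local perturbation argument which can be carried out provided the chosen $L^\prime_\pm$ at the boundary extend to finite-rank subbundles on which the projected family of operators has the required invertibility properties, and such extensions can always be arranged after stabilization. The only other nontrivial input is~\eqref{neweta}, which is the anticipated generalization of \cite[Theorem 4.35]{Bismut-Cheeger} to time-varying superconnections with invertible $A_\infty$; the formalism of Subsection~\ref{recall} was set up precisely to accommodate this.
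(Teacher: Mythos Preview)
Your proposal is correct and follows essentially the same approach as the paper, which simply instructs the reader to replay the proof of Theorem~\ref{maintheorem} with \eqref{eq:98} replacing~\eqref{eq:2.10} and \eqref{diff}$+$\eqref{neweta} replacing~\eqref{eq:90}. Your concern about extending the Mischenko--Fomenko subbundles over the bordism is slightly overstated: in the only bordisms actually used, namely $B'=[0,1]\times B$ with the fiber bundle pulled back from~$B$, the fiberwise operator $D^V$ is independent of the horizontal distribution and merely rescales under the metric rescaling, so the subbundles $L_\pm$ chosen over~$B$ pull back verbatim to~$B'$ and no stabilization is needed.
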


 \begin{corollary} \label{indie}
 \begin{enumerate}
 \item The homomorphism $\ind^{top} : \cK^0(X) \rightarrow \cK^{-n}(B)$
is independent of the choice of embedding $\iota$.
 \item The assignment ${\mathcal E} \rightarrow \ind^{an}({\mathcal E})$
factors through a homomorphism $\ind^{an}\:\cK^0(X) \rightarrow \cK^{-n}(B)$.
\item The map $\ind^{an} : \cK^0(X) \rightarrow \cK^{-n}(B)$
is independent of the choice of finite-dimensional vector subbundle
$L_\pm$.\
\item  If $D^V$ has vector bundle kernel then the
analytic index defined in Definition \ref{analindex} equals the
analytic index defined in Definition \ref{analindex2}.
 \end{enumerate}
 \end{corollary}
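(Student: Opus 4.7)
The plan is to deduce all four parts of the corollary from the two index theorems already in hand, Theorems~\ref{maintheorem} and~\ref{maintheorem2}, exploiting the asymmetry between the topological and analytic sides: the topological construction of Section~\ref{sec:4} was from the outset a well-defined homomorphism out of $\cK^0(X)$, but it a priori depended on the embedding $\iota\:X\hookrightarrow S^N\times B$; conversely, the analytic construction of Definition~\ref{analindex2} is manifestly intrinsic to the family Dirac operator on $\pi\:X\to B$ and makes no reference to $\iota$, but it a priori depends on the Mischenko--Fomenko data $L_\pm$ and is only defined on generators. The identities $\ind^{an}=\ind^{top}$ let one transport each of these a priori dependencies across the equality sign.

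For part~(1), Definition~\ref{analindex2} uses nothing but $(E,h^E,\nabla^E,\phi)$ and the geometry of $\pi$, so $\ind^{an}({\mathcal E})$ is independent of~$\iota$. Theorem~\ref{maintheorem2} then gives $\ind^{top}({\mathcal E})=\ind^{an}({\mathcal E})$ for every generator, so the same is true of $\ind^{top}$. For part~(2), the homomorphism $\ind^{top}\:\cK^0(X)\to \cK^{-n}(B)$ constructed in Section~\ref{sec:4} already descends to $\cK^0(X)$; given two generators ${\mathcal E}_1,{\mathcal E}_2$ representing the same class, Theorem~\ref{maintheorem2} yields $\ind^{an}({\mathcal E}_i)=\ind^{top}({\mathcal E}_i)$ and the right-hand sides coincide, so $\ind^{an}$ respects the differential $K$-theory relations and the additive structure, factoring through a homomorphism.

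For part~(3), two different Mischenko--Fomenko choices $L_\pm$ and $L_\pm'$ attached to the same generator ${\mathcal E}$ produce two a priori elements of $\cK^{-n}(B)$ via formula~(\ref{submerformula2}); both are equal to $\ind^{top}({\mathcal E})$ by Theorem~\ref{maintheorem2}, hence are equal to one another. For part~(4), when $\Ker(D^V)$ is a vector bundle, Theorem~\ref{maintheorem} identifies Definition~\ref{analindex} with $\ind^{top}({\mathcal E})$ while Theorem~\ref{maintheorem2} identifies Definition~\ref{analindex2} with $\ind^{top}({\mathcal E})$, so the two analytic definitions agree in this case.

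The only point requiring care — not really an obstacle — is that the soundness of the argument for~(3) depends on Theorem~\ref{maintheorem2} holding uniformly in the choice of $L_\pm$. This is automatic from the structure of its proof, which mimics that of Theorem~\ref{maintheorem} but substitutes the perturbed superconnection~(\ref{perturbedsuper}), identity~(\ref{eq:98}), and the adiabatic limit~(\ref{neweta}) for their unperturbed counterparts (\ref{eq:2.10}) and (\ref{eq:90}); none of these ingredients privilege a particular Mischenko--Fomenko decomposition, so any valid $L_\pm$ produced by Lemma~\ref{thm:99} may be used and the argument goes through unchanged.
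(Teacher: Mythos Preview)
Your argument is correct and matches the paper's approach: the corollary is stated without a separate proof, being an immediate consequence of Theorem~\ref{maintheorem2} by exactly the cross-transport mechanism you describe (and which already appeared explicitly after Theorem~\ref{maintheorem} in the vector-bundle-kernel case). Your additional remark on the uniformity of the proof of Theorem~\ref{maintheorem2} in the choice of $L_\pm$ is a welcome clarification that the paper leaves implicit.
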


Aside from its intrinsic interest, the next proposition will be
used in Section~\ref{sec:7}.

\begin{proposition}
The following diagrams commute :
  \begin{equation} \label{comm1}
     \xymatrix{ 0 \ar[r]& \dfrac{\Omega (X;\R)^{-1}}{\Omega
     (X;\R)_K^{-1}}\ar[d]^{\pi_*}\ar[r]^{\quad
           j}&\cK^0(X)\ar[d]^{\ind^{an}}\ar[r]^{c\quad} &
           K^0(X;\ZZ)\ar[d]^{\ind^{an}}\ar[r] &0 \\ 0 \ar[r]&
           \dfrac{\Omega (B;\R)^{-n-1}}{\Omega
     (B;\R)_K^{-n-1}}\ar[r]^{\quad j}&\cK^{-n}(B)\ar[r]^{c\quad} &
           K^{-n}(B;\ZZ)\ar[r]& 0 } 
  \end{equation}
and
  \begin{equation} \label{comm2}
     \xymatrix{ 0 \ar[r]&
           K^{-1}(X;\RZ)\ar[d]^{\ind^{an}}\ar[r]^{\quad
           j}&\cK^0(X)\ar[d]^{\ind^{an}}\ar[r]^{\omega\quad} &
           \Omega(X;\R)_K^0\ar[d]^{\pi_*}\ar[r] &0 \\
     0 \ar[r]& 
           K^{-n-1}(B;\RZ)\ar[r]^{\quad j}&\cK^{-n}(B)\ar[r]^{\omega\quad} &
           \Omega(B;\R)_K^{-n}\ar[r]& 0. } 
  \end{equation}
\end{proposition}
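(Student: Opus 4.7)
The right square of~\eqref{comm2} is precisely Lemma~\ref{omegapush2}, so no additional argument is required there. The remaining three squares are verified by direct calculation from Definition~\ref{analindex2}.

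For the left square of~\eqref{comm1}, I represent $j(\phi)$ by the generator $(0,0,0,\phi)$ with vanishing underlying bundle. Then the family $D^V$ is identically zero, Lemma~\ref{thm:99} is satisfied with $L=0$, the perturbed superconnection and hence the eta form $\widetilde{\eta}$ vanish, and formula~\eqref{submerformula2} reads $\ind^{an}(j(\phi))=(0,0,0,\pi_*(\phi))=j(\pi_*(\phi))$. For the right square of~\eqref{comm1}, applying the forgetful map $c$ to~\eqref{submerformula2} gives $c(\ind^{an}(\mathcal{E}))=[L]\in K^{-n}(B;\ZZ)$; but the Mischenko--Fomenko perturbation of Lemma~\ref{thm:99} is engineered precisely so that $[L_+]-[L_-]=[\ker D^V_+]-[\ker D^V_-]$, which represents the Atiyah--Singer families index of $c(\mathcal{E})=[E]$. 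Hence the two compositions coincide.

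The left square of~\eqref{comm2} is the most substantive: it asserts that, restricted to $K^{-1}(X;\RZ)\subset\cK^0(X)$, our differential analytic index recovers the $\RR/\ZZ$-valued analytic index of~\cite{Lott}. My plan is as follows. For $a\in K^{-1}(X;\RZ)$ I choose a representative $\mathcal{E}=(E,h^E,\nabla^E,\phi)$ with $\omega(\nabla^E)+d\phi=0$. Lemma~\ref{omegapush2} then gives $\omega(\ind^{an}(\mathcal{E}))=0$, so $\ind^{an}(\mathcal{E})=j(\mathcal{T})$ for a unique $\mathcal{T}\in K^{-n-1}(B;\RZ)$. To identify $\mathcal{T}$ with $\ind^{an}(a)$ in the sense of~\cite{Lott}, one can either compare formula~\eqref{submerformula2} directly with the definition in~\cite[\S\S4--5]{Lott} (the Mischenko--Fomenko perturbation set up in Subsection~\ref{subsec:6.12} is precisely the one used there, and the $\widetilde{\eta}$ here is the corresponding Bismut--Cheeger eta form), or alternatively pair both sides against every class in $K_{-n-1}(B;\ZZ)$ via Proposition~\ref{etapairing}~(3), representing $K$-homology classes by maps $f\:M\to B$ as in the proof of Theorem~\ref{maintheorem} and reducing the computation to reduced eta-invariants on~$X$ using the adiabatic-limit identity~\eqref{eq:90}.

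The hardest step is this last identification, and the main obstacle is simply the bookkeeping with superconnection conventions, powers of~$u$, and the treatment of $\ZZ/2\ZZ$-graded representatives; the two routes sketched serve as cross-checks. Once these matches are established, all four squares commute and the proposition follows.
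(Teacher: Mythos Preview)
Your treatment of three of the four squares matches the paper's: the right square of~\eqref{comm2} is Lemma~\ref{omegapush2}, and both squares of~\eqref{comm1} the paper dispatches with the single phrase ``follows immediately from the definition of~$\ind^{an}$,'' which is exactly what your explicit computations with the zero bundle and the Mischenko--Fomenko index class unpack.

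Where you diverge is in weighting: you call the left square of~\eqref{comm2} ``the most substantive'' and outline two strategies, including a full eta-invariant pairing argument. The paper, by contrast, disposes of it in one clause: the square commutes ``from the definition of $\ind^{an}:K^{-1}(X;\RZ)\to K^{-n-1}(B;\RZ)$ in~\cite{Lott}.'' The point is that Definition~\ref{analindex2} was deliberately built as an extension of Lott's $\RZ$-index --- same Mischenko--Fomenko perturbation, same eta form --- so the restriction to the image of $i$ recovers Lott's map tautologically. Your first route (direct comparison with~\cite[\S\S4--5]{Lott}) is exactly this observation; your second route via $K$-homology pairings and adiabatic limits would work but is far more than the situation requires. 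There is no gap in your argument, but you have misjudged where the content lies.
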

\begin{proof}
The commuting of (\ref{comm1}) follows immediately from the
definition of $\ind^{an}$.  The
left-hand square of (\ref{comm2}) commutes from the definition of
$\ind^{an} : K^{-1}(X; \RR/\ZZ) \rightarrow K^{-n-1}(B; \RR/\ZZ)$ in
\cite{Lott}. The right-hand square of (\ref{comm2}) commutes
from Lemma \ref{omegapush2}.
\end{proof}

 \subsection{Noncompact base}\label{subsec:6.1}

We use the limit theorem in the appendix to define the index maps for proper
submersions and extend Theorem~\ref{maintheorem2}.  

Suppose that $\pi : X \rightarrow B$ is a
 proper submersion of relative dimension $n$, 
with $n$ even.
Suppose that $\pi$ is equipped with a Riemannian
structure and a differential $\spinc$-structure. 
Let $B_1\subset B_2\subset \cdots$ be an exhaustion of~$B$ by compact
codimension-zero submanifolds-with-boundary.
From Theorem~\ref{thm:10}, there is an isomorphism
  \begin{equation}\label{eq:121}
      \cK^{-n}(B) \cong \varprojlim_i\,\cK^{-n}(B_i) .
  \end{equation}

Put $X_i=\pi \inv (B_i)$.
Given ${\mathcal E} \in \cK^0(X)$, we can define 
$\ind^{top}({\mathcal E} \big|_{X_i}) \in \cK^{-n}(B_i)$ as in
Section \ref{sec:4}, after making a choice of 
embedding $\iota : X_i \rightarrow S^N \times B_i$. Clearly if
$B_j \subset B_i$ then $\ind^{top}({\mathcal E} \big|_{X_j})$,
as defined using the restriction of $\iota$ to $X_j$, is
the restriction of $\ind^{top}({\mathcal E} \big|_{X_i})$ to
$B_j$. Using the fact from Corollary \ref{indie} that
$\ind^{top}({\mathcal E} \big|_{X_j})$ is independent of the
choice of embedding, it follows that we have defined
a topological index $\ind^{top}({\mathcal E})$ in
$\cK^{-n}(B) \cong \varprojlim_i\,\cK^{-n}(B_i)$.

Similarly, we can define
$\ind^{an}({\mathcal E} \big|_{X_i}) \in \cK^{-n}(B_i)$ as in the
earlier part of this section,
after making a choice of the finite-dimensional vector subbundle
$L_\pm$ over $B_i$. Clearly if
$B_j \subset B_i$ then $\ind^{an}({\mathcal E} \big|_{X_j})$,
as defined using the restriction of $L_\pm$ to $B_j$, is
the restriction of $\ind^{an}({\mathcal E} \big|_{X_i})$ to
$B_j$. Using the fact from Corollary \ref{indie} that
$\ind^{an}({\mathcal E} \big|_{X_j})$ is independent of the
choice of vector subbundle, it follows that we have defined
an analytic index $\ind^{an}({\mathcal E})$ in
$\cK^{-n}(B) \cong \varprojlim_i\,\cK^{-n}(B_i)$.
 
        \begin{theorem}[]\label{thm:13}
  Suppose that $\pi : X \rightarrow B$ is a 
 proper submersion with even relative dimension. 
Suppose that $\pi$ is equipped with a Riemannian
structure and a differential $\spinc$-structure.  Then for all ${\mathcal E}
\in \cK^0(X)$, we have $\ind^{an}({\mathcal E}) = \ind^{top}({\mathcal E})$.
        \end{theorem}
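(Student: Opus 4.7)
The plan is to reduce to Theorem~\ref{maintheorem2} by restricting to an exhaustion and then invoking the inverse-limit isomorphism $\cK^{-n}(B) \cong \varprojlim_i \cK^{-n}(B_i)$ from Theorem~\ref{thm:10}. So first I would fix an exhaustion $B_1 \subset B_2 \subset \cdots$ of~$B$ by compact codimension-zero submanifolds-with-boundary, set $X_i = \pi\inv(B_i)$, and note that since $\pi$ is proper each $X_i$ is compact and $\pi_i \: X_i \to B_i$ inherits a Riemannian structure and a differential $\spinc$-structure from the ambient data.

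Next I would observe that the constructions of $\ind^{an}$ and $\ind^{top}$ already carried out in Subsection~\ref{subsec:6.1} produce, via the isomorphism~\eqref{eq:121}, two classes in $\cK^{-n}(B)$ whose image in each $\cK^{-n}(B_i)$ is respectively $\ind^{an}({\mathcal E}|_{X_i})$ and $\ind^{top}({\mathcal E}|_{X_i})$. The key point, already verified in the preceding discussion, is that the restriction compatibility holds: choosing an embedding $\iota \: X_i \hookrightarrow S^N \times B_i$ and a Mischenko-Fomenko subbundle $L_{\pm,i}$ over $B_i$, their restrictions to $B_j$ (for $j < i$) serve as valid choices for $X_j$ as well, and by Corollary~\ref{indie} both indices on $B_j$ are independent of these auxiliary choices.

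Now for each~$i$, the fibration $\pi_i \: X_i \to B_i$ has $X_i$~compact with even-dimensional fibers, is equipped with a Riemannian structure and a differential $\spinc$-structure, so Theorem~\ref{maintheorem2} gives the identity
\begin{equation}
\ind^{an}({\mathcal E}|_{X_i}) = \ind^{top}({\mathcal E}|_{X_i}) \quad \text{in } \cK^{-n}(B_i).
\end{equation}
Since both sides are compatible under the restriction maps $\cK^{-n}(B_i) \to \cK^{-n}(B_j)$ for $j \le i$, they assemble into equal elements of the inverse system, hence equal elements of $\varprojlim_i \cK^{-n}(B_i) \cong \cK^{-n}(B)$. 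This proves $\ind^{an}({\mathcal E}) = \ind^{top}({\mathcal E})$.

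The main (mild) obstacle is the bookkeeping verification that Theorem~\ref{maintheorem2}, stated for closed base, applies to each $B_i$ which is a manifold-with-boundary. This is handled by the evident extension of the compact-base theory to compact manifolds-with-boundary noted in Subsection~\ref{subsec:1.1}: the constructions of $\ind^{an}$ and $\ind^{top}$ (Bismut superconnection, Bismut-Zhang current, eta form, K\"unneth decomposition) are local in~$B$ and extend verbatim, and the proof of Theorem~\ref{maintheorem2} goes through, since the pairing argument with elements of $K_{-n-1}(B_i;\ZZ)$ is replaced here by the direct inverse-limit comparison and does not require us to test against $K$-homology on manifolds-with-boundary. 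Once this compatibility is in place, no further analytic input is needed beyond what has already been established.
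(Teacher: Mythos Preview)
Your proposal is correct and follows essentially the same approach as the paper: apply Theorem~\ref{maintheorem2} over each $B_i$ in an exhaustion and then invoke the inverse-limit isomorphism~\eqref{eq:121}. Your final paragraph's worry is slightly misdirected---Theorem~\ref{maintheorem2} is stated for compact~$X$ (not closed~$B$), and its proof via pairings with $K_{-n-1}(B_i;\ZZ)$ goes through unchanged since those $K$-homology classes are represented by maps from \emph{closed} $\spinc$-manifolds into~$B_i$, so the boundary of~$B_i$ plays no role.
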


        \begin{proof}
By Theorem~\ref{maintheorem2}, we know that for each $i$,
$\ind^{an}({\mathcal E} \big|_{X_i}) = \ind^{top}({\mathcal E} \big|_{X_i})$
in $\cK^{-n}(B_i)$. 
Along with (\ref{eq:121}), the theorem follows.
        \end{proof}

\section{Relationships to earlier work}\label{sec:7}

In this section we illustrate how our main index theorem relates to other
work in the geometric index theory of Dirac operators.  We first treat the
determinant line bundle, using the holonomy theorem of~\cite{Bismut-Freed} to
show that the determinant of the analytic pushforward is the determinant
bundle.  (For a different approach to this question
see~\cite[Chapter~9]{Klonoff}).  As a consequence, our main theorem gives a
``topological'' construction of the determinant line bundle, equipped with
its connection, up to isomorphism.

Next, in Subsection~\ref{subsec:7.9} we remark that when specialized to
$\RR/\ZZ$-valued K-theory, our theorem implies that
the topological pushforward constructed in Section 
\ref{sec:2}  coincides with the topological pushforward constructed
from generalized cohomology theory.

The Chern character map, from topological K-theory to rational
cohomology, has a differential refinement, going from
differential K-theory to rational differential cohomology.
 In Subsection \ref{rational} we apply
this refined Chern character map to our main theorem
and recover the Riemann-Roch formula of~\cite{Bunke-Schick}.

Finally, under certain assumptions,
there are geometric invariants of families of Dirac operators which
live in higher-degree integral differential cohomology \cite{Lott2}.
In Subsection \ref{Deligne} we point out that our index theorem computes them 
in terms of the
topological pushforward.

\subsection{Determinant line bundle} \label{detline}

There is a map $\Det$ from $\cK^0(X)$ to isomorphism classes of line bundles
on $X$, equipped with a Hermitian metric and a compatible connection.
(The latter group may be identified with the 
integral
differential cohomology group~$\breve H^2(X)$.)  
Given a generator ${\mathcal E} = (E, h^E,
\nabla^E, \phi)$ for $\cK^0(X)$, its image $\Det({\mathcal E})$ is represented
by the line bundle $\Lambda^{max}(E)$, equipped with the Hermitian metric
$h^{\Lambda^{max}(E)}$ and the connection \begin{equation}
\nabla^{\Det({\mathcal E})} = \nabla^{\Lambda^{max}(E)} - 2 \pi i \phi_{(1)}.
\end{equation}
Here $\phi_{(1)} \in \Omega^1(X)/\Image(d)$ denotes $u$ times the
component of $\phi \in \Omega(X; \R)^{-1}/\Image(d)$ in
$u^{-1} \Omega^1(X)/\Image(d)$.
Note that changing
a particular representative $\widehat{\phi} \in \Omega^1(X)$ for $\phi$ by an
exact form $df$ amounts to acting on the connection
$\nabla^{\Lambda^{max}(E)} - 2 \pi i \widehat{\phi}$ by a gauge
transformation $g = e^{2 \pi i f}$. 

Suppose that $\pi : X \rightarrow B$ is a compact fiber
bundle with fibers of even dimension  $n$,
endowed with a Riemannian structure and
a differential $\spinc$ structure.  If ${\mathcal E}$ is a generator for
$\cK^0(X)$ of the form $(E, h^E, \nabla^E, 0)$ then there is a corresponding
determinant line bundle $\Det_{\Quillen}$ on $B$, which is equipped with a
Hermitian metric $h^{\Quillen}$ (due to Quillen~\cite{Quillen}) and a
compatible connection~$\nabla^{\Quillen}$ (due to Bismut-Freed
\cite{Bismut-Freed}); see \cite[Chapter 9.7]{Berline-Getzler-Vergne}.
The
construction is analytic;
for example, the construction of
$h^{\Quillen}$ uses $\zeta $-functions built from the spectrum
of the fiberwise Dirac-type operators $D^V$. 

\begin{proposition} \label{determinant}
 After using periodicity to shift $\ind^{an}({\mathcal E}) 
\in \cK^{-n}(B)$ into $\cK^0(B)$, we have that 
$\Det(\ind^{an}({\mathcal E}))$ is the inverse of $\Det_{\Quillen}$.
\end{proposition}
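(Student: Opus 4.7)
The plan is to compare $\Det(\ind^{an}(\mathcal E))$ and $\Det_{\Quillen}^{-1}$ as Hermitian line bundles with connection on $B$, equivalently as classes in $\breve H^2(B)$. Two such classes coincide iff their curvature $2$-forms agree and their $\RR/\ZZ$-valued holonomies around every smooth loop $\gamma\colon S^1\to B$ agree, so I will check these two invariants in turn.

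For the curvatures: by Lemma~\ref{omegapush2}, $\omega(\ind^{an}(\mathcal E)) = \pi_*(\omega(\mathcal E))$ in $\Omega(B;\R)^{-n}$. After multiplication by $u^{n/2}$ to shift into $\Omega(B;\R)^0$ and extraction of the $\Omega^2(B)$-component, this is the first Chern form of $\Det(\ind^{an}(\mathcal E))$, namely the degree-$2$ part of $\int_{X/B}\Td(\widehat\nabla^{T^VX})\wedge\omega(\nabla^E)$. By Bismut's local families index theorem, this same $2$-form is, up to sign, the curvature of the Bismut-Freed connection on $\Det_{\Quillen}$, so the curvatures of $\Det(\ind^{an}(\mathcal E))$ and $\Det_{\Quillen}^{-1}$ agree.

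For the holonomies: fix $\gamma\colon S^1\to B$ and pull back all of the geometric data on $\pi\colon X\to B$ along $\gamma$. Both $\Det(\ind^{an})$ and the Bismut-Freed determinant are natural under pullback, so it suffices to treat $B=S^1$. On $S^1$ the holonomy of any Hermitian line bundle with connection $(\mathcal L,h^{\mathcal L},\nabla^{\mathcal L})$ is $\exp(-2\pi i\,\overline\eta(D^{S^1,\mathcal L}))$, so unwinding Definition~\ref{analindex2} together with the definition of $\Det$ gives
\begin{equation}
\mathrm{hol}(\nabla^{\Det(\ind^{an}(\mathcal E))}) \,=\, \exp\!\bigl(-2\pi i\bigl[\overline\eta(D^{S^1,L}) + \textstyle\int_{S^1}\widetilde\eta\bigr]\bigr),
\end{equation}
the $\int_{S^1}\widetilde\eta$ contribution coming from the $-2\pi i\,\widetilde\eta_{(1)}$ correction in the connection form on $\Lambda^{\max}(L)$. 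Meanwhile the Bismut-Freed holonomy theorem identifies
\begin{equation}
\mathrm{hol}(\nabla^{\Quillen}) \,=\, \lim_{\epsilon\to 0}\exp\!\bigl(-2\pi i\,\overline\eta(D^{X_\epsilon, E})\bigr).
\end{equation}
The two expressions are reconciled by the adiabatic limit identity formed by combining~\eqref{diff} and~\eqref{neweta} of Section~\ref{sec:6}, specialized to $B=S^1$ where $\Td(\widehat\nabla^{TS^1})=1$:
\begin{equation}
\lim_{\epsilon\to 0}\overline\eta(D^{X_\epsilon,E}) \,\equiv\, \overline\eta(D^{S^1,L}) + \textstyle\int_{S^1}\widetilde\eta \pmod \ZZ.
\end{equation}
Thus the holonomies of $\Det(\ind^{an}(\mathcal E))$ and $\Det_{\Quillen}^{-1}$ agree, completing the identification.

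The main obstacle will be the careful bookkeeping of the $\ZZ/2$-grading on $L$ together with the periodicity shift by $u^{n/2}$: the grading of the finite-rank subbundle produced by the Mischenko-Fomenko construction is precisely what converts $\Det_{\Quillen}$ into its inverse on the analytic side (since $\Det(L_+\oplus L_-)=\Lambda^{\max}L_+\otimes(\Lambda^{\max}L_-)^{-1}$, opposite to Quillen's pairing of $\Ker(D^V_+)$ with $\Ker(D^V_-)^*$). A secondary issue is that the Bismut-Freed holonomy theorem was originally proved under the assumption of vector bundle kernel; in the general case one uses the time-varying $\eta$-form formalism of Subsection~\ref{recall} and the perturbed superconnection of Subsection~\ref{subsec:6.12}, which together yield the required holonomy formula after the Mischenko-Fomenko stabilization.
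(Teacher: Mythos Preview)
Your approach is essentially the paper's: reduce to $B=S^1$, compute the holonomy of $\Det(\ind^{an}(\mathcal E))$ via the $\widetilde\eta$-correction and the eta-invariant of the bundle~$L$, invoke the Bismut--Freed holonomy theorem for $\Det_{\Quillen}$, and match the two using the adiabatic-limit identity \eqref{diff}+\eqref{neweta}. The curvature comparison you begin with is harmless but redundant: for Hermitian line bundles with unitary connection the holonomy function alone determines the isomorphism class (this is how the paper argues), so once holonomies agree you are done. The paper also adds a preliminary step you omit: in the vector-bundle-kernel case it gives a direct isometric identification $T^*(\Det_{\Quillen},h^{\Quillen},\nabla^{\Quillen})\cong \Det(\ind^{an}(\mathcal E))^{-1}$ via the Ray--Singer torsion factor, and only resorts to the holonomy argument for the general case.

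There is, however, a concrete sign slip in your displayed formulas. With the paper's conventions the holonomy of $\nabla^{\Lambda^{\max}L_+\otimes(\Lambda^{\max}L_-)^{-1}}$ around $S^1$ is $e^{+2\pi i\,\overline\eta(D^{S^1,L})}$, not $e^{-2\pi i\,\overline\eta(D^{S^1,L})}$, and the $\widetilde\eta$-correction contributes $e^{+2\pi i\int_{S^1}\widetilde\eta_{(1)}}$. As you have written them, your two holonomy formulas together with the adiabatic identity yield $\mathrm{hol}(\Det(\ind^{an}(\mathcal E)))=\mathrm{hol}(\Det_{\Quillen})$, i.e.\ equality rather than inverse---so your sentence ``Thus the holonomies \dots\ agree'' does not follow from what precedes it. You correctly flag the $\ZZ/2$-grading bookkeeping as the main obstacle, but the fix is not the grading of~$L$; it is simply that the two displayed holonomies carry opposite overall signs in the exponent (the paper gets $e^{+2\pi i\lim\overline\eta}$ for $\Det(\ind^{an})$ and $e^{-2\pi i\lim\overline\eta}$ for $\Det_{\Quillen}$). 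Once that is corrected the argument goes through.
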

\begin{proof}
Suppose first that $\Ker(D^V)$ is a $\ZZ/2\ZZ$-graded vector
bundle on $B$. Using Definition \ref{analindex} for 
$\ind^{an}({\mathcal E})$, it follows that
\begin{equation}
\Det(\ind^{an}({\mathcal E})) = \Lambda^{max}(\Ker(D^V)_+) \otimes
\left( \Lambda^{max}(\Ker(D^V)_-) \right)^{-1}
\end{equation}
is the inverse of
the determinant line bundle. The connection 
$\nabla^{\Det(\ind^{an}({\mathcal E}))}$ is
\begin{equation}
\nabla^{\Det(\ind^{an}({\mathcal E}))} =
\nabla^{\Lambda^{max}(\Ker(D^V)_+) \otimes
\left( \Lambda^{max}(\Ker(D^V)) \right)^{-1}} - 2\pi i
\widetilde{\eta}_{(1)}.
\end{equation}
From (\ref{etaform}),
\begin{equation}
\widetilde{\eta}_{(1)} = - \frac{1}{2\pi i} \int_0^\infty
\STr \left( D^V [\nabla^{\sH}, D^V] e^{-s^2 (D^V)^2} \right) \, s \, ds.
\end{equation}

In this case of vector bundle kernel, $h^{\Quillen}$ differs from the
$L^2$-metric $h^{L^2}$ by a factor involving the Ray-Singer analytic torsion.
Let $T \in \End(\Det_{\Quillen})$ be multiplication by
$\sqrt{\frac{h^{L^2}}{h^{\Quillen}}}$, so that $T$ is an isometric isomorphism
from $\left( \Det_{\Quillen}, h^{L^2} \right)$ to $\left( \Det_{\Quillen},
h^{\Quillen} \right)$. Then \begin{equation} T^* \left( \Det_{\Quillen},
h^{\Quillen}, \nabla^{\Quillen} \right) = \left( \Det(\ind^{an}({\mathcal
E}))^{-1}, h^{\Det(\ind^{an}({\mathcal E}))^{-1}},
\nabla^{\Det(\ind^{an}({\mathcal E}))^{-1}} \right); \end{equation} see
\cite[Proof of Proposition 9.45]{Berline-Getzler-Vergne}.

If one does not assume vector bundle kernel then a direct proof of
the proposition is trickier for the following reason.  The usual
construction of the determinant line bundle proceeds by making
spectral cuts over suitable open subsets of $B$, constructing 
a line bundle with Hermitian metric and compatible connection over
each open set, and then showing
that these local constructions are compatible on overlaps.  As
$\Det(\ind^{an}({\mathcal E}))$ is only defined up to isomorphism,
it cannot be directly recovered from its restrictions to the
elements of an open cover of $B$. For this reason, a direct
comparison of $\Det(\ind^{an}({\mathcal E}))$ and $\Det_{\Quillen}$ is
somewhat involved.  Instead, we will just compare their holonomies.

Without loss of generality, we can assume that $B$ is connected.  Let $\star
\in B$ be a basepoint.  Let $PB$ denote the smooth maps $c : [0,1]
\rightarrow B$ with $c(0) = \star$. Let $\Omega B$ denote the elements of
$PB$ with $c(1) = \star$.  A unitary connection on a line bundle over $B$
gives rise to a homomorphism $H : \Omega B \rightarrow U(1)$, the holonomy
map. Given $H$, we can construct a line bundle
$\mathcal{L}\to B$ as $\mathcal{L}=(PB \times \CC)/\sim$, where $(c_1, z_1)
\sim (c_2, z_2)$ if $c_1(1) = c_2(1)$ and $z_2 = H(c_2^{-1} \cdot c_1)
z_1$. There is an evident notion of parallel transport on $\mathcal{L}\to B$
and a corresponding unitary connection. There is also a unique Hermitian
inner product on $\mathcal{L}\to B$, up to overall scaling, with which the
unitary connection is compatible.  Hence, given two Hermitian line bundles on
$B$ with compatible connections, if their holonomies are the same then they
are isomorphic.

To compare the holonomies of the connections on $\Det(\ind^{an}({\mathcal
E}))$ and $\Det_{\Quillen}$ around a closed curve, we pull the fiber bundle
structure back to the curve and thereby reduce to the case when $B =
S^1$. From Definition \ref{analindex2}, the holonomy around $S^1$ of
$\nabla^{\Det(\ind^{an}({\mathcal E}))}$ is $e^{2\pi i \int_{S^1}
\widetilde{\eta}_{(1)}}$ times the holonomy around $S^1$ of
$\nabla^{\Lambda^{max} L_+ \otimes (\Lambda^{max} L_-)^{-1}}$.  From
(\ref{diff}) and (\ref{neweta}),
 \begin{equation}  
 e^{2\pi i \int_{S^1} \widetilde{\eta}_{(1)}} = e^{2\pi i (\lim_{\epsilon
\rightarrow 0} \overline{\eta}(D^{X^\epsilon, E})
-\overline{\eta}(D^{S^1,L}))}.
 \end{equation}
By a standard computation, the holonomy around $S^1$ of
$\nabla^{\Lambda^{max} L_+ \otimes (\Lambda^{max} L_-)^{-1}}$ equals $e^{2
\pi i \overline{\eta}(D^{S^1,L})}$. Thus the holonomy around $S^1$ of
$\nabla^{\Det(\ind^{an}({\mathcal E}))}$ is $e^{2\pi i \lim_{\epsilon
\rightarrow 0} \overline{\eta}(D^{X^\epsilon, E})}$.  On the other hand, the
holonomy around $S^1$ of $\nabla^{\Det_{\Quillen}}$ is $e^{- 2\pi i
\lim_{\epsilon \rightarrow 0} \overline{\eta}(D^{X^\epsilon, E})}$
\cite[Theorem 3.16]{Bismut-Freed}.  This proves the proposition.
\end{proof}

As a consequence of Theorem  \ref{thm:13}  
and Proposition
\ref{determinant}, the determinant line bundle with its Hermitian metric and
compatible connection can be constructed up to isomorphism without using any
spectral analysis.  This was also derived in~\cite[Chapter~9]{Klonoff},
though with a different model of differential $K$-theory.

\subsection{$\RR/\ZZ$-index theory}\label{subsec:7.9}

Under the assumptions of Theorem  \ref{thm:13},  
there is a topological index
$\ind^{top} : K^{-1}(X; \RR/\ZZ) \rightarrow K^{-1}(B; \RR/\ZZ)$
which can be constructed from a general procedure in generalized cohomology
theory.

\begin{proposition}
The following diagram commutes :
  \begin{equation} \label{comm4}
     \xymatrix{ 0 \ar[r]&
           K^{-1}(X;\RZ)\ar[d]^{\ind^{top}}\ar[r]^{\quad
           j}&\cK^0(X)\ar[d]^{\ind^{top}}\ar[r]^{\omega\quad} &
           \Omega(X;\R)_K^0\ar[d]^{\pi_*}\ar[r] &0 \\
     0 \ar[r]& 
           K^{-n-1}(B;\RZ)\ar[r]^{\quad j}&\cK^{-n}(B)\ar[r]^{\omega\quad} &
           \Omega(B;\R)_K^{-n}\ar[r]& 0. } 
  \end{equation}
\end{proposition}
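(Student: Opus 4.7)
The plan is to prove the two squares of diagram~\eqref{comm4} separately, reducing each to results already established.

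For the right-hand square, the required identity $\omega(\ind^{top}({\mathcal E})) = \pi_*(\omega({\mathcal E}))$ is exactly the content of Lemma~\ref{omegaagrees}, so no additional work is needed. The point is that $\ind^{top}$ was constructed precisely so as to refine the cohomological pushforward at the level of characteristic forms, and this was verified when the topological index was defined.

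For the left-hand square, I would fix $a \in K^{-1}(X;\RZ)$ and show that $\ind^{top}(j(a)) = j(\ind^{top}(a))$ in $\cK^{-n}(B)$, noting that the outer $\ind^{top}$ refers to the differential topological index from Section~\ref{sec:4}, while the inner $\ind^{top}$ on the right refers to the topological pushforward in $\RR/\ZZ$-valued K-theory coming from generalized cohomology theory. The strategy is to route through the analytic index via three successive identifications. First, by Theorem~\ref{thm:13} applied to $j(a)\in\cK^0(X)$, one has $\ind^{top}(j(a)) = \ind^{an}(j(a))$. Second, the preceding proposition asserts that the diagram~\eqref{comm2} commutes, so $\ind^{an}(j(a)) = j(\ind^{an}(a))$, where $\ind^{an}$ on the $\RR/\ZZ$-side is the one defined in~\cite{Lott}. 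Third, by the main theorem of~\cite{Lott}, which identifies the analytic and topological indices on $\RR/\ZZ$-valued K-theory, we have $\ind^{an}(a) = \ind^{top}(a)$ in $K^{-n-1}(B;\RZ)$. Chaining these three equalities and using naturality of $j$ under the identification of both indices in $\RR/\ZZ$-K-theory yields $\ind^{top}(j(a)) = j(\ind^{top}(a))$, as desired.

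There is no real obstacle here: the argument is a purely formal composition of theorems already in hand, and the proposition is essentially a repackaging of Theorem~\ref{thm:13}, of the earlier diagram~\eqref{comm2}, and of the $\RR/\ZZ$-index theorem of~\cite{Lott}. An alternative route, independent of the main theorem, would be to verify the left square by a direct computation mimicking the treatment of the analogous $\ind^{top}$-diagram given just after Definition~\ref{modified}, tracking an $\RR/\ZZ$-class explicitly through the embedding pushforward $\check{\iota}^{mod}_*$ and the projection pushforward $\check{\pi}^{prod}_*$; but routing through Theorem~\ref{thm:13} is shorter and more transparent.
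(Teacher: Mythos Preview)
Your proof is correct and follows essentially the same approach as the paper: Lemma~\ref{omegaagrees} for the right-hand square, and for the left-hand square the chain $\ind^{top}\circ j = \ind^{an}\circ j = j\circ\ind^{an} = j\circ\ind^{top}$ using Theorem~\ref{thm:13}, diagram~\eqref{comm2}, and the $\RR/\ZZ$-index theorem of~\cite{Lott}, respectively.
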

\begin{proof}
The right-hand square commutes by Lemma \ref{omegaagrees}. 
The left-hand square commutes from the fact that the diagram
(\ref{comm2}) commutes, along with the facts that the analytic and
topological indices agree in differential K-theory 
(Theorem  \ref{thm:13}),
and in $\RR/\ZZ$-valued K-theory \cite{Lott}.
\end{proof}

\begin{remark}
In a similar vein, one might think that Theorem 
 \ref{thm:13},  along
with the commuting of the
right-hand squares in (\ref{comm1}) and (\ref{comm4}), gives a
new and purely analytic proof of the Atiyah-Singer families index theorem
\cite{Atiyah-Singer}. However, such is
not the case.  The proof of Theorem  \ref{thm:13}  uses
Proposition \ref{etapairing}(2), whose proof uses 
\cite[Theorem (5.3)]{Atiyah-Patodi-SingerIII}, whose proof in turn uses the
Atiyah-Singer families index theorem
\cite[Section 8]{Atiyah-Patodi-SingerIII}.  
\end{remark}

\subsection{Rational index theorem} \label{rational}

For a subring $\Lambda \subset
\RR$ we define~$\R_\Lambda =\Lambda [u,u\inv ]$, analogous to~\eqref{eq:2}.
There is a notion of differential cohomology $\breve{H}(X;\R_\Lambda
)^{\bullet}$, the generalized differential cohomology theory attached to
ordinary cohomology with coefficients in the graded ring~$\RL $.
It fits into exact sequences
  \begin{equation}\label{Hses}
     0 \longrightarrow
     H^{\bullet -1}(X;(\RR/\Lambda)[u,u\inv])\xrightarrow{\;\;i\;\;}
\breve{H}^\bullet (X;\RL)\xrightarrow{\;\;\omega \;\;}
     \Omega(X;\R)_\Lambda^\bullet \longrightarrow 0, 
   \end{equation}
  \begin{equation}\label{Hses2}
      0 \longrightarrow 
      \dfrac{\Omega (X;\R)^{\bullet -1}}{\Omega
      (X;\R)_{\Lambda }^{\bullet -1}}\xrightarrow{\;\;j\;\;}
      \breve{H}(X;\RL)^\bullet \xrightarrow{\;\;c\;\;}
      H^\bullet (X;\RL)\longrightarrow  0,
   \end{equation}
where $\Omega(X;\R)_\Lambda^\bullet$ denotes the closed
$\R$-valued forms on $X$
with periods in $\RL=\Lambda [u,u\inv ]$.

The differential cohomology theory
$\breve{H}(X;\RL)^\bullet $ is essentially the same as the
Cheeger-Simons theory of differential characters \cite{Cheeger-Simons}.
Namely, let $C_k(X)$ and $Z_k(X)$ denote the groups of smooth singular
$k$-chains and $k$-cycles in $X$, respectively.  Then an element of 
$\breve{H}^k(X;\RL)^{l}$ corresponds to a homomorphism $F
: Z_{k-1}(X) \rightarrow u^{\frac{l-k}{2}} \cdot (\RR/\Lambda)$ with the
property that there is some $\alpha \in \Omega(X; \R)^{l}$ so that for all $c
\in C_{k}(X)$, we have $F(\partial c) = \int_c \alpha \mod{u^{\frac{l-k}{2}}
\cdot \Lambda}$.

Given a proper submersion $\pi : X \rightarrow B$ of relative
dimension $n$ which is oriented
in ordinary cohomology, there is an ``integration over the fiber'' map
$\int_{X/B} : \breve{H}(X;\RL)^{\bullet}
\rightarrow \breve{H}(B;\RL)^{\bullet - n}$
\cite[Section 3.4]{Hopkins-Singer}.
In short, if $F$ is a differential character on $X$ and
$z \in Z_*(B)$ then the evaluation of $\int_{X/B}$ on
$z$ is $F(\pi^{-1}(z)) \in (\RR/\Lambda)[u,u\inv]$; see also
\cite{Bunke-Kreck-Schick}.

There is a Chern character $\breve{\ch} : \cK^{\bullet}(X) \rightarrow
\breve{H}(X; \RQ)^{\bullet}$.  When acting on generators of
$\cK^0(X)$ of the form ${\mathcal E} = (E, h^E, \nabla^E, 0)$, the Chern
character $\breve{\ch}({\mathcal E})$ was defined in \cite[\S
2]{Cheeger-Simons} as a differential character.  It then suffices to
additionally define $\breve{\ch}$ on $\Image(j)$,
where $j$~is the map in~\eqref{eq:5}.  For this, we may note that
there is a natural map of the domain of~$j$ in~\eqref{eq:5} to the domain
of~$j$ in~\eqref{Hses2}, since $\Omega (X;\R)^{\bullet -1}_K\subset \Omega
(X;\R)^{\bullet -1}_\QQ $.  Or, in the language of differential
characters, we define the evaluation of $\breve{\ch}(j(\phi))$ on a cycle
$z \in Z_*(X)$ to be $\int_z \phi \mod{\RQ}$.  The Chern character on
differential K-theory was also considered in \cite[Section 6]{Bunke-Schick}.
 
In the proof of the next proposition we will make use of the Chern character 
  \begin{equation}\label{eq:125}
     \ch_*\:K_{\bullet }(X; \ZZ)\to H(X;\RQ)_{\bullet } 
  \end{equation}
on the $K$-homology of a space~$X$.  Recall from the proof of
Theorem~\ref{maintheorem} that every $K$-homology class can be written
as~$u^kf_*[M]$ for some integer~$k$ and some continuous map $f\:M\to X$ of a
$\spinc$ manifold~$M$ into~$X$, where $[M]\in K_q(M; \ZZ)$ is the fundamental
class.  Let $\Td(M)\in H(M;\RQ)^0$ be the Todd class and let $\Td(M)^{\vee}\in
H(M;\RQ)_q$ be its Poincar\'e dual.  Then
  \begin{equation}\label{eq:123}
     \ch_*\bigl(u^kf_*[M] \bigr) = u^kf_*\bigl(\Td(M)^{\vee} \bigr)\in
     H(X;\RQ)_{q+2k}. 
  \end{equation}
The Chern character maps on cohomology and homology are compatible
with the natural pairings, meaning that for $a\in K_\ell (X; \ZZ)$ and 
$\alpha \in K^{\ell }(X; \ZZ)$, we have
  \begin{equation}\label{eq:124}
     \langle a,\alpha \rangle = \langle \ch_*(a),\ch(\alpha ) \rangle. 
  \end{equation}
The Chern character on homology 
is an isomorphism after tensoring~$K_{\bullet }(X; \ZZ)$
with~$\RQ$.

Recall from \cite[\S 2]{Cheeger-Simons} 
that there are characteristic classes in
$\breve{H}(X;\RQ)^{\bullet}$.
In particular, if $W$ is a real vector bundle with a
$\spinc$-structure and $\widehat{\nabla}^W$ is a $\spinc$-connection
on the associated spinor bundle then there is a refined Todd class
$\breve{\Td} \left( \widehat{\nabla}^W \right) 
\in \breve{H}(X;\RQ)^{0}$. 

\begin{proposition}
Let $\pi : X \rightarrow B$ be a proper submersion of relative dimension
$n$, with $n$ even. Suppose that $\pi$ has a Riemannian structure and
a differential $\spinc$-structure. Then for all
${\mathcal E} \in \cK^0(X)$,
\begin{equation} \label{ratindex}
\breve{\ch}(\ind^{top}({\mathcal E})) =
\int_{X/B} \breve{\Td} \left( \widehat{\nabla}^{T^VX} \right) 
\cup \breve{\ch}({\mathcal E})
\in \breve{H}(B; \RQ)^{-n}.
\end{equation}
\end{proposition}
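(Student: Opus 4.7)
\emph{Setup.} Both sides of \eqref{ratindex} lie in $\breve H^{-n}(B;\RQ)$. The plan is to match their characteristic forms under $\omega$ and then show that the resulting flat difference vanishes by pairing with $K$-homology, closely mirroring the proof of Theorem \ref{maintheorem}. Throughout I would freely use Theorem \ref{thm:13} to replace $\ind^{top}$ by $\ind^{an}$ whenever analytic input is convenient. The compatibility $\omega(\breve{\ch}(\mathcal{F}))=\omega(\mathcal{F})$, immediate from the definition of $\breve{\ch}$ on generators $\mathcal{F}=(F,h^F,\nabla^F,\phi)$, combined with Lemma \ref{omegaagrees}, gives $\omega$ of the left side equal to $\pi_*(\omega(\mathcal{E}))$. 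The multiplicativity of $\omega$ on $\breve H^{\bullet }$ and its compatibility with integration over the fiber \cite[Section 3.4]{Hopkins-Singer} give $\omega$ of the right side equal to $\int_{X/B}\Td(\widehat\nabla^{T^VX})\wedge\omega(\mathcal{E})=\pi_*(\omega(\mathcal{E}))$ as well. The difference therefore equals $i(\alpha)$ for a unique $\alpha\in H^{-n-1}(B;(\RR/\QQ)[u,u\inv])$.

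\emph{Reduction to odd-dimensional $\spinc$ bases.} By the universal coefficient theorem, to show $\alpha=0$ it suffices to show that $\alpha$ pairs trivially with $H_{-n-1}(B;\QQ)$. By \eqref{eq:125}, rational $K$-homology generates rational homology, so it is enough to test against classes of the form $\ch_*(u^kf_*[M])$ with $f\:M\to B$ a smooth map from a closed odd-dimensional $\spinc$ manifold. The differential cohomology pushforward, $\ind^{top}$, and $\ind^{an}$ are natural under pullback of the fiber bundle $\pi$ along $f$, so we may replace $B$ by $M$ and reduce to the case in which $B$ itself is a closed odd-dimensional $\spinc$ manifold; then $\alpha$ is a single element of $(\RR/\QQ)[u,u\inv]$ (in a single $u$-degree determined by $\dim B$) which must be shown to vanish.

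\emph{The main step.} In this reduced setting, $\langle[B],\alpha\rangle$ equals the difference of the two Cheeger-Simons evaluations on $[B]$. By Definition \ref{analindex2} and Proposition \ref{etapairing}(2), the evaluation of the left side on $[B]$ is $\overline\eta(B,\ind^{an}(\mathcal{E}))$, i.e., a combination of $\overline\eta(D^{B,L_\pm})$ with $\int_B\Td(\widehat\nabla^{TB})\wedge(\pi_*(\phi)+\tilde\eta)$. By the character description of fiber integration recalled in Subsection \ref{rational}, the evaluation of the right side on $[B]$ is the Cheeger-Simons evaluation of $\breve\Td\cup\breve\ch(\mathcal{E})$ on $[X]$, which---by the analogue of Proposition \ref{etapairing} in integral differential cohomology in place of $\dcK^0$---reduces modulo $\ZZ$ to $u^{-(\dim X+1)/2}\overline\eta(D^{X,E})+\int_X\Td(\widehat\nabla^{TX})\wedge\phi$. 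The required congruence between these two quantities is exactly the adiabatic-limit identity \eqref{eq:90} of Dai, which was the key analytic input into Section~\ref{sec:5}. The main obstacle in filling in the argument is this final identification of the Cheeger-Simons evaluation of $\breve\Td\cup\breve\ch(\mathcal{E})$ on $[X]$ with the explicit eta-form expression; it essentially amounts to re-running the proof of Proposition \ref{etapairing} with $\dcK^0$ replaced by integral differential cohomology, which is standard but tedious.
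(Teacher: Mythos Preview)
Your setup and reduction to a closed odd-dimensional $\spinc$ base are correct and parallel the paper's proof. The gap is in the ``main step,'' specifically in your identification of the right-hand side after pairing with $\ch_*[B]$.

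Since $\ch_*[B]=\Td(B)^{\vee}$ by~\eqref{eq:123}, pairing the right-hand side with $\ch_*[B]$ gives
\[
\int_X \pi^*\breve\Td\bigl(\widehat\nabla^{TB}\bigr)\cup\breve\Td\bigl(\widehat\nabla^{T^VX}\bigr)\cup\breve\ch(\mathcal{E}),
\]
not the evaluation of $\breve\Td(\widehat\nabla^{T^VX})\cup\breve\ch(\mathcal{E})$ on $[X]$ alone. You then identify this with $\overline\eta(X,\mathcal{E})=\int_X\breve\Td(\widehat\nabla^{TX})\cup\breve\ch(\mathcal{E})$, but that would require
\[
\pi^*\breve\Td\bigl(\widehat\nabla^{TB}\bigr)\cup\breve\Td\bigl(\widehat\nabla^{T^VX}\bigr)=\breve\Td\bigl(\widehat\nabla^{TX}\bigr)
\quad\text{in }\breve H(X;\RQ)^0,
\]
which is false in general: the Levi-Civita connection $\nabla^{TX}$ does not split as the direct sum of the vertical connection and the pulled-back base connection. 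The two differential Todd classes differ by $j$ of a transgression form (essentially the $\widetilde C$ of~\eqref{dC}), and the resulting real correction $\int_X(\cdots)\wedge\omega(\mathcal{E})$ has no reason to lie in $\RQ$. Consequently the congruence you need is not ``exactly'' Dai's identity~\eqref{eq:90}; there is a leftover Todd-discrepancy term.

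The paper's proof addresses precisely this point. It keeps the topological index on the left and reuses the explicit computation \eqref{C2}--\eqref{C3} from the proof of Theorem~\ref{maintheorem}, which already tracks~$\widetilde C$; it then deforms the horizontal distribution to $T^HX=(T^H(S^N\times B))|_X$, where Lemma~\ref{vanishes} gives $\widetilde C=0$ and the differential Todd class does factor. Your analytic-index route can be salvaged, but only by inserting the same deformation argument (or an equivalent explicit bookkeeping of the Todd transgression); the appeal to Dai alone does not close the argument.
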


\begin{proof}
Put 
\begin{equation}
\Delta = \breve{\ch}(\ind^{top}({\mathcal E})) -
\int_{X/B} \breve{\Td} \left( \widehat{\nabla}^{T^VX} \right) 
\cup \breve{\ch}({\mathcal E}).
\end{equation}
From Lemma \ref{omegaagrees}, we have $\omega(\Delta) = 0$. Then (\ref{Hses})
implies that $\Delta = i({\mathcal U})$ for some unique
${\mathcal U} \in H(B;(\RR/\QQ)[u,u\inv])^{-n -1}$.  Using the universal
coefficient theorem and the fact that $\RR/\QQ$ is divisible, to show that
${\mathcal U}$ vanishes it suffices to prove the vanishing of its pairings
with $H_*(B; \RQ)$.
We use the fact that the Chern character~\eqref{eq:125} is surjective after
tensoring $K_*(X; \ZZ)$ with~$\RQ$ to argue, 
as in the proof of Theorem~\ref{maintheorem},
that we can reduce to the case when $B$ is a closed odd-dimensional
$\spinc$-manifold and we are evaluating on the Chern character of its
fundamental $K$-homology class.

We equip $B$ with a 
Riemannian metric and a unitary connection $\nabla^{L^B}$ on the
characteristic line bundle $L^B$.
In the rest of this proof, we work modulo $\RQ=\QQ[u,u\inv]$.
From~\eqref{eq:124} and~ \cite[\S 9]{Cheeger-Simons}, 
\begin{equation}
\langle \ch_*[B], {\mathcal U} \rangle \equiv
\overline{\eta}(B; \ind^{top}({\mathcal E})) -
\int_B \int_{X/B} 
\pi^* \breve{\Td} \left( \widehat{\nabla}^{TB} \right) \cup
\breve{\Td} \left( \widehat{\nabla}^{T^VX} \right) 
\cup \breve{\ch}({\mathcal E}).
\end{equation}
From (\ref{C2})-(\ref{C3}),
\begin{align} \label{rest}
\overline{\eta}(B, \ind^{top}({\mathcal E})) \equiv &
u^{-\frac{\dim(X)+1}{2}} \overline{\eta}(D^{X,E}) -
\int_{X} 
\frac{\pi^* \Td \left( \widehat{\nabla}^{TB} \right)
}{
\Td \left( \widehat{\nabla}^{\nu} \right)}
\wedge
\widetilde{C}
\wedge \omega(\nabla^E) + \\
&  \int_X \pi^* \Td \left( \widehat{\nabla}^{TB} \right) \wedge
\Td \left( \widehat{\nabla}^{T^VX} \right) \wedge
\phi \notag \\
\equiv &
\int_X \breve{\Td}(\nabla^{TX}) \cup \breve{\ch}(\nabla^E) -
\int_{X} 
\frac{\pi^* \Td \left( \widehat{\nabla}^{TB} \right)
}{
\Td \left( \widehat{\nabla}^{\nu} \right)}
\wedge
\widetilde{C}
\wedge \omega(\nabla^E) + \notag \\
&  \int_X \pi^* \Td \left( \widehat{\nabla}^{TB} \right) \wedge
\Td \left( \widehat{\nabla}^{T^VX} \right) \wedge
\phi. \notag 
\end{align}
Here $X$ has the induced Riemannian metric from its embedding
in $S^N \times B$.
Thus
\begin{align} \label{rhmess}
\langle \ch_*[B], {\mathcal U} \rangle 
\equiv &
\int_X \breve{\Td}(\nabla^{TX}) \cup \breve{\ch}(\nabla^E) -
\int_{X} 
\frac{\pi^* \Td \left( \widehat{\nabla}^{TB} \right)
}{
\Td \left( \widehat{\nabla}^{\nu} \right)}
\wedge
\widetilde{C}
\wedge \omega(\nabla^E) + \\
&  \int_X \pi^* \Td \left( \widehat{\nabla}^{TB} \right) \wedge
\Td \left( \widehat{\nabla}^{T^VX} \right) \wedge
\phi - \notag \\
& \int_X \breve{\Td} \left( \widehat{\nabla}^{T^VX} \right) 
\cup \breve{\ch}({\mathcal E})  \notag \\
\equiv & 
\int_X \breve{\Td}(\nabla^{TX}) \cup \breve{\ch}(\nabla^E) -
\int_{X} 
\frac{\pi^* \Td \left( \widehat{\nabla}^{TB} \right)
}{
\Td \left( \widehat{\nabla}^{\nu} \right)}
\wedge
\widetilde{C}
\wedge \omega(\nabla^E) - \notag \\
& \int_X
\pi^* \breve{\Td} \left( \widehat{\nabla}^{TB} \right) \cup
 \breve{\Td} \left( \widehat{\nabla}^{T^VX} \right) 
\cup \breve{\ch}(\nabla^E).  \notag
\end{align}

As in the proof of Theorem \ref{maintheorem}, we can deform to the
case $T^HX = (T^H(S^N \times B)) \big|_X$ without changing
the right-hand side of (\ref{rhmess}). In this case,
\begin{equation}
\breve{\Td} \left( \widehat{\nabla}^{TX} \right) = 
\pi^* \breve{\Td} \left( \widehat{\nabla}^{TB} \right) \cup
 \left( \widehat{\nabla}^{T^VX} \right)
\end{equation}
and Lemma \ref{vanishes} says that 
$\widetilde{C} = 0$. The proposition follows.
\end{proof}

\begin{corollary} \label{ratcor}
Let $\pi : X \rightarrow B$ be a proper submersion of relative dimension
$n$, with $n$ even. Suppose that $\pi$ has a Riemannian structure and
a differential $\spinc$-structure. Then for all
${\mathcal E} \in \cK^0(X)$,
\begin{equation} \label{ratindex2}
\breve{\ch}(\ind^{an}({\mathcal E})) =
\int_{X/B} \breve{\Td} \left( \widehat{\nabla}^{T^VX} \right) 
\cup \breve{\ch}({\mathcal E})
\in \breve{H}(B; \RQ)^{-n}.
\end{equation}
\end{corollary}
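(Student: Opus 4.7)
The plan is essentially immediate: Corollary~\ref{ratcor} is a direct consequence of combining Theorem~\ref{thm:13} with the preceding proposition giving the rational index formula for $\ind^{top}$, since $\breve{\ch}$ is a well-defined homomorphism $\cK^{-n}(B) \to \breve{H}(B;\RQ)^{-n}$.

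More concretely, I would proceed as follows. First, by Theorem~\ref{thm:13}, for every $\mathcal{E} \in \cK^0(X)$ the analytic and topological indices agree as elements of $\cK^{-n}(B)$:
\begin{equation*}
\ind^{an}(\mathcal{E}) = \ind^{top}(\mathcal{E}).
\end{equation*}
Next, apply the differential Chern character $\breve{\ch}\:\cK^{-n}(B)\to\breve{H}(B;\RQ)^{-n}$ (which is defined on generators via the Cheeger-Simons construction and extended through the map~$j$ of~\eqref{eq:5} as in the discussion preceding the proposition) to both sides to obtain
\begin{equation*}
\breve{\ch}(\ind^{an}(\mathcal{E})) = \breve{\ch}(\ind^{top}(\mathcal{E})).
\end{equation*}
Finally, invoke the preceding proposition~\eqref{ratindex}, which evaluates the right-hand side as $\int_{X/B}\breve{\Td}(\widehat{\nabla}^{T^VX})\cup\breve{\ch}(\mathcal{E})$, yielding the desired identity in $\breve{H}(B;\RQ)^{-n}$.

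There is no real obstacle here: all of the substantive work has been carried out in establishing the differential index theorem (Theorem~\ref{thm:13}) and in the topological rational Riemann-Roch identity~\eqref{ratindex}. The latter required the pairing argument with $K$-homology classes, the reduction to a closed odd-dimensional $\spinc$ base via surjectivity of the homological Chern character, the eta-invariant computation from~\eqref{C2}--\eqref{C3}, and the deformation to the product horizontal distribution making $\widetilde{C}=0$; once these are in place, the analytic refinement is a formal consequence, so the corollary is just a two-line assembly.
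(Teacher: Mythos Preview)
Your proposal is correct and matches the paper's approach: the corollary is stated without proof, as it follows immediately from applying $\breve{\ch}$ to the equality $\ind^{an}=\ind^{top}$ of Theorem~\ref{thm:13} and invoking the preceding proposition~\eqref{ratindex}. Your summary of the ingredients behind~\eqref{ratindex} is accurate but unnecessary for the corollary itself.
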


Corollary \ref{ratcor} was proven by different means in
\cite[Section 6.4]{Bunke-Schick}. 

\subsection{Index in Deligne cohomology} \label{Deligne}

In general, the image of $\breve{\ch}$ lies in the rational differential
cohomology group $\breve{H}(X; \RQ)^{\bullet}$ but not in the
integral differential cohomology group $\breve{H}(X; \ZZ[u,u\inv
])^{\bullet}$. However, in some special cases one gets integral differential
cohomology classes.  Recall that there is a filtration of the usual K-theory
$K^{\bullet}(X; \ZZ) = K^{\bullet}_{(0)}(X; \ZZ) \supset 
K^{\bullet}_{(1)} (X; \ZZ)\supset
\ldots$, where $K^{\bullet}_{(i)}(X; \ZZ)$ consists of the elements $x$ of
$K^{\bullet}(X; \ZZ)$ 
with the property that for any finite simplicial complex $Y$
of dimension less than $i$ and any continuous map $f : Y \rightarrow X$, the
pullback $f^* x$ vanishes in $K^{\bullet}(Y; \ZZ)$ \cite[Section
1]{Atiyah-Hirzebruch}.

Let ${H}^{(i)}(X;
\Lambda[u,u\inv ])^{\bullet}$ denote the subgroup of
${H}(X;
\Lambda[u,u\inv ])^{\bullet}$ consisting of terms of $X$-degree
equal to $i$, and similarly for
$\breve{H}^{(i)}(X; \Lambda[u,u\inv ])^{\bullet}$. 
Given $[E] \in K^{\bullet}_{(i)}(X; \ZZ)$, one can refine 
the component of 
$\ch([E])\in {H}(X; \QQ[u,u\inv
])^{\bullet}$ in ${H}^{(i)}(X; \QQ[u,u\inv
])^{\bullet}$
to an integer class $\ch^{(i)}([E])\in {H}^{(i)}(X; \ZZ[u,u\inv
])^{\bullet}$. 
Similarly, if $[{\mathcal E}] \in \cK^\bullet(X)$ and $c([{\mathcal
E}]) \in K^{\bullet}_{(i)}(X; \ZZ)$ 
then one can refine the 
component of $\breve{\ch}([{\mathcal E}]) \in \breve{H}(X; \QQ[u,u\inv
])^{\bullet}$ in $\breve{H}^{(i)}(X;\RQ)^{\bullet}$
to an element $\breve{\ch}^{(i)}([{\mathcal E}])\in
\breve{H}^{(i)}(X;
\ZZ[u,u\inv ])^{\bullet}$.
In general there is more than one such refinement,
but if $X$ is $(i-2)$-connected then there is a canonical choice.

Under the hypotheses of Theorem  \ref{thm:13}, 
suppose in addition that $B$ is $(2k-2)$-connected and
the Atiyah-Singer index $\ind^{top}([E]) = \ind^{an}([E]) \in 
K^{-n}(B; \ZZ)$
lies in the subset 
$K^{-n}_{(2k)}(B; \ZZ)\subset K^{-n}(B; \ZZ)$.
Then an explicit cocycle in
a certain integral Deligne cohomology group is
constructed in \cite[Section 4]{Lott2}.
More precisely, the cocycle is a
$2k$-cocycle for the Cech-cohomology of the complex of sheaves
\begin{equation} \ZZ \longrightarrow \Omega^0 \longrightarrow \ldots
\longrightarrow \Omega^{2k-1}
\end{equation}
on $B$. From the viewpoint of the present paper, the cocycle constructed in
\cite[Section 4]{Lott2} represents $\breve{\ch}^{(2k)}(\ind^{an}({\mathcal
E})) \in \breve{H}^{(2k)}(B; \ZZ[u,u\inv ])^{-n}$.  Then Theorem
 \ref{thm:13}  
implies that the same 
integral differential cohomology class
can be computed as
$\breve{\ch}^{(2k)}(\ind^{top}({\mathcal E}))$.

\section{Odd index theorem}\label{sec:8}

In this section we extend the index-theoretic 
results of the previous sections to
the case of odd differential K-theory classes. 
Because some of the arguments in the section are similar to what
was already done in the even case, 
we state some results without proof.

In Subsection \ref{oddsubsec1} we give a model for odd differential K-theory
whose generators consist of a Hermitian vector bundle, a compatible
connection, a unitary automorphism and an even differential form.  
We then construct suspension and desuspension maps between the even and
odd differential K-theory classes.  These are used to prove that
the odd groups defined here are isomorphic to those defined from the general
theory in~\cite{Hopkins-Singer}.

In Subsection \ref{oddsubsec2} we define the analytic and topological indices
in the case of an odd differential K-theory class on the total space of a
fiber bundle with even-dimensional fibers.  The definition uses suspension
and desuspension to reduce to the even case.  It is likely that the
topological index map agrees with that constructed using a more topological
model~\cite{Klonoff}.  We do not attempt to relate the odd analytic
pushforward with the odd Bismut superconnection.

In Subsection \ref{oddsubsec3} we define the analytic and topological indices
in the case of an even differential K-theory class on the total space of a
fiber bundle with odd-dimensional fibers. The definition uses the trick,
taken from \cite[Proof of Theorem 2.10]{Bismut-Freed}, of multiplying both
the base and fiber by a circle and then tensoring with the Poincar\'e line
bundle on the ensuing torus. 

In Subsection \ref{degreeone} we look at the result of applying a determinant
map to the analytic and topological index, to obtain a map from $B$ to
$S^1$. We show that this map is given by the reduced eta-invariants of the
fibers $X_b$. In particular if $Z$ is a closed Riemannian $\spinc$-manifold
of odd dimension $n$, and $\pi : Z \rightarrow \pt$ is the map to a point
then for any ${\mathcal E} \in \cK^0(Z)$, the topological and analytic
indices $\ind^{top}_{odd}({\mathcal E}), \ind^{an}_{odd}({\mathcal E}) \in
\cK^{-n}(\pt)$ equal the reduced eta-invariant $\overline{\eta}(Z, {\mathcal
E})$.  This is a version of the main theorem in~\cite{Klonoff}.

In Subsection \ref{indexgerbe} we indicate the relationship between
the odd differential K-theory index and the index gerbe of
\cite{Lott2}.

\subsection{Odd differential K-theory} \label{oddsubsec1}

Let $X$~be a smooth manifold.  We can describe $K^{-1}(X; \ZZ)$ as an abelian
group in terms of generators and relations.  The generators are complex
vector bundles $G$ over $X$ equipped with Hermitian metrics $h^G$ and unitary
automorphisms $U^G$.  The relations are that \begin{enumerate} \item $(G_2,
h^{G_2}, U^{G_2}) = (G_1, h^{G_1}, U^{G_1}) + (G_3, h^{G_3}, U^{G_3})$
whenever there is a short exact sequence of Hermitian vector bundles
\begin{equation} \label{shortexact2} 0 \longrightarrow G_1 {\longrightarrow}
G_2 {\longrightarrow} G_3 \longrightarrow 0
\end{equation}
so that the diagram
  \begin{equation} \label{commutes}
     \xymatrix{ 0 \ar[r]& G_1
\ar[d]^{U^{G_1}}\ar[r]^{}&G_2\ar[d]^{U^{G_2}}\ar[r]^{} &
G_3\ar[d]^{U^{G_3}}\ar[r] &0 \\ 0 \ar[r]&
           G_1\ar[r]^{}&G_2\ar[r]^{} &
           G_3\ar[r]& 0 } 
  \end{equation}
commutes.
\item $(G, h^G, U^G_1 \circ U^G_2) = (G, h^G, U^G_1) + 
(G, h^G, U^G_2)$.
\end{enumerate}

Given a generator $(G, h^G, U^G)$ for $K^{-1}(X; \ZZ)$,
let $\nabla^G$ be a unitary connection on $G$. Put
\begin{equation}
A(t) = (1-t) \nabla^G + t U^G \cdot \nabla^G \cdot (U^G)^{-1}
\end{equation}
and
\begin{equation}
\omega(\nabla^G, U^G) = \int_0^1 R_u 
\tr \left( e^{- u^{-1} (dt \, \partial_t + A(t))^2}
\right) \in \Omega (X;\R)^{-1}.
\end{equation}
Then $\omega(\nabla^G, U^G)$ is a closed form whose de Rham cohomology class 
$\ch(G,U^G) \in H (X;\R)^{-1}$ 
is independent of $\nabla^G$. The assignment
$(G, U^G) \rightarrow \ch(G, U^G)$ factors through 
a map $\ch : K^{-1}(X; \ZZ) \longrightarrow H (X;\R)^{-1}$ which
becomes an isomorphism after tensoring the left-hand side with $\RR$.

We can represent $K^{-1}(X; \ZZ)$
using $\ZZ/2\ZZ$-graded vector bundles.
A generator of $K^{-1}(X; \ZZ)$ is then a
$\ZZ/2\ZZ$-graded complex vector bundle $G = G_+ \oplus G_-$ on $X$, equipped 
with a Hermitian metric $h^G = h^{G_+} \oplus h^{G_-}$ and a
unitary automorphism $U^G_{\pm} \in \Aut(G_\pm)$.
Choosing compatible connections $\nabla^{G_\pm}$, and putting
\begin{equation}
A_\pm(t) =  (1-t) \nabla^G_\pm + t U^G_\pm \cdot \nabla^G_\pm \cdot 
(U^G)^{-1}_\pm,
\end{equation}
we put
\begin{equation} \label{Chern4}
\omega(\nabla^G, U^G) = \int_0^1 R_u 
\str \left( e^{- u^{-1} (dt \, \partial_t + t A(t))^2}
\right) \in \Omega (X;\R)^{-1}.
\end{equation}

If $\nabla^G_1$ and $\nabla^G_2$ are two 
metric-compatible connections on a Hermitian vector
bundle $G$ with a unitary automorphism $U^G$ then
there is an explicit form
$CS(\nabla^G_1,\nabla^G_2, U^G) \in 
\Omega (X;\R)^{-2}/\Image(d)$ so that
\begin{equation}
dCS(\nabla^G_1,\nabla^G_2, U^G) =
\omega(\nabla^G_1, U^G) - \omega(\nabla^G_2, U^G).
\end{equation}
More generally, if we have a short exact sequence (\ref{shortexact2}) of
Hermitian vector bundles, unitary automorphisms $U^{G_i} \in 
\Aut(G_i)$,
a commutative diagram (\ref{commutes}) 
and metric-compatible connections
$\{\nabla^{G_i}\}_{i=1}^3$ then there is an explicit form
$CS(\nabla^{G_1},\nabla^{G_2},\nabla^{G_3}, U^{G_1}, U^{G_2}, U^{G_3}) \in 
\Omega (X;\R)^{-2}/\Image(d)$ so that
\begin{equation}
dCS(\nabla^{G_1},\nabla^{G_2},\nabla^{G_3}, U^{G_1}, U^{G_2}, U^{G_3}) =
\omega(\nabla^{G_2}, U^{G_2}) - \omega(\nabla^{G_1},U^{G_1}) - 
\omega(\nabla^{G_3},U^{G_3}).
\end{equation}
To construct 
$CS(\nabla^{G_1},\nabla^{G_2},\nabla^{G_3}, U^{G_1}, U^{G_2}, U^{G_3})$,
put $W = [0,1] \times X$ and let $p : W \rightarrow X$ be the
projection map.  Put $F = p^* G_2$, $h^F = p^* h^{G_2}$ and
$U^F = p^* U^{G_2}$. Let $\nabla^F$ be a
unitary connection on $F$
which equals $p^* \nabla^{G_2}$ near
$\{1\} \times X$ and which equals $p^*(\nabla^{G_1} \oplus \nabla^{G_3})$
near $\{0\} \times X$. Then
\begin{equation}
CS(\nabla^{G_1},\nabla^{G_2},\nabla^{G_3}) = \int_0^1
\omega(\nabla^F, U^F) \in \Omega (X;\R)^{-2}/\Image(d).
\end{equation}

Also, if $G$ is a Hermitian vector bundle with a unitary connection $\nabla^G$
and two unitary automorphisms $U^G_1, U^G_2$ then there is an explicit form
$CS(\nabla^{G}, U^G_1, U^G_2) \in \Omega (X;\R)^{-2}/\Image(d)$ so that
\begin{equation} dCS(\nabla^{G}, U^G_1, U^G_2) = \omega(\nabla^{G}, U^G_1 \circ
U^G_2) - \omega(\nabla^{G}, U^G_1) - \omega(\nabla^{G}, U^G_2).  \end{equation}
To construct $CS(\nabla^{G}, U^G_1, U^G_2)$, let $\Delta \subset \AA^2$ be the
simplex \begin{equation} \Delta = \{(t_1, t_2) \in \AA^2 : t_1 \ge 0, t_2 \ge
0, t_1 + t_2 \le 1\}, \end{equation} with the orientation induced from the
canonical orientation of the affine plane~ $\AA^2$.  Put \begin{equation}
A(t_1, t_2) = \nabla^G + t_1 U^G_1 \cdot \nabla^G \cdot (U^G_1)^{-1} + t_2
(U^G_1 \circ U^G_2) \cdot \nabla^G \cdot (U^G_1 \circ U^G_2)^{-1}.
\end{equation} Then \begin{equation} CS(\nabla^{G}, U^G_1, U^G_2) = \int_\Delta
R_u \tr \left( e^{- u^{-1} (dt_1 \, \partial_{t_1} + dt_2 \, \partial_{t_2} +
A(t_1,t_2))^2} \right).  \end{equation}

 \begin{definition}\label{def:8.15}
The differential K-theory group $\cK^{-1}(X)$ is the abelian group 
defined by the following generators and relations.
The generators are quintuples ${\mathcal G} = 
(G, h^G, \nabla^G, U^G, \phi)$ where
\begin{itemize}
\item $G$ is a complex vector bundle on $X$.
\item $h^G$ is a Hermitian metric on $G$.
\item $\nabla^G$ is an $h^G$-compatible connection on $G$.
\item $U^G$ is a unitary automorphism of $G$.
\item $\phi \in \Omega (X;\R)^{-2}/\Image(d)$.
\end{itemize}

The relations are 
\begin{enumerate}
\item
${\mathcal G}_2 = {\mathcal G}_1 + {\mathcal G}_3$
whenever there is a short exact sequence (\ref{shortexact2}) of
Hermitian vector bundles, along with a commuting diagram
(\ref{commutes}) and
$\phi_2 = \phi_1 + \phi_3 - CS(\nabla^{G_1},\nabla^{G_2},\nabla^{G_3},
U^{G_1}, U^{G_2}, U^{G_3})$.
\item 
$(G, h^G, \nabla^G, U^G_1 \circ U^G_2, - CS(\nabla^G, U^G_1, U^G_2)) = 
(G, h^G, \nabla^G, U^G_1, 0) + 
(G, h^G, \nabla^G, U^G_2, 0)$.
\end{enumerate}
\end{definition}

By a generator of $\cK^{-1}(X)$ we mean a quadruple ${\mathcal G} = (G, h^G,
\nabla^G, U^G, \phi)$ as above. There is a homomorphism $\omega : \cK^{-1}(X)
\rightarrow \Omega(X; \R)^{-1}$ given on generators by $\omega({\mathcal G})
= \omega(\nabla^G, U^G) + d\phi$.

There is a similar model of $\cK^{r}(X)$ for any odd~$r$.  A
generator is a quintuple ${\mathcal G} = (G, h^G, \nabla^G, U^G, \phi)$ as
above with only a change in degree: $\phi \in \Omega (X;\R)^{r-1}/\Image(d)$.
Then $\omega({\mathcal G}) = u^{(r+1)/2}\omega(\nabla^G, U^G) + d\phi\in
\Omega (X;\R)^{r}$.  Also, the exact 
sequences \eqref{eq:4} and \eqref{eq:5} hold 
in odd degrees. 

There is a suspension map $S : \cK^{-1}(X) \rightarrow \cK^0(S^1 \times X)$
given on generators as follows. Let ${\mathcal G} = 
(G, h^G, \nabla^G, U^G, \phi)$ be a generator for
$\cK^{-1}(X)$. 
Let $p : [0,1] \times X \rightarrow X$ be the projection map.
Put $F = p^* G$, $h^F = p^* h^G$ and 
\begin{equation}
\nabla^F = dt \, \partial_t + (1-t) \nabla^G + t U^G \cdot \nabla^G 
\cdot (U^G)^{-1}.
\end{equation}
Let $(E, h^E, \nabla^E)$ be the
Hermitian vector bundle with connection on $S^1 \times X$ obtained
by gluing $F \big|_{\{0\} \times X}$ with $F \big|_{\{1\} \times X}$
using the automorphism $U^G$. Put 
\begin{equation}
\Phi = dt \wedge \phi \in \Omega(S^1 \times X; \R)^{-1}/\Image(d).
\end{equation}
Then $S({\mathcal G}) = 
(E, h^E, \nabla^E, \Phi ) \in \cK^0(S^1 \times X)$.
Equivalently, $S$ is multiplication by a certain element of
$\cK^{1}(S^1)$.

There is also a suspension map
$S : \cK^{0}(X) \rightarrow \cK^1(S^1 \times X)$
given on generators as follows. Let ${\mathcal E} = 
(E, h^E, \nabla^E, \Phi)$ be a generator for
$\cK^{0}(X)$. 
Let $p_1 : S^1 \times X \rightarrow S^1$ and
$p_2 : S^1 \times X \rightarrow X$ be the projection maps.
Let ${\mathcal L}$ be the trivial complex line bundle over
$S^1$, equipped with the product Hermitian metric $h^{\mathcal L}$,
the product connection $\nabla^{\mathcal L}$ and the automorphism 
$U^{\mathcal L}$ which multiplies
the fiber ${\mathcal L}_{e^{2\pi i t}}$ over 
$e^{2\pi i t} \in S^1$ by $e^{2\pi i t}$.
Put $G = p_1^* {\mathcal L} \otimes p_2^* E$, 
$h^G = p_1^* h^{\mathcal L} \otimes p_2^* h^E$,
$\nabla^G = p_1^* \nabla^{\mathcal L} \otimes I +
I \otimes p_2^* \nabla^E$, 
$U^G = p_1^* U^{\mathcal L}$ and 
\begin{equation}
\phi = p_1^* dt \wedge
p_2^* \Phi \in \Omega(S^1 \times X; \R)^{0}/\Image(d).
\end{equation}
Then $S({\mathcal E}) = 
(G, h^G, \nabla^G, U^G, \phi ) \in \cK^0(S^1 \times X)$.
Again, $S$ is multiplication by an element of
$\cK^{1}(S^1)$.

The double suspension $S^2 : \cK^0(X) \rightarrow \cK^2(T^2 \times X)$ can be
described explicitly as follows. Let $p_1 : T^2 \times X \rightarrow T^2$ and
$p_2 : T^2 \times X \rightarrow X$ be the projection maps.
Consider the trivial complex line bundle ${\mathcal M}$ on $[0,1]
\times [0,1]$ with product Hermitian metric and connection $dt_1 \,
\partial_{t_1} + dt_2 \, \partial_{t_2} - 2 \pi i t_1 dt_2$.  Define the
hermitian line bundle $P\to T^2$ by making identifications of ${\mathcal M}$
along the boundary of $[0,1] \times [0,1]$; it is the ``Poincar\'e'' line
bundle on the torus.  Let $h^P$ and $\nabla^P$ be the ensuing Hermitian
metric and compatible connection on $P$.  Given a generator ${\mathcal E} =
(E, h^E, \nabla^E, \phi)$ for $\cK^0(X)$, its double suspension is
\begin{equation} \label{eq:8.19}
 S^2({\mathcal E}) = \left( p_1^* P \otimes p_2^* E, p_1^*
h^P \otimes p_2^* h^E, p_1^* \nabla^P \otimes I + I \otimes p_2^* \nabla^E,
dt_1 \wedge dt_2 \wedge \phi \right) \in \cK^2(T^2 \times X).
\end{equation}

Finally, there is a desuspension map $D : \cK^0(S^1 \times X) \rightarrow
\cK^{-1}(X)$
given on generators as follows. Let ${\mathcal E} = 
(E, h^E, \nabla^E, \Phi)$ be a generator for
$\cK^0(S^1 \times X)$.
Picking a basepoint $\star \in S^1$,
define $A : X \rightarrow S^1 \times X$ by
$A(x) = (\star,x)$. Put $G = A^*E$, $h^G = A^* h^E$ and
$\nabla^G = A^* \nabla^E$. Let $U^G \in \Aut(G)$ be the
map given by parallel transport around the circle fibers on $S^1 \times X$,
starting from $\{\star\} \times X$.
Put 
\begin{equation}
\phi = \int_{S^1} \Phi \in \Omega(X; \R)^{-2}/\Image(d).
\end{equation}
Then 
  \begin{equation}\label{eq:105}
     D({\mathcal E}) = (G, h^G, \nabla^G, U^G, \phi) \in \cK^{-1}(X) .
  \end{equation}
It is independent of the choice of basepoint $\star \in S^1$.
Also, $D \circ S$ is the identity on $\cK^{-1}(X)$.

There are analogous suspension and
desuspension maps in other degrees. 

We now show how to use 
the suspension and desuspension maps
to relate Definition~\ref{def:8.15} to
the notion of odd differential K-theory groups from
\cite{Hopkins-Singer}.

        \begin{proposition}[]\label{thm:4}
 The odd differential $K$-groups defined here are isomorphic to those defined
by the theory of generalized differential cohomology.
        \end{proposition}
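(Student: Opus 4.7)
Plan. The plan is to reduce to the even-degree case, where the isomorphism is already established in Remark~\ref{thm:3}, by exploiting the suspension and desuspension maps constructed above. I will proceed in three steps.

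First, I will show that the suspension $S\:\cK^{-1}(X)\to \cK^0(S^1\times X)$ and the desuspension $D\:\cK^0(S^1\times X)\to \cK^{-1}(X)$ realize $\cK^{-1}(X)$ as a natural direct summand of $\cK^0(S^1\times X)$. We already have $D\circ S=\id_{\cK^{-1}(X)}$ from the discussion preceding the proposition. The remaining checks are that $A_\star^*\circ S=0$, where $A_\star\:X\hookrightarrow S^1\times X$ is the basepoint inclusion, and that
\begin{equation*}
\id_{\cK^0(S^1\times X)}= S\circ D + p^*\circ A_\star^*,
\end{equation*}
where $p\:S^1\times X\to X$ is the projection. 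Both identities can be verified at the level of generators, the second by an explicit Chern--Simons transgression argument using~\eqref{Chern4} applied to the family of connections $A(t)$ appearing in the definition of~$S$.

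Second, an analogous splitting $\cK^0_{HS}(S^1\times X)\cong \cK^0_{HS}(X)\oplus \cK^{-1}_{HS}(X)$ holds in the Hopkins--Singer setting. This is automatic because $\cK^{\bullet}_{HS}$ is a generalized differential cohomology theory, so the stable splitting $(S^1\times X)_+\simeq S^0\vee \Sigma X_+$ induces the corresponding direct-sum decomposition of $\cK^0_{HS}$-groups, with the $\cK^{-1}_{HS}(X)$ summand realized as the image of the natural desuspension $D_{HS}$.

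Third, apply the even-degree isomorphism of Remark~\ref{thm:3} both to $Y=S^1\times X$ and to $Y=X$. By naturality with respect to $A_\star^*$, the resulting commutative diagram preserves the two splittings, so restriction to the second summand yields the desired isomorphism $\cK^{-1}(X)\cong \cK^{-1}_{HS}(X)$. The remaining odd-degree isomorphisms $\cK^r(X)\cong\cK^r_{HS}(X)$ follow by the same argument, applied one degree higher via the $2$-periodicity of both theories. The main obstacle is the splitting identity $S\circ D + p^*\circ A_\star^* = \id$ on $\cK^0(S^1\times X)$: at the level of underlying $K$-theory classes it is the standard reduced-versus-unreduced splitting, but lifting it to the differential refinement requires a careful treatment of the Chern--Simons correction that arises when one reconstructs a bundle on $S^1\times X$ from its restriction at $\{\star\}\times X$ together with the monodromy automorphism $U^G$.
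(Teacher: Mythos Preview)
Your splitting strategy is reasonable in spirit, but the two identities you plan to verify in Step~1 are both false for the suspension map~$S$ as defined in the paper. For a generator $\mathcal{G}=(G,h^G,\nabla^G,U^G,\phi)$, the construction of~$S(\mathcal{G})$ glues $p^*G$ over $[0,1]\times X$ to produce a bundle~$E$ on $S^1\times X$ whose restriction to $\{\star\}\times X$ is~$G$ itself, with connection~$\nabla^G$ and with $\Phi=dt\wedge\phi$ restricting to zero. Hence $A_\star^*S(\mathcal{G})=(G,h^G,\nabla^G,0)$, which is nonzero whenever $[G]\neq 0$ in $K^0(X;\ZZ)$. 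The second identity $S\circ D+p^*\circ A_\star^*=\id$ then fails already at the level of underlying $K$-theory classes: applied to a class represented by a single bundle~$E$, the left-hand side has virtual rank $2\rank(E)$. The paper's~$S$ is not a reduced suspension; it does not land in $\ker A_\star^*$.

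The paper avoids this by never invoking a splitting. It composes $S$ with the even-degree isomorphism of Remark~\ref{thm:3} on $S^1\times X$ and then with integration over the circle (the Hopkins--Singer desuspension) to obtain a single homomorphism from the geometric odd group to~$\cK^{-1}(X)$. That this map is an isomorphism follows from the $5$-lemma applied to the exact sequence~\eqref{eq:5} for both theories, since the induced maps on the differential-form quotient and on topological $K^{-1}$ are the identity. Your approach can be repaired by replacing~$S$ with $\tilde S=S-p^*A_\star^*S$, which does land in $\ker A_\star^*$, but verifying that the resulting splittings match under the even isomorphism essentially reduces to the same $5$-lemma check, so the detour through the full decomposition buys nothing.
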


        \begin{proof}
 Temporarily denote the geometrically defined groups in
Definition~\ref{def:1.14} and Definition~\ref{def:8.15} 
as~$\breve{L}^{\bullet}(X)$.
Then,  looking at 
degree~$-1$ for definiteness, the composition 
  \begin{equation}\label{eq:107}
     \xymatrix@1{\breve{L}\inv (X)\ar@<.5ex>[r]^<(.15)S &\breve{L}^0(\cir\times
     X)\cong \cK^0(\cir\times X)\ar@<.5ex>[l]^<(.25)D\ar[r]^<(.2){\int_{\cir}}&
     \cK\inv (X)}
  \end{equation}
defines a homomorphism we claim is an isomorphism.  (See Remark~\ref{thm:3}
for the isomorphism in the middle of~\eqref{eq:107}.)  This follows from the
5-lemma applied to the diagram  
  \begin{equation}\label{eq:108}
     \xymatrix{ 0 \ar[r]& \dfrac{\Omega (X;\R)^{-2}}{\Omega
     (X;\R)_K^{-2}}\ar[d]\ar[r]^{\quad
     j}&\breve{L}^{-1}(X)\ar[d]\ar[r]^{c\quad} &
     K^{-1}(X;\ZZ)\ar[d]\ar[r] &0 \\ 
    0 \ar[r]& \dfrac{\Omega (X;\R)^{-2}}{\Omega
     (X;\R)_K^{-2}}\ar[r]^{\quad
     j}&\cK^{-1}(X)\ar[r]^{c\quad} &
     K^{-1}(X;\ZZ)\ar[r] &0} 
  \end{equation}
in which the rows are exact and the outer vertical arrows are the identity. 
        \end{proof}
 
Note that under the isomorphism 
$\breve{L}^0 (\cir\times X)\cong \cK^0 (\cir\times
X)$, the desuspension map corresponds to integration over the circle.

\subsection{Index theorem : even-dimensional fibers, odd classes}
\label{oddsubsec2}

Suppose that $\pi : X \rightarrow B$ is a 
 proper submersion of relative dimension $n$, with
$n$ even. 
Suppose that $\pi$ is 
equipped with a Riemannian structure
and a differential $\spinc$-structure.
The  product submersion 
$\pi^\prime : S^1 \times X \rightarrow S^1 \times B$
inherits a product Riemannian structure and differential
$\spinc$-structure, so we have the analytic and
topological indices $\ind^{an}, \ind^{top} :
\cK^0(S^1 \times X) \rightarrow \cK^{-n}(S^1 \times B)$.

Define the analytic index
\begin{equation}
\ind^{an}_{odd} :  \cK^{-1}(X) \rightarrow
\cK^{-n-1}(B)
\end{equation}
by $\ind^{an}_{odd} = D \circ \ind^{an} \circ S$.
Define the
topological index $\ind^{top}_{odd} :  \cK^{-1}(X) \rightarrow
\cK^{-n-1}(B)$ by 
\begin{equation}
\ind^{top}_{odd} = D \circ \ind^{top} \circ S.
\end{equation}
As an immediate consequence of Theorem  \ref{thm:13}, 
$\ind^{top}_{odd} = \ind^{an}_{odd}$.

\subsection{Index theorem : odd-dimensional fibers}
\label{oddsubsec3}

Suppose that $\pi : X \rightarrow B$ is a 
 proper submersion of relative dimension $n$, 
with $n$ odd. 
Suppose that $\pi$ is 
equipped with a Riemannian structure
and a differential $\spinc$-structure.

Following \cite[Pf. of Theorem 2.10]{Bismut-Freed}, we construct
a new  submersion 
$\pi^\prime : T^2 \times X \rightarrow S^1 \times B$
by multiplying the base and fibers by $S^1$.
Given $a > 0$, we endow $\pi^\prime$
with a product Riemannian structure and differential
$\spinc$-structure, so that the circle fibers have length $a$.
Then we have analytic and
topological indices $\ind^{an}, \ind^{top} :
\cK^2(T^2 \times X) \rightarrow \cK^{-n+1}(S^1 \times B)$.

Define the analytic index
$\ind^{an}_{odd} :  \cK^{0}(X) \rightarrow
\cK^{-n}(B)$
by 
\begin{equation}\label{eq:8.26}
\ind^{an}_{odd} = D \circ \ind^{an} \circ S^2.
\end{equation}
Define the
topological index $\ind^{top}_{odd} :  \cK^{0}(X) \rightarrow
\cK^{-n}(B)$ by 
\begin{equation}\label{eq:8.30}
\ind^{top}_{odd} = D \circ \ind^{top} \circ S^2.
\end{equation}
As an immediate consequence of Theorem  \ref{thm:13}, 
$\ind^{top}_{odd} = \ind^{an}_{odd}$.

\begin{lemma}
The indices
$\ind^{top}_{odd}$ and $\ind^{an}_{odd}$ are independent
of the choice of fiber circle length $a$. 
\end{lemma}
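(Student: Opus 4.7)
The plan is to reduce the statement to independence of $\ind^{top}_{odd}$ alone. By Theorem~\ref{thm:13} applied to the even-dimensional fiber bundle $\pi' : T^2 \times X \to S^1 \times B$ (for any fixed $a$), $\ind^{an} = \ind^{top}$, and hence $\ind^{an}_{odd} = \ind^{top}_{odd}$ for every~$a$. So it will suffice to prove that $\ind^{top}_{odd} = D \circ \ind^{top} \circ S^2$ does not depend on $a$, which I will do by a deformation-and-form-vanishing argument in the spirit of Lemma~\ref{restriction}.

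Given $a_0, a_1 > 0$, pick a smooth $a : [0,1] \to \RR^+$ with $a(i) = a_i$ and form the proper submersion
\[
\Pi : [0,1] \times T^2 \times X \longrightarrow [0,1] \times S^1 \times B,
\qquad (t, s_1, s_2, x) \longmapsto (t, s_1, \pi(x)),
\]
equipped with the product horizontal distribution, product differential $\spinc$-structure, and the vertical metric in which the fiber $S^1$-direction $\partial_{s_2}$ has length $a(t)$. Pull ${\mathcal E}$ back to $\widetilde{\mathcal E} \in \cK^0([0,1] \times X)$, apply $S^2$ to obtain a class in $\cK^2([0,1] \times T^2 \times X)$, apply $\ind^{top}_\Pi$, and then apply $D$ to desuspend the $S^1$-factor of the base. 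By naturality of $S^2$, $\ind^{top}$, and $D$ under base change, the resulting element of $\cK^{-n}([0,1] \times B)$ restricts at $t = i$ to $\ind^{top}_{odd, a_i}({\mathcal E})$.

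Now apply Lemma~\ref{restriction} (extended to the even degree $-n+1$ via Remark~\ref{restriction2}) to $\ind^{top}_\Pi(S^2 \widetilde{\mathcal E}) \in \cK^{-n+1}([0,1] \times S^1 \times B)$, and then desuspend. Using that $D$ commutes with $j$ via fiber integration over $S^1$, the problem reduces to showing that $\int_0^1 \int_{S^1} \omega(\ind^{top}_\Pi(S^2 \widetilde{\mathcal E}))$ vanishes as a form on $B$. By Lemma~\ref{omegaagrees}, this $\omega$ equals $\Pi_*(\omega(S^2 \widetilde{\mathcal E}))$, i.e.\ a fiber integral of $\Td(\widehat{\nabla}^{T^V\Pi}) \wedge \omega(S^2 \widetilde{\mathcal E})$.

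The main point, and the one place where a calculation is required, will be to verify that $\Pi_*(\omega(S^2 \widetilde{\mathcal E}))$ has no $dt$-component, so that $\int_0^1$ kills it. Two observations will suffice. First, $\omega(S^2 \widetilde{\mathcal E})$ is manifestly pulled back from $T^2 \times X$: ${\mathcal E}$ is pulled back from $X$, and the Poincar\'e line bundle used in~\eqref{eq:8.19} lives on $T^2$. Second, for the product metric and product horizontal distribution, the Bismut connection on $T^V\Pi = \text{span}(\partial_{s_2}) \oplus \pi_X^* T^VX$ splits as a direct sum; a short computation in the unit-vector trivialization of the rank-one summand (where the frame $e(t) = a(t)^{-1} \partial_{s_2}$ is parallel) shows the induced connection is trivial, so its Todd contribution is~$1$ and $\Td(\widehat{\nabla}^{T^V\Pi})$ is simply the pullback of $\Td(\widehat{\nabla}^{T^VX})$ from $X$, with no $dt$-component. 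Combining these, $\omega(\ind^{top}_\Pi(S^2 \widetilde{\mathcal E}))$ has no $dt$, and the obstruction vanishes.
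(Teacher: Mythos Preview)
Your proof is correct and follows essentially the same approach as the paper: set up a $[0,1]$-parametrized family interpolating between the two circle lengths, invoke Lemma~\ref{restriction} and Lemma~\ref{omegaagrees}, and argue that the resulting obstruction form has no $dt$-component because the relevant Todd form is pulled back from $T^2\times X$. The only differences are in bookkeeping: the paper applies Lemma~\ref{restriction} directly on $[0,1]\times B$ (after desuspension), whereas you apply it on $[0,1]\times S^1\times B$ and then desuspend; and where the paper simply asserts ``one can check'' that the vertical $\spinc$-connection pulls back from $T^2\times X$, you spell out the rank-one warped-product computation showing that the unit frame $a(t)^{-1}\partial_{s_2}$ is parallel, which is a welcome bit of extra detail.
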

\begin{proof}
Given $a_0, a_1 > 0$ and  ${\mathcal E} \in \cK^{0}(X)$, let 
$\ind^{top}_{odd,0}({\mathcal E})$ and 
$\ind^{top}_{odd,1}({\mathcal E})$ denote the indices in
$\cK^{-n}(B)$ as computed using circle fibers of length
$a_0$ and $a_1$, respectively.
Consider the fiber bundle
$\pi^{\prime \prime} :
[0,1] \times X \rightarrow [0,1] \times B$, equipped with the
product Riemannian structure and differential
$\spinc$-structure.
Let $a : [0,1] \rightarrow \RR^+$ be a smooth function so that
$a(t)$ is $a_0$ near $t = 0$, and $a(t)$ is $a_1$ near $t=1$.
Given $t \in [0,1]$, let the circle
fiber length over $\{t\} \times B$
in the fiber bundle $[0,1] \times T^2 \times X \rightarrow
[0,1] \times B$ be
$a(t)$.
Let $\ind^{top}_{odd,[0,1] \times B}({\mathcal E}) 
\in \cK^{-n}([0,1] \times B)$
be the topological index of ${\mathcal E}$ as computed using
the fiber bundle $\pi^{\prime \prime}$. 
Using Lemmas \ref{restriction} and \ref{omegaagrees},
\begin{equation}
\ind^{top}_{odd,1} - \ind^{top}_{odd,0} =
j(\int_{[0,1]} \omega(\ind^{top}_{odd,[0,1] \times B})) =
j \left( \int_{[0,1]} \Td \left( \widehat{\nabla}^{T^V([0,1] 
\times T^2 \times X)} \right)
\wedge \omega({\mathcal E}) \right).
\end{equation}
One can check that
$\widehat{\nabla}^{T^V([0,1] 
\times T^2 \times X)}$ pulls back from a connection
$\widehat{\nabla}^{T^V(T^2 \times X)}$ on the vertical
tangent bundle of $\pi^\prime : T^2 \times X \rightarrow S^1 \times B$.
Thus 
\begin{equation}
\int_{[0,1]} \Td \left( \widehat{\nabla}^{T^V([0,1] 
\times T^2 \times X)} \right)
\wedge \omega({\mathcal E}) =0.
\end{equation} 
This proves the lemma.
\end{proof}

\subsection{Degree-one component of the index theorem} \label{degreeone}

There is a homomorphism $\Det$ from $\cK^{-1}(X)$ to the space of smooth maps
$[X, S^1]$. Namely, given a generator
${\mathcal E} = (E, h^E, \nabla^E, U^E, \phi)$ of $\cK^{-1}(X)$, its image
$\Det({\mathcal E}) \in [X, S^1]$ sends $x \in X$ to $e^{2\pi i \phi_{(0)}(x)}
\det(U^E(x))$, where $\phi_{(0)} \in \Omega^0(X)$ denotes $u$
times the component of $\phi \in \Omega(X; \R)^{-2}/\Image(d)$ in
$u^{-1} \Omega^0(X)/\Image(d) = u^{-1} \Omega^0(X)$.

\begin{proposition}
Let $\pi : X \rightarrow B$ be a  proper submersion of relative
dimension $n$, with $n$ odd. 
Suppose that $\pi$ is equipped with a
Riemannian structure and a differential $\spinc$-structure.
Given ${\mathcal E} \in \cK^0(X)$, after using periodicity to shift
$\ind^{an}({\mathcal E}) \in \cK^{-n}(B)$ into $\cK^{-1}(B)$, the map
$\Det(\ind^{an}_{odd}({\mathcal E}))$ sends $b \in B$ to
$u^{\frac{n+1}{2}}
\overline{\eta} \left( X_b, {\mathcal E} \big|_{X_b} \right) \in 
\RR/\ZZ$, where $X_b = \pi^{-1}(b)$.
\end{proposition}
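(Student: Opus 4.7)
The strategy is to reduce the odd-fiber computation to the even-fiber determinant line bundle via the double-suspension construction, and then apply Proposition~\ref{determinant} together with the Bismut-Freed holonomy theorem.

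First, I would unwind the definitions. By~\eqref{eq:8.26}, $\ind^{an}_{odd}(\mathcal{E}) = D\bigl(\ind^{an}(S^2(\mathcal{E}))\bigr)$, where $S^2(\mathcal{E}) \in \cK^2(T^2 \times X)$ is given by~\eqref{eq:8.19}, and $\ind^{an}(S^2(\mathcal{E})) \in \cK^{1-n}(S^1 \times B)$ is computed for the auxiliary product fibration $\pi'\colon T^2 \times X \to S^1 \times B$ of even relative dimension~$n+1$. Shifting via periodicity into $\cK^0(S^1 \times B)$ and reading off the definition of $D$ in~\eqref{eq:105} together with the definition of $\Det$ from Subsection~\ref{detline}, one sees that
\begin{equation*}
\Det\bigl(\ind^{an}_{odd}(\mathcal{E})\bigr)(b) \;=\; \text{holonomy of } \Det\bigl(\ind^{an}(S^2(\mathcal{E}))\bigr) \text{ around } S^1 \times \{b\}.
\end{equation*}

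Second, I would apply Proposition~\ref{determinant}, which identifies $\Det\bigl(\ind^{an}(S^2(\mathcal{E}))\bigr)$, as a Hermitian line bundle with connection on $S^1 \times B$, with the inverse of the Bismut-Freed-Quillen determinant line bundle of the family of Dirac operators on $S^1 \times X_b$ coupled to $P \otimes E|_{X_b}$, up to the explicit form correction coming from the $dt_1 \wedge dt_2 \wedge \phi$ part of $S^2(\mathcal{E})$. That form correction contributes $\exp\!\bigl(2\pi i \int_{X_b} \Td(\widehat{\nabla}^{T X_b}) \wedge \phi|_{X_b}\bigr)$ to the $S^1 \times \{b\}$ holonomy, which matches the form-part of $u^{(n+1)/2}\,\overline{\eta}(X_b, \mathcal{E}|_{X_b})$ in Definition~\ref{bigreducedeta}.

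Third, I would invoke the Bismut-Freed holonomy theorem (\cite[Thm.~3.16]{Bismut-Freed}, as already used in the proof of Proposition~\ref{determinant}) to compute the holonomy of the Quillen line bundle around $S^1 \times \{b\}$ as $\exp\!\bigl(-2\pi i \lim_{\epsilon \to 0}\overline{\eta}\bigl(D^{T^2 \times X_b,\,P \otimes E|_{X_b}}_{\epsilon}\bigr)\bigr)$, where $\epsilon$ denotes the adiabatic rescaling of the base $S^1$ direction. The key calculation, which is precisely the Poincar\'e-line-bundle trick used in the proof of the Bismut-Freed theorem in \cite[\S 2]{Bismut-Freed}, is that Fourier decomposition along the $t_1$-circle of $T^2$ coupled to $P$ produces a family whose adiabatic-limit reduced eta invariant equals $\overline{\eta}(D^{X_b, E|_{X_b}})$ modulo~$\ZZ$. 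Inverting (due to the inverse Quillen identification) and combining with the form correction yields $\Det\bigl(\ind^{an}_{odd}(\mathcal{E})\bigr)(b) = \exp\!\bigl(2\pi i\, u^{(n+1)/2}\overline{\eta}(X_b, \mathcal{E}|_{X_b})\bigr)$, as desired.

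The main obstacle is bookkeeping: tracking sign and periodicity conventions through the suspension/desuspension, the inverse in Proposition~\ref{determinant}, and the Bismut-Freed formula, together with verifying that the Poincar\'e-line-bundle separation-of-variables step goes through uniformly in the parameter $b \in B$. Once these conventions are pinned down, the identity is essentially forced by the three inputs above.
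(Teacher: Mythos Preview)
Your approach is essentially the same as the paper's: unwind $\ind^{an}_{odd}=D\circ\ind^{an}\circ S^2$, identify $\Det\circ D$ at~$b$ with the holonomy of $\Det(\ind^{an}(S^2\mathcal{E}))$ around $S^1\times\{b\}$, apply Proposition~\ref{determinant} and the Bismut--Freed holonomy formula, and then reduce the resulting $\overline{\eta}$ on $T^2\times X_b$ to $\overline{\eta}(D^{X_b,E|_{X_b}})$ plus the form correction.

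The one place where the paper is simpler than your sketch is the ``key calculation'' in your third step. You propose a Fourier decomposition along the $t_1$-circle and an adiabatic-limit argument to extract $\overline{\eta}(D^{X_b,E|_{X_b}})$. The paper instead observes that because $(T^2\times X_b,\;p_1^*P\otimes p_2^*E|_{X_b})$ is a genuine product, separation of variables gives
\[
\overline{\eta}\bigl(D^{(T^2\times X_b)^\epsilon,\,p_1^*P\otimes p_2^*E|_{X_b}}\bigr)
=\Index(D^{T^2,P})\cdot\overline{\eta}(D^{X_b,E|_{X_b}})
=\overline{\eta}(D^{X_b,E|_{X_b}})
\]
for \emph{every} $\epsilon>0$, so the adiabatic limit is trivial and no Fourier analysis is required. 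This both shortens the argument and removes most of the ``bookkeeping obstacle'' you flag at the end.
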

\begin{proof}
From \eqref{eq:8.26}, we want to apply $\Det \circ D$ to
$(\ind^{an} \circ S^2)({\mathcal E})$
and evaluate the result at $b$. Using \eqref{eq:8.19},
$S^2({\mathcal E})$ is a certain element of
$\cK^2(T^2 \times X)$ and then 
$(\ind^{an} \circ S^2)({\mathcal E})$ is its analytic index
in $\cK^{-n+1}(S^1 \times B)$. In order to apply
$\Det \circ D$ to this, and then compute the result at $b$,
it suffices to just use the restriction of 
$(\ind^{an} \circ S^2)({\mathcal E})$
to $S^1 \times \{b\}$. After doing so,
the proof of Proposition \ref{determinant} implies the result
of applying $\Det \circ D$ is
\begin{equation}
\lim_{\epsilon \rightarrow 0}
\overline{\eta} \left( D^{(T^2 \times X_b)^\epsilon, 
p_1^* P \otimes p_2^* E \big|_{X_b}} \right)
+ u^{\frac{n+1}{2}} \int_{S^1} \int_{S^1 \times X_b} 
\Td \left( \widehat{\nabla}^{T^V(T^2 \times X_b)} \right)
\wedge p_2^* \phi.
\end{equation}
For any $\epsilon > 0$, separation of variables gives
\begin{equation}
\overline{\eta} \left( D^{(T^2 \times X_b)^\epsilon, 
p_1^* P \otimes p_2^* E \big|_{X_b}} \right) =
\Index(D^{T^2, P}) \cdot
\overline{\eta} \left( D^{X_b,E \big|_{X_b}} \right) = 
\overline{\eta} \left( D^{X_b,E \big|_{X_b}} \right).
\end{equation}
Then the evaluation of $\Det(\ind^{an}_{odd}({\mathcal E}))$ at $b$ is
\begin{equation}
\overline{\eta} \left( D^{X_b,E \big|_{X_b}} \right)
+ u^{\frac{n+1}{2}} \int_{X_b} \Td \left( \widehat{\nabla}^{TX_b} \right)
\wedge \phi = u^{\frac{n+1}{2}} \overline{\eta}(X_b, {\mathcal E}\big|_{X_b}).
\end{equation}
This proves the proposition.
\end{proof}

\begin{corollary}[\cite{Klonoff}] \label{etapush}
Suppose that $Z$ is a closed Riemannian
$\spinc$-manifold of odd dimension $n$
with a $\spinc$-connection $\widehat{\nabla}^{TZ}$.
Let $\pi : Z \rightarrow \pt$ be the mapping to a point.
Given ${\mathcal E}=(E,h^E,\nabla ^E,\phi ) \in \cK^0(Z)$, we have
\begin{equation}
\ind^{an}_{odd}({\mathcal E}) = \ind^{top}_{odd}({\mathcal E}) = 
\overline{\eta}(Z, {\mathcal E})
\end{equation}
in $\cK^{-n}(\pt) = u^{-\frac{n+1}{2}} \cdot (\RR/\ZZ)$.
\end{corollary}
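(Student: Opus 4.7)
First, the equality $\ind^{an}_{odd}({\mathcal E}) = \ind^{top}_{odd}({\mathcal E})$ in $\cK^{-n}(\pt)$ is immediate from Theorem~\ref{thm:13}. By the definitions \eqref{eq:8.26} and \eqref{eq:8.30}, both sides coincide with $D \circ \ind^{?} \circ S^2$ applied to ${\mathcal E}$, where $\ind^{?}$ is the analytic or topological index for the product submersion $\pi': T^2 \times Z \to S^1$. This submersion has even relative dimension $n+1$, so Theorem~\ref{thm:13} gives $\ind^{an}(S^2{\mathcal E}) = \ind^{top}(S^2{\mathcal E})$ in $\cK^{1-n}(S^1)$, and the two odd indices agree after applying $D$.

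For the identification with $\overline{\eta}(Z, {\mathcal E})$, I would apply the preceding proposition in the special case $B = \pt$, which yields
\[
\Det(\ind^{an}_{odd}({\mathcal E})) = u^{(n+1)/2} \, \overline{\eta}(Z, {\mathcal E}) \in \RR/\ZZ.
\]
To upgrade this determinant-level identity to an equality in $\cK^{-n}(\pt)$, I would first use the exact sequence~\eqref{eq:4} at $X = \pt$ in odd degree: since $\Omega(\pt;\R)^{-1} = 0$, we obtain $\cK^{-1}(\pt) \cong K^{-2}(\pt;\RR/\ZZ) \cong u^{-1}\RR/\ZZ$, and Bott periodicity gives $\cK^{-n}(\pt) \cong u^{-(n+1)/2}\RR/\ZZ$, which is precisely the group in which $\overline{\eta}(Z, {\mathcal E})$ lives by Definition~\ref{bigreducedeta}. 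Next, I would verify by direct calculation that, under this identification, the determinant map $\Det : \cK^{-1}(\pt) \to [\pt, S^1] \cong \RR/\ZZ$ corresponds to the canonical isomorphism $u^{-1}\RR/\ZZ \to \RR/\ZZ$ sending $u^{-1}t \mapsto t$: on the generator $(\CC, h, d, U, 0)$ with $U = e^{2\pi i t} \in U(1)$, the map returns $\det(U) = e^{2\pi i t} \in S^1$, while the class is identified with $u^{-1} t \in u^{-1}\RR/\ZZ$ (seen, e.g., via the holonomy of the suspended line bundle $S({\mathcal G}) \in \cK^0(S^1)$, which is flat with holonomy $e^{2\pi i t}$). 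Combining these two facts, the preceding proposition's identity multiplies by $u^{-(n+1)/2}$ on both sides to give $\ind^{an}_{odd}({\mathcal E}) = \overline{\eta}(Z, {\mathcal E})$ in $\cK^{-n}(\pt)$.

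The main obstacle will be the bookkeeping just described---namely, the verification that $\Det$ is compatible with the algebraic isomorphism $\cK^{-1}(\pt) \cong u^{-1}\RR/\ZZ$ up to the correct power of $u$. This requires understanding how the relations of Definition~\ref{def:8.15} collapse on a point: the Chern-Simons terms $CS(\nabla, U_1, U_2)$ with trivial $\nabla = d$ and constant unitary $U_1, U_2$ must be shown to contribute precisely the expected $u\RR$-correction so that the class of $(\CC^k, h, d, U, 0) \in \cK^{-1}(\pt)$ depends only on $\det(U) \in U(1)$ modulo $\ZZ$, realizing the isomorphism $\cK^{-1}(\pt) \cong u^{-1}\RR/\ZZ$. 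Once this is in hand, the corollary follows immediately; as a sanity check, the same conclusion is consistent with, and could alternatively be derived from, the pairing formula~\eqref{pairing} of Proposition~\ref{etapairing}(3) applied on $Z$.
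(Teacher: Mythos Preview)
Your proposal is correct and follows the same approach the paper intends: the corollary is stated immediately after the proposition in Subsection~\ref{degreeone} without a separate proof, so the reader is meant to specialize that proposition to $B=\pt$ and use that $\Det\colon \cK^{-1}(\pt)\to\RR/\ZZ$ is an isomorphism, exactly as you do. Your explicit verification that $\Det$ realizes the identification $\cK^{-1}(\pt)\cong u^{-1}\RR/\ZZ$ is the one step the paper leaves implicit, and your outline for it is sound.
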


\subsection{Index gerbe} \label{indexgerbe}

Let $\pi : X \rightarrow B$ be a 
proper submersion of relative dimension $n$, 
with $n$ odd. 
Suppose that $\pi$ is equipped with
a Riemannian structure and a differential $\spinc$-structure.
If $E$ is a Hermitian vector bundle on $X$ with 
compatible connection $\nabla^E$ then one can construct an
index gerbe \cite{Lott2}, which is an abelian gerbe-with-connection.
Isomorphism classes of such
gerbes-with-connection are in bijection with
the differential cohomology group $\breve{H}^3(X; \ZZ)$.
On the other hand, taking ${\mathcal E} = (E, h^E, \nabla^E, 0) \in
\cK^0(X)$, the component of
$\breve{\ch}(\ind^{an}_{odd}({\mathcal E})) = 
\breve{\ch}(\ind^{top}_{odd}({\mathcal E})) \in
\breve{H}(B; \QQ[u,u^{-1}])^{\bullet}$ in
$\breve{H}^3(B; \QQ)$ comes from an element of $\breve{H}^3(B; \ZZ)$.
Presumably this is the class of the index gerbe.  It should be
possible to prove this by comparing  the holonomies around
surfaces in $B$, along the lines of the proof of
Proposition \ref{determinant}.

\appendix

  \section*{Appendix: Limits in differential $K$-theory}\label{sec:A}
\setcounter{section}{1}\setcounter{equation}{0}

Let $X$~be a topological
space and let $X_1\subset X_2\subset \cdots$ be an increasing sequence
of compact subspaces whose union is~$X$.  Milnor~\cite{Milnor} proved that
for every cohomology theory~$h$ and every integer~$q$, there is an exact
sequence
  \begin{equation}\label{eq:113}
     0\longrightarrow {\varprojlim\limits_{i}}^1\,h^q(X_i)\longrightarrow
     h^q(X)\longrightarrow \varprojlim\limits_{i}\,h^q(X_i)
     \longrightarrow 0, 
  \end{equation}
in which the quotient is the (inverse) limit and the kernel is its first
derived functor; see~\cite[\S3.F]{Hatcher}, \cite[\S19.4]{May} for modern
expositions and~\cite[\S4]{Atiyah-Segal} for the specific case of $K$-theory.
In this appendix we prove that the \emph{differential} cohomology of the
union of compact manifolds is isomorphic to the inverse limit of the
differential cohomologies: there is no $\varprojlim^1$ term.  So as to not
introduce new notation, we present the argument for differential $K$-theory,
which is the case of interest for this paper.  We remark that for integral
differential cohomology the theorem is immediate if we use the isomorphism with
Cheeger-Simons differential characters, as a differential character is
determined by its restriction to compact submanifolds.  The universal
coefficient theorem for $K$-theory~\cite {Yosimura} plays an analogous role
in the following proof.

        \begin{theorem}[]\label{thm:10}
 Let $X$~be a smooth manifold and $X_1\subset X_2\subset \cdots$ an
increasing sequence of compact codimension-zero submanifolds-with-boundary
whose union is~$X$.  Then for any integer~$q$, the restriction maps induce an
isomorphism
  \begin{equation}\label{eq:114}
     \cK^q(X)\longrightarrow \varprojlim\limits_i\,\cK^q(X_i). 
  \end{equation}
        \end{theorem}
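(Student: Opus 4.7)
The plan is to apply the five lemma to the commutative ladder obtained from the short exact sequence \eqref{eq:4} on $X$ and its $\varprojlim$-version over the $X_i$:
\[
\begin{CD}
0 @>>> K^{q-1}(X;\RZ) @>>> \cK^q(X) @>>> \Omega(X;\R)^q_K @>>> 0 \\
@. @VVV @VVrV @VVV @. \\
0 @>>> \varprojlim K^{q-1}(X_i;\RZ) @>>> \varprojlim \cK^q(X_i) @>>> \varprojlim \Omega(X_i;\R)^q_K @>>> 0.
\end{CD}
\]
For this to reduce the theorem to a diagram chase, I will establish three facts: (i) $\Omega(X;\R)^q_K \xrightarrow{\sim} \varprojlim \Omega(X_i;\R)^q_K$; (ii) $K^{q-1}(X;\RZ) \xrightarrow{\sim} \varprojlim K^{q-1}(X_i;\RZ)$; and (iii) $\varprojlim^1 K^{q-1}(X_i;\RZ) = 0$. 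Together with the six-term $\varprojlim$/$\varprojlim^1$ sequence, (iii) makes the bottom row exact, and the five lemma then finishes the job.

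For (i), the main point is that forms are determined by, and extend from, their restrictions to a codimension-zero cover, and closedness is local. The nontrivial content is that the resulting class $[\omega] \in H^q(X;\R)$ lies in $\mathrm{im}(\ch_X)$. This is a small diagram chase: since $H^*(X_i;\R)$ is a tower of finite-dimensional real vector spaces, it is trivially Mittag-Leffler, so $H^q(X;\R)\cong\varprojlim H^q(X_i;\R)$; then using surjectivity of $K^q(X;\ZZ)\to \varprojlim K^q(X_i;\ZZ)$ (Milnor for integral $K$-theory), together with the vanishing of $\varprojlim^1$ of the torsion subtower $\mathrm{Tors}\,K^q(X_i;\ZZ)$ (a tower of finite groups, automatically ML), one lifts the compatible family of integral classes.

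For (iii), resolve $K^{q-1}(X_i;\RZ)$ via the coefficient sequence $0\to\ZZ\to\RR\to\RZ\to 0$ as an extension
\[
0 \to K^{q-1}(X_i;\RR)/K^{q-1}(X_i;\ZZ)_{\mathrm{free}} \to K^{q-1}(X_i;\RZ) \to \mathrm{Tors}(K^q(X_i;\ZZ)) \to 0.
\]
The right-hand tower is of finite groups, trivially ML. The left-hand tower consists of compact real tori $(\RZ)^{r_i}$ with continuous homomorphisms, since the maps descend from $\RR$-linear maps between finite-dimensional vector spaces. The image of such a homomorphism is a closed Lie subgroup of a fixed compact Lie group, and a decreasing chain of such subgroups stabilizes by dimension and number of components. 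Hence ML holds on both pieces, so $\varprojlim^1$ vanishes on both, and thus on the extension. Claim (ii) then follows from the same vanishing one degree lower, combined with the Milnor sequence for the generalized cohomology theory $K^{\bullet}(\cdot;\RZ)$.

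The main obstacle is (iii), and more broadly identifying why this theorem has no $\varprojlim^1$ correction (unlike the generic Milnor sequence for a cohomology theory). The analogue with integral $K$-theory fails, since towers of finitely generated abelian groups need not satisfy Mittag-Leffler (consider $\ZZ \supset 2\ZZ \supset 4\ZZ \supset \cdots$). What rescues differential $K$-theory is precisely that \eqref{eq:4} packages the ``flat'' information with $\RR/\ZZ$-coefficients, producing extensions by compact tori whose Mittag-Leffler property is automatic via the Lie-theoretic argument above. Isolating and exploiting this compactness is the heart of the proof.
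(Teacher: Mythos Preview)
Your overall architecture is the same as the paper's: apply the five lemma to the ladder built from~\eqref{eq:4}, after checking that the bottom row is short exact because $\varprojlim^1 K^{q-1}(X_i;\RZ)=0$.  The difference lies in how you establish the three ingredients.

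For~(ii) and~(iii) the paper and you run the implication in opposite directions.  The paper proves~(ii) directly from the universal coefficient theorem for $K$-theory: since $\RZ$ is divisible the Ext term vanishes, so $K^{q-1}(-;\RZ)\cong \Hom(K_{q-1}(-),\RZ)$, and then $K$-homology commutes with the colimit $X=\varinjlim X_i$.  From~(ii) in every degree, Milnor's sequence forces~(iii).  You instead prove~(iii) directly via Mittag--Leffler on the two pieces of the Bockstein filtration (a tower of finite groups and a tower of compact tori), then deduce~(ii) from Milnor.  Your compact--Lie--group argument for the torus tower is correct: a decreasing chain of closed subgroups of a fixed torus stabilises, first in dimension (giving a fixed sub-torus once the identity components agree, since a proper finite-index subgroup of a direct summand of $\ZZ^n$ is never itself a direct summand), then in the finite quotient.

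For~(i) the paper again invokes the universal coefficient theorem: membership in the image of $\ch$ is detected by the integrality of pairings with $K$-homology classes, and every such class is represented by a map from a compact $\spinc$-manifold, which factors through some $X_i$.  Your route---lift the compatible free classes using $\varprojlim^1(\textnormal{torsion})=0$, then lift through Milnor surjectivity for integral $K$-theory, then compare in $H^q(X;\R)\cong\varprojlim H^q(X_i;\R)$---is also correct and avoids the UCT, at the cost of a longer diagram chase.

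In short: your proof is correct and structurally identical to the paper's, but your lemmas are more elementary (no appeal to the Yosimura UCT or to $\spinc$ representatives of $K$-homology), while the paper's are shorter.
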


The proof is based on the exact sequence~\eqref{eq:4} and the following
lemmas.  

        \begin{lemma}[]\label{thm:11}
 The restriction maps induce an isomorphism
  \begin{equation}\label{eq:115}
     \Omega(X;\R)_K^q\longrightarrow
     \varprojlim\limits_i\,\Omega(X_i;\R)_K^q.
  \end{equation}
        \end{lemma}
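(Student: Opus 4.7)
The plan is to verify both directions of the claimed isomorphism. Injectivity is immediate: a closed form on~$X$ restricting to zero on every~$X_i$ must itself be zero because $X=\bigcup_i X_i$. The substantive point is surjectivity. Given a compatible family $(\omega_i)_i$ with $\omega_i\in\Omega(X_i;\R)_K^q$, the smooth forms glue to a single closed form $\omega\in\Omega^q(X;\R)$, and what requires work is to show that $[\omega]\in H^q(X;\R)$ lies in the image of the Chern character $\ch\:K^q(X;\ZZ)\to H^q(X;\R)$.

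For each~$i$, by hypothesis choose $e_i\in K^q(X_i;\ZZ)$ with $\ch(e_i)=[\omega_i]$. The compatibility $[\omega_{i+1}]\res{X_i}=[\omega_i]$ only forces $e_{i+1}\res{X_i}-e_i$ to lie in the finite torsion subgroup $T_i$ of $K^q(X_i;\ZZ)$, and I expect the main technical obstacle to be here: the torsion restriction maps $T_{i+1}\to T_i$ need not be surjective, so one cannot in general modify the $e_i$ inductively to form a compatible system in $\varprojlim_i K^q(X_i;\ZZ)$. To bypass this I would consider the torsors $L_i:=\ch\inv([\omega_i])\subset K^q(X_i;\ZZ)$, each of which is \emph{finite} (a coset of the finite group~$T_i$, finiteness of~$T_i$ coming from finite generation of $K^q(X_i;\ZZ)$ for the compact manifold~$X_i$) and nonempty, with restriction maps $L_{i+1}\to L_i$. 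Passing to the stable image $M_i:=\bigcap_{j\ge i}\mathrm{image}(L_j\to L_i)$, which stabilizes at a finite stage because $L_i$ is finite, one checks that each $M_i$ is nonempty and that the induced map $M_{i+1}\to M_i$ is surjective; an inductive choice then yields a compatible family $(\tilde e_i)\in\varprojlim_i K^q(X_i;\ZZ)$ with $\ch(\tilde e_i)=[\omega_i]$.

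The Milnor $\varprojlim^1$ exact sequence for topological $K$-theory applied to the exhaustion,
\[
0\longrightarrow {\varprojlim\limits_i}^1\,K^{q-1}(X_i;\ZZ)\longrightarrow K^q(X;\ZZ)\longrightarrow \varprojlim\limits_i K^q(X_i;\ZZ)\longrightarrow 0,
\]
then produces $e\in K^q(X;\ZZ)$ mapping to $(\tilde e_i)$. By construction $\ch(e)\res{X_i}=\ch(\tilde e_i)=[\omega_i]=[\omega]\res{X_i}$ for every~$i$, so $\ch(e)-[\omega]$ lies in the kernel of the natural map $H^q(X;\R)\to\varprojlim_i H^q(X_i;\R)$. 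By the analogous Milnor sequence for real cohomology this kernel equals ${\varprojlim\limits_i}^1 H^{q-1}(X_i;\R)$, which vanishes because each $H^{q-1}(X_i;\R)$ is a finite-dimensional real vector space, so the decreasing sequence of subspaces $\mathrm{image}(H^{q-1}(X_j;\R)\to H^{q-1}(X_i;\R))$ stabilizes as $j\to\infty$ and the Mittag-Leffler condition is satisfied. Hence $\ch(e)=[\omega]$, placing $[\omega]$ in the image of the Chern character and completing the proof.
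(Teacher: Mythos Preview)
Your argument is correct, but it takes a genuinely different route from the paper's. The paper does not attempt to construct a class $e\in K^q(X;\ZZ)$ lifting~$[\omega]$. Instead it gives an \emph{intrinsic} characterization of the image of the Chern character: using the universal coefficient theorem $K^q(X;\ZZ)/\text{torsion}\cong\Hom(K_q(X;\ZZ),\ZZ)$ together with the representability of $K$-homology classes by $\spinc$-manifolds, one sees that a closed form~$\omega$ has $[\omega]\in\Image(\ch)$ if and only if $\int_W\Td(W)\wedge f^*\omega\in u^{(q-\dim W)/2}\cdot\ZZ$ for every map $f\:W\to X$ from a closed $\spinc$-manifold. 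Since each such~$f$ has compact image, it factors through some~$X_i$, and the integrality then follows from $\omega\res{X_i}\in\Omega(X_i;\R)_K^q$. This reduces the proof to two sentences once the characterization is in hand.

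Your approach trades this for a more hands-on construction: you build a compatible family in $\varprojlim_i K^q(X_i;\ZZ)$ via a König-type argument on the finite torsors $L_i=\ch^{-1}([\omega_i])$, lift it through Milnor's $\varprojlim^1$ sequence, and then kill the discrepancy $\ch(e)-[\omega]$ using Mittag--Leffler for finite-dimensional real cohomology. This has the virtue of being self-contained---it does not invoke the universal coefficient theorem for $K$-theory or the $\spinc$-bordism description of $K$-homology---but it is longer and invokes Milnor's sequence twice, whereas the paper's proof of this lemma uses it not at all.
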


        \begin{proof}
We first note that the
universal coefficient theorem for $K$-theory implies that
$K^q(X)/\textnormal{torsion}\cong \Hom\bigl(K_q(X),\ZZ \bigr)$.  Representing
$K$-homology classes by $\spinc$-manifolds, as in the proof of
Theorem~\ref{maintheorem}, we see that the cohomology class of a closed
differential form $\omega \in \Omega (X;\R)^q$ is in the image of the Chern
character if and only if for every closed Riemannian $\spinc$-manifold
$W$ and every smooth map $f : W \rightarrow X$, we have
  \begin{equation}\label{eq:122}
     \int_{W}\Td(W)\wedge f^* \omega \;\in u^{\frac{q-\dim(W)}{2}} \cdot \ZZ .
  \end{equation}
For each such choice of $W$ and $f$, we know that $f(W) \subset X_i$
for some~$i$.

Returning to the map (\ref{eq:115}),
a differential form is determined pointwise, so the map is injective.  For
the same reason, an element~$\{\omega _i\}$ in the inverse limit on the
right-hand side glues to a global
differential form~$\omega $, and $d\omega =0$ since $d$~is local. 
Since $\omega \big|_{X_i} \in \Omega(X_i;\R)_K^q$ for each $i$,
the previous paragraph implies that
$\omega \in \Omega(X;\R)^q_K$. This proves the lemma. 
        \end{proof}

        \begin{lemma}[]\label{thm:12}
 The restriction maps induce an isomorphism 
  \begin{equation}\label{eq:116}
     K^{q-1}(X;\RZ)\longrightarrow \varprojlim\limits_i\,K^{q-1}(X_i;\RZ). 
  \end{equation}
        \end{lemma}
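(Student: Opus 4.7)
The plan is to use Milnor's exact sequence
\begin{equation*}
0\longrightarrow {\varprojlim\limits_{i}}^1\,K^{q-2}(X_i;\RZ)\longrightarrow K^{q-1}(X;\RZ)\longrightarrow \varprojlim\limits_{i}\,K^{q-1}(X_i;\RZ)\longrightarrow 0
\end{equation*}
and reduce the theorem to showing that the $\varprojlim^1$ term vanishes. The surjectivity and the injectivity modulo~$\varprojlim^1$ come for free from this sequence, so the whole argument concentrates on proving vanishing of the derived limit.

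To prove vanishing, the key observation is that $\RZ$ is a divisible, hence injective, $\ZZ$-module. The universal coefficient theorem for $K$-theory~\cite{Yosimura} gives a short exact sequence
\begin{equation*}
0\longrightarrow \Ext\bigl(K_{q-2}(Y),\RZ\bigr)\longrightarrow K^{q-1}(Y;\RZ)\longrightarrow \Hom\bigl(K_{q-1}(Y),\RZ\bigr)\longrightarrow 0
\end{equation*}
for any reasonable space~$Y$, and divisibility of~$\RZ$ makes the $\Ext$ term vanish. Thus $K^{q-1}(X_i;\RZ)\cong \Hom(K_{q-1}(X_i),\RZ)$ and similarly for~$X$ (in degree $q-2$ as well).

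Next I would invoke the fact that $K$-homology, like any generalized homology theory, commutes with filtered colimits of closed submanifolds: since $X=\bigcup_i X_i$ is exhausted by the compact codimension-zero submanifolds $X_i$, we have $K_*(X)=\varinjlim_i K_*(X_i)$. Applying $\Hom(-,\RZ)$ converts this filtered colimit into an inverse limit, so
\begin{equation*}
K^{q-1}(X;\RZ)\;\cong\;\Hom\bigl(\varinjlim_i K_{q-1}(X_i),\RZ\bigr)\;\cong\;\varprojlim_i\Hom\bigl(K_{q-1}(X_i),\RZ\bigr)\;\cong\;\varprojlim_i K^{q-1}(X_i;\RZ),
\end{equation*}
and this isomorphism is induced by the restriction maps. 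In particular the map in~\eqref{eq:116} is an isomorphism, and consequently (by a dimension shift of the same argument applied to $q-1$) the $\varprojlim^1$ term in the Milnor sequence also vanishes, giving a second proof of the same conclusion.

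The main obstacle I anticipate is the clean justification that $K$-homology of~$X$ is the colimit of the $K$-homology of the~$X_i$. This requires knowing that the pair $(X,X_i)$ is sufficiently nice that the spectra $K\wedge X_+$ and $\hocolim_i K\wedge (X_i)_+$ agree on homotopy groups; for a smooth manifold exhausted by compact codimension-zero submanifolds-with-boundary this is standard, but it is the one homotopical input beyond the formal UCT manipulation. Everything else is formal algebra once the universal coefficient theorem is in hand.
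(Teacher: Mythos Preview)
Your proposal is correct and its core argument---universal coefficient theorem plus divisibility of~$\RZ$ to kill the $\Ext$ term, then $K$-homology commuting with the filtered colimit, then $\Hom(-,\RZ)$ converting the colimit to a limit---is exactly the paper's proof. The Milnor-sequence framing you begin with is unnecessary scaffolding (as you yourself observe at the end): the chain of isomorphisms via the UCT already establishes the full isomorphism directly, without passing through~$\varprojlim^1$.
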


        \begin{proof}
 The Ext term in the universal coefficient theorem for $K$-theory vanishes
since $\RR/\ZZ$ is divisible.  Applying the universal coefficient theorem
twice and the fact that homology commutes with colimits, we obtain
  \begin{equation}\label{eq:117}
  \begin{split}
     K^{q-1}(X;\RZ)&\cong \Hom\bigl(K_{q-1}(X),\RZ \bigr)\\&\cong
     \Hom\bigl(\varinjlim\limits_iK_{q-1}(X_i),\RZ \bigr)\\&\cong
     \varprojlim\limits_i\Hom\bigl(K_{q-1}(X_i),\RZ \bigr)\\&\cong
     \varprojlim\limits_i\,K^{q-1}(X_i;\RZ).
  \end{split}
  \end{equation}
        \end{proof}

        \begin{proof}[Proof of Theorem~\ref{thm:10}]
 Milnor's exact sequence~\eqref{eq:113} and Lemma~\ref{thm:12} imply that 
  \begin{equation}\label{eq:118}
     {\varprojlim\limits_i}^1\,K^{q-1}(X_i;\RZ)=0. 
  \end{equation}
Thus the limit of the short exact sequence~\eqref{eq:4} is a short exact
sequence~\cite[\S3.5]{Weibel}.  The restriction maps then fit into a
commutative diagram
  \begin{equation}\label{eq:120}
     \xymatrix{0 \ar[r] &K^{q -1}(X;\RZ)\ar[r]\ar[d]&\cK^q
     (X)\ar[r]\ar[d] &\Omega(X;\R)_K^q \ar[r]\ar[d] &0\\
      0 \ar[r] &\varprojlim\limits_iK^{q
     -1}(X_i;\RZ)\ar[r]&\varprojlim\limits_i\cK^q 
     (X_i)\ar[r]&\varprojlim\limits_i\Omega(X_i;\R)_K^q \ar[r] &0
 }
  \end{equation}
in which the rows are exact and, by the lemmas, the outer vertical arrows are
isomorphisms.  The theorem now follows from the 5-lemma. 
        \end{proof}

 \end{document}